\newenvironment{Proof of}[1]{\emph{Proof of #1.}}{\hfill $\qquad \square$\par}
\DeclareMathOperator{\M}{Mes}
\DeclareMathOperator{\Closed}{Closed}
\DeclareMathOperator{\Aut}{Aut}
\DeclareMathOperator{\clsp}{\overline{span}}
\newcommand{\HH}{\mathcal H}
\newcommand{\K}{\mathcal K}
\newcommand{\RR}{\mathcal R}
\newcommand{\VV}{\mathcal V}
\newcommand{\morp}{\varrho}
\newcommand{\LL}{\mathcal L}
\newcommand{\al}{\alpha}
\newcommand{\FF}{\mathcal F}
\newcommand{\OO}{\mathcal O}
\newcommand{\B}{ B}
\newcommand{\D}{\mathcal D}
\newcommand{\G}{\mathcal G}
\newcommand{\C}{\mathbb C}
\newcommand{\Z}{\mathbb Z}
\newcommand{\N}{\mathbb N}
\newcommand{\T}{\mathbb T}
\newcommand{\TT}{\mathcal T}
\newcommand{\QQ}{\mathcal Q}
\newcommand{\supp}{\textrm{supp}\,}
\newtheorem{thm}{Theorem}[section]\newtheorem{lem}[thm]{Lemma} 
\newtheorem{prop}[thm]{Proposition} 
\newtheorem{cor}[thm]{Corollary}
\theoremstyle{remark}
\newtheorem{rem}[thm]{Remark}
\theoremstyle{definition}
\newtheorem{dfn}[thm]{Definition}
\newtheorem{ex}[thm]{Example}
\title[Exel's crossed product and completely positive maps]{Exel's crossed product and \\ crossed products
by completely positive maps}
\author{Bartosz Kosma  Kwa\'sniewski}
\address{Department of Mathematics and Computer Science, The University of Southern Denmark,
Campusvej 55, DK--5230 Odense M, Denmark // Institute of Mathematics, University  of Bialystok, ul. Akademicka 2, PL-15-267  Bialystok, Poland}
\address{Institute of Mathematics, Polish Academy of Science,  ul. \'Sniadeckich 8, PL-00-956 Warszawa, Poland (from October 1, 2013, to September 30, 2014)}
\email{bartoszk@math.uwb.edu.pl}
\urladdr{http://math.uwb.edu.pl/~zaf/kwasniewski}
\keywords{Completely positive map, transfer operator, crossed product, Exel's crossed product, graph $C^*$-algebra,  Cuntz-Pimsner algebra}
\subjclass[2010]{46L05,46L55}
\thanks{This research was supported by  NCN  grant number  DEC-2011/01/D/ST1/04112  and by a Marie Curie Intra European Fellowship within the 7th European Community Framework Programme; project `OperaDynaDual' (2014-2016). The author  thanks Adam Skalski and the anonymous reviewers
for their  comments and suggestions that improved the
quality of the paper}
\begin{document}

\begin{abstract}
We introduce  crossed products of a $C^*$-algebra $A$ by a completely positive map $\morp:A\to A$ relative to an ideal in $A$. When $\morp$ is multiplicative  they generalize various  crossed products by endomorphisms. When $A$ is commutative they include $C^*$-algebras associated to Markov operators by  Ionescu, Muhly, Vega, and to topological relations by Brenken, but in general they are not modeled by topological quivers popularized by   Muhly and Tomforde.

 We show that   Exel's crossed product $A\rtimes_{\alpha,\LL} \N$, generalized to the case where $A$ is not necessarily unital,  is  the crossed product of   $A$ by the transfer operator $\LL$ relative to the ideal generated by $\alpha(A)$. We give natural conditions under which $\alpha(A)$ is uniquely determined by $\LL$, and hence $A\rtimes_{\alpha,\LL} \N$ depends only on $\LL$. Moreover, the  $C^*$-algebra $\OO(A,\alpha,\LL)$ associated to $(A,\alpha,\LL)$ by Exel and Royer always coincides with our unrelative crossed product by $\LL$.

As another  non-trivial application of our construction we extend a result of Brownlowe, Raeburn and Vittadello, by  showing that the $C^*$-algebra of an arbitrary  infinite  graph $E$ can be realized as a crossed product of the diagonal algebra $\D_E$ by a `Perron-Frobenious' operator  $\LL$. The important difference to the previous result is that in general there is no  endomorphism $\alpha$ of $\D_E$ making $(\D_E,\alpha,\LL)$  an Exel system.
\end{abstract}

\maketitle


\section{Introduction}

In the present state of the art the theory of crossed products  of  $C^*$-algebras by endomorphisms breaks down  into two areas that involve two different constructions. The first approach originated in late 1970's in the  work of Cuntz \cite{cuntz} and was developed by many authors \cite{Paschke2}, \cite{Stacey},
  \cite{alnr}, \cite{Murphy},  \cite{Ant-Bakht-Leb},  \cite{KL}, \cite{kwa-rever}.  Another approach was initiated by Exel \cite{exel2} in the beginning of the present century and  immediately received a lot of attention; in particular, Exel's construction was extended in \cite{brv}, \cite{Larsen}, \cite{er}, \cite{Brownlowe}. By now,  both of the approaches have proved to be  useful in an innumerable variety of problems and their importance is well-acknowledged.
They (or their semigroup versions) serve as tools to construct and analyse the most intensively studied  $C^*$-algebras in recent years. These include: Cuntz algebras \cite{cuntz}, Cuntz-Krieger algebras  \cite{exel2}, Exel-Laca algebras \cite{er}, graph algebras \cite{brv}, \cite{hr}, \cite{kwa-interact}, higher-rank graph algebras \cite{Brownlowe},  $C^*$-algebras arising from semigroups \cite{alnr}, number fields \cite{bc-alg}, \cite{hecke5}, or algebraic dynamical systems \cite{BLS}. Among the applications one could mention their significant role in classification  of $C^*$-algebras \cite{Rordam},  study of phase transitions   \cite{diri}, or short exact sequences and tensor products \cite{larsen1}.

  In view of what has been said, it is somewhat surprising that the intersection of these two  approaches is relatively small: the two constructions coincide for injective corner endomorphisms \cite{exel2} and more generally for systems called complete in \cite{Ant-Bakht-Leb}, \cite{kwa-trans}, and  reversible in \cite{kwa-rever}. Nowadays, it is known, see, for instance, \cite{br}, \cite{KL}, that  the aforementioned crossed-products can be unified in the framework of relative Cuntz-Pimsner algebras $\OO(J,X)$ of Muhly and Solel \cite{ms2}. However, different constructions are associated with different $C^*$-correspondences and different ideals $J$.

	In fact, the relationship between the  two aforesaid lines of research  is  still shrouded in mystery and calls for clarification. One of the overall aims of the present paper is to cover this demand.  We do it   by  showing that  the two areas  are  different special cases of one natural construction of a crossed product by a completely positive map.
In particular, since completely positive maps are ubiquitous in the  $C^*$-theory and in quantum physics,  the crossed products we introduce have an ample potential for further study  and applications. We hope that the present article will not only clear the decks but also give an impulse for such a  development (see, for instance, our remarks  concerning crossed products of commutative algebras (subsection \ref{commutative example}); also the study of ergodic properties of non-commutative Perron-Frobenius operators that we introduce is of interest (see subsection \ref{Non-commutative Perron-Frobenius subsection})).

We note that Schweizer defined in  \cite[Subsection 3.3]{Schweizer} a crossed product by a completely positive map  as a particular case of Pimsner's (augmented) $C^*$-algebra
 \cite{p}. However, apart from giving a simplicity criterion \cite[Theorem 4.6]{Szwajcar} he  didn't study the structure of these algebras.  Schweizer's crossed product is covered by our construction  (cf. Remark \ref{Schweizer's crossed product}).

Let us explain our strategy in more detail. We introduce (in Definition \ref{definition main}) the relative crossed product $C^*(A,\morp;J)$ of a $C^*$-algebra $A$ by a completely positive mapping $\morp:A\to A$  relative to an ideal $J$ in $A$.  The unrelative crossed product is $C^*(A,\morp):=C^*(A,\morp;N_\morp^\bot)$ where $N_\morp$ is the largest ideal contained in  $\ker\morp$.
When $\alpha:=\morp$ is multiplicative, hence an endomorphism of  $A$, the crossed products $C^*(A,\alpha;J)$ cover the line of research we attributed to Cuntz.  More specifically (see  Subsection  \ref{The case when multiplicative} below), the $C^*$-algebras  $C^*(A,\alpha;J)$ coincide with crossed products by endomorphisms studied in \cite{KL}, for unital $A$, and in  \cite{kwa-rever},  for extendible $\alpha$. In particular, if $\alpha$ is  extendible then $C^*(A,\alpha;A)$ is  Stacey's crossed product \cite{Stacey}, and $C^*(A,\alpha;\{0\})$ is the partial isometric crossed product introduced,  in a semigroup context, by Lindiarni and Raeburn \cite{Lin-Rae} (see Proposition \ref{Proposition for endomorphisms}). Accordingly, $C^*(A,\alpha)=C^*(A,\alpha;\ker \alpha^\bot)$ is a good candidate for the (unrelative) crossed product  by an arbitrary endomorphism, cf. \cite{KL}, \cite{kwa-rever}.  In contrast to this multiplicative case, we claim that  Exel's crossed product $A\rtimes_{\alpha,\LL} \N$  is a crossed product by the transfer operator $\LL$
 (which as a rule is not multiplicative).  In order to make this statement precise  we  need to thoroughly re-examine - take `a new look at' Exel's construction.

We recall   that Exel introduced  in \cite{exel2} the crossed product $A\rtimes_{\alpha,\LL} \N$ of a unital $C^*$-algebra $A$ by an endomorphism $\alpha:A\to A$
 which  also depends  on the choice of a \emph{transfer operator}, i.e.  a positive linear map $\LL:A\to A$ such that  $\LL(\al(a)b)=a\LL(b)$, for all $a,b \in A$.
This construction was generalized to the non-unital case  in   \cite{brv}, \cite{Larsen}  were authors assumed that both $\alpha$ and $\LL$ extend to strictly continuous maps on the multiplier algebra $M(A)$. We show however that extendability of $\LL$ is automatic and since extendability of $\alpha$ does not play any  role in the definition, in the present paper,  we consider  crossed products $A\rtimes_{\alpha,\LL} \N$ for  Exel systems $(A,\alpha, \LL)$ where  $A$, $\alpha$ and $\LL$ are arbitrary.
Obviously, in a typical situation there are infinitely many different transfer operators for a fixed $\alpha$. On the other hand, under natural assumptions, such as faithfulness of  $\LL$,  which usually appear in applications \cite{exel2}, \cite{exel_vershik}, \cite{br}, \cite{brv}, the endomorphism $\alpha$ is uniquely determined by a fixed transfer operator $\LL$.
Moreover, any transfer operator $\LL$ is necessarily  a completely positive map and therefore it is  suitable to form a crossed product on its own. This provokes  the question:
\begin{center}
To what extent  $A\rtimes_{\alpha,\LL} \N$ depends on $\alpha$?
\end{center}

Before giving an answer, we  need to stress that the pioneering Exel's definition of $A\rtimes_{\alpha,\LL} \N$, \cite[Definition 3.7]{exel2}, was to some degree experimental. In general it requires a modification. Namely, Brownlowe and Raeburn in \cite{br} recognized $A\rtimes_{\alpha,\LL} \N$ as a relative Cuntz-Pimsner algebra $\OO(K_\alpha,M_\LL)$ where $M_\LL$ is a $C^*$-correspondence  associated to $(A,\alpha, \LL)$   and  $K_\alpha=\overline{A\alpha(A)A} \cap J(M_\LL)$ is the intersection of the ideal generated by $\alpha(A)$ and the ideal of elements that the left action $\phi$ of $A$ on $M_\LL$ sends to `compacts'. Then it follows from general results on relative Cuntz-Pimsner algebras, see \cite[Proposition 2.21]{ms}, \cite[Proposition 3.3]{katsura} or \cite[Lemma 2.2]{br},  that  $A$ embeds into $A\rtimes_{\alpha,\LL} \N$ if and only if $K_\alpha$ is contained in  the ideal $(\ker\phi)^\bot \cap J(M_\LL)$. But the latter condition is not  always satisfied. In particular,  the theory of Cuntz-Pimsner algebras, and most notably  the work of Katsura \cite{katsura1}, \cite{katsura}, indicates that   Exel's construction should be  improved by replacing in \cite[Definition 3.7]{exel2} the ideal $\overline{A\alpha(A)A}$ with $(\ker\phi)^\bot \cap J(M_\LL)$. This is  done by Exel and Royer in \cite{er}, cf. also \cite[Proposition 4.5]{Brownlowe},   where they  associate to $(A,\alpha,\LL)$ a  $C^*$-algebra $\OO(A,\alpha,\LL)$ which is isomorphic to Katsura's Cuntz-Pimsner algebra $\OO_{M_\LL}$ (as a matter of fact, authors of \cite{er} deal with more general Exel systems where $\alpha$ and $\LL$ are
only `partially defined').

Turning back to our question, the results of the present paper give the following  answer, which consists of three parts:

\begin{itemize}
\item[(1)] the modified Exel's crossed product $\OO(A,\alpha,\LL)$ always coincides with our unrelative crossed product $C^*(A,\LL)$ of $A$ by $\LL$ (Theorem \ref{Exel crossed products as relative Pimsner}),
\item[(2)] original Exel's crossed product $A\rtimes_{\alpha,\LL} \N$, for regular systems, does not depend on $\alpha$, if we assume certain conditions assuring  that $A$   embeds into $A\rtimes_{\alpha,\LL} \N$ (Proposition \ref{Exel without endomorphism}, Theorem \ref{complete systems crossed products}),
\item[(3)] the three algebras  $A\rtimes_{\alpha,\LL} \N$,  $\OO(A,\alpha,\LL)$, $C^*(A,\LL)$ coincide for most of systems appearing in applications (Proposition \ref{corollary on faithful transfers}, Theorem \ref{complete systems crossed products}, Theorem \ref{thm for Raeburn, Brownlowe and Vitadello} i)).
\end{itemize}
In connection with point (3)  it is interesting to note that, in general, there seems to be no clear relation between the ideals
$$
\overline{A\alpha(A)A}, \qquad  (\ker\phi)^\bot, \qquad J(M_\LL).
$$
However, for many natural systems $(A,\alpha,\LL)$,  for instance for all such systems arising from graphs  (cf. Lemma  \ref{existence of endomorphism} below), we always  have  $\overline{A\alpha(A)A}=(\ker\phi)^\bot\cap  J(M_\LL)$ and consequently $A\rtimes_{\alpha,\LL} \N=\OO(A,\alpha,\LL)=C^*(A,\LL)$. This shows that (by incorporating the ideal $\overline{A\alpha(A)A}$ into his original construction) Exel exhibited  an incredibly good intuition; especially that, in contrast to  $\overline{A\alpha(A)A}$, determining  $(\ker\phi)^\bot \cap J(M_\LL)$  is  very hard in practice.
In particular, it is an important task  to identify  Exel systems $(A,\alpha,\LL)$ for which   $A\rtimes_{\alpha,\LL} \N=  \OO(A,\alpha,\LL)=C^*(A,\LL)$. We find a large class of such objects in the present article.

We test the results of our findings on graph $C^*$-algebras. We recall that the main motivation for introduction of $A\rtimes_{\alpha,\LL} \N$ in \cite{exel2} was to realize Cuntz-Krieger algebras as crossed products  associated with one-sided Markov shifts. This result was adapted in \cite{brv}  to graph $C^*$-algebras $C^*(E)$ where $E$ is a locally finite graph with no sinks or sources (by \cite[Proposition 4.6]{Brownlowe}, it can  be generalized to graphs  admitting sinks).
 For such a graph $E$ the space of infinite paths $E^\infty$ is a locally compact Hausdorff space and the one-sided shift $\sigma: E^\infty \to E^\infty$ is a surjective proper local homeomorphism. In particular,  the formulas
\begin{equation}\label{composition with shift preliminary}
\alpha(a)(\mu)= a(\sigma(\mu)),\qquad    \LL(a)(\mu)=\frac{1}{|\sigma^{-1}(\mu)|}\sum_{\eta \in \sigma^{-1}(\mu)} a(\eta),
\end{equation}
$a\in   C_0(E^\infty)$, $\mu\in E^\infty$, yield well-defined mappings on $C_0(E^\infty)$. Actually, $(C_0(E^\infty), \alpha, \LL)$ is an Exel system and $C_0(E^\infty)$ is naturally isomorphic to the diagonal $C^*$-subalgebra $\D_E$ of $ C^*(E)$. By \cite[Theorem 5.1]{brv},  the isomorphism $C_0(E^\infty)\cong \D_E$ extends to the isomorphism $C_0(E^\infty)\rtimes_{\alpha,\LL} \N\cong C^*(E)$.  In order to  generalize that result to arbitrary graphs one is forced to pass to
a boundary path space $\partial E$ of $E$, cf. \cite{Webster}. Then $C_0(\partial E)\cong \D_E$, but the analogues of maps given by \eqref{composition with shift preliminary} are in general not well defined onto the whole of $C_0(\partial E)$. One possible solution, see \cite{Brownlowe}, is to consider `partial' Exel systems defined in \cite{er}. In the present paper, we circumvent this problem by studying a more general class of `Perron-Frobenious operators' of the form:
\begin{equation}\label{general formula for L of a graph3}
 \LL_\lambda(a)(\mu)=\sum\limits_{e \in E^1,\, e\mu \in \partial E} \lambda_{e}\, a(e\mu), \qquad a\in C_0(\partial E),
\end{equation}
where $\lambda=\{\lambda_e\}_{e\in E^1}$ is a family of strictly positive numbers indexed by the edges of $E$.   We find necessary and sufficient conditions on $\lambda$ assuring that $\LL_\lambda:C_0(\partial E) \to C_0(\partial E)$ is well-defined. For any such  $\lambda$ we get  an isomorphism
$$
C^*(E)\cong C^*(C_0(\partial E) ,\LL_\lambda).
$$
Moreover, the map induced on $ \D_E\cong C_0(E^\infty)$ by $\LL_\lambda$ extends in a natural way to a completely positive map on $C^*(E)$. The latter deserves a name of \emph{non-commutative Perron-Frobenious operator}. This indicates, at least in the present context, a somewhat superior role of a Perron-Frobenious operator $\LL_\lambda$ over the standard non-commutative Markov shift, cf., for instance, \cite{jp}, which  in general is not even well-defined.

Finally, we mention our findings concerning an arbitrary (necessarily completely) positive map $\morp$ on a commutative $C^*$-algebra $A=C_0(D)$, where $D$ is a locally compact Hausdorff space. Any such map defines a relation  on $D$:
$$
(x,y)\in R\, \, \, \stackrel{def}{\Longleftrightarrow} \, \, \, \left(\forall_{a\in A_+}\,\,  \morp(a)(x)=0 \, \Longrightarrow\,  a(y)= 0\right).
$$
If the set $R\subseteq D\times D$ is closed, then $\morp$ give rise to a topological relation  $\mu$ in the sense of  \cite{brenken} and a topological quiver $\QQ$ in the sense of \cite{mt}. Then we prove that for the corresponding $C^*$-algebras associated to $\mu$ and $\QQ$, in  \cite{brenken} and  \cite{mt}  respectively, we  have $C^*(A,\morp; A)\cong \mathcal{C}(\mu)$  and $C^*(A,\morp)\cong C^*(\QQ)$. In particular, if $\morp$ is a Markov operator in the sense of \cite{imv}, the  $C^*$-algebra $C^*(\morp)$ considered in \cite{imv} coincides with $C^*(A,\morp)$. However, as we explain in detail and show by concrete examples, when $R$ is not closed in $D\times D$, then $C^*(A,\morp)$ cannot be modeled in any obvious way by the $C^*$-algebras studied in \cite{mt}. In particular,  an analysis, similar to that in  \cite{imv}, for  general  positive maps on commutative $C^*$-algebras requires a generalization of the theory of
 topological quivers  \cite{mt}.

The content of the paper is organized as follows.

In  Section \ref{preliminary section}, which serves as preliminaries, we gather certain facts on positive maps,  explain in detail what we mean by a universal representation and recall definitions of relative Cuntz-Pimsner algebras. Also, we  present a  definition of Exel's crossed product $A\times_{\al, \LL} \N$ for arbitrary Exel systems $(A,\alpha,\LL)$, and  recall a definition of Exel-Royer's crossed product $\OO(A,\alpha,\LL)$ for  such systems.

In Section \ref{cp section} we introduce  relative crossed products $C^*(A,\morp;J)$ for a completely positive map $\morp:A\to A$.
We present three pictures of $C^*(A,\morp;J)$:  as a quotient of a certain Toeplitz algebra (Definition \ref{definition main}); as a relative Cuntz-Pimsner algebra  associated with a GNS correspondence $X_\morp$ of $(A,\morp)$ (Theorem \ref{identification theorem}); and as a universal $C^*$-algebra generated by  suitably defined covariant representations of $(A,\morp)$ (Proposition \ref{universal description of crossed products}).
 We finish this section by revealing relationships between  construction and various crossed products by endomorphisms (subsection \ref{The case when multiplicative}),  and with $C^*$-algebras associated to topological relations, topological quivers, and Markov operators  (subsection \ref{commutative example}).

In Section \ref{general exel systems section} we show  that the Toeplitz algebra $\TT (A,\alpha,\LL)$ of  $(A,\alpha,\LL)$ coincides with the
 Toeplitz algebra $\TT(A,\LL)$ of $(A,\LL)$ (Proposition \ref{destroying proposition}), which leads us to identities
$
\OO(A,\alpha,\LL)=C^*(A,\LL)$ and $A\times_{\al, \LL} \N = C^*(A,\LL;\overline{A\alpha(A)A})$
(Theorem \ref{Exel crossed products as relative Pimsner}). Using this result we conclude that $
A\times_{\al, \LL} \N=C^*(A,\LL)
$ for instance when $\LL$ faithful and $\alpha$ extendible (Proposition \ref{corollary on faithful transfers}). In Subsection \ref{Regular section} we study Exel systems $(A,\alpha,\LL)$ with the additional property that $E:=\alpha\circ \LL$ is a conditional expectation onto $\alpha(A)$.  We give a number of characterizations  and an intrinsic description of such Exel systems.   This leads us to convenient conditions implying that $
A\times_{\al, \LL} \N$ does not depend on $\alpha$ (cf. Proposition \ref{Exel without endomorphism}). In particular, if $\alpha(A)$ is a hereditary subalgebra of $A$ we prove that  $
A\times_{\al, \LL} \N=C^*(A,\LL)\cong C^*(A,\alpha)$  (Theorem \ref{complete systems crossed products}).

In the closing Section \ref{graphs section},  we analyze the $C^*$-algebra  $C^*(E)=C^*(\{p_v: v\in E^0\}\cup \{s_e: e\in E^1\})$ associated to an arbitrary infinite graph $E=(E^0,E^1,r,s)$. We briefly present Brownlowe's  \cite{Brownlowe} realization of   $C^*(E)$ as Exel-Royer's crossed product for a  partially defined  Exel system $(C_0(\partial E),\alpha,\LL)$.    We find conditions on the numbers $\lambda=\{\lambda_e\}_{e\in E^1}$ assuring that \eqref{general formula for L of a graph3} defines a  self-map on $C_0(\partial E)$ (Proposition \ref{proposition for lambda transfer operators}).  For any such choice of $\lambda$ we prove, using an algebraic picture of the system $(C_0(\partial E), \LL_\lambda)$, that
$
C^*(E)\cong C^*(\D_E,\LL)$,  where $\LL(a):=\sum_{e\in E^1} \lambda _e  s_e^* a s_e$, $a \in \D_E$ (see Theorem \ref{thm for Raeburn, Brownlowe and Vitadello}).
 If $E$ is locally finite and without sources then the (non-commutative) Markov shift $\alpha(a):=\sum_{e\in E^1} s_e a s_e^*$ is the unique endomorphism of $\D_E$ such that $(\D_E,\alpha,\LL)$ is an Exel system and $C^*(\D_E,\LL)=\D_E\times_{\al, \LL} \N$ (Theorem \ref{thm for Raeburn, Brownlowe and Vitadello} iii)). In general there is no endomorphism making $(\D_E,\alpha,\LL)$  an Exel system (Theorem \ref{thm for Raeburn, Brownlowe and Vitadello} ii)).
One of  possible interpretations of these results is that in order to  associate a non-commutative shift to an arbitrary infinite graph one is forced to fix a certain measure system and encode the shift in its `transfer operator', as the `composition endomorphism' does not exist.

\subsection{Conventions and notation}
  All ideals in  $C^*$-algebras (unless stated otherwise) are assumed to be closed and two-sided. If $I$ is an ideal in a $C^*$-algebra $A$ we denote by
	$I^{\bot}=\{a\in A: aI={0}\}$ the annihilator of $I$. We denote by $1$ the unit in the multiplier  algebra $M(A)$ of $A$.  Any approximate unit in $A$ is assumed to compose of contractive positive elements. All homomorphisms between $C^*$-algebras are assumed to be $*$-preserving.  For  actions $\gamma\colon A\times B\to C$
such as  multiplications, inner products, etc., we use the notation:
$$
\gamma(A,B)=\{\gamma(a,b) :a\in A, b\in B\}, \qquad \overline{\gamma(A,B)}=\clsp\{\gamma(a,b) :a\in A, b\in B\}.
$$
By the Cohen-Hewitt Factorization Theorem we have $\gamma(A,B)=\overline{\gamma(A,B)}$ whenever $\gamma$ can be interpreted as a continuous representation of a $C^*$-algebra $A$ on a Banach space $B$. We emphasize that we will use this fact without  further warning. In particular, a $C^*$-subalgebra $A$ of a $C^*$-algebra $B$ is non-degenerate   if  $AB=B$.

\section{Preliminaries} \label{preliminary section}
In this section, we present certain facts concerning positive maps. Most of them are known, but usually they are stated in the literature in the unital case. We also present definitions of a universal $C^*$-algebra and a universal representation, which are  well suited for our analysis. We  briefly recall definitions of  $C^*$-algebras associated to $C^*$-correspondences.  In the last part of this section, we introduce a definition of Exel crossed product for arbitrary Exel systems, and also recall the definition of  crossed products associated to
 such systems in \cite{er}.

\subsection{Positive maps}
Throughout this subsection we fix a positive map  $\morp:A\to B$ between two $C^*$-algebras $A$ and $B$.
This means that $\morp:A\to B$  is linear and $\morp(aa^*)\geq 0$ for every $a\in A$. Such $\morp$ is automatically $*$-preserving: $\morp(a^*)=\morp(a)^*$, $a\in A$; and bounded, see \cite[Lemma 5.1]{Lan}. We have the following formula for the norm of $\morp$, which is  well known for completely positive maps, cf. \cite[Lemma 5.3(i)]{Lan}, and less known for positive maps.\footnote{the author thanks Paul Skoufranis for providing the following short proof.}
\begin{lem}\label{boundness equality for a cp maps} For any  approximate unit  $\{\mu_\lambda\}_{\lambda\in \Lambda}$  in $A$ the norm of $\morp$ is given by the limit
$\|\morp\|=\lim_{\lambda\in \Lambda} \|\morp(\mu_\lambda)\|.
$
\end{lem}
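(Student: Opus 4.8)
The plan is to prove the norm formula $\|\morp\| = \lim_{\lambda} \|\morp(\mu_\lambda)\|$ by establishing both inequalities, with the nontrivial direction being $\|\morp\| \le \lim_\lambda \|\morp(\mu_\lambda)\|$. First I would note the easy direction: since each $\mu_\lambda$ is contractive and positive, $\|\mu_\lambda\|\le 1$, so $\|\morp(\mu_\lambda)\| \le \|\morp\|$ for every $\lambda$, and hence $\limsup_\lambda \|\morp(\mu_\lambda)\| \le \|\morp\|$. It remains to see that the limit exists and dominates $\|\morp\|$. I would also record that $\morp$ is bounded and $*$-preserving (cited from \cite[Lemma 5.1]{Lan}), and that $\|\morp(\mu_\lambda)\|$ is eventually monotone-ish: since $\morp$ is positive, $\mu_{\lambda}\le \mu_{\lambda'}$ (in a suitable cofinal sense for a genuine increasing approximate unit) gives $\morp(\mu_\lambda)\le \morp(\mu_{\lambda'})$, so the net of norms converges to its supremum $M := \sup_\lambda \|\morp(\mu_\lambda)\|$; thus the limit exists and we must show $\|\morp\|\le M$.

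The key step is a Cauchy--Schwarz / $2\times 2$-matrix trick for positive maps. For $a\in A$ with $\|a\|\le 1$ and any index $\lambda$, consider the positive element
\[
\begin{pmatrix} \mu_\lambda & a \\ a^* & \mu_\lambda \end{pmatrix} + \varepsilon,
\]
or more precisely work with $\mu_\lambda^{1/2}$ and the fact that $\mu_\lambda - a^*\mu_\lambda^{\perp} \cdots$; the cleaner route is: for positive $\morp$ one has the Kadison--Schwarz-type bound along the approximate unit, namely $\morp(a)^*\morp(a) \le \|\morp(\mu_\lambda)\|\,\morp(a^*a)$ is \emph{not} available without complete positivity, so instead I would use the genuinely positive-map-valid inequality obtained by applying $\morp$ to $0 \le (\,t\mu_\lambda^{1/2} - t^{-1} a\,)(\,t\mu_\lambda^{1/2} - t^{-1}a\,)^*$ for $t>0$, which after optimizing over $t$ gives a bound on $\|\morp(a)\|$ in terms of $\|\morp(\mu_\lambda)\|$ and $\|\morp(a\mu_\lambda a^*/\|a\|^2 \cdots)\|$. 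Letting $\lambda\to\infty$ and using $a\mu_\lambda a^* \to aa^*$ together with continuity of $\morp$ collapses the right-hand side, and a limiting argument in $t$ yields $\|\morp(a)\|^2 \le M \cdot \|\morp(aa^*)\| \le M^2$, hence $\|\morp(a)\|\le M$. Taking the supremum over $\|a\|\le 1$ gives $\|\morp\|\le M$, completing the proof.

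The main obstacle I anticipate is that the familiar Kadison--Schwarz inequality $\morp(a)^*\morp(a)\le\|\morp(1)\|\morp(a^*a)$ requires either $2$-positivity or unitality, neither of which is assumed here; so I must be careful to use only manipulations valid for merely positive maps between possibly non-unital $C^*$-algebras. The device that rescues this is precisely the approximate unit: positivity of $\morp$ applied to elements of the form $(b\mu_\lambda^{1/2}-c)(b\mu_\lambda^{1/2}-c)^*$ stays within the realm of ``positive map applied to a positive element'', and one only needs the scalar facts that $\mu_\lambda^{1/2}b^*b\mu_\lambda^{1/2}\le \|b\|^2\mu_\lambda$ and $\mu_\lambda \to 1$ strictly. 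A second, more bookkeeping-level obstacle is justifying that $\{\|\morp(\mu_\lambda)\|\}$ is convergent rather than merely bounded; for a directed approximate unit this follows from monotonicity of $\morp$ and of the norm on the positive cone, but if one does not wish to assume the approximate unit is increasing, one argues instead that every subnet has the same limit $\|\morp\|$ by the squeeze just established, which forces convergence of the original net.
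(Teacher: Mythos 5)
Your easy direction and your observation that $\|\morp(aa^*)\|\le \lim_\lambda\|\morp(\mu_\lambda)\|=:M$ for contractions $a$ (via $\mu_\lambda^{1/2}aa^*\mu_\lambda^{1/2}\le\|a\|^2\mu_\lambda$) are fine, but the key step has a genuine gap. Expanding
$\morp\bigl((t\mu_\lambda^{1/2}-t^{-1}a)(t\mu_\lambda^{1/2}-t^{-1}a)^*\bigr)\ge 0$
gives
$\morp(a\mu_\lambda^{1/2})+\morp(a\mu_\lambda^{1/2})^*\le t^2\morp(\mu_\lambda)+t^{-2}\morp(aa^*)$,
i.e.\ after optimizing over $t$ and letting $\lambda\to\infty$ you control only the \emph{self-adjoint combination} $\|\morp(a)+\morp(a)^*\|\le 2\sqrt{M\,\|\morp(aa^*)\|}$. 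This does yield $\|\morp(a)\|\le\sqrt{M\|\morp\|}$ when $a=a^*$ (so $\|\morp\|\le M$ on self-adjoint contractions, which one can also get more directly), but for a general contraction $a$ passing from $\|\morp(a)+\morp(a)^*\|$ to $\|\morp(a)\|$ costs a factor of $2$ (split $a=h+ik$, or equivalently the numerical-radius-versus-norm gap), and the argument only delivers $\|\morp\|\le 2M$. The inequality you actually invoke, $\|\morp(a)\|^2\le M\|\morp(aa^*)\|$ for \emph{arbitrary} $a$, is the operator Cauchy--Schwarz inequality whose validity is exactly what requires $2$-positivity (or normality of $a$); it cannot be extracted from positivity applied to a single square as you propose. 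Removing the factor of $2$ for merely positive maps genuinely requires the Russo--Dye theorem, i.e.\ approximating the unit ball by convex combinations of unitaries, and for that you need a unital algebra.

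That is precisely why the paper's proof passes to the bidual: $\morp^{**}:A^{**}\to B^{**}$ is a positive map on a unital (von Neumann) algebra, so Russo--Dye gives $\|\morp^{**}\|=\|\morp^{**}(1)\|$, and since $\mu_\lambda\to 1$ $\sigma$-weakly, weak lower semicontinuity of the norm yields $\|\morp\|\le\|\morp^{**}(1)\|\le\liminf_\lambda\|\morp(\mu_\lambda)\|$; combined with your easy direction this also settles convergence of the net of norms without assuming the approximate unit is increasing. To repair your argument you would have to import some form of this unitization step (bidual or a careful positive extension to $\widetilde A$) rather than work with squares in $A$ alone.
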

\begin{proof} Recall that the double dual $A^{**}$ of $A$ can be identified with the enveloping  von  Neumann algebra of $A$, cf.   \cite[III, Theorem 2.4]{Tak}.
Similarly for $B$. Then the double dual $\morp^{**}:A^{**}\to B^{**}$ of $\morp:A\to B$ is  a $\sigma$-weakly continuous  extension of $\morp$.
As positive elements in $A$ are $\sigma$-weakly dense in the set of positive elements in $A^{**}$, $\morp^{**}$ is positive.  Hence  Russo-Dye theorem implies, see \cite[Corollary  2.9]{Paulsen},  that $
\|\morp^{**}\|=\|\morp^{**}(1)\|.
$
Moreover, since $\{\mu_\lambda\}_{\lambda\in \Lambda}$ converges $\sigma$-weakly  to $1$, $\{\morp^{**}(\mu_\lambda)\}_{\lambda\in \Lambda}$ converges $\sigma$-weakly to $\morp^{**}(1)$. Since the norm is weakly lower-semicontinuous we get
$$
\|\morp^{**}\|=\|\morp^{**}(1)\|\leq \liminf_{\lambda\in \Lambda} \|\morp^{**}(\mu_\lambda)\|=\liminf_{\lambda\in \Lambda} \|\morp(\mu_\lambda)\|.
$$
As clearly we have $
\limsup_{\lambda\in \Lambda} \|\morp(\mu_\lambda)\|\leq \|\morp\|\leq \|\morp^{**}\|
$, we get the desired equality.
\end{proof}
The formula for the kernel of the classic GNS representation yields also an important ideal for an arbitrary positive map.
\begin{prop}\label{almost faithfulness corollary}
The set
\begin{equation}\label{kernel of dynamical cp system}
N_\morp:=\{a\in A: \morp((ab)^*ab))=0 \textrm{ for all }b\in A\}
\end{equation}
is the largest ideal in $A$ contained in the kernel of the mapping $\morp:A\to B$.
\end{prop}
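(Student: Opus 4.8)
The plan is to compare $N_\morp$ with the set $N:=\{a\in A:\morp(a^*a)=0\}$, which plays the role of the ``GNS kernel'' of $\morp$. I would establish three things: (i) $N$ is a closed left ideal of $A$; (ii) $N\subseteq\ker\morp$; and (iii) $N_\morp$ is precisely the largest two-sided ideal contained in $N$. Combining (ii) and (iii) with the elementary observation that every ideal contained in $\ker\morp$ is automatically contained in $N$ then yields the statement.

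For (i): closedness is immediate from continuity of $\morp$. If $a\in N$ and $b\in A$, the operator inequality $(ba)^*(ba)=a^*b^*ba\le\|b\|^2a^*a$ together with positivity of $\morp$ forces $0\le\morp((ba)^*(ba))\le\|b\|^2\morp(a^*a)=0$, so $ba\in N$; hence $N$ is a left ideal. Stability under addition follows from the parallelogram identity $(a+c)^*(a+c)+(a-c)^*(a-c)=2a^*a+2c^*c$: applying the linear map $\morp$, for $a,c\in N$ the right-hand side has image $0$, so the two positive summands on the left-hand side both have image $0$, and in particular $a+c\in N$.

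Step (ii) is where I expect to need some care, because $\morp$ is only assumed positive, not $2$-positive, so no direct Schwarz inequality of the form $\morp(a)^*\morp(a)\le\|\morp\|\,\morp(a^*a)$ is available in $B$. I would get around this by slicing with states: for a state $\omega$ of $B$ the composition $f:=\omega\circ\morp$ is a bounded positive functional on $A$, for which the classical Cauchy--Schwarz inequality $|f(b^*a)|^2\le f(b^*b)f(a^*a)$ holds. Taking $a\in N$ and $b=\mu_\lambda$ in an approximate unit gives $|f(\mu_\lambda a)|^2\le f(\mu_\lambda^2)f(a^*a)=0$, and letting $\lambda$ run to infinity yields $f(a)=0$ (using $\mu_\lambda a\to a$ and continuity of $f$); since this holds for every state $\omega$ and states separate the points of $B$, we conclude $\morp(a)=0$, i.e.\ $N\subseteq\ker\morp$.

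Finally, directly from the definition, $a\in N_\morp$ if and only if $ab\in N$ for every $b\in A$, that is, $N_\morp=\{a\in A:aA\subseteq N\}$. For a closed left ideal $N$ this set is the largest two-sided ideal contained in $N$: it is obviously a right ideal, it is a left ideal because $N$ is, it lies in $N$ since $a=\lim_\lambda a\mu_\lambda\in\overline{N}=N$, and any two-sided ideal $I\subseteq N$ satisfies $aA\subseteq I\subseteq N$ for all $a\in I$, whence $I\subseteq N_\morp$. Then $N_\morp\subseteq N\subseteq\ker\morp$ by (ii), while conversely any ideal $I$ of $A$ with $I\subseteq\ker\morp$ satisfies $a^*a\in I\subseteq\ker\morp$ for $a\in I$, so $\morp(a^*a)=0$ and hence $I\subseteq N$; being a two-sided ideal inside $N$, such $I$ is contained in $N_\morp$. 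Therefore $N_\morp$ is the largest ideal of $A$ contained in $\ker\morp$.
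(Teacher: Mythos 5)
Your proof is correct. The skeleton is the same as the paper's in two places: you both verify that $N_\morp$ is a two-sided ideal via the operator inequalities $b^*a^*ab\le\|a\|^2\,b^*b$ (and the parallelogram identity for additivity), and you both observe that any ideal inside $\ker\morp$ trivially lies in $N_\morp$. Where you genuinely diverge is the key inclusion $N_\morp\subseteq\ker\morp$. The paper argues directly: since $N_\morp$ is a closed two-sided ideal it is a $C^*$-subalgebra, so a positive $a\in N_\morp$ has $a^{1/4}\in N_\morp$, and then taking $b=a^{1/4}$ in the defining condition gives $\morp(a)=\morp\bigl((a^{1/4}a^{1/4})^*a^{1/4}a^{1/4}\bigr)=0$; linearity finishes it. You instead factor the problem through the closed left ideal $N=\{a:\morp(a^*a)=0\}$, prove $N\subseteq\ker\morp$ by slicing with states and applying Cauchy--Schwarz to the positive functionals $\omega\circ\morp$ together with an approximate unit, and identify $N_\morp=\{a:aA\subseteq N\}$ as the largest two-sided ideal inside $N$. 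Your route is a bit longer but buys more: it correctly sidesteps the unavailability of an operator-valued Schwarz inequality for merely positive maps, and it exhibits $N_\morp$ as the two-sided core of the genuine GNS left kernel $N$, which is a cleaner structural statement than the paper records. The paper's fourth-root trick is the more economical argument, exploiting that once $N_\morp$ is known to be a closed two-sided ideal one can do functional calculus inside it and never needs to mention $N$ at all.
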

\begin{proof}
Obviously, $N_\morp$ is a closed right ideal in $A$. Let $a,b\in A$.  Since $b^*a^*ab \leq \|a^*a\| b^*b$  we get  $\morp((ab)^*ab)\leq \|a^*a\| \morp(b^*b)$. The latter inequality implies that $N_\morp$ is  a left ideal. In particular, if $a$ is a positive element in $N_\morp$, then $a^{1/4}\in N_\morp$ and therefore $\morp(a)= \morp((a^{1/4}a^{1/4})^*a^{1/4}a^{1/4})=0$. This implies that $N_\morp\subseteq \ker\morp$. Clearly, if  $I$ is an ideal in $A$ contained in  $\ker\morp$, then  $I\subseteq N_\morp$.
\end{proof}
\begin{dfn}\label{GNS-kernel}
We call the ideal in \eqref{kernel of dynamical cp system} the \emph{GNS-kernel} of  $\morp:A\to B$.
\end{dfn}
The ideal \eqref{kernel of dynamical cp system}  is closely related to the notion of almost faithfulness introduced, in the context of Exel systems, in \cite{br}.
Namely,   following \cite[Definition 4.1]{br}, we say that $\morp$ is \emph{almost faithful} on an ideal $I$ in $A$ if
$$
a\in I\textrm{ and } \morp((ab)^*ab))=0 \textrm{ for all }b\in A \,\, \Longrightarrow a=0.
$$
The above implication is equivalent to the equality $I\cap  N_\morp=\{0\}$. In other words,
$$
\morp \textrm{ is almost faithful on } I\,\,  \Longleftrightarrow \,\, I \subseteq N_\morp^\bot.
$$
In particular,   the annihilator $N_\morp^\bot$ of the GNS-kernel of $\morp$ is the largest ideal in $A$ on which $\morp$ is almost faithful. We recall that $\morp$ is \emph{faithful} on a $C^*$-subalgebra $C\subseteq A$ if for any $a\in C$,  $\morp(a^*a)=0$ implies $a=0$. The following lemma sheds considerable light on the relationship between the two aforementioned notions.
\begin{lem}\label{faithful vs almost faithful}
Let $C\subseteq A$ be a $C^*$-subalgebra  and consider the following conditions:
\begin{itemize}
\item[i)] $\morp$ is faithful on the ideal $\overline{ACA}$,
\item[ii)] $\morp$ is faithful on the hereditary $C^*$-subalgebra $CAC$,
\item[iii)] $\morp$ is almost faithful on the ideal $\overline{ACA}$.
\end{itemize}
Then \emph{i)} $\Rightarrow$ \emph{ii)} $\Rightarrow$ \emph{iii)}  and if $A$ is commutative then the above conditions are  equivalent.
\end{lem}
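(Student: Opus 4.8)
The plan is to prove the two implications directly and then recover the converse when $A$ is commutative. For i) $\Rightarrow$ ii), I would first record the inclusion $CAC\subseteq\overline{ACA}$: if $\{u_\lambda\}$ is an approximate unit for $C$, then $c_1ac_2=\lim_\lambda u_\lambda c_1(ac_2)$ with $u_\lambda c_1(ac_2)\in ACA$ for every $\lambda$. Thus $CAC$ is a $C^*$-subalgebra of the ideal $\overline{ACA}$, and faithfulness of $\morp$ on $\overline{ACA}$ restricts at once to faithfulness on $CAC$; there is no real content here.

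The implication ii) $\Rightarrow$ iii) is the substantial one. Recall from the discussion following Definition~\ref{GNS-kernel} that $\morp$ is almost faithful on $I:=\overline{ACA}$ exactly when $I\cap N_\morp=\{0\}$, so I would argue by contradiction and fix $0\neq a\in I\cap N_\morp$. The crucial preliminary step is to show that $a\,\overline{AC}\neq\{0\}$: since the ideal generated by $C$ factors as $\overline{ACA}=\overline{\,\overline{AC}\cdot\overline{CA}\,}$, the vanishing $a\,\overline{AC}=\{0\}$ would give $a\,\overline{ACA}=\{0\}$; as $\overline{ACA}$ is a self-adjoint ideal containing $a$, this yields $aa^*=0$, i.e.\ $a=0$, a contradiction. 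Hence we may choose $y\in A$ and $c\in C$ with $ayc\neq0$ and set $z:=(c^*y^*a^*ayc)^{1/2}=|ayc|$. Then $c^*y^*a^*ayc=c^*\bigl((ay)^*(ay)\bigr)c\in CAC$, so $z$, being the positive square root of an element of the $C^*$-algebra $CAC$, itself belongs to $CAC$; on the other hand $z^*z=(ayc)^*(ayc)$, and since $a\in N_\morp$ and $yc\in A$, the definition of $N_\morp$ gives $\morp(z^*z)=0$. Faithfulness of $\morp$ on $CAC$ forces $z=0$, hence $ayc=0$, contradicting the choice of $y$ and $c$. Therefore $I\cap N_\morp=\{0\}$.

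For the commutative case it suffices to add iii) $\Rightarrow$ i), since i) $\Rightarrow$ ii) $\Rightarrow$ iii) has just been established. When $A$ is commutative, reordering products (for instance $a_1ca_2=c(a_1a_2)$ and $c_1ac_2=(c_1c_2)a$) together with $\overline{A^2}=A$ and $\overline{C^2}=C$ shows that
$$
\overline{ACA}=\overline{CA}=\overline{CAC}=:I,
$$
an ideal of $A$; in particular conditions i) and ii) become the same statement. Assume iii), i.e.\ $I\cap N_\morp=\{0\}$, and let $z\in I$ satisfy $\morp(z^*z)=0$. For every $b\in A$, commutativity gives $(zb)^*(zb)=z^*z\,b^*b\le\|b\|^2\,z^*z$, so $0\le\morp\bigl((zb)^*(zb)\bigr)\le\|b\|^2\morp(z^*z)=0$ by order preservation of $\morp$; hence $z\in N_\morp$. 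Then iii) forces $z\in I\cap N_\morp=\{0\}$, which proves that $\morp$ is faithful on $I=\overline{ACA}$.

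The only step carrying genuine content is the preliminary claim in ii) $\Rightarrow$ iii), namely that a nonzero element of $\overline{ACA}$ cannot be left-annihilated by $\overline{AC}$; once that is secured, everything else is routine manipulation with approximate units, Cohen-Hewitt factorisation, and the order preservation of positive maps.
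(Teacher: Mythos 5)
Your proof is correct and follows essentially the same route as the paper's: the key mechanism in ii)\,$\Rightarrow$\,iii) is conjugating by elements of $C$ so that the ideal property of $N_\morp$ produces a positive element of $CAC$ killed by $\morp$, and the commutative case uses the inequality $(zb)^*(zb)\le\|b\|^2z^*z$ exactly as in the paper. The only cosmetic difference is that you run the middle implication by contradiction on a fixed nonzero $a\in\overline{ACA}\cap N_\morp$ (requiring your small non-annihilation lemma), whereas the paper argues directly that $N_\morp C=\{0\}$, i.e.\ $C\subseteq N_\morp^\bot$, and then invokes the ideal property of $N_\morp^\bot$.
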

\begin{proof}
The  inclusion $CAC\subseteq\overline{ACA}$  yields the implication i)$\Rightarrow$ ii).
\\
ii) $\Rightarrow$ iii). Let $a\in N_\morp$ and $c\in C$. Since $N_\morp$ is an ideal, we have   $\morp((acb)^*acb)=0$ for all $b\in A$. Taking  $b=c^*$ and using  faithfulness of  $\morp$ on $CAC$ we  infer that  $(a cc^*)^*a cc^*=0$.  This implies that $ac=0$. Accordingly, $C\subseteq N_\morp^\bot$  and since $N_\morp^\bot$ is an ideal in $A$ we get  $\overline{ACA}\subseteq N_\morp^\bot$.
\\
Assume now that  $A$ is commutative and $ \overline{ACA}\subseteq N_\morp^\bot$. Consider an element $ac$ of $\overline{ACA}=AC$,  $a\in A$, $c\in C$,  such that $\morp((ac)^*ac)=0$.    For all $b\in A$ we have
$$
\morp((acb)^*acb)=\morp((bac)^* bac) \leq \|b^*b\|\morp((ac)^*ac)=0.
$$
Thus  (by almost faithfulness)  $ac=0$. Hence  $\morp$ is faithful on $\overline{ACA}=CA$.
\end{proof}

There is a natural $C^*$-subalgebra of $A$ on which $\morp$ is multiplicative.
\begin{dfn}
Let   $\morp:A\to B$ be a positive map. We call the set
\begin{equation}\label{multiplicative domain 2}
MD(\morp):=\{a\in A: \morp(b)\morp(a)=\morp(ba)\textrm{ and } \morp(a)\morp(b)=\morp(ab) \textrm{ for every  }b\in A \}
\end{equation}
the \emph{multiplicative domain of $\morp$}.
\end{dfn}
It is immediate that $MD(\morp)$ is a $C^*$-subalgebra of $A$. Hence $\morp:MD(\morp)\to B$ is a homomorphism of  $C^*$-algebras.
 In the literature, see e.g. \cite[p. 38]{Paulsen}, multiplicative domains are considered for contractive completely positive maps, which is due to the fact we express in Proposition \ref{multiplicative domain for ccp maps} below.  We recall  that $\morp$ is \emph{completely positive} if for every integer $n>0$ the amplified map $\morp^{(n)}:M_n(A)\to M_n(B)$ obtained by applying $\morp$ to each matrix element: $\morp^{(n)}\left((a_{ij})\right)=\left(\morp(a_{ij})\right)$, is positive, see \cite[p. 5]{Paulsen}, \cite[p. 39]{Lan}, or \cite[IV, Definition 3.3]{Tak}.
It is not hard to show, cf. \cite[Remark 5.1]{Paschke}, see \cite[IV, Corollary 3.4]{Tak}, that a linear map $\morp:A\to B$ is completely positive if and only if
$$
\sum_{i,j=1}^n b_i^*\morp(a_i^* a_j)b_j \geq 0, \qquad \textrm{for all }a_1,...,a_n \in A \textrm{ and } b_1,...,b_n \in B.
$$
The following fact  is a generalization of \cite[Theorem 3.18]{Paulsen} to not necessarily unital completely positive maps on not necessarily unital $C^*$-algebras.
\begin{prop}\label{multiplicative domain for ccp maps}
 Let $\morp:A\to B$ be a contractive completely positive map between $C^*$-algebras. Then
\begin{equation}\label{multiplicative domain 1}
MD(\morp)=\{a\in A: \morp(a)^*\morp(a)=\morp(a^*a)\textrm{ and } \morp(a)\morp(a)^*=\morp(aa^*) \}.
\end{equation}
In particular,  $MD(\morp)$ is the largest $C^*$-subalgebra of $A$ on which $\morp$ restricts to a homomorphism.
\end{prop}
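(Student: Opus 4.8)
The plan is to prove the displayed identity \eqref{multiplicative domain 1} first and then deduce the maximality statement. One inclusion is immediate: if $a\in MD(\morp)$, then substituting $b=a^*$ in \eqref{multiplicative domain 2} and recalling that $\morp$ is $*$-preserving yields $\morp(a)^*\morp(a)=\morp(a^*a)$ and $\morp(a)\morp(a)^*=\morp(aa^*)$. So the whole content is to show that an element $a$ obeying these two identities lies in $MD(\morp)$.

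The tool I would use is the Kadison--Schwarz inequality $\psi(c)^*\psi(c)\le\psi(c^*c)$ for a contractive completely positive map $\psi$ between $C^*$-algebras. In the possibly non-unital setting I would derive it by the double-dual device already used in the proof of Lemma~\ref{boundness equality for a cp maps}: $\psi^{**}$ is a $\sigma$-weakly continuous completely positive map of the unital algebra $A^{**}$, and $\|\psi^{**}(1)\|=\|\psi^{**}\|=\|\psi\|\le1$ gives $0\le\psi^{**}(1)\le1$; applying $(\psi^{**})^{(2)}$ to the positive element $\bigl(\begin{smallmatrix}1&c\\0&0\end{smallmatrix}\bigr)^*\bigl(\begin{smallmatrix}1&c\\0&0\end{smallmatrix}\bigr)$ of $M_2(A^{**})$, using $\psi^{**}(1)\le1$ to enlarge the $(1,1)$-corner to $1$, and invoking positivity of $2\times2$ operator matrices yields $\psi^{**}(c^*c)\ge\psi^{**}(c)^*\psi^{**}(c)$, which restricts to the inequality on $A$.

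Now fix $a$ satisfying the two identities and an arbitrary $b\in A$. Apply the Kadison--Schwarz inequality to the contractive completely positive map $\morp^{(2)}:M_2(A)\to M_2(B)$ --- contractive because $\|\morp^{(2)}\|=\|\morp\|$, by Lemma~\ref{boundness equality for a cp maps} applied to $\morp^{(2)}$ with a diagonal approximate unit of $M_2(A)$ --- and to the element $X=\bigl(\begin{smallmatrix}b&a\\0&0\end{smallmatrix}\bigr)$. A routine entrywise computation shows that the positive matrix $\morp^{(2)}(X^*X)-\morp^{(2)}(X)^*\morp^{(2)}(X)\ge0$ over $B$ has $(2,2)$-entry $\morp(a^*a)-\morp(a)^*\morp(a)=0$ and $(1,2)$-entry $\morp(b^*a)-\morp(b)^*\morp(a)$. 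Since a positive $2\times2$ matrix with a zero diagonal entry has the corresponding row and column equal to $0$, we obtain $\morp(b^*a)=\morp(b)^*\morp(a)$ for every $b\in A$, equivalently (replace $b$ by $b^*$) $\morp(ba)=\morp(b)\morp(a)$ for every $b$. Applying this to $a^*$ in place of $a$ --- legitimate, since $\morp(aa^*)=\morp(a)\morp(a)^*$ is precisely the first identity written for $a^*$ --- and taking adjoints gives $\morp(ab)=\morp(a)\morp(b)$ for every $b$. Hence $a\in MD(\morp)$, and \eqref{multiplicative domain 1} is proved.

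For the last assertion, $MD(\morp)$ is a $C^*$-subalgebra of $A$ on which $\morp$ restricts to a homomorphism, as already noted. Conversely, if $\morp$ restricts to a homomorphism on a $C^*$-subalgebra $C\subseteq A$, then every $a\in C$ satisfies $\morp(a^*a)=\morp(a^*)\morp(a)=\morp(a)^*\morp(a)$ and likewise $\morp(aa^*)=\morp(a)\morp(a)^*$, so $a\in MD(\morp)$ by \eqref{multiplicative domain 1}; thus $C\subseteq MD(\morp)$. The only point requiring any care is the non-unital form of the Kadison--Schwarz inequality together with the identity $\|\morp^{(2)}\|=\|\morp\|$; once these are secured by the double-dual argument present in the paper, the remainder is the standard Choi-type manipulation, cf.\ \cite[Theorem 3.18]{Paulsen}.
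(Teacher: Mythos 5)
Your proof is correct and follows essentially the same route as the paper, which simply invokes the argument of Paulsen's Theorem 3.18 together with the Schwarz inequality for the contractive completely positive map $\morp^{(2)}$ (cited there from Lance rather than re-derived via double duals). The $2\times 2$ matrix computation with $X=\bigl(\begin{smallmatrix}b&a\\0&0\end{smallmatrix}\bigr)$ and the zero-diagonal-entry observation is precisely the Choi-type manipulation the paper is referring to.
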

\begin{proof}
To show the  equality \eqref{multiplicative domain 1} note that the argument of the proof of  \cite[Theorem 3.18]{Paulsen} applies, only modulo the fact that the Schwarz inequality
$$
\morp^{(2)}(a^*)\morp^{(2)}(a)\leq \morp^{(2)}(a^*a), \qquad \quad a \in M_2(A),
$$
used there holds for arbitrary contractive completely positive maps, see \cite[Lemma 5.3 (ii)]{Lan}.
Plainly, \eqref{multiplicative domain 1} implies that for any $C^*$-subalgebra $C$ of $A$ such that $\morp:C\to B$ is a homomorphism we have $C\subseteq  MD(\morp)$.
\end{proof}
We recall that any positive map $\morp:A\to B$ is automatically completely positive whenever $A$ or $B$ is commutative \cite[Corollary 3.5, Proposition 3.9]{Tak}. Of course any homomorphism is a completely positive contraction. Also it is well known, cf., for instance, \cite[III, Theorem 3.4, IV, Corollary 3.4 ]{Tak}, that if $B$ is a $C^*$-subalgebra of $A$ then for a linear idempotent $E:A\to B$ we have
\begin{align*}
E \textrm{ is contractive } &\Longleftrightarrow E \textrm{ is  positive and } B\subseteq MD(E)\\
 &\Longleftrightarrow E \textrm{ is completely positive and } B\subseteq MD(E)
\end{align*}
An idempotent $E$ satisfying the above equivalent conditions is called a \emph{conditional expectation}.

\begin{dfn}
Let $\morp:A\to B$ be a  positive map. We say that
  $\morp$ is \emph{strict}  if  $\{\morp(\mu_\lambda)\}_{\lambda\in \Lambda}$ is strictly convergent in $M(A)$ for some approximate unit $\{\mu_\lambda\}_{\lambda\in \Lambda}$ in $A$. We say that $\morp$ is \emph{extendible} if it extends to a strictly continuous mapping $\overline{\morp}:M(A)\to M(B)$.
\end{dfn}
\begin{rem} The  positive elements in $A$ are strictly dense in the set of positive elements in $M(A)$. Thus  if $\morp$ is an extendible (completely) positive map, then $\overline{\morp}:M(A)\to M(B)$ is also (completely) positive.  Clearly, every extendible map is strict, and it is well known that for homomorphisms  these notions are actually equivalent.
\end{rem}

\subsection{Universal $C^*$-algebras}
Defining a universal $C^*$-algebra for a given set of generators $\G$ subject to a set of relations $\RR$ can be   tricky as free $C^*$-algebras do not exist. A recent and perhaps the most compelling   approach   is elaborated by  Loring  in \cite{loring}, see \cite{loring} for references to previous approaches.  We propose a slightly more general  framework that fits our setting.  As in \cite{loring}  we concentrate on a class of representations of $\G$  that are determined by prescribed relations, rather than on the relations themselves.

\begin{dfn}
Let $\G$ be a set and let $\RR$ be a certain class of maps  from $\G$ to  $C^*$-algebras.  We refer to elements of $\RR$ as to \emph{representations of} $\G$.  We define a preorder relation on $\RR$ by writing $\pi\precsim \sigma$ for any $\pi, \sigma \in \RR$ such that the map
\begin{equation}\label{maps on generators to be extended}
\sigma(a)\longmapsto \pi(a),\qquad a\in \G,
\end{equation}
extends to a (necessarily unique) homomorphism from $C^*(\sigma(\G))$ onto $C^*(\pi(\G))$. 
We denote by $\approx$ the equivalence relation on $\RR$ induced by  this  preorder:
$\pi \approx \sigma$ $\Longleftrightarrow$  $\pi \precsim \sigma$  and $\sigma \precsim \pi$.
\end{dfn}
\begin{rem}\label{trivial universal lemma}
Let $\RR$ be a class of representations of a set $\G$. It is straightforward to see that, if $\pi, \sigma \in \RR$ are such that $\pi\approx \sigma$, then the map \eqref{maps on generators to be extended} extends to an isomorphism $C^*(\sigma(\G))\cong C^*(\pi(\G))$. Moreover, if $\pi, \sigma \in \RR$ are upper bounds for $(\RR, \precsim)$ then $\pi\approx \sigma$.
\end{rem}

\begin{dfn}\label{definition of a universal algebra}
Suppose that   a class $\RR$ of representations of a set $\G$ admits an upper bound $\iota \in \RR$, that is,   $\pi\precsim \iota$ for all $\pi\in \RR$. By the obvious abuse of language, cf.  Remark \ref{trivial universal lemma}, we say that  $\iota$   \emph{the universal representation}  of $\G$ and the $C^*$-algebra
$$
C^*(\G,\RR):=C^*(\iota(\G))
$$
 is \emph{the universal $C^*$-algebra for} $\RR$.
\end{dfn}
\begin{rem}
By the definition of the universal $C^*$-algebra  $C^*(\G,\RR)$, for any $\pi\in \RR$  the map $\iota(a)\mapsto \pi(a)$,  $a\in \G$, extends to a (necessarily unique) epimorphism from $C^*(\iota(\G))$ onto $C^*(\pi(\G))$. We will write equality between any two $C^*$-algebras generated by ranges of two universal representations for the same class of representations of the same set of generators.
\end{rem}

Now, we give a condition on $(\G,\RR)$ that imply existence of an upper bound in $\RR$.
A version of this condition appears in all of the previous approaches starting from Blackadar's \cite[Definition 1.1]{blackadar}.  It appears also  in Loring's characterisation \cite[Theorem 3.1.1]{loring} of $C^*$-relations admitting universal $C^*$-algebras.

For any set  of $C^*$-algebras $B_i$, $i\in I$, we denote their direct product by  $\prod_{i\in I}B_i$, so the elements of  $\prod_{i\in I}B_i$ are  $\prod_{i\in I}a_i$ where $a_i\in B_i$, $i\in I$, and $\sup_{i\in I}\|a_i\|<\infty$.
\begin{dfn}\label{closed under products definition}
We say that a class $\RR$ of representations of a set $\G$ is \emph{closed under products} if for every set  of mappings $\pi_i:\G\to B_i$,  $i\in I$, that belong to $\RR$ the following two conditions are satisfied:
\begin{itemize}
\item[i)] $\prod_{i\in I}\pi_i(a)\in \prod_{i\in I}B_i$ for all $a\in \G$.
\item[ii)] there exists an injective homomorphism $\tau:\prod_{i\in I}B_i\to B$ into a $C^*$-algebra $B$ such that the map $\pi:\G\to B$ given by $\pi(a):=\tau\left(\prod_{i\in I}\pi_i(a)\right)$, $a\in \G$,  belongs to $\RR$.
\end{itemize}
\end{dfn}

\begin{prop}\label{existence of universal stuff}
If   a  class $\RR$ of representations of a set $\G$ is  closed under products then $\RR$ has an upper bound and therefore the universal
$C^*$-algebra $C^*(\G,\RR)$ exists.
\end{prop}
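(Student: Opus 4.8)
The plan is to construct the desired upper bound $\iota$ directly as a product of a sufficiently rich collection of representations in $\RR$, exploiting the hypothesis that $\RR$ is closed under products. The only subtlety is that $\RR$ may well be a proper class rather than a set, so one first has to cut down to a set of representatives; this is the step I expect to be the main technical obstacle, and it is handled by a cardinality bound.

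First I would observe that for every $\pi\in\RR$, say $\pi\colon\G\to B$, the $C^*$-algebra $C^*(\pi(\G))$ is generated (as a $C^*$-algebra) by the set $\pi(\G)$, whose cardinality is at most $|\G|$; hence $\|C^*(\pi(\G))\|$-many $*$-polynomials in $|\G|$ generators suffice to exhaust a dense subset, and a standard argument shows that $\mathrm{card}\,C^*(\pi(\G))\le\kappa$ for a fixed infinite cardinal $\kappa$ depending only on $|\G|$ (e.g. $\kappa=2^{\aleph_0+|\G|}$). Consequently, up to isometric isomorphism there is only a \emph{set} $\mathcal S$ of $C^*$-algebras arising as $C^*(\pi(\G))$ for $\pi\in\RR$, and for each such algebra only a set of maps $\G\to C^*(\pi(\G))$. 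Thus there is a set $\{\pi_i\colon\G\to B_i\}_{i\in I}\subseteq\RR$ such that every $\pi\in\RR$ satisfies $\pi\approx\pi_i$ for some $i\in I$; in particular $\pi\precsim\pi_i$.

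Next I would apply the hypothesis that $\RR$ is closed under products to this family $\{\pi_i\}_{i\in I}$. Condition i) gives $\prod_{i\in I}\pi_i(a)\in\prod_{i\in I}B_i$ for every $a\in\G$, and condition ii) produces an injective homomorphism $\tau\colon\prod_{i\in I}B_i\to B$ such that $\iota\colon\G\to B$, $\iota(a):=\tau\bigl(\prod_{i\in I}\pi_i(a)\bigr)$, belongs to $\RR$. For each fixed $j\in I$, composing $\tau$ with the coordinate projection $\mathrm{pr}_j\colon\prod_{i\in I}B_i\to B_j$ (which exists on the image, since $\tau$ is injective, as the map $\tau(x)\mapsto x_j$) and restricting to $C^*(\iota(\G))$ yields a homomorphism sending $\iota(a)\mapsto\pi_j(a)$ for all $a\in\G$; hence it descends to a homomorphism of $C^*(\iota(\G))$ onto $C^*(\pi_j(\G))$, i.e.\ $\pi_j\precsim\iota$. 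Finally, for an arbitrary $\pi\in\RR$ choose $i\in I$ with $\pi\precsim\pi_i$; then $\pi\precsim\pi_i\precsim\iota$, so $\iota$ is an upper bound for $(\RR,\precsim)$. By Definition \ref{definition of a universal algebra} the universal $C^*$-algebra $C^*(\G,\RR)=C^*(\iota(\G))$ exists, which completes the proof.

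The only place where care is genuinely needed is the passage from the possibly-proper class $\RR$ to a set $I$ of representatives; everything after that is a routine verification using projections out of a direct product. I would state the cardinality bound as a brief lemma or simply invoke it in one sentence, since it is entirely standard (it is exactly the mechanism behind Blackadar's and Loring's treatments).
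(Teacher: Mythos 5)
Your proposal is correct and follows essentially the same route as the paper: reduce $\RR$ to a set of $\approx$-representatives via a cardinality bound, form their product using the closure hypothesis, and recover each $\pi_j$ (hence every $\pi\in\RR$ by transitivity of $\precsim$) through the coordinate projection composed with $\tau^{-1}$. The only cosmetic difference is that you bound the cardinality of $C^*(\pi(\G))$ directly, whereas the paper embeds all such algebras into $B(H)$ for a single sufficiently large Hilbert space $H$; both are standard and equivalent ways to see that $\RR/\!\approx$ is a set.
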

\begin{proof}
First we need to show that the collection $\RR/\approx$ of equivalence classes for $\approx$ form a set.
To this end, we  note that there is a Hilbert space $H$ with the property that any $C^*$-algebra  $B$ generated by $|\G|$ generators can be embedded into $B(H)$. Indeed, any GNS representation of $B$ is determined by a function from the set  of generators to complex numbers, and the dimension of the resulting Hilbert space cannot exceed the cardinality of the free $*$-algebra $\FF(\G)$ generated by $\G$. Hence, by GNS construction, there is a faithful representation of  $B$ on a Hilbert space with dimension not exceeding  $|\{f:\G\to \C\}|\cdot |\FF(\G)|$. This implies our claim.

Let $H$ be the aforesaid Hilbert space and denote by $F$ the set of all mappings from $\G$ into $B(H)$. For each $\pi\in \RR$ we choose an embedding $\phi_\pi:C^*(\pi(\G))\to B(H)$, so that $\phi_{\pi}\circ \pi \in F$. We define an equivalence relation on $F$ in a similar fashion as  we did for $\RR$.    For $\pi, \sigma \in F$ we write
$$
\pi \approx_F \sigma \, \, \Longleftrightarrow \, \, \textrm{the map \eqref{maps on generators to be extended} extends to an isomorphism } C^*(\sigma(\G))\cong C^*(\pi(\G)).
$$
It is straightforward to see that, for any $\pi, \sigma \in \RR$ we have  $\pi\approx \sigma$ if and only if  $ \phi_{\pi}\circ \pi \approx_F  \phi_{\sigma}\circ \sigma$. Thus the assignment  $\RR\ni  \pi\longmapsto \phi_{\pi}\circ \pi \in F$ factors through to a  bijective assignment from $\RR/\approx$ onto a subset $I$ of the set $F/\approx_F$.

Accordingly, there is a set $\{\pi_i\}_{i\in I}\subseteq \RR$  such that for any $\sigma\in \RR$ we have $\sigma\approx \pi_i$ for some $i\in I$.
 Let $\pi \in \RR$ be the product of these representations $\{\pi_i\}_{i\in I}$
 as described in Definition \ref{closed under products definition}, so that, $\pi(a):=\tau\left(\prod_{i\in I}\pi_i(a)\right)$, $a\in \G$, for an injective homomorphism $\tau:\prod_{i\in I}B_i\to B$. To see that $\pi$ is an upper bound for $\RR$, let $\sigma\in \RR$. Choose $i_0\in I$ such that $\sigma\approx \pi_{i_0}$. Let $\Psi: C^*(\pi_{i_0}(\G))\to  C^*(\sigma(\G))$ be the isomorphism determined by $\Psi(\pi_{i_0}(a))=\sigma(a)$,  $a\in \G$. Denote by $p_{i_0}:\prod_{i\in I}B_i\to B_{i_0}$ the projection onto $B_{i_0}$ and let  $\tau^{-1}$ denote the inverse to the isomorphism $\tau:\prod_{i\in I}B_i\to \tau(\prod_{i\in I}B_i)$. Putting $\Phi:=\Psi\circ p_{i_0}\circ \tau^{-1}$ we get
$$
\Phi(\tau(a))=(\Psi\circ p_{i_0}) \left(\prod_{i\in I}\pi_i(a)\right)=\Psi(\pi_{i_0}(a))=\sigma(a), \quad\textrm{ for all } a\in \G.
$$
Hence $\Phi: C^*(\pi(\G)\to C^*(\sigma(\G))$ is the  homomorphism showing that $\sigma \precsim \pi$.
\end{proof}

In the present paper,  we will consider only two types of generators and their representations.   One type  comes from a $C^*$-correspondence $X$ over a $C^*$-algebra $A$. Then   the set of generators is  $\G=A\cup X$ and we  identify representations $\sigma$ of $\G$  with pairs $(\pi,\pi_X)$ where $\pi=\sigma|_{A}$ and  $\pi_X=\sigma|_{X}$. Another type comes from a $C^*$-dynamical system or an Exel system on a  $C^*$-algebra $A$. Then the set of generators is $\G=A\cup \{s\}$ where $s$ is an abstract element and we  identify representations $\sigma\in \RR$ with pairs $(\pi,S)$ where $\pi=\sigma|_{A}$ and  $S=\sigma(s)$. In the latter case we  will study the  $C^*$-subalgebra
$C^*(\iota(A)\cup \iota(A)\iota(s))$ of the universal $C^*$-algebra $C^*(\G,\RR)=C^*(\iota(\G))$ (which can be also viewed as a universal $C^*$-algebra but with a different set of generators).

\subsection{Relative Cuntz-Pimsner algebras}
We assume that the reader is familiar with the theory of Hilbert modules (for an  introduction see, for instance, \cite{Lan}).
 A (right) \emph{$C^*$-correspondence} over a $C^*$-algebra $A$ is a right Hilbert $A$-module $X$ together with a left action of $A$ on $X$ given by a homomorphism $\phi$ of $A$ into the $C^*$-algebra $\mathcal{L}(X)$ of all adjointable operators on $X$: we write $a\cdot x=\phi(a)x$.
Sometimes $C^*$-correspondences  are  called Hilbert bimodules, see  \cite{br}, \cite{brv}. However, it seems to become a standard to use the term Hilbert bimodule in the sense of \cite[Definition 3.1]{BMS}. Namely, by a \emph{Hilbert bimodule} over $A$ we mean a  space  $X$ which is at the same time a right and a left $C^*$-correspondence and  the corresponding right $\langle \cdot  , \cdot  \rangle_A$ and left ${_A\langle} \cdot , \cdot  \rangle$  $A$-valued inner products satisfy
 $x \cdot \langle y ,z \rangle_A = {_A\langle} x , y  \rangle \cdot z$,  for all $x,y,z\in X$,
cf.  \cite[Definition 3.1]{katsura1} or \cite[Definition 1.10]{kwa-doplicher} and the remarks below these definitions.
\begin{dfn}
A representation $(\pi,\pi_X)$ of a $C^*$-correspondence $X$   consists of a representation $\pi:A\to \B(H)$ in a Hilbert space $H$ and a linear map $\pi_X:X\to \B(H)$ such that
 $$
 \pi_X(a\cdot x\cdot b)=\pi(a)\pi_X(x)\pi(b),\quad \pi_X(x)^*\pi_X(y)=\pi(\langle x, y\rangle_A),\quad  a,b\in A,\, x\in X.
 $$
The $C^*$-algebra generated by $\pi(A)\cup \pi_X(X)$ is denoted by $C^*(\pi,\pi_X)$.
\end{dfn}
\begin{rem}
If $(\pi,\pi_X)$ is a representation of a $C^*$-correspondence $X$ then for each $x\in X$ we have $\|\pi_X(x)\|^2=\|\pi(\langle x, x\rangle_A)\|\leq \|\langle x, x\rangle_A\|=\|x\|$. Thus the map $\pi_X$ is automatically contractive (it is isometric if  $\pi$ is faithful), and  using Proposition \ref{existence of universal stuff} one readily sees that a universal representation of $X$  exists.
\end{rem}
\begin{dfn}[Pimsner]
We denote by   $(i_A,i_X)$ the  universal representation of a $C^*$-corres\-pondence $X$ and we call $\TT(X):=C^*(i_A,i_X)$ the \emph{Toeplitz algebra}  of $X$.
\end{dfn}
\begin{rem}
Originally,  Pimsner  \cite{p} constructed the Toeplitz algebra $\TT(X)$ by means of the  Fock representation of $X$, which as he noticed  is the universal representation of $X$.
\end{rem}
We recall that the set $\K(X)$ of  \emph{generalized compact operators} on $X$   is the closed linear span  of  the operators $\Theta_{x,y}$ where $\Theta_{x,y} (z)=x \langle y,z\rangle_A$ for $x, y, z\in X$. In particular, $\K(X)$ is an ideal in $\LL(X)$.
 Any  representation $(\pi,\pi_X)$ of $X$ induces a homomorphism
 $(\pi,\pi_X)^{(1)}: \K(X) \to \B(H)$ which satisfies
 $$
 (\pi,\pi_X)^{(1)}(\Theta_{x,y})=\pi_X(x)\pi_X(y)^*, \qquad (\pi,\pi_X)^{(1)}(T)\pi_X(x)=\pi_X(Tx)
 $$
 for $x, y \in X$ and $T\in \K(X)$, cf. \cite[Page 202]{p} or \cite[Proposition 4.6.3]{kpw}. Let $J(X):=\phi^{-1}(\K(X))$. For any representation $(\pi,\pi_X)$ of $X$ the restrictions $(\pi,\pi_X)^{(1)}\circ \phi|_{J(X)}$ and  $\pi|_{J(X)}$ yield two representations of $J(X)$. Putting constraints on the set on which these two representations coincide leads us to  the following definition.
\begin{dfn}[Muhly and Solel]
Let $J$ be an ideal in $J(X)=\phi^{-1}(\K(X))$. We say that a representation $(\pi,\pi_X)$ of  $X$ is $J$-\emph{covariant} if
$$
(\pi,\pi_X)^{(1)}(\phi(a))=\pi(a), \qquad \textrm{ for all }a\in J.
$$
We denote by $(j_A,j_X)$ the universal $J$-covariant representation of $X$ and we call the $C^*$-algebra $\OO(J,X):=C^*(j_A,j_X)$ the  \emph{relative Cuntz-Pimsner algebra} determined by $J$.
\end{dfn}
\begin{rem}\label{remark on relative Cuntz-Pimsners} It is clear that the relative Cuntz-Pimsner algebra $\OO(J,X)$ is naturally isomorphic to the quotient of the Toeplitz algebra $\TT(X)$ by the ideal generated by $
\{i_A(a)- (i_A,i_X)^{(1)}(\phi(a)): a\in J\}$. Actually, Muhly and Solel \cite[Definition 2.18]{ms} introduced   the  $C^*$-algebras $\OO(J,X)$ as quotients of $\TT(X)$. The $C^*$-algebra $\OO(J(X),X)$, related to the ideal $J(X)$, coincides with the (augmented) $C^*$-algebra associated to $X$ by Pimsner  \cite{p}.
\end{rem}
Katsura \cite{katsura1},  \cite{katsura} observed that among the relative Cuntz-Pimsner algebras $\OO(J,X)$, perhaps, the most natural one is determined by the ideal $J$ equal to
\begin{equation}\label{katsura's ideal}
J_X:=(\ker\phi)^\bot \cap J(X).
\end{equation}
In particular,   \cite[Proposition 2.21]{ms} and  \cite[Proposition 3.3]{katsura}, see also  \cite[Lemma 2.3]{br}, imply the following proposition.
\begin{prop}\label{proposition for the referee}
Let $X$ be a $C^*$-correspondence and let $J$ be an ideal in $J(X)$. The universal representation $j_A:A\to  \OO(J,X)$ is injective if and only if $J\subseteq (\ker\phi)^\bot$.
\end{prop}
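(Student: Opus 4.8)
The plan is to prove Proposition \ref{proposition for the referee} by analysing the Toeplitz algebra $\TT(X)$ and the canonical gauge action, reducing everything to the well-understood injectivity statement for $\OO(J_X,X)$ and then transferring it along the quotient maps $\TT(X)\to\OO(J,X)\to\OO(J_X,X)$. First I would recall, from Remark \ref{remark on relative Cuntz-Pimsners}, that $\OO(J,X)$ is the quotient of $\TT(X)$ by the ideal $\I_J$ generated by $\{i_A(a)-(i_A,i_X)^{(1)}(\phi(a)):a\in J\}$, and that $i_A:A\to\TT(X)$ is injective (this is classical — the Fock representation is faithful on $A$). So $j_A:A\to\OO(J,X)$ is injective precisely when $\I_J\cap i_A(A)=\{0\}$.

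The key step is to identify $\I_J\cap i_A(A)$. Here I would invoke a gauge-invariant uniqueness / structure argument: $\TT(X)$ carries a circle action fixing $i_A(A)$ and scaling $i_X(X)$, and the ideal $\I_J$ is gauge-invariant, so $\I_J\cap i_A(A)$ is determined by the conditional expectation onto the fixed-point algebra. Concretely, the content of \cite[Proposition 2.21]{ms}, \cite[Proposition 3.3]{katsura} and \cite[Lemma 2.3]{br} — which the excerpt permits me to assume — is exactly that $\I_{J_X}\cap i_A(A)=\{0\}$, i.e. $j_A$ is injective for Katsura's ideal $J_X=(\ker\phi)^\bot\cap J(X)$, and moreover that for a general ideal $J\subseteq J(X)$ one has $\I_J\cap i_A(A)=i_A\big(J\cap(\ker\phi)\big)$ (this is the precise "kernel computation" behind those references). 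Granting this identification, the proposition is immediate: $j_A$ is injective $\iff$ $J\cap\ker\phi=\{0\}$ $\iff$ $J\subseteq(\ker\phi)^\bot$, since $J$ and $\ker\phi$ are both ideals in the $C^*$-algebra $A$ and for ideals $J\cap\ker\phi=\{0\}$ is equivalent to $J\subseteq(\ker\phi)^\bot$.

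For the implications individually: for the "only if" direction, if $J\not\subseteq(\ker\phi)^\bot$ pick $0\neq a\in J\cap\ker\phi$; then $\phi(a)=0$, so the defining covariance relation forces $j_A(a)=(j_A,j_X)^{(1)}(\phi(a))=0$, exhibiting a nonzero element of the kernel of $j_A$. This half needs no deep input — it is a one-line consequence of the definition of $J$-covariance. For the "if" direction, assuming $J\subseteq(\ker\phi)^\bot$, so that $J\subseteq J_X$, I would factor $j_A^{J}:A\to\OO(J,X)$ through $j_A^{J_X}:A\to\OO(J_X,X)$ via the canonical surjection $\OO(J,X)\to\OO(J_X,X)$ (which exists because $J\subseteq J_X$ means every $J_X$-covariant representation is $J$-covariant, so there is a quotient map the other way — one must be a little careful here about the direction) and conclude injectivity of $j_A^{J}$ from injectivity of $j_A^{J_X}$, the latter being the cited deep fact. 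Alternatively, and more cleanly, one applies the kernel identification of the previous paragraph directly with this $J$.

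The main obstacle is the direction-of-quotient-map subtlety just flagged, together with establishing the kernel formula $\I_J\cap i_A(A)=i_A(J\cap\ker\phi)$ rigorously rather than merely quoting it; a fully self-contained argument would go through a gauge-invariant uniqueness theorem for the relative algebras (constructing a faithful representation of $\OO(J_X,X)$ by a Fock-type/boundary construction so that one can compute the kernel of the quotient $\TT(X)\to\OO(J_X,X)$ exactly, and then noting that enlarging from $J$ to $J_X$ only kills elements $i_A(a)-(i_A,i_X)^{(1)}(\phi(a))$ with $a\in\ker\phi$, on which the first term is $i_A(a)$ and the second is $0$). Since the excerpt explicitly grants \cite[Proposition 2.21]{ms} and \cite[Proposition 3.3]{katsura}, I would not reprove that machinery; I would present the proof as the short deduction: invoke those results for $J_X$, handle the elementary "only if" direction by the covariance relation, and for "if" reduce to $J_X$ via $J\subseteq J_X$ and the induced surjection $\OO(J,X)\to\OO(J_X,X)$ which is injective on the copy of $A$.
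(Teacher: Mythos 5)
Your proposal is correct and takes essentially the same route as the paper, which offers no independent argument but derives the statement directly from the cited results of Muhly--Solel, Katsura and Brownlowe--Raeburn; you merely make that deduction explicit (the elementary ``only if'' half by applying the covariance relation to some $0\neq a\in J\cap\ker\phi$, which exists since $J\not\subseteq(\ker\phi)^\bot$ forces $J\cap\ker\phi\neq\{0\}$ for ideals, and the ``if'' half by noting $J\subseteq J_X$ and composing with the canonical surjection $\OO(J,X)\to\OO(J_X,X)$, on which $A$ embeds by the quoted results). The stronger kernel identity $\I_J\cap i_A(A)=i_A(J\cap\ker\phi)$ is not needed for this argument and is best not asserted without proof.
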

\begin{dfn}[Katsura \cite{katsura1}, Definition 2.6] The (unrelative) \emph{Cuntz-Pimsner algebra} associated to a $C^*$-correspondence $X$
is $\OO_X:=\OO(J_X,X)$ where $J_X$ is Katsura's ideal \eqref{katsura's ideal}.
\end{dfn}

If  the $C^*$-correspondence $X$ is \emph{essential}, that is if $AX=X$, we may restrict our attention  to representations $(\pi,\pi_X)$ where $\pi$ is  non-degenerate. This is  due to the  following statement which was proved in \cite{brv} for a certain concrete $C^*$-correspondence $X$. However, the proof uses only the fact that $X$ is essential.
 \begin{lem}[\cite{brv}, Lemma 3.4]\label{lemma 0.1}
 For any representation $(\pi,\pi_X)$ of an essential $C^*$-correspon\-dence $X$ on the Hilbert space $H$, the  subspace $K=\pi(A)H$ is reducing for $(\pi,\pi_X)$ and we have $\pi|_{K^\bot}=0$ and $\pi_X|_{K^\bot}=0$.
 \end{lem}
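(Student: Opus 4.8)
The plan is to prove the statement about essential $C^*$-correspondences by a direct computation using the defining relations of a representation $(\pi,\pi_X)$. Set $K=\overline{\pi(A)H}$. First I would check that $K$ is invariant under $\pi(A)$: this is immediate since $\pi(a)\pi(b)\xi=\pi(ab)\xi\in \pi(A)H$ for $a,b\in A$, $\xi\in H$. Since $\pi(A)$ is a selfadjoint set of operators, $K$ is then automatically reducing for $\pi(A)$, so $\pi|_{K^\bot}=0$; equivalently, $\pi(a)$ vanishes on $K^\bot$ for every $a\in A$.

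Next I would show $K$ is invariant under $\pi_X(X)$. Using that $X$ is essential, $AX=X$, every $x\in X$ can be written (Cohen--Hewitt, as invoked in the conventions) as $x=a\cdot y$ with $a\in A$, $y\in X$. Then for $\xi\in H$ we have $\pi_X(x)\xi=\pi_X(a\cdot y)\xi=\pi(a)\pi_X(y)\xi\in \pi(A)H\subseteq K$, so $\pi_X(X)K\subseteq K$ and in fact $\pi_X(X)H\subseteq K$. For the adjoints, the relation $\pi_X(x)^*\pi_X(y)=\pi(\langle x,y\rangle_A)$ together with $\langle X,X\rangle_A\subseteq A$ shows $\pi_X(x)^*\pi_X(y)H\subseteq \pi(A)H\subseteq K$; since $K=\overline{\pi_X(X)H}$ as well (because $X=AX$ gives $\pi_X(X)H=\pi(A)\pi_X(X)H$, whose closed span is contained in $K$, while conversely $\pi(a)\pi_X(y)\xi=\pi_X(a\cdot y)\xi$ shows $\pi(A)H=\overline{\pi(A)\pi_X(X)H}\subseteq \overline{\pi_X(X)H}$ after another factorization $a=\langle\text{something}\rangle$ — more cleanly, observe $\overline{\pi_X(X)H}\subseteq K$ and $K=\overline{\pi(A)H}=\overline{\pi(AX)H}=\overline{\pi_X(X)H}$ using $\pi_X(a\cdot y)=\pi(a)\pi_X(y)$), it follows that $\pi_X(x)^*$ maps $\overline{\pi_X(X)H}=K$ into $K$. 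Hence $K$ is reducing for $\pi_X(X)$ as well.

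It remains to see $\pi_X|_{K^\bot}=0$. For $\xi\in K^\bot$ and $x\in X$, since $\pi_X(x)\xi\in K$ (shown above, as $\pi_X(X)H\subseteq K$) and also $\pi_X(x)\xi\in K^\bot$ (because $K^\bot$ is reducing, being the orthocomplement of a reducing subspace), we get $\pi_X(x)\xi\in K\cap K^\bot=\{0\}$. Thus $\pi_X(x)$ annihilates $K^\bot$ for every $x\in X$, which is the assertion $\pi_X|_{K^\bot}=0$; combined with $\pi|_{K^\bot}=0$ this finishes the proof.

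I do not anticipate a serious obstacle here — the whole argument is a bookkeeping exercise with the two correspondence relations and the essentiality hypothesis $AX=X$ (used via Cohen--Hewitt factorization). The one point requiring a little care is the identification $\overline{\pi(A)H}=\overline{\pi_X(X)H}$, which is what lets one conclude that $\pi_X(x)^*$ preserves $K$; I would make sure to spell that equality out explicitly rather than gloss over it. Everything else is a formal consequence of $K$ being $\pi(A)\cup\pi_X(X)$-reducing together with $\pi(A)H$ and $\pi_X(X)H$ both lying inside $K$.
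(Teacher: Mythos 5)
The step you yourself flag as ``the one point requiring a little care'' is precisely where the argument breaks: the identity $K=\overline{\pi_X(X)H}$ is false for a general essential correspondence. Essentiality gives $\pi_X(X)H=\pi_X(A\cdot X)H=\pi(A)\pi_X(X)H\subseteq \pi(A)H=K$, but nothing forces the reverse inclusion. Concretely, take $A=\C^2$ and $X=\C$ with $(a_1,a_2)\cdot x=a_1x=x\cdot(a_1,a_2)$ and $\langle x,y\rangle_A=(\bar xy,0)$; this correspondence is essential, and for the representation $\pi(a_1,a_2)=\mathrm{diag}(a_1,a_2)$ on $H=\C^2$ with $\pi_X(x)=\mathrm{diag}(x,0)$ one has $K=\pi(A)H=\C^2$ while $\overline{\pi_X(X)H}=\C\oplus 0$. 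Your chain ``$\overline{\pi(A)H}=\overline{\pi(AX)H}=\overline{\pi_X(X)H}$'' silently replaces $\pi$ by $\pi_X$ in the middle term, and that is where the error enters. Since this identity is the only thing supporting the claim that $\pi_X(x)^*$ preserves $K$, the reducibility of $K$ for $\pi_X(X)$ --- and hence your subsequent deduction $\pi_X(x)\xi\in K\cap K^{\bot}$ --- is not established.

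The lemma is of course still true, and the repair is to run the argument in the opposite order. From $\pi|_{K^{\bot}}=0$ (which you have correctly) and the relation $\pi_X(x)^*\pi_X(x)=\pi(\langle x,x\rangle_A)$ one gets, for $\xi\in K^{\bot}$,
$$\|\pi_X(x)\xi\|^2=\langle\xi,\pi(\langle x,x\rangle_A)\xi\rangle=0,$$
so $\pi_X|_{K^{\bot}}=0$ directly; note that this part uses no essentiality at all. Then $\pi_X(x)^*H\subseteq K$ because $\langle\pi_X(x)^*\eta,\xi\rangle=\langle\eta,\pi_X(x)\xi\rangle=0$ for every $\xi\in K^{\bot}$ and $\eta\in H$, while $\pi_X(x)H\subseteq K$ follows from essentiality exactly as you argued; together these show $K$ is reducing. (The paper does not print a proof of this lemma --- it imports it from Brownlowe--Raeburn--Vittadello with the remark that only essentiality of $X$ is used --- so the comparison here is with the standard argument rather than with a proof appearing in the text.)
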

Since all the $C^*$-correspondences considered in the text will be essential,
\begin{quote}
 \emph{all the representations $(\pi,\pi_X)$ of $C^*$-correspondences will be assumed to be non-degenerate,}
\end{quote}
 in the sense that $\pi$ is non-degenerate. It will force our universal homomorphisms to be also non-degenerate. We recall that a homomorphism $h:A\to B$ between two $C^*$-algebras is \emph{non-degenerate} if $h(A)$ is non-degenerate in $B$, that is if $h(A)B=B$.

\subsection{Exel's and Exel-Royer's crossed products}

Initially, Exel defined his  crossed product for unital  $C^*$-algebras  \cite{exel2},  and then it was generalized  in \cite{brv}, \cite{Larsen} to  Exel systems that consist of extendible maps.  Nevertheless, the definition of the crossed product makes sense for an arbitrary Exel system and can be expressed as follows.

\begin{dfn}
Let   $\al:A\to A$ be an endomorphism of a $C^*$-algebra $A$ and let $\LL:A\to A$ be  a positive linear map such that
\begin{equation}\label{transfer operator relation}
\LL(a\al(b))=\LL(a)b, \qquad \textrm{for all }a,b \in A.
\end{equation}
Then $\LL$ is called  a \emph{transfer operator} for $\alpha$ and the triple $(A,\al,\LL)$ is  an \emph{Exel system}.
\end{dfn}
\begin{dfn}\label{exel crossed product definition} A \emph{representation of an Exel system} $(A,\al, \LL)$ is a pair $(\pi,S)$ consisting of   a non-degenerate representation $\pi:A\to \B(H)$ and an operator $S\in \B(H)$ such that
\begin{equation}\label{Exel relations}
S\pi(a) =\pi(\al(a))S\,\,\, \textrm{  and  }\,\,\,  S^*\pi(a)S=\pi(\LL(a)) \textrm{ for all } a\in A .
\end{equation}
A \emph{redundancy} of a representation $(\pi,S)$ of $(A,\al, \LL)$  is a pair $(\pi(a),k)$ where $ a\in A$ and $ k\in  \overline{\pi(A)SS^*\pi(A)}$ are such that
$$
\pi(a)\pi(b)S=k\pi(b)S, \qquad \textrm{ for all }  b\in A.
$$
The \emph{Toeplitz algebra} $\TT(A,\al,\LL)$ of $(A,\al,\LL)$ is the  $C^*$-algebra generated by $i_A(A)\cup i_A(A)t$ for a universal  representation   $(i_A,t)$ of $(A,\al,\LL)$. \emph{Exel's crossed product}  $A\times_{\al, \LL} \N$  of $(A,\al, \LL)$ is the  quotient $C^*$-algebra of $\TT(A,\al,\LL)$ by the ideal generated by the set
$$
\{i_A(a)-k: a\in \overline{A\al(A)A} \textrm{ and }(i_A(a),k) \textrm{ is a redundancy of } (i_A,t)\}.
$$
\end{dfn}

Existence of the universal representation  $(i_A,t)$ of an Exel system $(A,\al,\LL)$ can be deduced from Proposition \ref{existence of universal stuff}, cf. the proof of Lemma \ref{existence resistance} below. It can be also obtained by realizing $\TT(A,\al,\LL)$ as a Toeplitz algebra of a  $C^*$-correspondence $M_\LL$ introduced  by Exel in   \cite{exel2}.

More specifically, let  $(A,\al,\LL)$ be an Exel system. One makes $A$ into a semi-inner product (right) $A$-module $A_{\LL}$ by putting
$
m\cdot a :=m\al(a)$, $\langle m,n \rangle_{\LL}:=\LL(m^*n)$, $n,m\in A_\LL, a\in A$,
and defines $M_{\LL}$ to be the associated Hilbert $A$-module:
$$
M_{\LL}:=\overline{A_\LL/N}, \qquad N:=\{m \in A_\LL:\langle m,m \rangle_{\LL} =0\}.
$$
Denoting by $q:A_\LL \to M_{\LL}$ the quotient map one gets,  cf. \cite{exel2}, \cite{br}, that
$$
a \cdot q(m):= q(am), \qquad m \in A_\LL,\,\, a\in A,
$$
yields a well defined left action of $A$ on $M_{\LL}$ making $M_{\LL}$ into  a $C^*$-correspondence.  We note that  $A\cdot M_{\LL}=M_{\LL}$, that is $M_{\LL}$ is an essential $C^*$-correspondence. Moreover, the kernel of the left action of $A$ on $M_{\LL}$ coincides with the GNS-kernel $N_\LL$ of $\LL$, cf. Definition \ref{GNS-kernel}.

The following fact was proved in \cite[Lemmas 3.2 and 3.3]{brv} for extendible Exel systems. However,  the proofs exploit only  extendability of a transfer operator. We will show in Proposition \ref{extendibility of transfer operators} below, that this is automatic. Another reason for omitting the proof of the next Proposition \ref{slight generalization of brv} is that  it will follow from our more general results, cf. Corollary \ref{Toeplitz corollary}. We include it here, for the sake of discussion.
\begin{prop}\label{slight generalization of brv}
We have a one-to-one correspondence between  representations $(\pi,S)$ of  $(A,\al,\LL)$ and   representations $(\pi,\pi_{M_{\LL}})$ of the $C^*$-correspondence $M_{\LL}$. In particular, we have an isomorphism $\TT(A,\al,\LL)\cong  \TT(M_{\LL})$.
\end{prop}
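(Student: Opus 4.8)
The plan is to establish the bijection between the two kinds of representations directly from the defining relations, and then promote it to the isomorphism of Toeplitz algebras via the universal property. First I would take a representation $(\pi,S)$ of the Exel system $(A,\alpha,\LL)$ on a Hilbert space $H$ and define a candidate map $\pi_{M_\LL}:M_\LL\to\B(H)$ by $\pi_{M_\LL}(q(m)):=\pi(m)S$ for $m\in A_\LL$. One must check this is well defined on the quotient $M_\LL=\overline{A_\LL/N}$: using \eqref{Exel relations}, for $m,n\in A_\LL$ we have $(\pi(m)S)^*\pi(n)S=S^*\pi(m^*n)S=\pi(\LL(m^*n))=\pi(\langle m,n\rangle_\LL)$, which simultaneously shows that $\|\pi(m)S\|^2=\|\pi(\langle m,m\rangle_\LL)\|$, so $N$ is sent to $0$ and $\pi_{M_\LL}$ is bounded, hence extends to $M_\LL$. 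The same computation gives the inner-product relation $\pi_{M_\LL}(x)^*\pi_{M_\LL}(y)=\pi(\langle x,y\rangle_\LL)$ for all $x,y\in M_\LL$. For the bimodule relation, $\pi_{M_\LL}(a\cdot q(m)\cdot b)=\pi_{M_\LL}(q(am\alpha(b)))=\pi(am)\pi(\alpha(b))S=\pi(am)S\pi(b)=\pi(a)\pi_{M_\LL}(q(m))\pi(b)$, again using \eqref{Exel relations}. So $(\pi,\pi_{M_\LL})$ is a representation of the $C^*$-correspondence $M_\LL$, and it is non-degenerate since $\pi$ is.

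Conversely, given a representation $(\pi,\pi_{M_\LL})$ of $M_\LL$ with $\pi$ non-degenerate, I would recover $S$ as follows. Pick an approximate unit $\{\mu_\lambda\}$ in $A$; the elements $q(\mu_\lambda)\in M_\LL$ should converge to an element implementing $S$, but to stay within bounded operators the cleanest route is to use that $M_\LL$ is essential and set $S:=\slim_\lambda \pi_{M_\LL}(q(\mu_\lambda))$ in the strict/strong sense, or — more robustly — to first verify on the dense subspace $\pi(A)H$ that there is a bounded operator $S$ with $S\pi(a)h=\pi_{M_\LL}(q(a))h$ for $a\in A$, $h\in H$. Boundedness of this assignment follows because $\|\pi_{M_\LL}(q(a))h\|^2=\langle \pi(\langle q(a),q(a)\rangle_\LL)h,h\rangle=\langle\pi(\LL(a^*a))h,h\rangle\le\|\LL\|\,\|\pi(a)h\|^2$ using Lemma \ref{boundness equality for a cp maps} (or simply boundedness of $\LL$ and the C*-identity); together with non-degeneracy of $\pi$ this defines $S\in\B(H)$. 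Then $S\pi(a)=\pi_{M_\LL}(q(a))$, whence $S^*\pi(a)S$ acts by $h\mapsto \pi_{M_\LL}(q(1))^*\pi(a)\pi_{M_\LL}(q(1))h$; more precisely $S^*\pi(a)S\pi(b)h=\pi_{M_\LL}(q(1))^*\pi(a)\pi_{M_\LL}(q(b))h=\pi_{M_\LL}(q(1))^*\pi_{M_\LL}(q(ab))h=\pi(\langle q(1),q(ab)\rangle_\LL)h=\pi(\LL(ab))h$, and $\LL(ab)=\LL(b)a$? — here one must be careful: actually $S^*\pi(a)S$ should equal $\pi(\LL(a))$, and one gets this by inserting an approximate unit on the right and using the bimodule relation $\pi_{M_\LL}(q(a))=\pi_{M_\LL}(a\cdot q(\mu_\lambda))=\pi(a)\pi_{M_\LL}(q(\mu_\lambda))$ in the limit. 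Similarly the covariance $S\pi(a)=\pi(\alpha(a))S$ comes from $\pi_{M_\LL}(q(a))=\pi_{M_\LL}(q(\mu_\lambda\alpha(a)))=\pi_{M_\LL}(q(\mu_\lambda)\cdot a)=\pi_{M_\LL}(q(\mu_\lambda))\pi(a)$ compared with $\pi_{M_\LL}(q(\alpha(a)\mu_\lambda))=\pi(\alpha(a))\pi_{M_\LL}(q(\mu_\lambda))$, passing to the strict limit. One then checks the two assignments $(\pi,S)\leftrightarrow(\pi,\pi_{M_\LL})$ are mutually inverse, which is immediate once $S\pi(a)=\pi_{M_\LL}(q(a))$ is in hand on both sides.

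Having the bijection, the isomorphism $\TT(A,\alpha,\LL)\cong\TT(M_\LL)$ follows from the universal properties. The correspondence visibly respects the preorder $\precsim$ on representations: if $(\pi,S)\precsim(\sigma,T)$ then, because $C^*(\sigma(A)\cup\sigma(A)T)$ maps onto $C^*(\pi(A)\cup\pi(A)S)$ carrying generators to generators, and the formulas $\pi_{M_\LL}(q(a))=\pi(a)S$, $\sigma_{M_\LL}(q(a))=\sigma(a)T$ express the correspondence generators in terms of the system generators (and vice versa), we get $(\pi,\pi_{M_\LL})\precsim(\sigma,\sigma_{M_\LL})$, and conversely. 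Hence a universal representation on one side maps to a universal representation on the other, and by Remark \ref{trivial universal lemma} the two generated C*-algebras are canonically isomorphic; this isomorphism sends $i_A(a)\mapsto i_A(a)$ and $i_A(a)t\mapsto i_X(q(a))$ (note $i_A(A)t$ is spanned by such products, matching $i_X(M_\LL)=\overline{i_X(q(A))}$).

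The main obstacle I anticipate is the non-unital bookkeeping in the converse direction: constructing $S$ as a genuine bounded operator from $\pi_{M_\LL}$ and then verifying $S^*\pi(a)S=\pi(\LL(a))$ and $S\pi(a)=\pi(\alpha(a))S$ rigorously, since these require manipulating strict limits of $\pi_{M_\LL}(q(\mu_\lambda))$ against an approximate unit and invoking essentiality of $M_\LL$ (so that $\pi_{M_\LL}$ is determined on all of $M_\LL$ by its values on $q(A)$, and Lemma \ref{lemma 0.1} lets us assume non-degeneracy). In the unital case $S=\pi_{M_\LL}(q(1_A))$ outright and everything is a one-line computation; the content here is purely the care needed to make that limit argument work without a unit — exactly the point the paper flags as being handled by the automatic extendability of $\LL$ (Proposition \ref{extendibility of transfer operators}).
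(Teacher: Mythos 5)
Your forward direction is fine, and your overall strategy (a direct bijection between the two kinds of representations, then the universal property) is legitimate, though it is not the paper's route: the paper never argues with $M_\LL$ directly, but factors the statement through the GNS correspondence $X_\LL$ of the completely positive map $\LL$ alone (Proposition \ref{characterization of representations of cp map} together with Lemma \ref{Exel vs GNS}), obtaining the covariance $S\pi(a)=\pi(\al(a))S$ for free from Proposition \ref{destroying proposition}. The genuine problem is your ``more robust'' construction of $S$ in the converse direction. The formula $S\pi(a)h=\pi_{M_\LL}(q(a))h$ is inconsistent with your forward formula $\pi_{M_\LL}(q(m))=\pi(m)S$: the latter gives $\pi(a)S=\pi_{M_\LL}(q(a))$ and $S\pi(a)=\pi(\al(a))S=\pi_{M_\LL}(q(\al(a)))$, so $S$ sits on the \emph{right} of $\pi(a)$ in the recovery relation. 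Worse, the assignment $\pi(a)h\mapsto\pi_{M_\LL}(q(a))h$ is not well defined: your estimate $\langle\pi(\LL(a^*a))h,h\rangle\le\|\LL\|\,\|\pi(a)h\|^2$ would require $\LL(a^*a)\le\|\LL\|\,a^*a$ in $A$, which fails for essentially every transfer operator (for the shift $\sigma(z)=z^2$ on $C(\T)$ with $\LL(f)(z)=\tfrac12\sum_{w^2=z}f(w)$, take $f$ vanishing at $w_0^2$ but not at $w_0$; then $\LL(|f|^2)(w_0^2)>0=|f(w_0^2)|^2$). Concretely, $\pi(a)h$ can vanish while $\pi_{M_\LL}(q(a))h=\pi(a)Sh$ does not, since the latter is not a function of $\pi(a)h$.

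The salvageable route is the one you mention first, $S:=\textrm{s-}\lim_\lambda\pi_{M_\LL}(q(\mu_\lambda))$, but you do not show the limit exists, and that is exactly the non-trivial content in the non-unital case. Using $q(\mu_\lambda)\cdot a=q(\mu_\lambda\al(a))$ and the transfer identity twice, one needs the Cauchy estimate
$$
\|\big(\pi_{M_\LL}(q(\mu_\lambda))-\pi_{M_\LL}(q(\mu_{\lambda'}))\big)\pi(a)h\|^2=\langle h,\pi\big(a^*\LL((\mu_\lambda-\mu_{\lambda'})^2)a\big)h\rangle\le\langle\pi(a)h,\pi\big(\LL(\mu_\lambda-\mu_{\lambda'})\big)\pi(a)h\rangle,
$$
which tends to zero because $\{\pi(\LL(\mu_\lambda))\}$ is a bounded increasing net of positive operators; this is precisely the computation carried out in the proof of Proposition \ref{characterization of representations of cp map}. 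With $S$ so defined one gets $\pi(a)S=\pi_{M_\LL}(q(a))$ and $S\pi(a)=\pi_{M_\LL}(q(\al(a)))$, and then $S^*\pi(a)S\pi(b)=S^*\pi_{M_\LL}(q(a\al(b)))=\pi(\LL(a\al(b)))=\pi(\LL(a))\pi(b)$, which repairs the $\LL(ab)$ miscomputation you flagged. Once this is in place, the rest of your argument (mutual inverseness and the passage to Toeplitz algebras via the preorder $\precsim$) goes through.
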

Previous versions of the above result  were a point of departure in \cite{er}. More specifically,  the authors of \cite{er} considered `partial Exel systems' $(A,\alpha, \LL)$ where $\LL$ is not everywhere defined and $\alpha$ may attain values outside of $A$. For such triples they defined a crossed product $\OO(A,\alpha, \LL)$, in essence,  simply to be $\OO_{M_{\LL}}$, where $M_{\LL}$  is a generalization of the $C^*$-correspondence defined above to the `partial case'. In the present paper we will only make use of  \cite[Definition 1.6]{er} applied to `global' Exel systems. Thus we adopt the following definition.
\begin{dfn}
The \emph{Exel-Royer's crossed product} $\OO(A,\alpha, \LL)$ associated to an Exel system $(A,\al,\LL)$ is the
 quotient  of $\TT(A,\al,\LL)$ by the ideal generated by the set
$$
\{i_A(a)-k: a\in J_{M_{\LL}} \textrm{ and }(i_A(a),k) \textrm{ is a redundancy of } (i_A,t)\}
$$
where $J_{M_{\LL}}$ is Katsura's ideal \eqref{katsura's ideal} associated to the $C^*$-correspondence $M_{\LL}$.
\end{dfn}
\begin{rem}\label{Remark for the referee who loves Kastura's work}
Since the kernel of the left action of $A$ on $M_\LL$ is equal to $N_\LL$, we have $J_{M_{\LL}}=N_\LL^\bot\cap J(M_\LL)$. It can be readily deduced from Proposition \ref{slight generalization of brv} and Remark \ref{remark on relative Cuntz-Pimsners}, see \cite[Proposition 4.5]{Brownlowe}, that we have a natural isomorphism $\OO(A,\alpha, \LL)\cong \OO_{M_{\LL}}$.
\end{rem}
\section{Crossed products by  completely positive  maps}\label{cp section}
Throughout this section, we fix a  completely positive map $\morp:A\to A$, and refer to the  pair $(A,\morp)$ as to a \emph{$C^*$-dynamical system}. We introduce relative crossed products $C^*(A,\morp;J)$ as quotients of a certain Toeplitz algebra. Then we realize them as relative Cuntz Pimsner algebras and  as universal $C^*$-algebras generated by appropriately defined covariant representations of $(A,\morp)$. At the end of this section we discuss two important special cases when: 1) $\morp$ is multiplicative; 2)   $A$ is commutative.

\subsection{Crossed products}
Following  the original idea of Exel \cite{exel2}, we first define  a Toeplitz algebra, and then construct  crossed products by `eliminating redundancies' in the latter.

\begin{dfn}\label{representation of a cp map}
A \emph{representation of} $(A,\morp)$  is   a pair $(\pi,S)$ consisting of a non-degenerate representation $\pi:A\to \B(H)$ and an   operator $S\in \B(H)$  such that
\begin{equation}\label{representation of morp}
  S^*\pi(a) S=\pi(\morp(a))\qquad  \textrm{ for all }a\in A.
 \end{equation}
 If $\pi$ is faithful we call $(\pi,S)$ \emph{faithful}.
 We denote by $C^*(\pi,S)$ the $C^*$-algebra generated by $\pi(A)\cup\pi(A)S$.  We define the \emph{Toeplitz algebra of $(A,\morp)$} to be the $C^*$-algebra $\TT(A,\morp):=C^*(i_A(A),t)$ where $(i_A,t)$ is the universal representation  of $(A,\morp)$. Hence    for any  representation $(\pi,S)$ of $(A,\morp)$ the assignments
\begin{equation}\label{toeplitz epimorphism}
 i_A(a)\longmapsto \pi(a), \qquad i_A(a)t \longmapsto \pi(a)S, \qquad a\in A,
 \end{equation}
 define the  epimorphism from $\TT(A,\morp)$ onto $C^*(\pi,S)$.
  \end{dfn}
	\begin{lem}\label{existence resistance}
	Any $C^*$-dynamical system $(A,\morp)$ admits a universal representation and hence the Toeplitz algebra $\TT(A,\morp)$ exists.
	 \end{lem}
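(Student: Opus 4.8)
The plan is to invoke Proposition~\ref{existence of universal stuff} by showing that the class $\RR$ of representations of the generating set $\G = A \cup \{s\}$ that satisfy the covariance relation \eqref{representation of morp} is closed under products in the sense of Definition~\ref{closed under products definition}. Here a representation $\sigma$ of $\G$ is identified with a pair $(\pi, S)$, $\pi = \sigma|_A$, $S = \sigma(s)$, subject to the requirements that $\pi$ be a non-degenerate representation of $A$ on some Hilbert space and that $S^*\pi(a)S = \pi(\morp(a))$ for all $a \in A$. Once $\RR$ is shown to be closed under products, Proposition~\ref{existence of universal stuff} produces an upper bound $(i_A, t) \in \RR$, which is precisely the universal representation we need, and then $\TT(A,\morp) := C^*(i_A(A), t)$ is well defined.

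First I would check condition i) of Definition~\ref{closed under products definition}: given a family $(\pi_i, S_i)$, $i \in I$, in $\RR$ with $\pi_i \colon A \to \B(H_i)$, I need the pointwise products to be bounded, i.e. $\sup_i \|\pi_i(a)\| < \infty$ and $\sup_i \|S_i\| < \infty$. The first bound is automatic since each $\pi_i$ is a $*$-homomorphism, so $\|\pi_i(a)\| \le \|a\|$. For the second, pick any $a \in A$ with $\|a\| \le 1$; the covariance relation gives $\|S_i^* \pi_i(a^*a) S_i\| = \|\pi_i(\morp(a^*a))\| \le \|\morp\|$, but this alone does not bound $\|S_i\|$ without control coming from the left of $S_i$. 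The clean way is to use an approximate unit $\{\mu_\lambda\}$ in $A$: since $\pi_i$ is non-degenerate, $\pi_i(\mu_\lambda) \to 1$ strongly on $H_i$, so $\|S_i h\|^2 = \lim_\lambda \|\pi_i(\mu_\lambda)^{1/2} S_i h\|^2 \le \limsup_\lambda \langle S_i^* \pi_i(\mu_\lambda) S_i h, h\rangle = \langle \pi_i(\morp(\text{w-}\lim \mu_\lambda)) h, h \rangle$; more carefully, $\langle S_i^*\pi_i(\mu_\lambda)S_i h, h\rangle = \langle \pi_i(\morp(\mu_\lambda))h,h\rangle \le \|\morp(\mu_\lambda)\| \|h\|^2 \le \|\morp\|\,\|h\|^2$ by Lemma~\ref{boundness equality for a cp maps}, and taking the supremum over $\lambda$ gives $\|S_i\|^2 \le \|\morp\|$, a bound independent of $i$. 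Hence $\bigoplus$-type products make sense and i) holds.

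Next, for condition ii), I would take $B := \B(\bigoplus_{i\in I} H_i)$ (or equivalently use the embedding $\prod_i \B(H_i) \hookrightarrow \B(\bigoplus_i H_i)$ as block-diagonal operators), define $\pi := \bigoplus_i \pi_i$ and $S := \bigoplus_i S_i$, and observe that $\pi$ is non-degenerate on $\bigoplus_i H_i$ because each $\pi_i$ is non-degenerate on $H_i$, and that $S^*\pi(a)S = \bigoplus_i S_i^*\pi_i(a)S_i = \bigoplus_i \pi_i(\morp(a)) = \pi(\morp(a))$, so $(\pi, S) \in \RR$. This verifies that $\RR$ is closed under products. The main obstacle—really the only non-formality—is the uniform bound on $\|S_i\|$ in condition i); it is exactly the point where one needs Lemma~\ref{boundness equality for a cp maps} (giving $\sup_\lambda \|\morp(\mu_\lambda)\| = \|\morp\| < \infty$) together with the non-degeneracy hypothesis on $\pi$. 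Everything else is routine bookkeeping with direct sums. With $\RR$ closed under products, Proposition~\ref{existence of universal stuff} applies verbatim and yields the universal representation $(i_A, t)$ and hence $\TT(A,\morp)$, completing the proof.
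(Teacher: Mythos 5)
Your proposal is correct and follows essentially the same route as the paper: both establish the uniform bound $\|S\|^2\leq\|\morp\|$ via non-degeneracy of $\pi$, an approximate unit, and the covariance relation \eqref{representation of morp}, and then conclude by closure under products and Proposition \ref{existence of universal stuff}. (The appeal to Lemma \ref{boundness equality for a cp maps} is not even needed for your inequality, since $\|\morp(\mu_\lambda)\|\leq\|\morp\|$ already follows from boundedness of $\morp$ and $\|\mu_\lambda\|\leq 1$.)
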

	\begin{proof} Let  $(\pi,S)$ be a representation of $(A,\morp)$ and let $\{\mu_\lambda\}_{\lambda\in \Lambda}$ be an approximate unit  in $A$. Using  non-degeneracy of $\pi$ and  relation \eqref{representation of morp} we get
		$$
		\|S\|^2=\lim_{\lambda\in \Lambda}\|\pi(\mu_\lambda)S\|^2=\lim_{\lambda\in \Lambda}\|S^*\pi(\mu_\lambda^2)S\|=\lim_{\lambda\in \Lambda}\|\pi(\morp(\mu_\lambda^2))\|\leq \|\morp\|.
		$$
Now let  $\{(\pi_i,S_i)\}_{i\in I}$ be a set of representation, where  $\pi_i:A\to \B(H_i)$ and $S_i\in \B(H_i)$. The above inequality implies that the direct product $\prod_{i\in I}S_i $ is an element of $\prod_{i\in I} B(H_i)$. In particular,  embedding $\prod_{i\in I} B(H_i)$ in a non-degenerate way into $B(H)$ for some Hilbert space $H$, we see that 	$(\prod_{i\in I}\pi_i, \prod_{i\in I} S_i)$ is a representation of $(A,\morp)$.
Thus the assertion follows by Proposition \ref{existence of universal stuff}.
	\end{proof}
To study the structure of $C^*(\pi,S)=C^*(\pi(A)\cup \pi(A)S)$  one needs to understand the relationship between   the following `monomials':
$$
\pi(a)S \pi(b)S\pi(c), \quad \pi(a)S^* \pi(b)S^*\pi(c),\quad\pi(a)S^* \pi(b)S\pi(c),\quad \pi(a)S \pi(b)S^*\pi(c).
$$
As we will see in the course of our analysis, the first two  behave like `simple tensors', and by \eqref{representation of morp}  the third one  is in $\pi(A)$.
Establishing the relationship with the fourth `monomial'  requires determining additional data which is encoded in  an  ideal that we are about to introduce. In the context of $C^*$-correspondences, this ideal is closely related with the one considered in \cite[Definition 5.8]{katsura}, and is  called the ideal of covariance in \cite[Definition 4.5]{KL}.
\begin{dfn}\label{redundancy definition}
By a \emph{redundancy} of a representation $(\pi,S)$ of $(A,\morp)$  we  mean a pair $(\pi(a),k)$ such that $a\in A$, $k\in \overline{\pi(A)S\pi(A)S^*\pi(A)}$ and
 $$
 \pi(a)\pi(b)S=k \pi(b)S \,\, \textrm{ for all }b \in A.
 $$
Let $J_{(\pi,S)}$ be the set of elements $a\in A$ such that $(\pi(a),k)$ is a redundancy of $(\pi,S)$ with   $\pi(a)=k$. Clearly, it is an ideal in $A$ and
\begin{align*}
 J_{(\pi,S)}=\{a\in A: \pi(a) \in \overline{\pi(A)S\pi(A)S^*\pi(A)}\}.
\end{align*}
 We call it the \emph{ideal of covariance} for $(\pi,S)$.
  \end{dfn}
	\begin{rem} Using \eqref{representation of morp},  we see that  $\overline{\pi(A)S\pi(A)S^*\pi(A)}$ is a $C^*$-algebra that acts on the  space $\pi(A)S$. Moreover, this action is faithful. Thus, if $(\pi(a),k)$ is a redundancy of $(\pi,S)$, then $k$ is uniquely determined by $a$, and we have $\pi(a)=k$ if and only if $a\in  J_{(\pi,S)}$.
	\end{rem}
 Let us consider the  GNS-kernel $N_\morp$ of $\morp$, see \eqref{kernel of dynamical cp system}, and  a representation $(\pi,S)$  of $(A,\morp)$. If $a\in N_\morp$ then the pair $(\pi(a),0)$ is necessarily a redundancy  because $\|\pi(a)\pi(b)S\|^2\leq\|\morp((ab)^*ab))\|=0$, for all $b\in A$. In particular,    $a\in J_{(\pi,S)}\cap N_\morp$ implies  $\pi(a)=0$. Accordingly, if  $(\pi,S)$ is faithful then  $J_{(\pi,S)}\subseteq N_\morp^\bot$. An argument of this sort stands behind Katsura's motivation for introducing the ideal \eqref{katsura's ideal}.  It explains the special role of the ideal $N_\morp^\bot$ in the following definition.

\begin{dfn}\label{definition main}
We define the \emph{crossed product} $C^*(A,\morp)$ of $A$ by $\morp$ to be the quotient  of the Toeplitz $C^*$-algebra $\TT(A,\morp)$ by the ideal generated by the set
$$
\{i_A(a)-k: a\in N_\morp^\bot \textrm{ and }(i_A(a),k) \textrm{ is a redundancy of } (i_A,t)\}.
$$
For any ideal $J$  in $A$  we define the \emph{relative crossed product} $C^*(A,\morp;J)$  to be the quotient  of the Toeplitz $C^*$-algebra $\TT(A,\morp)$ by the ideal  generated by the set
$$
\{i_A(a)-k: a\in J \textrm{ and }(i_A(a),k) \textrm{ is a redundancy of } (i_A,t)\}.
$$
We denote by $(j_A, s)$ the representation of $(A,\morp)$ that generates $C^*(A,\morp;J)$.
\end{dfn}
\subsection{Crossed products as relative Cuntz-Pimsner algebras}
 A  $C^*$-cor\-respon\-dence  associated to a completely positive map  was already considered by Paschke  \cite[section 5]{Paschke} and  sometimes is called the GNS or the KSGNS-correspondence (for Kasparov, Stinespring, Gelfand, Naimark, Segal), cf. \cite{Lan}, \cite{imv}.  Namely, we let
  $X_\morp$ to be a Hausdorff completion of the algebraic tensor product $A\odot A$ with respect to the seminorm associated to the $A$-valued sesquilinear  form  given by
\begin{equation}\label{inner product for rho}
 \langle a\odot b, c\odot d  \rangle_\morp:= b^* \morp(a^*c)d, \qquad a,b,c,d \in A.
\end{equation}
In the sequel we  use the symbol $a\otimes b$ to denote the image of the simple tensor $a\odot b$ in $X_\morp$. The space $X_\morp$ becomes a $C^*$-correspondence over $A$ with  the left and right actions determined by: $a\cdot (b\otimes c)= (ab)\otimes c$ and  $(b \otimes c) \cdot a= b\otimes  (ca)$ where  $a,b,c\in A$.
\begin{dfn}
We  call  $X_\morp$ defined above the  \emph{$C^*$-correspondence of} $(A,\morp)$.
\end{dfn}

\begin{rem} Clearly, the $C^*$-correspondence $X_\morp$ is essential. The GNS-kernel \eqref{kernel of dynamical cp system} of $\morp$ coincides with the kernel of the left action of $A$ on $X_\morp$.  Hence the left action of $A$ on $X_\morp$ is faithful if and only if $\morp$ is almost faithful on $A$.
\end{rem}
If $A$ is not unital we  give a meaning to the symbol  $a\otimes 1$, $a\in A$,  using the following lemma.
\begin{lem}\label{bread remark2}
Let $\{\mu_\lambda\}_{\lambda\in \Lambda}$ be an approximate unit  in $A$. Then, for any $a\in A$, the limit
\begin{equation}\label{tensor one mappings2}
a\otimes 1 :=\lim_{\lambda\in \Lambda} a \otimes \mu_\lambda
\end{equation}
 exists  and  defines a bounded linear map $A \ni a \mapsto a\otimes 1 \in X_\morp$ of norm $\|\morp\|^{\frac{1}{2}}$.
\end{lem}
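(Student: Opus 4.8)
The plan is to show two things: first, that the net $\{a\otimes\mu_\lambda\}_{\lambda\in\Lambda}$ is Cauchy in $X_\morp$ for every $a\in A$, so that the limit in \eqref{tensor one mappings2} exists; and second, that the resulting map is linear with the asserted norm. Linearity is immediate from the bilinearity of the simple-tensor map and the fact that a limit of sums is the sum of limits, so the real content is the Cauchy estimate together with the norm computation. Throughout I will work with the $A$-valued inner product \eqref{inner product for rho}, under which $\|a\otimes b\|^2_{X_\morp}=\|b^*\morp(a^*a)b\|$.

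First I would establish a key auxiliary estimate: for $a\in A$ and $b,c\in A$,
\[
\|a\otimes b - a\otimes c\|_{X_\morp}^2 = \|(b-c)^*\morp(a^*a)(b-c)\| \le \|\morp(a^*a)\|\,\|b-c\|^2 .
\]
Applying this with $b=\mu_\lambda$, $c=\mu_\kappa$ and using that $\morp(a^*a)\ge 0$ so $\|\morp(a^*a)\|\le\|\morp\|\,\|a\|^2$, we get $\|a\otimes\mu_\lambda - a\otimes\mu_\kappa\|^2 \le \|\morp\|\,\|a\|^2\,\|\mu_\lambda-\mu_\kappa\|^2$. This bound is not yet enough to conclude Cauchyness directly since an approximate unit need not be norm-Cauchy. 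The standard fix, which I would carry out, is to first reduce to $a$ in a dense subset. By the $C^*$-identity and continuity of the inner product it suffices to prove convergence for $a$ ranging over a set whose span is dense; I would take $a = a'a''$ with $a',a''\in A$ (legitimate since $A^2$ is dense in $A$, indeed equals $A$ by Cohen–Hewitt). Then, writing $a\otimes\mu_\lambda = a'\otimes a''\mu_\lambda$ via the right action, I reduce to showing that $\{a'\otimes(a''\mu_\lambda)\}$ converges; but $a''\mu_\lambda\to a''$ in norm, so the first displayed estimate (with $b=a''\mu_\lambda$, $c=a''$) gives $\|a'\otimes a''\mu_\lambda - a'\otimes a''\|^2\le\|\morp(a'^*a')\|\,\|a''\mu_\lambda-a''\|^2\to 0$. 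Hence $a\otimes\mu_\lambda\to a'\otimes a''$ for such $a$, and by the uniform bound $\|a\otimes\mu_\lambda\|\le\|\morp\|^{1/2}\|a\|$ (valid for all $a$, see below) the map extends by continuity to all of $A$, with the net convergent for every $a$ because the net itself is uniformly bounded and convergent on a dense set; a standard $3\varepsilon$-argument closes this.

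For the norm, I would compute $\|a\otimes\mu_\lambda\|^2_{X_\morp}=\|\mu_\lambda\morp(a^*a)\mu_\lambda\|$. On one hand $\|\mu_\lambda\morp(a^*a)\mu_\lambda\|\le\|\morp(a^*a)\|\le\|\morp\|\,\|a\|^2$, giving $\|a\otimes 1\|\le\|\morp\|^{1/2}\|a\|$ after passing to the limit; this is the uniform bound used above. On the other hand $\mu_\lambda\morp(a^*a)\mu_\lambda\to\morp(a^*a)$ in norm (multiply an approximate unit on both sides of a fixed positive element), so $\|a\otimes 1\|^2=\|\morp(a^*a)\|$, and taking the supremum over $a$ with $\|a\|\le 1$ and invoking Lemma \ref{boundness equality for a cp maps} — which identifies $\|\morp\|$ with $\lim_\lambda\|\morp(\mu_\lambda)\|$, whence $\sup_{\|a\|\le1}\|\morp(a^*a)\|=\|\morp\|$ — yields that the map $a\mapsto a\otimes 1$ has norm exactly $\|\morp\|^{1/2}$.

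The main obstacle is the convergence claim: an approximate unit is not norm-Cauchy, so one cannot conclude directly from the Lipschitz-type estimate. The resolution — factoring $a=a'a''$ and absorbing $\mu_\lambda$ into $a''$ through the right action so that one only needs $a''\mu_\lambda\to a''$ in norm — is the crux, and everything else (linearity, the two-sided norm bound, the final norm equality via Lemma \ref{boundness equality for a cp maps}) is routine once this reduction is in place. I would also remark that well-definedness of the limit is independent of the choice of approximate unit, which follows by interleaving two approximate units into a single one.
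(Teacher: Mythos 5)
Your norm computation is correct, but the existence argument contains a genuine error at the step you yourself identify as the crux. The identity $a\otimes\mu_\lambda = a'\otimes(a''\mu_\lambda)$ for a factorization $a=a'a''$ does \emph{not} hold in $X_\morp$: this correspondence is not a balanced tensor product over $A$, and there is no relation allowing you to move a factor from the first leg to the second. Expanding $\bigl\| (a'a'')\otimes b - a'\otimes(a''b)\bigr\|_\morp^2$ with the inner product \eqref{inner product for rho} produces the defect terms $b^*\bigl(\morp(a''^*ha'')-\morp(a''^*h)a''-a''^*\morp(ha'')+a''^*\morp(h)a''\bigr)b$ with $h=a'^*a'$, which need not vanish unless $a''$ lies in the multiplicative domain of $\morp$. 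For the same reason the limit your reduction would produce, $a'\otimes a''$, is in general not $a\otimes 1$: its inner product against $c\otimes d$ is $a''^*\morp(a'^*c)d$ rather than $\morp(a''^*a'^*c)d$. So the dense-subset reduction does not go through as written.

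The obstacle you were trying to circumvent is in fact not there. From your own formula, $\|a\otimes\mu_\lambda-a\otimes\mu_{\lambda'}\|^2=\|(\mu_\lambda-\mu_{\lambda'})\morp(a^*a)(\mu_\lambda-\mu_{\lambda'})\|=\|\morp(a^*a)^{1/2}(\mu_\lambda-\mu_{\lambda'})\|^2$, and since $\morp(a^*a)^{1/2}$ is a \emph{fixed element of $A$}, the defining property of an approximate unit gives $\morp(a^*a)^{1/2}\mu_\lambda\to\morp(a^*a)^{1/2}$ in norm, so the net is Cauchy with no density argument needed. The mistake in your preliminary estimate was discarding the structure by bounding $\|(\mu_\lambda-\mu_{\lambda'})\morp(a^*a)(\mu_\lambda-\mu_{\lambda'})\|$ by $\|\morp(a^*a)\|\,\|\mu_\lambda-\mu_{\lambda'}\|^2$; the point is that $\mu_\lambda$ sits in the right-hand leg, where it multiplies a fixed algebra element. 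This is exactly the paper's one-line argument. The remainder of your proof --- the identity $\|a\otimes 1\|^2=\|\morp(a^*a)\|$ and the appeal to Lemma \ref{boundness equality for a cp maps} to obtain $\sup_{\|a\|\le 1}\|\morp(a^*a)\|=\|\morp\|$ --- is correct and agrees with the paper.
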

\begin{proof}
Since $\|a \otimes (\mu_\lambda - \mu_{\lambda'})\|^2=\|(\mu_\lambda - \mu_{\lambda'})\morp (a^*a)(\mu_\lambda - \mu_{\lambda'})\|$ tends to zero,
 as $\lambda$ and $\lambda'$ tend to `infinity', the net $\{ a \otimes \mu_\lambda\}_{\lambda\in \Lambda}$ is Cauchy and hence convergent. Since
$$
\sup_{a\in A, \|a\|=1}\|a \otimes 1\|^2=\sup_{a\in A, \|a\|=1}\|\morp(a^*a)\|=\|\morp\|,
$$
we see that $A \ni a \mapsto a\otimes 1 \in X_\morp$ is a bounded operator of norm $\|\morp\|^{\frac{1}{2}}$.
\end{proof}
\begin{rem}\label{bread remark}  Let $\{\mu_\lambda\}_{\lambda\in \Lambda}$ be an approximate unit  in $A$. The limit
\begin{equation}\label{tensor one mappings}
 1\otimes a :=\lim_{\lambda\in \Lambda} \mu_\lambda \otimes a, \qquad a \in A,
\end{equation}
in general  may not exist. However,  if $\morp$ is strict then it does exist and the  map  $A \ni a \mapsto 1\otimes a \in X_\morp$ is   linear bounded, again of norm $\|\morp\|^{\frac{1}{2}}$, see \cite[p. 50]{Lan}.
\end{rem}
The mapping in the latter remark, which exists when $\morp$ is strict, plays a key  role  in the construction of KSGNS-dilation of $\morp$, cf. \cite[Theorem 5.6]{Lan}. We adjust this construction to get a description of representations of the $C^*$-correspondence $X_\morp$ for arbitrary $\morp$.

\begin{prop}\label{characterization of representations of cp map} Let $X_\morp$  be the $C^*$-correspondence of $(A,\morp)$. We have a one-to-one correspondence between   representations $(\pi,S)$ of  $(A,\morp)$ and   representations $(\pi,\pi_{X_\morp})$ of ${X_\morp}$ where
\begin{equation}\label{associated correspondence representation}
\pi_{X_\morp}(a\otimes b)= \pi(a)S\pi(b),\qquad  a\otimes b \in {X_\morp},
\end{equation}
\begin{equation}\label{definition of partial isometry for partial morphisms}
S^*\pi_{X_\morp}(a\otimes b )=\pi(\morp(a)b),\quad  a,b\in A,  \quad S^*|_{(\pi_{X_\morp}({X_\morp})H)^{\bot}}\equiv 0.
\end{equation}
For the corresponding representations, we have $C^*(\pi,S)=C^*(\pi(A)\cup\pi_{X_\morp}({X_\morp}))$
and  for any approximate unit $\{\mu_\lambda\}_{\lambda\in \Lambda}$ in $A$:
\begin{equation}\label{conjugate by limit}
S=\textrm{s-}\lim_{\lambda\in \Lambda} \pi_{X_\morp}(\mu_\lambda\otimes 1)
\end{equation}
where the limit is taken in the strong operator topology.  If $\morp$ is strict,  the limit in \eqref{conjugate by limit} is strictly convergent in $M(C^*(\pi,S))$, and the  multiplier $S\in M(C^*(\pi,S))$ is determined by the formula $S\pi(a)=\pi_{X_\morp}(1\otimes a)$, $a\in A$, cf. Remark \ref{bread remark}.
\end{prop}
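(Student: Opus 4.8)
The plan is to set up the two correspondences separately and then check they are mutually inverse. \emph{From a representation $(\pi,S)$ of $(A,\morp)$ to a representation of $X_\morp$.} First I would define $\pi_{X_\morp}$ on the dense subspace $\spane\{a\otimes b : a,b\in A\}$ by formula \eqref{associated correspondence representation}, and the first task is to verify that this is well defined and extends to a bounded linear map on $X_\morp$. For this one computes, for finite sums, $\|\sum_i \pi(a_i)S\pi(b_i)\|^2 = \|\sum_{i,j}\pi(b_i)^*S^*\pi(a_i^*a_j)S\pi(b_j)\|$, and then uses the defining relation \eqref{representation of morp} $S^*\pi(c)S=\pi(\morp(c))$ to rewrite this as $\|\pi(\sum_{i,j} b_i^*\morp(a_i^*a_j)b_j)\|\le \|\sum_{i,j} b_i^*\morp(a_i^*a_j)b_j\| = \|\langle \sum_i a_i\otimes b_i,\ \sum_j a_j\otimes b_j\rangle_\morp\|$, which is exactly the $X_\morp$-norm squared of $\sum_i a_i\otimes b_i$. (Here complete positivity of $\morp$ guarantees the right-hand side is a positive element, cf.\ the Paschke characterization recalled before Proposition \ref{multiplicative domain for ccp maps}.) This simultaneously shows $\pi_{X_\morp}$ descends to the Hausdorff completion and is contractive. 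The bimodule identities $\pi_{X_\morp}(a\cdot(b\otimes c)\cdot d)=\pi(a)\pi_{X_\morp}(b\otimes c)\pi(d)$ are immediate from \eqref{associated correspondence representation}, and the inner-product compatibility $\pi_{X_\morp}(x)^*\pi_{X_\morp}(y)=\pi(\langle x,y\rangle_\morp)$ is the same computation done above without taking norms; thus $(\pi,\pi_{X_\morp})$ is a representation of $X_\morp$.

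\emph{Recovering $S$.} Next I would establish \eqref{conjugate by limit}. Using non-degeneracy of $\pi$, for $h=\pi(b)h'\in H$ one has $\pi_{X_\morp}(\mu_\lambda\otimes 1)h = \lim_{\lambda'}\pi(\mu_\lambda)S\pi(\mu_{\lambda'})\pi(b)h'$; commuting the limits and using $\pi(\mu_{\lambda'})\pi(b)\to\pi(b)$ together with $\|\pi(\mu_\lambda)S\pi(b)-S\pi(b)\|\to 0$ (the latter because $\|(\pi(\mu_\lambda)-1)S\pi(b)\|^2=\|\pi(b)^*S^*(1-\pi(\mu_\lambda))^2S\pi(b)\|\le\|\pi(b^*\morp((1-\mu_\lambda)^2)b)\|\to 0$, invoking Lemma \ref{boundness equality for a cp maps} / strictness of the approximate unit under $\morp^{**}$) gives strong convergence to $S\pi(b)h'$, i.e.\ to $Sh$. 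Formula \eqref{definition of partial isometry for partial morphisms} then follows: on vectors of the form $\pi_{X_\morp}(a\otimes b)h'=\pi(a)S\pi(b)h'$ we get $S^*\pi(a)S\pi(b)h'=\pi(\morp(a)b)h'$ by \eqref{representation of morp}, and $S^*$ kills the orthocomplement of $\clsp\,\pi_{X_\morp}(X_\morp)H=\clsp\,\pi(A)S\pi(A)H$ by construction — one should check this orthocomplement is exactly where $S^*$ already vanishes, which follows since $S^*h=0$ whenever $h\perp SH$ and $SH\subseteq\clsp\,\pi(A)S\pi(A)H$ by non-degeneracy. The equality of generated $C^*$-algebras $C^*(\pi,S)=C^*(\pi(A)\cup\pi_{X_\morp}(X_\morp))$ is then clear from \eqref{associated correspondence representation} (giving $\subseteq$ after adjoining adjoints) and \eqref{conjugate by limit} plus \eqref{definition of partial isometry for partial morphisms} (giving $S\in C^*(\pi(A)\cup\pi_{X_\morp}(X_\morp))$, hence $\supseteq$).

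\emph{The converse and bijectivity.} Conversely, given a representation $(\pi,\pi_{X_\morp})$ of $X_\morp$ with $\pi$ non-degenerate, I would \emph{define} $S$ by \eqref{conjugate by limit}, i.e.\ as the strong limit of $\pi_{X_\morp}(\mu_\lambda\otimes 1)$, after first checking via Lemma \ref{bread remark2} that $\mu_\lambda\otimes 1$ is Cauchy in $X_\morp$ (so $\pi_{X_\morp}(\mu_\lambda\otimes 1)$ is norm-Cauchy on the dense subspace $\pi(A)H$, hence strongly convergent by the uniform bound $\|\mu_\lambda\otimes 1\|\le\|\morp\|^{1/2}$), and then verify $S^*\pi(a)S=\pi(\morp(a))$ using the inner-product relation: $S^*\pi(a)S = \lim_{\lambda,\lambda'}\pi_{X_\morp}(\mu_\lambda\otimes 1)^*\pi(a)\pi_{X_\morp}(\mu_{\lambda'}\otimes 1)=\lim\pi(\langle\mu_\lambda\otimes 1,\ a\cdot(\mu_{\lambda'}\otimes 1)\rangle_\morp)=\lim\pi(\morp(\mu_\lambda a\mu_{\lambda'}))=\pi(\morp(a))$, where in the last step continuity of $\morp$ (Lemma \ref{boundness equality for a cp maps}) and of $\pi$ lets us pass the approximate unit through. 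Finally, the two assignments are mutually inverse: starting from $(\pi,S)$, passing to $(\pi,\pi_{X_\morp})$ via \eqref{associated correspondence representation}, and back via \eqref{conjugate by limit} returns $S$ by the already-proved formula \eqref{conjugate by limit}; starting from $(\pi,\pi_{X_\morp})$, defining $S$, and re-forming $\pi(a)S\pi(b)$ gives back $\pi_{X_\morp}(a\otimes b)$ because $\pi(a)S\pi(b)=\lim_\lambda\pi_{X_\morp}(a\mu_\lambda\otimes b)=\pi_{X_\morp}(a\otimes b)$ (using $a\mu_\lambda\otimes b\to a\otimes b$ in $X_\morp$, which holds since $a\mu_\lambda\to a$ and the left action is bounded, together with $\pi_{X_\morp}((a\mu_\lambda\otimes 1)\cdot b)=\pi_{X_\morp}(a\mu_\lambda\otimes b)$). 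The strict-case addendum follows from Remark \ref{bread remark}: when $\morp$ is strict the net $\mu_\lambda\otimes 1$ converges strictly in $M(C^*(\pi,S))$, $1\otimes a$ exists, and $S\pi(a)=\lim_\lambda\pi_{X_\morp}(\mu_\lambda\otimes a)=\pi_{X_\morp}(1\otimes a)$.

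\medskip

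The step I expect to be the main obstacle is the \emph{well-definedness and boundedness of $\pi_{X_\morp}$} — equivalently, the opposite direction, showing $S$ constructed from a correspondence representation satisfies \eqref{representation of morp} — because both hinge on the precise interplay between complete positivity of $\morp$ (needed to know $\sum b_i^*\morp(a_i^*a_j)b_j\ge 0$ and to identify the $X_\morp$-inner product), the seminorm passage to the Hausdorff completion, and the care needed with approximate units in the non-unital setting (where $a\otimes 1$ exists by Lemma \ref{bread remark2} but $1\otimes a$ need not, unless $\morp$ is strict). Everything else is bookkeeping with the defining relations, but this estimate is where the hypothesis that $\morp$ is completely positive (rather than merely positive) is genuinely used.
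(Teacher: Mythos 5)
Your overall strategy is the same as the paper's: define $\pi_{X_\morp}$ on elementary tensors and get well-definedness and contractivity from the identity $\bigl(\sum_i\pi(a_i)S\pi(b_i)\bigr)^*\sum_j\pi(c_j)S\pi(d_j)=\pi(\langle\sum_i a_i\otimes b_i,\sum_j c_j\otimes d_j\rangle_\morp)$, and in the converse direction recover $S$ as the strong limit of $S_\lambda:=\pi_{X_\morp}(\mu_\lambda\otimes 1)$. But there is a genuine gap at the one place in the converse direction where real analysis is required, namely the existence of that strong limit. You justify it by asserting that $\{\mu_\lambda\otimes 1\}$ is norm-Cauchy in $X_\morp$ ``via Lemma \ref{bread remark2}''. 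This is false in general: $\|(\mu_\lambda-\mu_{\lambda'})\otimes 1\|^2=\|\morp\bigl((\mu_\lambda-\mu_{\lambda'})^2\bigr)\|$, which already for $\morp=\mathrm{id}_A$ on a non-unital $A$ does not tend to zero (an approximate unit is never norm-Cauchy), and Lemma \ref{bread remark2} only gives boundedness of $a\mapsto a\otimes 1$ together with convergence of $a\otimes\mu_\lambda$ for each \emph{fixed} $a$. Note also that if $\{\mu_\lambda\otimes 1\}$ were norm-Cauchy then $\{S_\lambda\}$ would converge in operator norm and there would be nothing to prove; the whole point is that only strong convergence holds. The correct argument is a pointwise estimate on the dense subspace $\pi(A)H$: using $(\mu_\lambda-\mu_{\lambda'})^2\le|\mu_\lambda-\mu_{\lambda'}|$ and positivity of $\morp$ one gets
\[
\|(S_\lambda-S_{\lambda'})\pi(a)h\|^2=\bigl\langle h,\pi\bigl(a^*\morp((\mu_\lambda-\mu_{\lambda'})^2)a\bigr)h\bigr\rangle\le\bigl\langle \pi(a)h,\pi\bigl(\morp(|\mu_\lambda-\mu_{\lambda'}|)\bigr)\pi(a)h\bigr\rangle,
\]
and the bounded monotone net $\{\pi(\morp(\mu_\lambda))\}$ of positive operators converges strongly, so the right-hand side tends to zero; combined with the uniform bound $\|S_\lambda\|\le\|\morp\|^{1/2}$ and non-degeneracy of $\pi$ this yields the strong limit. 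This monotonicity argument is the actual content of the converse direction and is missing from your proposal.

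A second, related over-claim occurs in the forward direction: you assert $\|\pi(\mu_\lambda)S\pi(b)-S\pi(b)\|\to 0$ in \emph{norm}, justified by $\|\pi(b^*\morp((1-\mu_\lambda)^2)b)\|\to 0$. That quantity is exactly $\|(1-\mu_\lambda)\otimes b\|^2$ computed in the bidual, and its vanishing would force $\{\mu_\lambda\otimes b\}$ to converge, i.e.\ $1\otimes b$ to exist --- which Remark \ref{bread remark} explicitly warns can fail when $\morp$ is not strict. Fortunately this claim is not needed for \eqref{conjugate by limit}: one first shows $\pi_{X_\morp}(a\otimes 1)=\pi(a)S$ (because $\pi_{X_\morp}(a\otimes\mu_\lambda)=\pi(a)S\pi(\mu_\lambda)$ converges in norm to $\pi_{X_\morp}(a\otimes 1)$ by Lemma \ref{bread remark2} and strongly to $\pi(a)S$, so the two limits agree), whence $S_\lambda=\pi(\mu_\lambda)S\to S$ strongly simply because $\pi(\mu_\lambda)\to 1$ strongly. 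So the forward direction is repairable on the spot, but the converse direction needs the estimate above.
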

\begin{proof} Let  $(\pi,S)$ be a  representation of $(A,\morp)$ on $H$. The following computation
\begin{align*}
 \Big(\sum_{i}\pi(a_i)S\pi(b_i)\Big)^*  \sum_j\pi(c_j)S\pi(d_j)&=\sum_{i,j}\pi(b_i^*) S^*\pi(a_i^*c_j)S\pi(d_j)
 \\
 &=\sum_{i,j}\pi(b_i^*\morp(a_i^*c_j)d_j)
 \\ &
 =\pi(\langle\sum_{i}  a_i\otimes b_i, \sum_{j} c_j\otimes d_j\rangle_\morp)
\end{align*}
implies that the mapping \eqref{associated correspondence representation}   extends to a linear contractive map $\pi_{X_\morp}:{X_\morp}\to B(H)$. It is evident that $(\pi,\pi_{X_\morp})$  is a representation of ${X_\morp}$. We have  $
S^*\pi_{X_\morp}(a\otimes b )=S^*\pi(a) S \pi(b)=\pi(\morp(a)b)
$, for any $a,b\in A$. Since $\pi$ is non-degenerate, the range   of
 $S$ is contained in
$
\pi(A)SH=\pi(A)S\pi(A)H\subseteq \pi_{X_\morp}({X_\morp})H
$. Hence   $S^*|_{(\pi_{X_\morp}({X_\morp})H)^{\bot}}\equiv 0$ and   \eqref{definition of partial isometry for partial morphisms} holds.  Furthermore, note that for any approximate unit $\{\mu_\lambda\}_{\lambda\in \Lambda}$   in $A$, the net $\{\pi(\mu_\lambda)\}_{\lambda\in \Lambda}$ converges strongly to the identity in $B(H)$. Thus by Lemma \ref{bread remark2} and \eqref{associated correspondence representation}, we have  $\pi_{X_\morp}(a\otimes 1)= \lim_{\lambda \in \Lambda} \pi(a)S\pi(\mu_\lambda)=\pi(a)S$, for any $a\in A$. Therefore
$$
\pi(A)S=\pi_{X_\morp}(A\otimes 1) \subseteq \pi_{X_\morp}({X_\morp})=\pi(A)S\pi(A).
$$
Hence $C^*(\pi,S)=C^*(\pi(A)\cup\pi(A)S)=C^*(\pi(A)\cup\pi(A)S\pi(A))=C^*(\pi(A)\cup\pi_{X_\morp}({X_\morp}))$.

Suppose now that $(\pi,\pi_{X_\morp})$ is a  representation  of ${X_\morp}$. We need to show that there exists an operator $S\in B(H)$ such that  $(\pi,S)$ is a representation of $(A,\morp)$ satisfying  \eqref{associated correspondence representation}.  Let $\{\mu_\lambda\}_{\lambda\in \Lambda}$ be an approximate unit
  in $A$ and consider the net of bounded operators $S_\lambda:=\pi_{X_\morp}(\mu_\lambda \otimes 1)$,  $\lambda\in \Lambda$. Note that
	$S_\lambda\pi(a)=\pi_{X_\morp}(\mu_\lambda\otimes a)$, $a\in A$.    To see that \eqref{conjugate by limit} determines a bounded operator it suffices to show that the net $\{S_\lambda\}_{\lambda\in \Lambda}$ is strongly Cauchy. To this end, let $a\in A$, $h\in H$ and  $\lambda\leq \lambda'$, in the directed set $\Lambda$. Then
 \begin{align*}
\|(S_\lambda-S_{\lambda'})\pi(a)h\|^2 &=\|\pi_{X_\morp}((\mu_\lambda-\mu_{\lambda'})\otimes a)h\|^2
\\
&=\langle h,  \pi( \langle (\mu_\lambda-\mu_{\lambda'})\otimes a), (\mu_\lambda-\mu_{\lambda'})\otimes a\rangle_\morp)h\rangle
\\
&=\langle h, \pi(a^*\morp ((\mu_\lambda-\mu_{\lambda'})^2) a)h \rangle
\\
&\leq \langle h, \pi(a^*\morp (\mu_\lambda-\mu_{\lambda'}) a) h \rangle
\\
&=\langle \pi(a) h, \pi(\morp (\mu_\lambda-\mu_{\lambda'}))   \pi(a)h \rangle.
\end{align*}
Since the net $\{\pi(\morp (\mu_\lambda))\}_{\lambda\in \Lambda}$ is strongly convergent  the last expression tends to zero.
Hence $S:=\textrm{s-}\lim_{\lambda \in \Lambda} S_\lambda$ defines a bounded operator. Let $a, b \in A$. As
$\|(a- a\mu_\lambda)\otimes b\|^2=\|b^*\morp((a^*-\mu_\lambda a^*)(a- a\mu_\lambda))b\|$ tends to zero, we  get that $
a\otimes b=  \lim_{\lambda \in \Lambda} a\mu_\lambda \otimes b
$   in ${X_\morp}$.  Thus
$$
\pi_{X_\morp}(a\otimes b)=  \lim_{\lambda \in \Lambda} \pi_{X_\morp}(a\mu_\lambda \otimes b)=\pi(a)\lim_{\lambda \in \Lambda} S_\lambda  \pi(b)=\pi(a) S  \pi(b),
$$
that is  \eqref{associated correspondence representation} holds.  Moreover, for any $a,b\in A$ and $h,f \in H$ we have
\begin{align*}
\langle \pi_{X_\morp}(a\otimes b)h, S f\rangle &=\lim_{\lambda \in \Lambda}\langle \pi_{X_\morp}(a\otimes b)h, \pi_{X_\morp}(\mu_\lambda \otimes 1) f\rangle
\\
&=\lim_{\lambda \in \Lambda} \langle \pi(\morp(\mu_\lambda a ) b)h, f\rangle =\langle \pi(\morp(a ) b)h, f\rangle.
\end{align*}
Hence  $S^*\pi_{X_\morp}(a\otimes b )=\pi(\morp(a ) b)$ and therefore
$
S^* \pi(a) S \pi (b)= S^*\pi_{X_\morp}(a\otimes b )=\pi(\morp(a )) \pi(b).
$
Since $\pi$ is non-degenerate this implies \eqref{representation of morp}.
\\
Suppose now that $\morp$ is strict.  Then, in view of  Remark \ref{bread remark}, for any $a\in A$ the following limit exists
$$
\lim_{\lambda \in \Lambda} S_\lambda \pi(a)=\lim_{\lambda \in \Lambda} \pi_{X_\morp}(\mu_\lambda \otimes a)= \pi_{X_\morp}(1 \otimes a).
$$
Similarly, we get $\lim_{\lambda \in \Lambda} \pi(a) S_\lambda=\pi_{X_\morp}(a\otimes 1)$. Thus, since $\pi(A)$ is  non-degenerate
in $C^*(\pi,S)=C^*(\pi(A)\cup \pi(A)S\pi(A))$,  the limit in \eqref{conjugate by limit} is strictly convergent.
\end{proof}
\begin{rem}\label{remark on strictness} Let $(j_A,s)$ be the representation of $(A,\morp)$ that generates the relative crossed product $C^*(A,\morp;J)$. By the above proposition $s$ is  an element of the enveloping von Neumann algebra $C^*(A,\morp;J)^{**}$ and if  $\morp$ is strict then actually $s\in M(C^*(A,\morp;J))$.
\end{rem}
The following lemma is a translation of \cite[Proposition 3.3]{katsura} to our setting, cf.   also \cite[Lemma 3.5]{br}. It implies that  when considering the relative crossed products   it suffices to  restrict attention to ideals $J$ contained in the ideal $J(X_\morp)=\phi^{-1}(\K(X_\morp))$. It will also lead us to the main result of this subsection.

\begin{lem}\label{lemma of raeburn}
Let $(\pi,S)$ be a  faithful representation  of $(A,\morp)$ and let $(\pi,\pi_{X_\morp})$ be the  representation of  $X_\morp$ with $\pi_{X_\morp}$ given by \eqref{associated correspondence representation}. A pair $(\pi(a),k)$ is a redundancy of $(\pi,S)$ if and only if  $a\in J({X_\morp})$ and $k=(\pi,\pi_{X_\morp})^{(1)}(\phi(a))$.
\end{lem}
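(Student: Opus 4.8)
The plan is to characterize redundancies of $(\pi,S)$ purely in terms of the operator $(\pi,\pi_{X_\morp})^{(1)}$ on $\K(X_\morp)$, exploiting the identification $\pi_{X_\morp}(a\otimes b)=\pi(a)S\pi(b)$ from Proposition~\ref{characterization of representations of cp map}. First I would translate the defining condition of a redundancy. By definition $(\pi(a),k)$ is a redundancy iff $k\in\overline{\pi(A)S\pi(A)S^*\pi(A)}$ and $\pi(a)\pi(b)S=k\pi(b)S$ for all $b\in A$. Using $\pi(b)S=\pi_{X_\morp}(b\otimes 1)$ (the norm limit $\lim_\lambda\pi(b)S\pi(\mu_\lambda)$, as computed in the proof of Proposition~\ref{characterization of representations of cp map}) and the fact that $\pi(A)S\pi(A)$ is dense in $\pi_{X_\morp}(X_\morp)$, the condition $\pi(a)\pi(b)S=k\pi(b)S$ for all $b$ is equivalent to $\pi(a)\pi_{X_\morp}(\xi)=k\,\pi_{X_\morp}(\xi)$ for all $\xi\in X_\morp$, i.e.\ to $\phi(a)$ and the operator represented by $k$ agreeing on $X_\morp$ after applying the representation.

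Next I would analyze the membership condition $k\in\overline{\pi(A)S\pi(A)S^*\pi(A)}$. Since $\pi(a)S\pi(b)=\pi_{X_\morp}(a\otimes b)$ and $\pi(c)S^*\pi(d)=\big(\pi_{X_\morp}(c^*\otimes d^*)\big)^*$, the set $\pi(A)S\pi(A)S^*\pi(A)$ consists (after absorbing the flanking copies of $\pi(A)$, using non-degeneracy of $\pi$ and the module actions on $X_\morp$) of finite sums of operators $\pi_{X_\morp}(\xi)\pi_{X_\morp}(\eta)^*$ with $\xi,\eta\in X_\morp$. These are exactly $(\pi,\pi_{X_\morp})^{(1)}(\Theta_{\xi,\eta})$, so $\overline{\pi(A)S\pi(A)S^*\pi(A)}=(\pi,\pi_{X_\morp})^{(1)}(\K(X_\morp))$. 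Here faithfulness of $\pi$ is what guarantees $(\pi,\pi_{X_\morp})^{(1)}$ is isometric on $\K(X_\morp)$, so that every $k$ in this $C^*$-algebra is uniquely of the form $(\pi,\pi_{X_\morp})^{(1)}(T)$ for a (unique) $T\in\K(X_\morp)$.

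Putting these together: $(\pi(a),k)$ is a redundancy iff there is $T\in\K(X_\morp)$ with $k=(\pi,\pi_{X_\morp})^{(1)}(T)$ and $\pi(a)\pi_{X_\morp}(\xi)=(\pi,\pi_{X_\morp})^{(1)}(T)\pi_{X_\morp}(\xi)$ for all $\xi$. The left side equals $\pi_{X_\morp}(a\cdot\xi)=\pi_{X_\morp}(\phi(a)\xi)$, and since $(\pi,\pi_{X_\morp})^{(1)}(T)\pi_{X_\morp}(\xi)=\pi_{X_\morp}(T\xi)$, the condition becomes $\pi_{X_\morp}(\phi(a)\xi)=\pi_{X_\morp}(T\xi)$ for all $\xi\in X_\morp$. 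Because $\pi$ is faithful, $\pi_{X_\morp}$ is isometric, so this forces $\phi(a)\xi=T\xi$ for all $\xi$, i.e.\ $\phi(a)=T$ as operators on $X_\morp$. Thus a redundancy $(\pi(a),k)$ exists iff $\phi(a)\in\K(X_\morp)$, that is $a\in J(X_\morp)=\phi^{-1}(\K(X_\morp))$, and then necessarily $T=\phi(a)$ and $k=(\pi,\pi_{X_\morp})^{(1)}(\phi(a))$; conversely any such $a$ manifestly yields a redundancy. The one point needing a little care—and the main obstacle—is the passage from "equal on all of $\pi(A)S\pi(A)$" to "equal on $\pi_{X_\morp}(X_\morp)$" and then to an identity of operators on $X_\morp$ itself: this uses density of $A\otimes A$ (equivalently $\pi(A)S\pi(A)$) in $X_\morp$, the contractivity of $\pi_{X_\morp}$ to pass to closures, and faithfulness of $\pi$ to cancel $\pi_{X_\morp}$; one should also check that a redundant $k$ lies in the range of $(\pi,\pi_{X_\morp})^{(1)}$ rather than merely its closure, which is immediate since that range is already a $C^*$-algebra (indeed all of $(\pi,\pi_{X_\morp})^{(1)}(\K(X_\morp))$), closed by injectivity of $(\pi,\pi_{X_\morp})^{(1)}$ under the faithfulness hypothesis.
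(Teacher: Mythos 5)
Your proposal is correct and follows essentially the same route as the paper: both identify $\overline{\pi(A)S\pi(A)S^*\pi(A)}$ with $(\pi,\pi_{X_\morp})^{(1)}(\K(X_\morp))$, use the intertwining identity $(\pi,\pi_{X_\morp})^{(1)}(T)\pi_{X_\morp}(\xi)=\pi_{X_\morp}(T\xi)$, and invoke injectivity of $\pi_{X_\morp}$ (from faithfulness of $\pi$) to force $\phi(a)=T$. The only cosmetic difference is that the paper argues on simple tensors $b\otimes c$ directly rather than passing to general $\xi\in X_\morp$ by density.
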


\begin{proof}  Note that  $(\pi,\pi_{X_\morp})^{(1)}:\K({X_\morp})\to \overline{\pi(A)S\pi(A)S^*\pi(A)}$.
Thus if  $a\in J({X_\morp})$ and $k=(\pi,\pi_{X_\morp})^{(1)}(\phi(a))$ then $k\in \overline{\pi(A)S\pi(A)S^*\pi(A)}$. Moreover,    for any $b,c\in A$ we have
\begin{align*}
 \pi(a)\pi(b)S \pi(c)&= \pi(a)\pi_{X_\morp}(b\otimes c)=\pi_{X_\morp}(\phi(a)b\otimes c)= (\pi,\pi_{X_\morp})^{(1)}(\phi(a)) \pi_{X_\morp}(b\otimes c)
\\
&=(\pi,\pi_{X_\morp})^{(1)}(\phi(a))\pi(b)S \pi(c).
\end{align*}
Hence by non-degeneracy of $\pi$, we see  that $(\pi(a),k)$ is a redundancy.
\\
Now let $(\pi(a),k)$ be any redundancy. Then $k=(\pi,\pi_{X_\morp})^{(1)}(t)$ for a certain $t\in \K({X_\morp})$, and for any $b,c\in A$ we have
\begin{align*}
 \pi_{X_\morp}(\phi(a)b\otimes c)&=\pi(a)\pi_{X_\morp}(b\otimes c)=\pi(a)\pi(b) S \pi(c)=k\pi(b) S \pi(c)
\\
&=(\pi,\pi_{X_\morp})^{(1)}(t)\pi_{X_\morp}(b\otimes c) =\pi_{X_\morp}(t (b\otimes c)).
\end{align*}
Since faithfulness of $\pi$ implies injectivity of  $\pi_{X_\morp}$, we get   $\phi(a)b\otimes c=t b\otimes c $, for all $b,c\in A$. Consequently, $\phi(a)=t$ as desired.
\end{proof}
\begin{thm}\label{identification theorem}
Let $X_\morp$ be the $C^*$-correspondence of $(A,\morp)$. For any ideal $J$ in $A$  we have
$$
C^*(A,\morp;J)=C^*(A,\morp;J\cap J({X_\morp}))\cong \OO(J\cap J({X_\morp}), {X_\morp}).
$$
The universal homomorphism $j_A:A \to C^*(A,\morp;J)$ is injective if and only if $\morp$ is almost faithful on $J\cap J({X_\morp})$, that is if and only if $J\cap J({X_\morp})\subseteq N_\morp^\bot$. In particular,
$$
C^*(A,\morp)\cong \OO_{X_\morp}
$$
and $j_A:A\to C^*(A,\morp)$  is injective.
\end{thm}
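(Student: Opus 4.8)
The plan is to deduce everything from the theory of relative Cuntz--Pimsner algebras, transporting all data along the bijection of Proposition~\ref{characterization of representations of cp map} between representations of $(A,\morp)$ and representations of the $C^*$-correspondence $X_\morp$. \emph{Step~1 (identification of Toeplitz algebras).} By Proposition~\ref{characterization of representations of cp map}, the assignment $(\pi,S)\mapsto(\pi,\pi_{X_\morp})$ with $\pi_{X_\morp}(a\otimes b)=\pi(a)S\pi(b)$ is a bijection between the two classes of representations; it satisfies $C^*(\pi,S)=C^*(\pi(A)\cup\pi_{X_\morp}(X_\morp))$ and $\pi_{X_\morp}(a\otimes1)=\pi(a)S$. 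Hence the universal representation $(i_A,t)$ of $(A,\morp)$ transports to the universal representation of $X_\morp$, so that $\TT(A,\morp)=C^*(i_A(A)\cup i_A(A)t)=\TT(X_\morp)$, with $i_A\leftrightarrow i_A$, $i_A(a)t\leftrightarrow i_{X_\morp}(a\otimes1)$, and the induced maps on $\K(X_\morp)$ matching. Moreover $i_A\colon A\to\TT(A,\morp)$ is injective: since $X_\morp$ admits a faithful representation (Pimsner's Fock representation induced from a faithful representation of $A$), the corresponding representation $(\pi,S)$ of $(A,\morp)$ is faithful, and composing the epimorphism \eqref{toeplitz epimorphism} with $i_A$ shows $i_A$ is injective. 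Thus $(i_A,t)$ is a faithful representation of $(A,\morp)$, and Lemma~\ref{lemma of raeburn} applies to it.

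\emph{Step~2 (comparison of the defining ideals).} By Lemma~\ref{lemma of raeburn}, a pair $(i_A(a),k)$ is a redundancy of $(i_A,t)$ if and only if $a\in J(X_\morp)$ and $k=(i_A,i_{X_\morp})^{(1)}(\phi(a))$. Consequently, for an arbitrary ideal $J$ in $A$,
\begin{align*}
&\{\,i_A(a)-k:\ a\in J,\ (i_A(a),k)\text{ is a redundancy of }(i_A,t)\,\}\\
&\qquad=\{\,i_A(a)-(i_A,i_{X_\morp})^{(1)}(\phi(a)):\ a\in J\cap J(X_\morp)\,\},
\end{align*}
and the right-hand set is unchanged if $J$ is replaced by $J\cap J(X_\morp)$. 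This already yields the literal equality $C^*(A,\morp;J)=C^*(A,\morp;J\cap J(X_\morp))$. Since, by Remark~\ref{remark on relative Cuntz-Pimsners} applied to $\TT(X_\morp)=\TT(A,\morp)$, the quotient by the ideal generated by the right-hand set above is exactly $\OO(J\cap J(X_\morp),X_\morp)$, we obtain $C^*(A,\morp;J)\cong\OO(J\cap J(X_\morp),X_\morp)$.

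\emph{Step~3 (injectivity and the special case).} Recall that the kernel of the left action $\phi$ of $A$ on $X_\morp$ is the GNS-kernel $N_\morp$. Under the isomorphism of Step~2 the universal homomorphism $j_A\colon A\to C^*(A,\morp;J)$ corresponds to $j_A\colon A\to\OO(J\cap J(X_\morp),X_\morp)$, which by Proposition~\ref{proposition for the referee} is injective if and only if $J\cap J(X_\morp)\subseteq(\ker\phi)^\bot=N_\morp^\bot$; by the discussion following Definition~\ref{GNS-kernel} this is precisely almost faithfulness of $\morp$ on $J\cap J(X_\morp)$. Taking $J=N_\morp^\bot$ gives $J\cap J(X_\morp)=N_\morp^\bot\cap J(X_\morp)=(\ker\phi)^\bot\cap J(X_\morp)=J_{X_\morp}$, Katsura's ideal \eqref{katsura's ideal}, so that $C^*(A,\morp)\cong\OO(J_{X_\morp},X_\morp)=\OO_{X_\morp}$, and $j_A$ is injective since $J_{X_\morp}\subseteq N_\morp^\bot$.

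I expect no deep obstacle here: once the preceding propositions are in place, the argument is essentially a dictionary. The only points that require genuine care are in Step~1 — verifying that the identification $\TT(A,\morp)=\TT(X_\morp)$ is compatible with the generators $i_A(a)t\leftrightarrow i_{X_\morp}(a\otimes1)$ and with the maps $(\pi,S)^{(1)}\leftrightarrow(\pi,\pi_{X_\morp})^{(1)}$ on the compacts, and checking that $(i_A,t)$ is a \emph{faithful} representation so that Lemma~\ref{lemma of raeburn} is actually available. Everything after that is a routine translation through results already proved.
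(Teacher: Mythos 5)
Your proposal is correct and follows essentially the same route as the paper: identify $\TT(A,\morp)$ with $\TT(X_\morp)$ via Proposition \ref{characterization of representations of cp map}, use Lemma \ref{lemma of raeburn} to rewrite the set of redundancies so that the three algebras are quotients of the same Toeplitz algebra by the same ideal, and then invoke Proposition \ref{proposition for the referee} together with $\ker\phi=N_\morp$. Your explicit check that $(i_A,t)$ is faithful (via the Fock representation), so that Lemma \ref{lemma of raeburn} is actually applicable, is a point the paper leaves implicit, and is a welcome addition.
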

\begin{proof}
 By   Proposition \ref{characterization of representations of cp map} Toeplitz algebras $\TT(A,\morp)$ and $\TT({X_\morp})$  are naturally isomorphic and identifying them explicitly we may assume that the universal representation $(i_A,i_{X_\morp})$ of ${X_\morp}$ in $\TT({X_\morp})$ satisfies  $i_{X_\morp}(a\otimes b)=i_A(a) t \, i_A(b)$ for $a,b\in A$. Then $\TT(A,\morp)=\TT({X_\morp})$ and by Lemma \ref{lemma of raeburn}
\begin{align}
&\{i_A(a)-k: a\in J \textrm{ and }(i_A(a),k) \textrm{ is a redundancy of } (i_A,t)\}\nonumber
\\
& =\{i_A(a)-k: a\in J\cap J({X_\morp}) \textrm{ and }(i_A(a),k) \textrm{ is a redundancy of } (i_A,t)\}\label{equation 15}
\\
&=\{i_A(a)- (i_A,i_{X_\morp})^{(1)}(\phi(a)): a\in J\cap J({X_\morp})\}. \nonumber
\end{align}
Hence the three algebras $C^*(A,\morp;J)$,  $C^*(A,\morp;J\cap J({X_\morp}))$ and $\OO(J\cap J({X_\morp}),{X_\morp})$ arise as quotients of the same algebra by the same ideal, cf. Remark \ref{remark on relative Cuntz-Pimsners}. Thus they are isomorphic. For the remaining  part of the assertion  apply Proposition \ref{proposition for the referee} and the fact that $(\ker\phi)^\bot = N_\morp^\bot$.
\end{proof}

\begin{rem}\label{Schweizer's crossed product}
In \cite[Subsection 3.3]{Schweizer}, a crossed product by a completely positive map $\morp$ was defined   as Pimsner's (augmented) $C^*$-algebra associated to the $C^*$-correspondence $X_\morp$. Thus Schweizer's crossed product  is isomorphic to the relative crossed product $C^*(A,\morp;A)=C^*(A,\morp;J({X_\morp}))$.
\end{rem}
\begin{rem}\label{remark on interactions}
Using the notion of multiplicative domain, an \emph{interaction} $(\VV,\HH)$ on a $C^*$-algebra $A$ introduced by Exel in \cite[Definition 3.1]{exel-inter} can be defined as a pair of positive maps $\VV, \HH: A\to A$ such that
$$
\VV\HH\VV=\VV, \quad \HH \VV \HH=\HH,\quad \VV(A)\subseteq MD(\HH) ,\quad \HH(A)\subseteq MD(\VV).
$$
Then $\VV$ and $\HH$ are automatically contractive completely positive maps, see \cite[Corollary 3.3]{exel-inter}.
In \cite{kwa-interact} the author considered an interaction $(\VV, \HH)$ on a unital $C^*$-algebra $A$ such that the ranges $\VV(A)$, $\HH(A)$ are corners in $A$.
The crossed product $C^*(A,\VV, \HH)$ defined in \cite[Definition 2.10]{kwa-interact} is a universal $C^*$-algebra generated by a copy of $A$ and an operator $s$ subject to relations
$$
\VV(a)=sas^* \quad \textrm{ and  } \quad \HH(a)=s^*a s\quad \textrm{ for all } a\in A.
$$
By \cite[Corollary 2.16]{kwa-interact},  $C^*(A,\VV, \HH)$ coincides with the covariance algebra associated to $(\VV, \HH)$ in \cite{exel-inter}. The $C^*$-correspondences $X_\VV$ and $X_\HH$ are (mutually opposite) Hilbert bimodules, see \cite[Lemma 2.11]{kwa-interact}. Hence by \cite[Proposition 2.14]{kwa-interact} and  \cite[Proposition 3.7]{katsura1},   $C^*(A,\VV, \HH)$ is isomorphic to both $\OO_{X_\VV}$ and $\OO_{X_\HH}$. In particular, by Theorem \ref{identification theorem}, we  get
\begin{equation}\label{interacion isomorphisms}
C^*(A,\VV, \HH)\cong  C^*(A,\VV)\cong C^*(A,\HH),
\end{equation}
where $C^*(A,\VV)$ and $C^*(A,\HH)$ are crossed products in the sense of Definition \ref{definition main}.
\end{rem}

\subsection{Universal description and gauge-invariant uniqueness theorem}
We  describe the $C^*$-algebra  $C^*(A,\morp;J)$ as a  universal object in the following way.

\begin{dfn}\label{definition of covariance}
Let $J$ be an ideal in $A$. We  say that a representation  $(\pi,S)$ of $(A,\morp)$ is \emph{$J$-covariant} if $J\cap J(X_\morp)\subseteq J_{(\pi,S)}$ or just  \emph{covariant} if $N_\morp^\bot\cap J(X_\morp)\subseteq J_{(\pi,S)}$.
\end{dfn}

\begin{prop}\label{universal description of crossed products}
Let $J$ be an ideal in $A$. The  crossed product $C^*(A,\morp;J)$ is  universal with respect to  $J$-covariant representations of $(A,\morp)$, that is  $(j_A, s)$ is   $J$-covariant and for every  $J$-covariant representation $(\pi,S)$ of $(A,\morp)$  the mapping
\begin{equation}\label{epimorphism induced by covariant rep}
j_A(a) \longmapsto \pi(a),\qquad j_A(a)s \longmapsto  \pi(a)S, \qquad a\in A,
\end{equation}
extends to a (necessarily unique)  epimorphism $\pi \rtimes_{J} S: C^*(A,\morp;J) \to C^*(\pi,S)$.
\end{prop}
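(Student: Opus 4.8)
The plan is to reduce the assertion, via Theorem~\ref{identification theorem}, to the universal property that \emph{defines} the relative Cuntz--Pimsner algebra. Recall from that theorem that $C^*(A,\morp;J)=\OO(J\cap J(X_\morp),X_\morp)$ once we identify $\TT(A,\morp)=\TT(X_\morp)$ as in Proposition~\ref{characterization of representations of cp map}; under this identification the generating representation $(j_A,s)$ of $C^*(A,\morp;J)$ becomes the universal $(J\cap J(X_\morp))$-covariant representation $(j_A,j_{X_\morp})$ of $X_\morp$, with $j_{X_\morp}(a\otimes b)=j_A(a)\,s\,j_A(b)$. Since, by Proposition~\ref{characterization of representations of cp map}, the passage $(\pi,S)\mapsto(\pi,\pi_{X_\morp})$ is a bijection between representations of $(A,\morp)$ and representations of $X_\morp$ that preserves the generated $C^*$-algebra (and encodes $S$ as $\textrm{s-}\lim_\lambda\pi_{X_\morp}(\mu_\lambda\otimes 1)$), it suffices to prove the matching of covariance conditions
$$
(\pi,S)\text{ is }J\text{-covariant}\ \Longleftrightarrow\ (\pi,\pi_{X_\morp})\text{ is }(J\cap J(X_\morp))\text{-covariant in the sense of Muhly--Solel}.
$$
Granting this, the universal property of $\OO(J\cap J(X_\morp),X_\morp)$ shows at once that $(j_A,s)$ is $J$-covariant (as $(j_A,j_{X_\morp})$ is $(J\cap J(X_\morp))$-covariant by construction) and that any $J$-covariant $(\pi,S)$ induces an epimorphism sending $j_A(a)\mapsto\pi(a)$, $j_{X_\morp}(x)\mapsto\pi_{X_\morp}(x)$, whence $j_A(a)s\mapsto\pi(a)S$; uniqueness is immediate since $j_A(A)\cup j_A(A)s$ generates.

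The heart of the matter is thus the pointwise statement: \emph{for corresponding representations $(\pi,S)$, $(\pi,\pi_{X_\morp})$ and $a\in J(X_\morp)$, one has $a\in J_{(\pi,S)}$ if and only if $(\pi,\pi_{X_\morp})^{(1)}(\phi(a))=\pi(a)$.} For $\Leftarrow$ one uses $(\pi,\pi_{X_\morp})^{(1)}(\K(X_\morp))=\overline{\pi(A)S\pi(A)S^*\pi(A)}$ (Proposition~\ref{characterization of representations of cp map} and the defining formula for $(\pi,\pi_{X_\morp})^{(1)}$), so the equality forces $\pi(a)\in\overline{\pi(A)S\pi(A)S^*\pi(A)}$, i.e.\ $a\in J_{(\pi,S)}$. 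The direction $\Rightarrow$ is exactly Lemma~\ref{lemma of raeburn} when $(\pi,S)$ is faithful, but we need it in general, and I would argue directly on $H$: the identity $\pi(a)\pi_{X_\morp}(x)=\pi_{X_\morp}(\phi(a)x)=(\pi,\pi_{X_\morp})^{(1)}(\phi(a))\pi_{X_\morp}(x)$ for all $x\in X_\morp$ shows that $\pi(a)$ and $(\pi,\pi_{X_\morp})^{(1)}(\phi(a))$ agree on the closed subspace $\overline{\pi_{X_\morp}(X_\morp)H}$, while on its orthogonal complement both operators vanish --- the generalized-compact one automatically, and $\pi(a)$ because $a\in J_{(\pi,S)}$ places it in $\overline{\pi(A)S\pi(A)S^*\pi(A)}=(\pi,\pi_{X_\morp})^{(1)}(\K(X_\morp))$. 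Hence the two operators coincide.

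The matching of covariance conditions then follows: $J\cap J(X_\morp)\subseteq J_{(\pi,S)}$ says precisely that every $a\in J\cap J(X_\morp)$ lies in $J_{(\pi,S)}$, which by the pointwise statement means $(\pi,\pi_{X_\morp})^{(1)}(\phi(a))=\pi(a)$ for all such $a$, i.e.\ $(\pi,\pi_{X_\morp})$ is $(J\cap J(X_\morp))$-covariant. I expect the only genuinely delicate point to be the $\Rightarrow$ half of the pointwise statement for non-faithful $\pi$, where Lemma~\ref{lemma of raeburn} does not apply and one must run the orthogonal-decomposition argument above; everything else is transport of structure along the identifications furnished by Theorem~\ref{identification theorem} and Proposition~\ref{characterization of representations of cp map}. (Alternatively, one can avoid invoking Theorem~\ref{identification theorem} and argue entirely at the Toeplitz level, factoring the canonical epimorphism $\TT(A,\morp)\to C^*(\pi,S)$ through the defining relations of $C^*(A,\morp;J)$ using Lemma~\ref{lemma of raeburn} for the faithful pair $(i_A,t)$; the two routes require the same pointwise lemma.)
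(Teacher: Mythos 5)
Your argument is correct, but it is organized differently from the paper's. The paper stays entirely at the Toeplitz level: it observes that, by Lemma~\ref{lemma of raeburn} applied to the \emph{faithful} universal pair $(i_A,t)$, the defining set of the quotient is exactly \eqref{equation 15}; then it notes that the canonical epimorphism $\pi\rtimes_{\{0\}}S:\TT(A,\morp)\to C^*(\pi,S)$ carries redundancies of $(i_A,t)$ to redundancies of $(\pi,S)$, and since for $a\in J\cap J(X_\morp)\subseteq J_{(\pi,S)}$ the pair $(\pi(a),\pi(a))$ is itself a redundancy and the compact half of a redundancy is unique (the remark after Definition~\ref{redundancy definition}), the generators $i_A(a)-k$ are killed and the map factors. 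This sidesteps entirely the issue you correctly flag, namely that Lemma~\ref{lemma of raeburn} is stated only for faithful $\pi$: the paper never needs the implication ``$a\in J_{(\pi,S)}\Rightarrow k=(\pi,\pi_{X_\morp})^{(1)}(\phi(a))$'' for a non-faithful $(\pi,S)$. Your route, by contrast, transports everything to the relative Cuntz--Pimsner picture of Theorem~\ref{identification theorem} and therefore must prove the pointwise equivalence ``$a\in J_{(\pi,S)}\Leftrightarrow(\pi,\pi_{X_\morp})^{(1)}(\phi(a))=\pi(a)$'' for arbitrary $\pi$; your orthogonal-decomposition argument for the forward direction is sound (both operators agree on $\overline{\pi_{X_\morp}(X_\morp)H}$ by the module relation and the defining property of $(\pi,\pi_{X_\morp})^{(1)}$, and both vanish on the complement since $\overline{\pi(A)S\pi(A)S^*\pi(A)}=(\pi,\pi_{X_\morp})^{(1)}(\K(X_\morp))$), and it yields a genuine strengthening of Lemma~\ref{lemma of raeburn} that the paper does not record. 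The one inaccuracy is your closing parenthetical: the Toeplitz-level route does \emph{not} require the same pointwise lemma for non-faithful representations --- it only needs the faithful case plus the uniqueness of the second entry of a redundancy --- which is precisely what makes the paper's proof shorter. What your approach buys in exchange is the explicit dictionary between $J$-covariance of $(\pi,S)$ and Muhly--Solel covariance of $(\pi,\pi_{X_\morp})$ for \emph{every} representation, which is the content later exploited (for faithful representations) in Proposition~\ref{Gauge invariant uniqueness theorem}.
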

\begin{proof}
That $(j_A, s)$ is $J$-covariant  follows from  the equality \eqref{equation 15} and the definition of $C^*(A,\morp;J)$.  Let $(\pi,S)$ be a  $J$-covariant representation  of $(A,\morp)$, and let  $\pi \rtimes_{\{0\}} S:\TT(A,\morp)\to C^*(\pi,S)$ be the  epimorphism determined by  \eqref{toeplitz epimorphism}. Clearly, $\pi \rtimes_{\{0\}} S$ maps redundancies of $(i_A,i_{X_\morp})$ onto redundancies of $(\pi,S)$. Thus in view of     \eqref{equation 15}, using $J\cap J(X_\morp)\subseteq J_{(\pi,S)}$, we conclude that $\pi \rtimes_{\{0\}} S$  factors through to the desired epimorphism  $\pi \rtimes_{J} S: C^*(A,\morp;J) \to C^*(\pi,S)$.  \end{proof}
Using Katsura's gauge-invariant uniqueness  theorem  for  Cuntz-Pimsner algebras \cite[Theorem 6.4]{katsura} we get the following version of this standard tool for $C^*(A,\morp)$. One could  formulate a more general result for crossed products $C^*(A,\morp;J)$, cf. for instance \cite[Theorem 9.1]{kwa-doplicher}, but we will not need it here.
\begin{prop}\label{Gauge invariant uniqueness theorem} Let $(\pi,S)$ be a covariant representation of $(A,\morp)$. The epimorphism  given by \eqref{epimorphism induced by covariant rep} is an isomorphism:
$$
C^*(\pi, S)\cong C^*(A,\morp)
$$
if and only if $(\pi,S)$ is faithful and there exists  a strongly continuous action $\beta:\T\to \Aut (C^*(\pi, S))$ such that $\beta_z(\pi(a))=\pi(a)$ and  $\beta_z(\pi(a)S)= z\pi(a)S$ for all $a\in A$ and $z\in \T$.
\end{prop}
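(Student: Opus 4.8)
The plan is to reduce the statement to Katsura's gauge-invariant uniqueness theorem \cite[Theorem 6.4]{katsura} for the Cuntz--Pimsner algebra $\OO_{X_\morp}$, via the identification $C^*(A,\morp)\cong \OO_{X_\morp}$ established in Theorem \ref{identification theorem}. First I would fix a covariant representation $(\pi,S)$ of $(A,\morp)$ and let $(\pi,\pi_{X_\morp})$ be the associated representation of $X_\morp$ furnished by Proposition \ref{characterization of representations of cp map}, so that $\pi_{X_\morp}(a\otimes b)=\pi(a)S\pi(b)$ and $C^*(\pi,S)=C^*(\pi(A)\cup\pi_{X_\morp}(X_\morp))$. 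The key observation is that covariance of $(\pi,S)$ in the sense of Definition \ref{definition of covariance}, i.e. $N_\morp^\bot\cap J(X_\morp)\subseteq J_{(\pi,S)}$, translates — using Lemma \ref{lemma of raeburn} in the faithful case, and a routine argument otherwise — precisely into $(\pi,\pi_{X_\morp})$ being $J_{X_\morp}$-covariant in the Muhly--Solel sense, where $J_{X_\morp}=(\ker\phi)^\bot\cap J(X_\morp)$ is Katsura's ideal; here one uses that $\ker\phi=N_\morp$ (the remark after the definition of $X_\morp$). Thus a covariant $(\pi,S)$ is the same data as a representation of $\OO_{X_\morp}$.

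For the ``if'' direction: assume $(\pi,S)$ is faithful and admits the gauge action $\beta$. Faithfulness of $\pi$ is exactly injectivity of $j_A$ in Katsura's terminology, and I would check that the circle action $\beta$ on $C^*(\pi,S)$ fixing $\pi(A)$ and scaling $\pi(A)S$ restricts to the standard gauge action on the copy of $X_\morp$, namely $\beta_z(\pi_{X_\morp}(x))=\beta_z(\pi(a)S\pi(b))=z\,\pi(a)S\pi(b)=z\,\pi_{X_\morp}(x)$ for $x=a\otimes b$, extending by continuity and linearity to all of $X_\morp$. Hence the hypotheses of \cite[Theorem 6.4]{katsura} are met, and the canonical surjection $\OO_{X_\morp}\to C^*(\pi,S)$ — which is the epimorphism $\pi\rtimes S$ transported along $C^*(A,\morp)\cong\OO_{X_\morp}$ — is an isomorphism.

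For the ``only if'' direction: suppose $\pi\rtimes S\colon C^*(A,\morp)\to C^*(\pi,S)$ is an isomorphism. Since $j_A\colon A\to C^*(A,\morp)$ is injective (the last clause of Theorem \ref{identification theorem}) and $\pi=(\pi\rtimes S)\circ j_A$, the representation $\pi$ is faithful. The circle action is obtained by transporting the canonical gauge action $\gamma$ on $\OO_{X_\morp}$ (which exists by universality: it is induced by the rotated representations $(j_A, z\,j_{X_\morp})$) through the isomorphism $C^*(A,\morp)\cong\OO_{X_\morp}\cong C^*(\pi,S)$; one checks $\beta_z$ fixes $\pi(a)$ and sends $\pi(a)S=\pi_{X_\morp}(a\otimes\cdot)\mapsto z\pi(a)S$ on generators, and strong continuity of $z\mapsto\beta_z$ follows from that of $\gamma$ together with the fact that $\beta_z$ is a $*$-automorphism acting on a generating set continuously in $z$.

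The main obstacle I anticipate is the bookkeeping in the first paragraph: matching ``covariance'' of $(\pi,S)$ with $J_{X_\morp}$-covariance of $(\pi,\pi_{X_\morp})$ cleanly when $(\pi,S)$ need not be faithful, since Lemma \ref{lemma of raeburn} is stated only for faithful representations. This is handled by noting that for the equivalence $C^*(\pi,S)\cong C^*(A,\morp)$ we may as well assume $\pi$ faithful (it is forced in both directions), so that Lemma \ref{lemma of raeburn} applies directly and the identification of the ideal of covariance $J_{(\pi,S)}$ with $\{a: \phi(a)\in\K(X_\morp)\text{ and }(\pi,\pi_{X_\morp})^{(1)}(\phi(a))=\pi(a)\}$ is exact; everything else is a transcription of \cite[Theorem 6.4]{katsura} through the already-established isomorphism.
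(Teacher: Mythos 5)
Your proposal is correct and follows essentially the same route as the paper: both reduce the statement to Katsura's gauge-invariant uniqueness theorem \cite[Theorem 6.4]{katsura} by using Lemma \ref{lemma of raeburn} to translate covariance of a faithful $(\pi,S)$ into Katsura-covariance of $(\pi,\pi_{X_\morp})$, and Theorem \ref{identification theorem} to identify $C^*(A,\morp)$ with $\OO_{X_\morp}$. The extra details you supply (the forced faithfulness in the ``only if'' direction and the transport of the gauge action) are implicit in the paper's one-line appeal to Katsura's theorem.
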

\begin{proof}
If $(\pi,S)$ is a faithful covariant representation of $(A,\morp)$ then by Lemma \ref{lemma of raeburn} the corresponding representation $(\pi,\pi_{X_\morp})$ of $X_{\morp}$ is covariant in the sense of \cite[Definition 3.4]{katsura}. Since $C^*(A,\morp)\cong \OO_{X_\morp}$, by Theorem \ref{identification theorem}, it suffices to apply  \cite[Theorem 6.4]{katsura}.
\end{proof}

\subsection{The case when $\morp$ is multiplicative} \label{The case when multiplicative}
In this section, we assume  that $\morp$ is multiplicative and we denote it by $\alpha$. In other words, we assume that  $\alpha:A\to A$ is an endomorphism.
We show that  our relative crossed products $C^*(A,\alpha;J)$ coincide with various crossed products by endomorphisms appearing in the literature. The latter are typically studied in the case where $\alpha$ is extendible. We warn the reader that  representations of endomorphisms  are considered in a different convention than we adopted in Definition \ref{representation of a cp map}. For the sake of discussion we include the following definition.

\begin{dfn}\label{crossed product defn} Let $\alpha:A\to A$ be an extendible endomorphism and let $J$ be an ideal in $A$. We say that $(\pi,U)$ is a  \emph{representation of the endomorphism} $\alpha$ if  $(\pi,S)$, where $S=U^*$, is a representation of $(A,\alpha)$ in the sense of Definition \ref{representation of a cp map}. Thus we assume that $(\pi,U)$ consists of a non-degenerate representation $\pi:A\to \B(H)$ and an   operator $U\in \B(H)$  such that
\begin{equation}\label{representation relation for U}
  U\pi(a) U^*=\pi(\alpha(a))\qquad  \textrm{ for all }a\in A.
 \end{equation}
Further under these assumptions:
\begin{itemize}
\item[i)] We say that $(\pi,U)$ is a \emph{$J$-covariant representation of} $\alpha$ if
$$
J\subseteq \{a\in A: U^*U\pi(a)=\pi(a)\}.
$$
We  put $C^*_{endo}(A,\alpha;J):=C^*(i_A(A)\cup i_A(A)u)$ where  $(i_A,u)$ is the universal $J$-covariant representation of the endomorphism $\alpha$. We also put  $C^*_{endo}(A,\alpha):=C^*_{endo}(A,\alpha;(\ker\alpha)^\bot)$.

\item[ii)] We say that  $(\pi,U)$ is \emph{isometric} if $U$ is an isometry. The \emph{Stacey's crossed product} is $A\rtimes^{1}_\alpha\N:= C^*(\{i_A(a)u^n u^{*m}: a\in A, \,\, n,m\in \N\})$  where  $(i_A,u)$ is the universal isometric representation of $\alpha$.
\item[iii)] We say that  $(\pi,U)$ is \emph{partial-isometric} if
\begin{equation}\label{superfluous condition}
U \textrm{ is a partial isometry and }U^*U \textrm{ belongs to the commutant of } A.
\end{equation}
The \emph{partial-isometric crossed product} is $A\rtimes_\alpha \N:=C^*(i_A(A)\cup ui_A(A))$ where $(i_A,u)$ is the universal partial-isometric representation of $\alpha$.
\end{itemize}
\end{dfn}
\begin{rem}\label{dupa remark}
The relation \eqref{representation relation for U} implies that $U$ is necessary a partial isometry (cf. Proposition \ref{Proposition for endomorphisms0} below). All of the authors mentioned below assumed that the universal operator $u$ belongs to the multiplier algebra of the corresponding crossed product. We did not require that explicitly in Definition \ref{crossed product defn}   but it  follows from the axioms (see the second part of Proposition \ref{Proposition for endomorphisms}). Moreover, we have the following comments:

i). The crossed product $C^*_{endo}(A,\alpha;J)$ was introduced in  \cite[Definition 1.12]{KL} in the case $A$ is unital. It was generalized to the non-unital case in  \cite[Definition 4.8]{kwa-rever}. These papers deal only with the ideals $J$ contained in $(\ker\alpha)^\bot$. However, as explained, for instance in \cite[Remark 4.3]{kwa-rever} or \cite[Subsection 5.3]{KL}, if $J\subsetneq (\ker\alpha)^\bot$, then  there is a  canonical quotient system $(A/R,\alpha_R)$ such that the image $q_R(J)$ of $J$ in $A/R$ is contained in $(\ker\alpha_R)^\bot$ and  $C^*_{endo}(A,\alpha;J)\cong C^*_{endo}(A/R,\alpha_R;q_R(J))$.

ii). The crossed product $A\rtimes^{1}_\alpha\N$ was introduced in \cite[Definition 3.1]{Stacey} as a crossed product of multiplicity $1$. The author of \cite{Stacey} did not assume explicitly that $\alpha$ is extendible but he uses extendibility of $\alpha$ in his arguments. We note that a representation $(\pi,U)$ of  $\alpha$ is isometric if and only if it is $A$-covariant.

iii).  The crossed product $A\rtimes_\alpha \N$ was defined in \cite[p. 73]{Lin-Rae} (in a semigroup context) essentially as Fowler's Toeplitz crossed product \cite[p. 344]{fowler}, cf. \cite[Proposition 3.4]{fowler} or \cite[Proposition 4.7]{Lin-Rae}.
\end{rem}

The fact that the following covariance relation \eqref{additional relation for endomorphisms}  is automatic went unnoticed in a few papers  preceding \cite{KL}, cf. \cite[Remark 1.3]{KL}.
\begin{prop}\label{Proposition for endomorphisms0}
Let  $\alpha:A\to A$ be an endomorphism and let   $(\pi, S)$ be a  representation of $(A,\alpha)$, in the sense of Definition \ref{representation of a cp map}. Then we automatically have that $S$ is a partial isometry and
\begin{equation}\label{additional relation for endomorphisms}
\pi(a) S=S \pi(\alpha(a)),\qquad \textrm{ for all }a\in A.
\end{equation}
In particular, the projection $SS^*$ belongs to the commutant of $\pi(A)$, and the ideal of covariance for $(\pi, S)$, in the sense of Definition \ref{redundancy definition}, is given by the formula
\begin{equation}\label{ideal of covariance}
 J_{(\pi,S)}=\{a\in A: SS^*\pi(a)=\pi(a)\}.
\end{equation}
\end{prop}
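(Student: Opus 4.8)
The plan is to derive all four claims from the single defining relation $S^{*}\pi(a)S=\pi(\alpha(a))$ together with the fact that $\alpha$ is a $*$-homomorphism, working throughout with a fixed approximate unit $\{\mu_{\lambda}\}_{\lambda\in\Lambda}$ of $A$ (positive contractions, by our conventions). First I would check that $S$ is a partial isometry. By the norm estimate in the proof of Lemma~\ref{existence resistance} we have $\|S\|\le 1$, and $\|\pi(\alpha(\mu_{\lambda}))\|\le 1$ for every $\lambda$. Since $\pi$ is non-degenerate, $\pi(\mu_{\lambda})\to 1$ and $\pi(\mu_{\lambda}^{2})\to 1$ strongly, so $\pi(\alpha(\mu_{\lambda}))=S^{*}\pi(\mu_{\lambda})S$ and $\pi(\alpha(\mu_{\lambda}))^{2}=\pi(\alpha(\mu_{\lambda}^{2}))=S^{*}\pi(\mu_{\lambda}^{2})S$ both converge strongly to $S^{*}S$; as $\{\pi(\alpha(\mu_{\lambda}))\}$ is a bounded net, the strong limit of its squares equals the square of its strong limit, so $(S^{*}S)^{2}=S^{*}S$. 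Hence $S^{*}S$ is a projection and $S$ is a partial isometry (so $SS^{*}$ is a projection too).

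The heart of the argument is the covariance relation~\eqref{additional relation for endomorphisms}, and this is the key computation, the one where equality in the Schwarz inequality is used. For $a\in A$ and $\xi\in H$, since $1-SS^{*}$ is a projection, $\|(1-SS^{*})\pi(a)S\xi\|^{2}=\|\pi(a)S\xi\|^{2}-\|S^{*}\pi(a)S\xi\|^{2}$; here $\|\pi(a)S\xi\|^{2}=\langle \pi(\alpha(a^{*}a))\xi,\xi\rangle$ by the defining relation applied to $a^{*}a$, while $\|S^{*}\pi(a)S\xi\|^{2}=\|\pi(\alpha(a))\xi\|^{2}=\langle\pi(\alpha(a)^{*}\alpha(a))\xi,\xi\rangle$, and the two terms cancel because $\alpha(a^{*}a)=\alpha(a)^{*}\alpha(a)$. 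Thus $(1-SS^{*})\pi(a)S=0$, i.e. $\pi(a)S=SS^{*}\pi(a)S=S\bigl(S^{*}\pi(a)S\bigr)=S\pi(\alpha(a))$, which is~\eqref{additional relation for endomorphisms}. (For a merely completely positive $\morp$ one has only $\morp(a^{*}a)\ge\morp(a)^{*}\morp(a)$, so the range of $S$ need not be $\pi(A)$-invariant; this is exactly the obstruction that multiplicativity removes.) Taking adjoints in~\eqref{additional relation for endomorphisms} and using $\alpha(a^{*})=\alpha(a)^{*}$ gives $S^{*}\pi(a)=\pi(\alpha(a))S^{*}$, and combining the two relations yields $\pi(a)SS^{*}=S\pi(\alpha(a))S^{*}=SS^{*}\pi(a)$, so $SS^{*}$ lies in the commutant of $\pi(A)$.

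It remains to establish~\eqref{ideal of covariance}. For the inclusion $J_{(\pi,S)}\subseteq\{a:SS^{*}\pi(a)=\pi(a)\}$ I would use that, since $SS^{*}$ commutes with $\pi(A)$ and $SS^{*}S=S$, one has $SS^{*}\bigl(\pi(x)S\pi(y)S^{*}\pi(z)\bigr)=\pi(x)S\pi(y)S^{*}\pi(z)$ for all $x,y,z\in A$; by continuity $SS^{*}$ acts as the identity on the left of $\overline{\pi(A)S\pi(A)S^{*}\pi(A)}$, so $a\in J_{(\pi,S)}$ (whence $\pi(a)$ lies in this space) forces $SS^{*}\pi(a)=\pi(a)$. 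Conversely, if $SS^{*}\pi(a)=\pi(a)$ then also $\pi(a)SS^{*}=\pi(a)$ (as $SS^{*}\in\pi(A)'$), so $\pi(a)=SS^{*}\pi(a)SS^{*}=S\pi(\alpha(a))S^{*}$; approximating $\alpha(a)=\lim_{\lambda}\alpha(\mu_{\lambda})\alpha(a)\alpha(\mu_{\lambda})$ in norm and pushing $S(\cdot)S^{*}$ through using $S\pi(\alpha(\mu_{\lambda}))=\pi(\mu_{\lambda})S$ and $\pi(\alpha(\mu_{\lambda}))S^{*}=S^{*}\pi(\mu_{\lambda})$ from the previous paragraph, one obtains $\pi(a)=\lim_{\lambda}\pi(\mu_{\lambda})S\pi(\alpha(a))S^{*}\pi(\mu_{\lambda})\in\overline{\pi(A)S\pi(A)S^{*}\pi(A)}$, i.e. $a\in J_{(\pi,S)}$.

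I expect the only delicate point to be the cancellation in the second paragraph: one must recognise that it is precisely the multiplicativity of $\alpha$ (equality, rather than just an inequality, relating $\alpha(a^{*}a)$ and $\alpha(a)^{*}\alpha(a)$) that makes the closed range of $S$ invariant under $\pi(A)$ and hence turns $SS^{*}$ into a projection commuting with $\pi(A)$. Once~\eqref{additional relation for endomorphisms} is available, the remaining assertions are routine manipulations with the approximate unit and the commutation relations just derived.
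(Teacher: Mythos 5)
Your proof is correct and follows essentially the same route as the paper: approximate units give that $S^*S$ is a projection, the commutation relation $\pi(a)S=S\pi(\alpha(a))$ is the crux, and the commutant and covariance-ideal statements then follow by routine manipulation. The only difference is that where the paper cites \cite[Lemma 1.2]{KL} for the commutation relation, you supply a direct argument (equality in the Schwarz inequality via the defect projection $1-SS^*$), which is a correct and self-contained substitute for the cited computation.
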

\begin{proof}
Let $\{\mu_\lambda\}_{\lambda \in \Lambda}$ be an approximate unit in $A$. Multiplicativity of $\alpha$ implies that the increasing net $\{\pi(\alpha(\mu_\lambda))\}_{\lambda \in \Lambda}$ converges strongly to a projection in $\B(H)$.  By \eqref{representation of morp}, and non-degeneracy of $\pi$, we have  $s\textrm{-}\lim_{\lambda \in \Lambda}\pi(\alpha(\mu_\lambda) )=s\textrm{-}\lim_{\lambda \in \Lambda} S^*\pi(\mu_\lambda)S=SS^*$. Hence $S$ is a partial isometry.
Following the argument in the proof of \cite[Lemma 1.2]{KL}, with $U=S^*$, we get  the commutation relation \eqref{additional relation for endomorphisms}.

Now,  by \eqref{additional relation for endomorphisms} and its adjoint version  ($S^*\pi(a)=\pi(\alpha(a))S^*$, $a\in A$)  one gets
$SS^*\pi(a)=S\pi(\alpha(a))S^*=\pi(a)SS^*$, for $a\in A$. Hence $SS^*$ belongs to the commutant of $\pi(A)$.

By \eqref{additional relation for endomorphisms}   we have $\overline{\pi(A)S\pi(A)S^*\pi(A)}=S\pi(\alpha(A)A\alpha(A))S^*$. Since $S$ is a partial isometry, this implies that
$J_{(\pi,S)}\subseteq \{a\in A: SS^*\pi(a)=\pi(a)\}$.  Conversely, for any $a\in A$ such that  $SS^*\pi(a)=\pi(a)$, again by \eqref{additional relation for endomorphisms}, we have
$$
\pi(a)=SS^*\pi(a)=S\pi(\alpha(a))S^*\in S\pi(\alpha(A)A\alpha(A))S^*=\overline{\pi(A)S\pi(A)S^*\pi(A)}.
$$
This proves \eqref{ideal of covariance}.
\end{proof}
\begin{rem}
If $A$ is unital and $(\pi,U)$ is a representation of  $\alpha$ (as in Definition \ref{crossed product defn}), then the set $I_{(\pi,U)}:=\{a\in A: U^*U\pi(a)=\pi(a)\}$ was called in \cite[Definition 1.7]{KL} the ideal of covariance for $(\pi,U)$. In view of  \eqref{ideal of covariance} we have $I_{(\pi,U)}=J_{(\pi,S)}$ where $S=U^*$.
\end{rem}
The following lemma implies that in the definition of crossed products by extendible endomorphisms we can put the generating operator either on the left or on the right of the generating algebra (the outcome will be the same).
\begin{lem}\label{dupa lemma}
If $\alpha:A\to A$ is  an extendible endomorphism and  $(\pi, S)$ is representation of $(A,\alpha)$, then
$$
C^*(\pi(A)\cup\pi(A)S)=C^*(\pi(A)\cup S\pi(A))=C^*(\{\pi(a)S^{*n} S^{m}: a\in A, \,\, n,m\in \N\}).
$$
\end{lem}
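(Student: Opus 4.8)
The plan is to lean on Proposition \ref{Proposition for endomorphisms0}, which tells us that $S$ is a partial isometry and that $\pi(a)S=S\pi(\alpha(a))$ for all $a\in A$ (relation \eqref{additional relation for endomorphisms}). Iterating this relation and its adjoint $S^*\pi(a)=\pi(\alpha(a))S^*$ gives, for every $n\in\N$ and $a\in A$, the identities $\pi(a)S^n=S^n\pi(\alpha^n(a))$ and $\pi(\alpha^n(a))S^{*n}=S^{*n}\pi(a)$; moreover, applying Proposition \ref{Proposition for endomorphisms0} to the representation $(\pi,S^n)$ of the $C^*$-dynamical system $(A,\alpha^n)$ shows that each $S^n$ is again a partial isometry. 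Write $B_1$, $B_2$, $B_3$ for the three $C^*$-algebras in the statement, in that order. Since $\pi(a)S=S\pi(\alpha(a))\in S\pi(A)$ we get $B_1\subseteq B_2$, and since $\pi(A)$ and $\pi(A)S=\pi(A)S^{*0}S^1$ occur among the generators of $B_3$ we get $B_1\subseteq B_3$. Hence everything reduces to proving $S\pi(A)\subseteq B_1$ and $\pi(a)S^{*n}S^m\in B_1$ for all $a\in A$, $n,m\in\N$.

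Both are instances of one approximation argument. Fix an approximate unit $\{\mu_\lambda\}_{\lambda\in\Lambda}$ of $A$, so that $\pi(\mu_\lambda)\to 1$ strongly, and note that each $\pi(\mu_\lambda)S$ is literally a generator of $B_1$. First I would prove by induction on $m$ that $S^m\pi(b)\in B_1$ for all $b\in A$: the case $m=0$ is trivial, and for $m\geq 1$ the operator $\pi(\mu_\lambda)S^m\pi(b)=(\pi(\mu_\lambda)S)(S^{m-1}\pi(b))$ lies in $B_1$ by the inductive hypothesis, while $\pi(\mu_\lambda)S^m\pi(b)=S^m\pi(\alpha^m(\mu_\lambda)b)$ is norm-convergent because $\alpha^m$ is extendible (extendibility of $\alpha$ passing to all powers $\alpha^n$, with $\overline{\alpha^n}=\overline{\alpha}^{\,n}$); its norm limit agrees with its strong limit $S^m\pi(b)$, and $B_1$ is norm closed, so $S^m\pi(b)\in B_1$. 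Specializing to $m=1$ gives $S\pi(A)\subseteq B_1$, hence $B_2\subseteq B_1$ and $B_1=B_2$. Taking adjoints also yields $\pi(a)S^{*n}\in B_1$, and $\pi(a)S^m=S^m\pi(\alpha^m(a))\in B_1$.

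For the mixed monomials I would fix $n\geq 1$ (the case $n=0$ being already covered) and use the partial-isometry identity $S^{*n}S^nS^{*n}=S^{*n}$ together with $S^{*n}S^n=s\textrm{-}\lim_{\lambda}\pi(\alpha^n(\mu_\lambda))$ (obtained by evaluating the defining relation of the representation $(\pi,S^n)$ on $\mu_\lambda$). Using $\pi(\alpha^n(\mu_\lambda))S^{*n}=S^{*n}\pi(\mu_\lambda)$ this gives
\[
\pi(a)S^{*n}S^m=\pi(a)\bigl(S^{*n}S^n\bigr)S^{*n}S^m=s\textrm{-}\lim_{\lambda}\,\bigl(\pi(a)S^{*n}\bigr)\bigl(\pi(\mu_\lambda)S^m\bigr).
\]
Each term on the right lies in $B_1$ (both factors do, by the previous paragraph), and it equals $\pi\bigl(a\,\alpha^n(\mu_\lambda)\bigr)S^{*n}S^m$, so extendibility of $\alpha^n$ makes the net norm-Cauchy; therefore its limit $\pi(a)S^{*n}S^m$ lies in the norm-closed algebra $B_1$. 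This proves $B_3\subseteq B_1$, and hence $B_1=B_2=B_3$.

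The step I expect to require the most care is the last one: the mixed words $\pi(a)S^{*n}S^m$ do not reduce to ``simple tensors'', and the naive manipulations only produce strong limits, which is not enough since $B_1$ is merely norm-closed. The point that resolves this is to first absorb the factor $S^{*n}S^n$, which is a strong limit of the contractions $\pi(\alpha^n(\mu_\lambda))$, and then move those past $S^{*n}$; only after this rearrangement does extendibility of $\alpha^n$ upgrade the convergence of the relevant net to the norm topology, which is exactly what is needed.
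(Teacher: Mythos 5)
Your proof is correct and rests on exactly the same mechanism as the paper's: the intertwining relation \eqref{additional relation for endomorphisms}, the partial-isometry identity combined with $S^*S=\textrm{s-}\lim_\lambda\pi(\alpha(\mu_\lambda))$, and extendibility of $\alpha$ to upgrade the resulting strong limits to norm limits inside the norm-closed algebra $C^*(\pi(A)\cup\pi(A)S)$. The only organizational difference is that you apply these facts to the powers $S^n$ directly (hence needing that $S^n$ is a partial isometry and that $\alpha^n$ is extendible), whereas the paper applies them only for $n=1$ to get $\pi(A)S^*\subseteq\pi(A)S^*\pi(A)$ and then inducts on word length via $\pi(A)S^{*n}S^{m}\subseteq(\pi(A)S^*)^n(\pi(A)S)^m$.
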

\begin{proof}
By  \eqref{additional relation for endomorphisms} we have $\pi(A)S= S\pi(\alpha(A))\subseteq S\pi(A)$. Thus we get
$$
C^*(\pi(A)\cup\pi(A)S)\subseteq C^*(\pi(A)\cup S\pi(A))\subseteq C^*(\{\pi(a)S^{*n} S^{m}: a\in A, \,\, n,m\in \N\}).
$$
We show that the rightmost algebra is contained in the leftmost one. Recall that for any approximate unit $\{\mu_\lambda\}_{\lambda \in \Lambda}$   in $A$ the net $\{\pi(\alpha(\mu_\lambda))\}_{\lambda \in \Lambda}$ converges strongly to $S^*S$, cf. the proof Proposition \ref{Proposition for endomorphisms0}.  Moreover, since $\alpha$ is extendible, for any $a\in A$, the net $\{a\alpha(\mu_\lambda)\}_{\lambda \in \Lambda}$ converges in norm (to  $a\overline{\alpha}(1)$). Thus for any $a\in A$, using the adjoint of \eqref{additional relation for endomorphisms}, we get
$$
\pi(a)S^*=\pi(a)S^*SS^*=\lim_{\lambda \in \Lambda} \pi(a\alpha(\mu_\lambda))S^*=\lim_{\lambda \in \Lambda}  \pi(a)S^*\pi(\mu_\lambda)\in  \pi(A)S^*\pi(A)
$$
($\pi(A)S^*\pi(A)$ is a closed space by the  Cohen-Hewitt Factorization Theorem).
Hence $\pi(A)S^*\subseteq \pi(A)S^*\pi(A)$. By \eqref{additional relation for endomorphisms}, we also have $\pi(A)S\subseteq S\pi(A)$. The last two  inclusions, used inductively, give us that
$$
\pi(A)S^{*n}S^{m}\subseteq\underbrace{(\pi(A)S^*) ... (\pi(A)S^*)}_{n \textrm{ times}}\underbrace{(\pi(A)S) ... ( \pi(A)S)}_{m \textrm{ times}}.
$$
Hence $
 C^*(\{\pi(a)S^{*n} S^{m}: a\in A, \,\, n,m\in \N\}) \subseteq C^*(\pi(A)\cup\pi(A)S).$
\end{proof}
\begin{cor}\label{corollary for browary} For any extendible endomorphism  $\alpha:A\to A$ we have that
$
A\rtimes^{1}_\alpha\N=C^*_{endo}(A,\alpha;A)$ and $A\rtimes_\alpha\N= C^*_{endo}(A,\alpha;\{0\})$.
\end{cor}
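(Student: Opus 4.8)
The plan is to reduce both equalities to one bookkeeping fact and then to match up the relevant universal representations. First I would record the following strengthening of Lemma~\ref{dupa lemma}: if $\alpha$ is extendible and $(\pi,S)$ is a representation of $(A,\alpha)$, then the five $C^*$-algebras
\begin{gather*}
C^*(\pi(A)\cup\pi(A)S),\quad C^*(\pi(A)\cup S\pi(A)),\quad C^*(\pi(A)\cup\pi(A)S^*),\\
C^*(\pi(A)\cup S^*\pi(A)),\quad C^*\bigl(\{\pi(a)S^{*n}S^{m}:a\in A,\ n,m\in\N\}\bigr)
\end{gather*}
all coincide. Lemma~\ref{dupa lemma} already identifies the first, the second and the last; the remaining two follow by a pure $*$-algebra argument, since the last (monomial) algebra is $*$-closed and contains $\pi(A)S$, hence also $S^*\pi(A)=(\pi(A)S)^*$, $\pi(A)S^*$ and $S\pi(A)=(\pi(A)S^*)^*$, so that $C^*(\pi(A)\cup\pi(A)S^*)$ and $C^*(\pi(A)\cup S^*\pi(A))$ are squeezed between $C^*(\pi(A)\cup\pi(A)S)$ and the monomial algebra. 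Extendibility of $\alpha$ is used only at this point, through Lemma~\ref{dupa lemma}.

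For the identity $A\rtimes^1_\alpha\N=C^*_{endo}(A,\alpha;A)$, I would observe that a representation $(\pi,U)$ of $\alpha$ is isometric exactly when $U^*U\pi(a)=\pi(a)$ for all $a\in A$, that is (by non-degeneracy of $\pi$) exactly when it is $A$-covariant, cf.\ Remark~\ref{dupa remark}. Hence the universal isometric and the universal $A$-covariant representations of $\alpha$ are one and the same object $(i_A,u)$. Writing $S:=u^*$ we have $i_A(a)u^{n}u^{*m}=i_A(a)S^{*n}S^{m}$, so that $A\rtimes^1_\alpha\N=C^*\bigl(\{i_A(a)S^{*n}S^m:a\in A,\ n,m\in\N\}\bigr)$, whereas $C^*_{endo}(A,\alpha;A)=C^*(i_A(A)\cup i_A(A)u)=C^*(i_A(A)\cup i_A(A)S^*)$; these agree by the fact recorded above.

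For the identity $A\rtimes_\alpha\N=C^*_{endo}(A,\alpha;\{0\})$, I would first invoke Proposition~\ref{Proposition for endomorphisms0}: in any representation of the endomorphism $\alpha$ the generating operator $U$ is automatically a partial isometry with $U^*U$ in the commutant of the algebra, so the defining conditions \eqref{superfluous condition} of a partial-isometric representation hold automatically; trivially, every representation is also $\{0\}$-covariant. Hence $A\rtimes_\alpha\N$ and $C^*_{endo}(A,\alpha;\{0\})$ are both generated by the ranges of one and the same universal representation $(i_A,u)$ of $\alpha$; with $S:=u^*$ they become $C^*(i_A(A)\cup S^*i_A(A))$ and $C^*(i_A(A)\cup i_A(A)S^*)$ respectively, which coincide, once more by the fact above.

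The main obstacle here is conceptual rather than computational: one must verify that in each of the two statements the two crossed products are modelled on the same universal representation (isometric $\Leftrightarrow$ $A$-covariant in the first, partial-isometric $\Leftrightarrow$ unrestricted in the second), and then absorb the purely cosmetic discrepancies between the definitions — the generating operator sitting on the left versus on the right, and $S$ versus $S^*$ — into the single identity between the five $C^*$-algebras above. Once Lemma~\ref{dupa lemma} and Proposition~\ref{Proposition for endomorphisms0} are in hand, nothing further is needed.
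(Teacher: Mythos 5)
Your proof is correct and follows essentially the same route as the paper: identify the universal isometric representation with the universal $A$-covariant one (and the universal partial-isometric representation with the unrestricted, i.e.\ $\{0\}$-covariant, one), then apply Lemma~\ref{dupa lemma} to $(i_A,u^*)$ to reconcile the different placements of the generating operator. The only difference is that you make explicit the $*$-closedness bookkeeping (adding the two algebras generated with $S^*$ to the three of Lemma~\ref{dupa lemma}), which the paper's proof uses implicitly.
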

\begin{proof} Since a representation $(\pi,U)$ of $\alpha$ is isometric if and only if it is $A$-covariant, we may identify the corresponding universal representations. Then applying  Lemma \ref{dupa lemma} to $(i_A,u^*)$ we get
\begin{align*}
 A\rtimes^{1}_\alpha\N&=C^*(\{i_A(a)u^n u^{*m}: a\in A, \,\, n,m\in \N\})=C^*(i_A(A)\cup i_A(A)u)
 \\
 &=C^*_{endo}(A,\alpha;A).
\end{align*}
Similarly, using  Proposition \ref{Proposition for endomorphisms0} we see that partial-isometric representations of $\alpha$ coincide with $\{0\}$-covariant representations of $\alpha$ (which are simply representations of $\alpha$). Hence identifying the corresponding universal representations and applying  Lemma \ref{dupa lemma} to $(i_A,u^*)$ we get
$
A\rtimes_\alpha\N=C^*(i_A(A)\cup ui_A(A))=C^*(i_A(A)\cup i_A(A)u)=C^*_{endo}(A,\alpha;\{0\}).
$
\end{proof}

The crossed products $C^*_{endo}(A,\alpha;J)$ can be realized as relative Cuntz-Pimsner algebras associated to a certain $C^*$-correspondence $E_\alpha$ associated to $\alpha$ (this fact is  extensively discussed in \cite{KL} in the case when $A$ is unital). The $C^*$-correspondence in question was already considered by Pimsner \cite{p} and it is defined by the formulas
$$
E_\alpha:=\alpha(A)A,  \quad \langle x, y\rangle_A :=x^*y, \quad
a\cdot x\cdot b:=\alpha(a)xb, \quad x,y \in \alpha(A)A,\,\, a,b \in A.
$$
Pimsner's $C^*$-correspondence $E_\alpha$ and KSGNS $C^*$-correspondence $X_\alpha$ are naturally isomorphic.
\begin{lem}\label{lemma on the correspondence of endomorphism} If $\alpha:A\to A$ is an endomorphism, then the map $X_\alpha \ni a\otimes b\to \alpha(a)b\in E_\alpha$ determines an  isomorphism of $C^*$-correspondences $X_\alpha\cong E_\alpha$.
In particular,  we have $J(X_\alpha)=J(E_\alpha)=A$.
\end{lem}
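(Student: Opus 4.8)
The plan is to exhibit the map on elementary tensors, check it is an isometric bimodule surjection, and then read off the equality of the ideals.

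Since $\alpha$ is multiplicative and $*$-preserving, $\alpha(a^{*}c)=\alpha(a)^{*}\alpha(c)$, so the defining sesquilinear form \eqref{inner product for rho} of $X_{\alpha}$ becomes
\[
\langle a\odot b,\, c\odot d\rangle_{\alpha}=b^{*}\alpha(a)^{*}\alpha(c)d=(\alpha(a)b)^{*}(\alpha(c)d)=\langle \alpha(a)b,\, \alpha(c)d\rangle_{A},
\]
the right-hand side being the inner product of $E_{\alpha}$. First I would consider the bilinear map $\Phi_{0}\colon A\odot A\to E_{\alpha}$ given by $\Phi_{0}(a\odot b)=\alpha(a)b$. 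The identity above, applied to finite sums, shows that $\Phi_{0}$ preserves the two sesquilinear forms; taking the arguments equal, it follows that $\Phi_{0}$ is isometric for the seminorm used to build $X_{\alpha}$, so it descends to the Hausdorff quotient and extends to an isometry $\Phi\colon X_{\alpha}\to E_{\alpha}$ preserving inner products. As $\Phi$ is isometric it has closed range, and since $\Phi(A\odot A)=\spane\{\alpha(a)b:a,b\in A\}$ is dense in $E_{\alpha}=\alpha(A)A$, the map $\Phi$ is onto. It is right $A$-linear, $\Phi((a\otimes b)\cdot c)=\alpha(a)bc=\Phi(a\otimes b)\cdot c$, and it intertwines the left actions, $\Phi(a\cdot(b\otimes c))=\alpha(ab)c=\alpha(a)\bigl(\alpha(b)c\bigr)=a\cdot \Phi(b\otimes c)$, where we use $a\cdot x=\alpha(a)x$ on $E_{\alpha}$. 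Thus $\Phi$ is an isomorphism of $C^{*}$-correspondences $X_{\alpha}\cong E_{\alpha}$.

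Because an isomorphism of $C^{*}$-correspondences conjugates $\K(X_{\alpha})$ onto $\K(E_{\alpha})$ and intertwines the left actions, it carries $J(X_{\alpha})$ onto $J(E_{\alpha})$; so it suffices to prove $J(E_{\alpha})=\phi^{-1}(\K(E_{\alpha}))=A$, i.e.\ that $\phi(a)$, which is left multiplication by $\alpha(a)$, is a generalized compact operator on $E_{\alpha}$ for every $a\in A$. Fix an approximate unit $\{\mu_{\lambda}\}_{\lambda\in\Lambda}$ in $A$. Writing $\mu_{\lambda}^{1/2}=\mu_{\lambda}^{1/4}\mu_{\lambda}^{1/4}$ and using multiplicativity of $\alpha$, both $\alpha(a\mu_{\lambda}^{1/2})$ and $\alpha(\mu_{\lambda}^{1/2})$ lie in $\alpha(A)A=E_{\alpha}$, so $\Theta_{\alpha(a\mu_{\lambda}^{1/2}),\,\alpha(\mu_{\lambda}^{1/2})}\in\K(E_{\alpha})$, and for every $z\in E_{\alpha}$,
\[
\Theta_{\alpha(a\mu_{\lambda}^{1/2}),\,\alpha(\mu_{\lambda}^{1/2})}(z)=\alpha(a\mu_{\lambda}^{1/2})\,\alpha(\mu_{\lambda}^{1/2})^{*}z=\alpha(a\mu_{\lambda})z=\phi(a\mu_{\lambda})z .
\]
Hence $\phi(a\mu_{\lambda})\in\K(E_{\alpha})$; since $\phi$ is a $*$-homomorphism, hence norm-decreasing, $\|\phi(a\mu_{\lambda})-\phi(a)\|\le\|a\mu_{\lambda}-a\|\to0$, and as $\K(E_{\alpha})$ is closed we conclude $\phi(a)\in\K(E_{\alpha})$. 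Therefore $J(E_{\alpha})=A$, and consequently $J(X_{\alpha})=A$.

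I do not anticipate a genuine obstacle: the only place requiring care is the non-unital bookkeeping, namely verifying that the approximate-unit elements fed into the rank-one operators genuinely belong to the right ideal $E_{\alpha}=\alpha(A)A$ — which the factorization $\mu_{\lambda}^{1/2}=\mu_{\lambda}^{1/4}\mu_{\lambda}^{1/4}$ and multiplicativity of $\alpha$ handle — together with the routine checks that $\Phi_{0}$ is well defined on the algebraic tensor product and that the inner-product identity propagates to finite sums.
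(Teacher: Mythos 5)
Your proof is correct. The isomorphism half is exactly the verification the paper leaves ``to the reader'': the computation $\langle a\odot b, c\odot d\rangle_{\alpha}=(\alpha(a)b)^{*}(\alpha(c)d)$ plus the routine bimodule checks, and your surjectivity argument (isometry with dense range onto $E_\alpha=\alpha(A)A$) is fine. The only place you diverge from the paper is in proving $J(E_\alpha)=A$: the paper invokes Cohen--Hewitt factorization to write $a=a_1a_2$ and then observes that $\phi(a)$ is \emph{exactly} the single rank-one operator $\Theta_{\alpha(a_1),\alpha(a_2^{*})}$, with no limit needed; you instead approximate, showing $\phi(a\mu_\lambda)=\Theta_{\alpha(a\mu_\lambda^{1/2}),\alpha(\mu_\lambda^{1/2})}\in\K(E_\alpha)$ and letting $\lambda\to\infty$ using contractivity of $\phi$ and closedness of $\K(E_\alpha)$. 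Both arguments are valid and rest on the same idea (factor $a$ inside $A$ and use multiplicativity of $\alpha$ to produce rank-one operators); the paper's version is slightly slicker, yours is more self-contained in that it only uses the approximate unit rather than the factorization theorem. Your care in checking that $\alpha(\mu_\lambda^{1/2})$ and $\alpha(a\mu_\lambda^{1/2})$ actually lie in $E_\alpha=\alpha(A)A$ is well placed and is the same non-unital bookkeeping the paper's one-line formula silently requires of $\alpha(a_1)$ and $\alpha(a_2^{*})$.
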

\begin{proof} We leave it to the reader, as an easy exercise, to check that the prescribed map determines the desired isomorphism. For every $a \in A$ and $x\in E$ we can write $a=a_1a_2$,  where $a_1,a_2\in A$, and then $a\cdot x= \Theta_{\alpha(a_1),\alpha(a_2^*)} x$. Thus $J(E_\alpha)=A$.
\end{proof}
Now we are ready to state the main result of this subsection.
\begin{prop}\label{Proposition for endomorphisms}
Let  $\alpha:A\to A$ be an endomorphism and let  $J$ be an ideal in $A$. A  representation $(\pi, S)$ of $(A,\alpha)$ is $J$-covariant (in the sense of Definition \ref{definition of covariance}) if and only if
\begin{equation}\label{covariance for endomorphisms}
J\subseteq \{a\in A: SS^*\pi(a)=\pi(a)\}.
\end{equation}
Thus the $C^*$-algebra $C^*(A,\alpha;J)$ is generated by  $j_A(A) \cup j_A(A)s$ where $(j_A,s)$ is a universal representation for the representations of $(A,\alpha)$ satisfying \eqref{covariance for endomorphisms}.

If $\alpha$ is extendible, then $s \in M(C^*(A,\alpha;J))$,  $C^*(A,\alpha;J)$ is generated by  $j_A(A) \cup sj_A(A)$, and the assignments $j_A(a)\to i_A(a)$, $sj_A(a)\to u^*i_A(a)$, $a\in A$, establish the isomorphisms
$$
C^*(A,\alpha;J)\cong C^*_{endo}(A,\alpha;J),\qquad C^*(A,\alpha)\cong C^*_{endo}(A,\alpha),
$$
$$
C^*(A,\alpha;A)\cong A\rtimes^1_\alpha\N,  \qquad C^*(A,\alpha;\{0\})\cong A\rtimes_\alpha\N.
$$

\end{prop}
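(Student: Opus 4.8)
The plan is to reduce everything to the structural facts already established: the identity $J(X_\alpha)=A$ of Lemma~\ref{lemma on the correspondence of endomorphism}, the description~\eqref{ideal of covariance} of the ideal of covariance from Proposition~\ref{Proposition for endomorphisms0}, and the universal property in Proposition~\ref{universal description of crossed products}. First I would observe that for $\morp=\alpha$ we have $J\cap J(X_\alpha)=J$, so by Definition~\ref{definition of covariance} a representation $(\pi,S)$ of $(A,\alpha)$ is $J$-covariant exactly when $J\subseteq J_{(\pi,S)}$; plugging in~\eqref{ideal of covariance} this is precisely condition~\eqref{covariance for endomorphisms}. The assertion that $C^*(A,\alpha;J)$ is generated by $j_A(A)\cup j_A(A)s$ and is universal for representations of $(A,\alpha)$ obeying~\eqref{covariance for endomorphisms} is then just a restatement of Proposition~\ref{universal description of crossed products} with this identification.

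For the extendible case, I would first note that an extendible endomorphism is strict, so $s\in M(C^*(A,\alpha;J))$ by Remark~\ref{remark on strictness}, and that Lemma~\ref{dupa lemma} applied to the representation $(j_A,s)$ of $(A,\alpha)$ gives $C^*(A,\alpha;J)=C^*(j_A(A)\cup j_A(A)s)=C^*(j_A(A)\cup s\,j_A(A))$. The heart of the matter is the comparison with $C^*_{endo}(A,\alpha;J)$. By Definition~\ref{crossed product defn}\,i), $(\pi,U)$ is a $J$-covariant representation of the endomorphism $\alpha$ if and only if $(\pi,U^*)$ is a representation of $(A,\alpha)$ with $J\subseteq\{a:U^*U\,\pi(a)=\pi(a)\}$, i.e.\ (by the first part) a $J$-covariant representation of $(A,\alpha)$; so $S\leftrightarrow U:=S^*$ gives a bijection between the two classes of representations. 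Using $u\,i_A(a)=i_A(\alpha(a))\,u$ from Proposition~\ref{Proposition for endomorphisms0} together with Lemma~\ref{dupa lemma} applied to $(i_A,u^*)$, I would check that $C^*_{endo}(A,\alpha;J)=C^*(i_A(A)\cup i_A(A)u)=C^*(i_A(A)\cup i_A(A)u^*)=C^*(i_A(A)\cup u^*i_A(A))$, so that the universal $J$-covariant representation of $\alpha$ corresponds under the bijection to $(j_A,s)$ with $s=u^*$. Uniqueness of universal objects (Remark~\ref{trivial universal lemma}) then yields the isomorphism $C^*(A,\alpha;J)\cong C^*_{endo}(A,\alpha;J)$ implemented on generators by $j_A(a)\mapsto i_A(a)$, $s\,j_A(a)\mapsto u^*i_A(a)$; the special case $J=(\ker\alpha)^\bot$ gives $C^*(A,\alpha)\cong C^*_{endo}(A,\alpha)$ once one notes that for a multiplicative $\alpha$ the GNS-kernel $N_\alpha$ coincides with $\ker\alpha$ (it is the largest ideal inside $\ker\alpha$, which is itself an ideal), so that $C^*(A,\alpha)=C^*(A,\alpha;N_\alpha^\bot)=C^*(A,\alpha;(\ker\alpha)^\bot)$.

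Finally, specialising to $J=A$ and $J=\{0\}$ and combining with Corollary~\ref{corollary for browary}, which identifies $C^*_{endo}(A,\alpha;A)$ with $A\rtimes^{1}_\alpha\N$ and $C^*_{endo}(A,\alpha;\{0\})$ with $A\rtimes_\alpha\N$, I would obtain $C^*(A,\alpha;A)\cong A\rtimes^{1}_\alpha\N$ and $C^*(A,\alpha;\{0\})\cong A\rtimes_\alpha\N$. I expect the only real difficulty to be bookkeeping: keeping straight the ``operator on the right'' convention of Definition~\ref{representation of a cp map} versus the ``operator on the left'', $U=S^*$ convention of Definition~\ref{crossed product defn}, and verifying that moving the generating operator from one side to the other (and replacing $s$ by $s^*$) does not change the generated $C^*$-subalgebra --- which is exactly the role played by Lemma~\ref{dupa lemma} and by the extendibility hypothesis.
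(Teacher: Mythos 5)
Your proposal is correct and follows essentially the same route as the paper: the first part via $J(X_\alpha)=A$, formula \eqref{ideal of covariance} and Proposition \ref{universal description of crossed products}, and the extendible case via strictness (so $s$ is a multiplier), Lemma \ref{dupa lemma}, the $U=S^*$ correspondence of universal representations, and Corollary \ref{corollary for browary}. The extra bookkeeping you supply (e.g.\ that $N_\alpha=\ker\alpha$ for multiplicative $\alpha$, and the explicit matching of generating sets) is just a more detailed rendering of steps the paper leaves implicit.
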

\begin{proof}
By the formula \eqref{ideal of covariance} and the second part of Lemma \ref{lemma on the correspondence of endomorphism} we get that a  representation $(\pi, S)$ of $(A,\alpha)$ is $J$-covariant  if and only if \eqref{covariance for endomorphisms} holds. Then the universal picture of $C^*(A,\alpha;J)$ follows by Proposition \ref{universal description of crossed products}.

Assume now that $\alpha$ is extendible. Then $s \in M(C^*(A,\alpha;J))$ by the last part of Proposition \ref{characterization of representations of cp map}, and $C^*(A,\alpha;J)=C^*(j_A(A) \cup sj_A(A))$ by Lemma \ref{dupa lemma}. Thus the universal descriptions immediately give us the isomorphism $C^*(A,\alpha;J)\cong C^*_{endo}(A,\alpha;J)$.  Taking   $J=(N_\alpha)^\bot=(\ker\alpha)^\bot$, we get $C^*(A,\alpha)\cong C^*_{endo}(A,\alpha)$. By Corollary \ref{corollary for browary} we get
 $C^*(A,\alpha;A)\cong A\rtimes^1_\alpha\N$ and $C^*(A,\alpha;\{0\})\cong A\rtimes_\alpha\N$.
\end{proof}

\subsection{The case when $A$ is commutative}\label{commutative example}

 In this subsection, we assume that $A=C_0(D)$ is the algebra of continuous,  vanishing at infinity  functions on  a locally compact Hausdorff space  $D$.
We let $\M(D)$ be the space of Radon positive measures on $D$ and treat $\M(D)$ as the subset of the dual space $A^*$ equipped with the $w^*$-topology. Let us start with a few   simple observations.
\begin{lem}
We have a one-to-one correspondence given by the relation
$$
\morp(a)(x)=\int_{D} a(y) d\mu_x (y), \qquad x\in D, a\in A,
$$
between positive maps $\morp$ on $A$ and continuous, uniformly bounded maps
\begin{equation}\label{Brenken's topological relation}
D \ni x \stackrel{\mu}{\longmapsto} \mu_x \in  \M(D)
\end{equation}
that vanish at infinity in the   $w^*$-sense, that is for every $a\in A$
and every $\varepsilon >0$ the set  $\{x :|\mu_x(a)| \geq \varepsilon\}$  is compact in $D$. Under this correspondence $\|\morp\|=\sup_{x\in D}\|\mu_x\|$.
\end{lem}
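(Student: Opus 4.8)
### Proof proposal

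The plan is to set up the correspondence explicitly in both directions and check that the required analytic conditions match up. First I would start from a positive map $\morp:A\to A$. For each fixed $x\in D$, the functional $A\ni a\mapsto \morp(a)(x)\in\C$ is a positive linear functional on $A=C_0(D)$ (positivity of $\morp$ gives $\morp(a^*a)\geq 0$, hence $\morp(a^*a)(x)\geq 0$), so by the Riesz representation theorem there is a unique $\mu_x\in\M(D)$ with $\morp(a)(x)=\int_D a\,d\mu_x=\mu_x(a)$. The assignment $x\mapsto\mu_x$ is then forced, and what must be verified is: (i) $w^*$-continuity of $x\mapsto\mu_x$, (ii) uniform boundedness $\sup_x\|\mu_x\|<\infty$, and (iii) the $w^*$-vanishing-at-infinity condition. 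For (i), note that for each $a\in A$ the function $x\mapsto\mu_x(a)=\morp(a)(x)$ is exactly the element $\morp(a)\in C_0(D)$, hence continuous; continuity against all $a$ is precisely $w^*$-continuity of $x\mapsto\mu_x$. For (iii), again $x\mapsto\mu_x(a)=\morp(a)(x)$ lies in $C_0(D)$, so $\{x:|\mu_x(a)|\geq\varepsilon\}$ is a closed subset of the compact set $\{x:|\morp(a)(x)|\geq\varepsilon\}$, hence compact. For (ii), since $\mu_x$ is positive, $\|\mu_x\|=\sup\{\mu_x(a):a\in A_+,\ \|a\|\leq 1\}=\sup\{\morp(a)(x):a\in A_+,\ \|a\|\leq 1\}\leq\|\morp\|$; this also gives $\sup_x\|\mu_x\|\leq\|\morp\|$, half of the norm identity.

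Conversely, given a continuous, uniformly bounded, $w^*$-vanishing family \eqref{Brenken's topological relation}, I would define $\morp(a)(x):=\mu_x(a)=\int_D a\,d\mu_x$. Linearity in $a$ is clear, and $\morp(a^*a)(x)=\mu_x(a^*a)=\int_D|a|^2\,d\mu_x\geq 0$ since $\mu_x$ is a positive measure, so $\morp$ is positive once we know $\morp(a)\in A$. That $\morp(a)$ is continuous is $w^*$-continuity of the family tested against $a$; that $\morp(a)$ vanishes at infinity is precisely the $w^*$-vanishing hypothesis applied to $a$ (the set $\{x:|\morp(a)(x)|\geq\varepsilon\}$ is assumed compact). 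So $\morp(a)\in C_0(D)=A$, and $\morp:A\to A$ is a well-defined positive map. These two constructions are mutually inverse by the uniqueness part of Riesz representation, giving the bijection.

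Finally, for the norm identity it remains to show $\|\morp\|\leq\sup_x\|\mu_x\|$. Here I would invoke Lemma \ref{boundness equality for a cp maps}: fix an approximate unit $\{\mu_\lambda\}$ in $A$ (for $C_0(D)$ one can take a net of compactly supported functions increasing to $1$), so that $\|\morp\|=\lim_\lambda\|\morp(\mu_\lambda)\|$. For each $\lambda$ and each $x$, $0\leq\morp(\mu_\lambda)(x)=\mu_x(\mu_\lambda)\leq\mu_x(1)=\|\mu_x\|\leq\sup_x\|\mu_x\|$ (using positivity and $0\leq\mu_\lambda\leq 1$), hence $\|\morp(\mu_\lambda)\|\leq\sup_x\|\mu_x\|$ for all $\lambda$, and passing to the limit gives $\|\morp\|\leq\sup_x\|\mu_x\|$. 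Combined with the reverse inequality from the first paragraph, $\|\morp\|=\sup_x\|\mu_x\|$.

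I expect the only genuinely delicate point to be bookkeeping around the non-unital case: making sure that "uniformly bounded $w^*$-continuous family" really is the right hypothesis to land in $C_0(D)$ rather than $C_b(D)$, which is exactly why the $w^*$-vanishing-at-infinity clause is built into the statement, and why the norm computation must go through Lemma \ref{boundness equality for a cp maps} (via an approximate unit) rather than simply evaluating at a unit that need not exist. Everything else is a routine application of Riesz representation and positivity.
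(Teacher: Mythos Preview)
Your proposal is correct and follows exactly the approach indicated in the paper's proof: the correspondence is obtained via the Riesz representation theorem applied pointwise (the paper simply cites Riesz and refers to \cite{brenken}), and the norm identity is deduced from Lemma~\ref{boundness equality for a cp maps} precisely as you do. You have simply spelled out the details that the paper leaves implicit.
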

\begin{proof}
The assertion  readily follows  from  Riesz theorem, cf., for instance,  \cite[Section 1]{brenken}. In particular, using Lemma \ref{boundness equality for a cp maps} we get $\|\morp\|=\sup_{x\in D}\|\mu_x\|$.
\end{proof}
 We denote by  $\Closed(D)$  the set of all closed subsets of $D$. A mapping $\Phi:D\to \Closed(D)$  is \emph{lower-semicontinuous}   if   for every open $V\subseteq D$ the set $\{x\in D:V \cap \Phi(x)\neq \emptyset\}$ is  open. For any such mapping the set $\textrm{Dom}(\Phi):=\{x\in D: \Phi(x)\neq \emptyset\}$ is open in $D$.

\begin{lem}\label{lower-semicontinuous lemma}
Any continuous mapping \eqref{Brenken's topological relation}  induces a lower-semicontinuous  mapping
\begin{equation}\label{Brenken's topological support}
 D \ni x \stackrel{\Phi}{\longmapsto} \supp \mu_x \in  \Closed(D).
\end{equation}
\end{lem}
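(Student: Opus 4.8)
The plan is to unwind the definition of lower-semicontinuity directly and reduce it to the $w^*$-continuity of $x\mapsto\mu_x$. Fix an open set $V\subseteq D$; I must show that $W:=\{x\in D: V\cap\supp\mu_x\neq\emptyset\}$ is open. First I would observe the elementary characterization: for a positive Radon measure $\nu$ on $D$ and an open set $V$, one has $V\cap\supp\nu\neq\emptyset$ if and only if there exists $a\in C_0(D)_+$ with $\supp a\subseteq V$ and $\int a\,d\nu>0$ (equivalently $\nu(V)>0$). Indeed, if $x_0\in V\cap\supp\nu$, by Urysohn's lemma pick $a\in C_0(D)_+$ with $a(x_0)>0$ and $\supp a\subseteq V$; then $\int a\,d\nu>0$ since $x_0\in\supp\nu$. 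The converse is immediate because $\int a\,d\nu>0$ forces $\nu$ to charge the open set $\{a>0\}\subseteq V$.

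Using this, I would rewrite $W=\bigcup_{a}\{x\in D: \mu_x(a)>0\}$, where the union runs over all $a\in C_0(D)_+$ with $\supp a\subseteq V$ (here $\mu_x(a)=\morp(a)(x)=\int a\,d\mu_x$). For each fixed such $a$, the map $x\mapsto\mu_x(a)=\morp(a)(x)$ is a continuous function on $D$ (it equals the value at $x$ of the continuous function $\morp(a)\in C_0(D)$; this is exactly the $w^*$-continuity of $x\mapsto\mu_x$ from the preceding lemma), so $\{x: \mu_x(a)>0\}$ is open. Hence $W$ is a union of open sets, therefore open, and $\Phi$ is lower-semicontinuous.

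The only point requiring a little care—and the nearest thing to an obstacle—is the first elementary observation relating $V\cap\supp\nu\neq\emptyset$ to the existence of a test function supported in $V$; this is standard measure theory (the support of a Radon measure is the complement of the largest open null set), but I would state it cleanly since the rest of the argument is just feeding it into the definition of lower-semicontinuity together with the continuity of $\morp(a)$. Everything else is routine: no compactness or vanishing-at-infinity hypotheses are needed for this particular claim, only the continuity half of the correspondence in the previous lemma.
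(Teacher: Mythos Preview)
Your proof is correct and follows essentially the same approach as the paper: the paper also fixes $x_0$ with $V\cap\Phi(x_0)\neq\emptyset$, chooses a positive $a\in C_c(D)$ supported in $V$ with $\mu_{x_0}(a)>0$, and observes that $\{x:\mu_x(a)>0\}$ is open by continuity of $\mu$. The only cosmetic difference is that you phrase it as writing $W$ as a union of open sets indexed by test functions, whereas the paper picks a single test function witnessing openness at each point.
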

\begin{proof}
Assume that $V \cap \Phi(x_0)\neq \emptyset$ where $x_0\in D$ and  $V\subseteq D$ is open. Then there is a positive function $a\in C_c(D)$ that vanishes outside $V$ and such that  $\mu_{x_0}(a)>0$. Continuity of $\mu$ implies that $U:=\{x\in D:\mu_x(a)>0\}$ is  open. Since $x_0\in U\subseteq \{x\in D:V \cap \Phi(x)\neq \emptyset\}$, this proves the assertion.
\end{proof}
Let us fix a positive map $\morp$ and the corresponding maps $\mu$ and $\Phi$ given by \eqref{Brenken's topological relation} and  \eqref{Brenken's topological support}. We associate to $\Phi$ the following relation on $D$:
\begin{equation}\label{support of morp}
R_\Phi:=\bigcup_{x\in \textrm{Dom}(\Phi)}\{x\}\times  \Phi(x) .
\end{equation}
If $R_\Phi$ is closed  in $D\times D$, then $\mu$ is  called  a \emph{topological relation} in \cite{brenken}. In this case, as we show below, $\mu$ can also be viewed as a topological quiver. In general,  the relationship with topological quivers is subtle.  We recall the relevant definition, see \cite[Example 5.4]{ms2} or  \cite[Definition 3.1]{mt}.
We adopt the convention concerning the roles of the maps  $r,s$ as presented in \cite{ms2} (it differs from the one in \cite{mt}). It  fits to conventions we adopt in Section \ref{graphs section} for graph $C^*$-algebras. It is also consistent with the notation used for topological graphs by Katsura \cite{ka1}. We stress that we use the term topological graph in a broader sense than   \cite[Definition 2.1]{ka1}. Namely, we do not assume that the source map is a local homeomorphism.  Also we do not assume that  the  topological spaces underlying a topological quiver are second countable, as it is done in \cite[Definition 3.1]{mt}.
\begin{dfn}\label{topological quiver defn}
A \emph{topological graph }is a quadruple $E=(E^0,E^1,r,s)$  consisting of locally compact Hausdorff spaces $E^0$, $E^1$ and  continuous  maps $r,s:E^{1}\to E^0$, where $s$ is additionally assumed to be open. An \emph{$s$-system} on the graph $E$ is a family of Radon measures $\lambda=\{\lambda_v\}_{v\in E^0}$  on $E^1$ such that
\begin{itemize}
\item[(Q1)] $\supp \lambda_v=s^{-1}(v)$ for all $v \in E^0$,
\item[(Q2)]  $v \to \int_{E^1} a(e) d\lambda_{v} (e)$ is an element of $C_c(E^0)$ for all $a\in C_c(E^1)$.
\end{itemize}
The quintuple $(E^0,E^1,r,s,\lambda)$ where $E:=(E^0,E^1,r,s)$ is a topological graph and $\lambda$ is an $s$-system on $E$ is called a \emph{topological quiver}.
\end{dfn}

The following lemma and proposition should be compared with \cite[Proposition 2.2]{imv} stated (essentially without a proof) in the context of Markov operators, see discussion below.
\begin{lem}\label{pseudo-quiver} Consider the quintuple
$$\QQ_\morp:=(D,\overline{R_\Phi},r,s,\lambda)$$
where  $R_\Phi$ is given by \eqref{support of morp}, $s(x,y):=x$, $r(x,y):=y$ and $\lambda_x$ is a measure supported on $\{x\}\times \Phi(x) $ given by  $\lambda_x( \{x\}\times U)=\mu_x(U)$. Then $\QQ_\morp$ satisfies all axioms of  topological quiver, except that openness of the source map and axiom (Q1) hold for the restriction $s|_{R_\Phi}$ to $R_\Phi$, rather than for $s:\overline{R_\Phi}\to D$ itself.
\end{lem}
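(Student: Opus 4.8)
The plan is to check, one clause at a time, the conditions of Definition~\ref{topological quiver defn} for the quintuple $\QQ_\morp=(D,\overline{R_\Phi},r,s,\lambda)$, keeping an eye on the two clauses --- openness of the source map and axiom (Q1) --- that I expect to survive only after restricting $s$ to $R_\Phi$. The topological‑graph part is essentially formal: $D\times D$ is locally compact Hausdorff, hence so is its closed subspace $\overline{R_\Phi}$, and $r,s\colon\overline{R_\Phi}\to D$ are continuous, being restrictions of the coordinate projections. So the real work concerns openness, (Q1) and (Q2).

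For openness I would use lower‑semicontinuity of $\Phi$ (Lemma~\ref{lower-semicontinuous lemma}): a basic open subset of $R_\Phi$ is $(W\times V)\cap R_\Phi$ with $W,V\subseteq D$ open, and its image under $s$ is $W\cap\{x\in D:V\cap\Phi(x)\neq\emptyset\}$, which is open in $D$; since every open subset of $R_\Phi$ is a union of such sets and $s$ carries unions to unions, $s|_{R_\Phi}$ is open. No such computation is available for $s\colon\overline{R_\Phi}\to D$, since passing to the closure adjoins points outside $\textrm{Dom}(\Phi)$; the examples to follow are meant to show the failure is genuine. For the measures, each $\lambda_v$ is the push‑forward of the Radon measure $\mu_v$ under the homeomorphism $y\mapsto(v,y)$ of $D$ onto $\{v\}\times D$, extended by $0$; since $\supp\mu_v=\Phi(v)$ and $\{v\}\times\Phi(v)$ is closed in $D\times D$ and, when non‑empty, contained in $R_\Phi\subseteq\overline{R_\Phi}$, each $\lambda_v$ is a well‑defined Radon measure on $\overline{R_\Phi}$ with $\supp\lambda_v=\{v\}\times\Phi(v)=(s|_{R_\Phi})^{-1}(v)$; this is exactly (Q1) for the restriction, and it fails for $s\colon\overline{R_\Phi}\to D$ precisely because $s^{-1}(v)$ inside $\overline{R_\Phi}$ may be strictly larger than $\{v\}\times\Phi(v)$.

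The one genuinely computational clause is (Q2), and that is where I expect the main (if mild) obstacle, because one must secure not merely continuity of $v\mapsto\int a\,d\lambda_v$ but also preservation of compact support. For $b,c\in C_c(D)$ one has $\int_{\overline{R_\Phi}}(b\otimes c)\,d\lambda_v=b(v)\,\morp(c)(v)$, which lies in $C_c(D)$ since $\morp(c)\in C_0(D)$ and $b\in C_c(D)$; by linearity the same holds for every element of the subalgebra of $C_0(\overline{R_\Phi})$ spanned by the restrictions of such products, and that subalgebra is self‑adjoint, separates points of $\overline{R_\Phi}$, and vanishes nowhere. Given an arbitrary $a\in C_c(\overline{R_\Phi})$, the plan is to pick a product $\psi=(b_0\otimes c_0)|_{\overline{R_\Phi}}$ with $\psi\equiv 1$ on $\supp a$ and $\psi$ compactly supported, to approximate $a$ uniformly by elements $g_n$ of the subalgebra via Stone--Weierstrass, and to pass to $a_n:=\psi g_n$, which still lie in the subalgebra, converge to $a$ uniformly, and have supports inside the fixed compact set $\supp\psi$. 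Using the uniform bound $\sup_v\|\lambda_v\|=\sup_v\|\mu_v\|=\|\morp\|<\infty$ recalled above, $v\mapsto\int a_n\,d\lambda_v$ then converges uniformly to $v\mapsto\int a\,d\lambda_v$ with all supports inside the compact first‑coordinate projection of $\supp\psi$, so the limit lies in $C_c(D)$. This yields (Q2) and completes the verification that $\QQ_\morp$ satisfies every quiver axiom except that openness of the source and (Q1) are only claimed for $s|_{R_\Phi}$.
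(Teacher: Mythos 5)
Your proposal is correct and follows essentially the same route as the paper: both hinge on the identity $\int (b\otimes c)\,d\lambda_v=b(v)\morp(c)(v)$, the uniform bound $\sup_v\|\lambda_v\|=\|\morp\|$ from Lemma \ref{boundness equality for a cp maps}, and openness of $s|_{R_\Phi}$ via lower-semicontinuity of $\Phi$ (Lemma \ref{lower-semicontinuous lemma}). The only (harmless) organizational difference is in the density step for (Q2): the paper extends the integration map to a bounded operator on all of $C_0(D\times D)$ and then restricts, whereas you run Stone--Weierstrass intrinsically on $\overline{R_\Phi}$ with a compactly supported cutoff to control supports.
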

\begin{proof} Openness of $s:R_\Phi\to D$ is equivalent to lower-semicontinuity of  $\Phi$ and thus follows from Lemma \ref{lower-semicontinuous lemma}. To show the axiom (Q2) define a map $\Psi:C_c(D)\odot C_c(D) \to  C_0(D)$ by the formula
$$
\Psi(\sum_{i}a_i\odot b_i)(x):=\int \sum_{i}a_i\odot b_i  \, d\lambda_{x} = \sum_{i} a_i(x) \int b_i\,d \mu_x  =\left(\sum_{i} a_i \morp(b_i)  \right)(x).
$$
 It is well defined because $\sum_{i} a_i \morp(b_i)\in C_0(D)$ and it is linear because it is given by the integral. It is  bounded with $\|\Psi\|\leq \sup_{x\in D}\|\lambda_x\|=\sup_{x\in D}\|\mu_x\|=\|\morp\|$. Thus, since $C_c(D)\odot C_c(D)$ is uniformly dense in $C_0(D\times D)$ we deduce  that the formula $\Psi(a)(x)=\int a d\lambda_{x}$   defines  a bounded linear map $\Psi:C_0(D\times D)\to C_0(D)$. Concluding, for any   $a\in C_c(\overline{R_\Phi})$ we see that $x \to \int_{E^1} a(x,y) d\lambda_x (y)$ defines a continuous function on $D$ which vanishes outside the compact set $s(\supp(a))$. This proves (Q2). The rest is clear by construction.
\end{proof}
\begin{rem} By the above lemma the quintuple $(D,R_\Phi,r,s,\lambda)$ is a topological quiver whenever $R_\Phi$ is locally compact.
However,  if $R_\Phi$ is   not closed in $D\times D$ then the mapping \eqref{mapping W} below, with $R_\Phi$ in place  of $\overline{R_\Phi}$, is not well defined.\end{rem}
\begin{prop}\label{Correspondence of a pseudo-quiver} The quintuple $\QQ_\morp=(D,\overline{R_\Phi},r,s,\lambda)$ from Lemma \ref{pseudo-quiver} gives rise to a $C^*$-correspondence $X_{\QQ_\morp}$ which is the Hausdorff completion of the semi-inner $C^*$-correspondence defined on $C_c(\overline{R_\Phi})$  via
$$
(a \cdot f \cdot b) (x,y)=\big((a \circ s) f (b\circ r)\big) (x,y)=a(x)f(x,y)b(y),
$$
and
$$
\langle f , g \rangle_{\QQ_\morp} (x)=\int_{R_\Phi} \overline{f} g d\lambda_x=\int_{\Phi(x)} \overline{f(x,y)} g(x,y) d\mu_x(y),
$$
$f,g \in C_c(\overline{R_\Phi})$, $a,b \in C_0(D)=C_0(D)$. Moreover, putting  $W(a\odot b)(x,y):=a(x)b(y)$, for $(x,y)\in \overline{R_\Phi}$ the mapping determined by
\begin{equation}\label{mapping W}
C_c(D)\odot C_c(D) \ni a\odot b  \to  W(a\odot b)  \in  C_c( \overline{R_\Phi}),
\end{equation}
 factors through and extends to an isomorphism of $C^*$-correspondences $X_\morp\cong  X_{\QQ_\morp}$.
\end{prop}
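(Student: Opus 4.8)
The plan is to deduce the entire statement from the single map $W$ together with the already-proved Lemma \ref{pseudo-quiver}. As a preliminary I would check that $X_{\QQ_\morp}$ is genuinely a $C^*$-correspondence over $C_0(D)$, so that ``Hausdorff completion'' is meaningful: the $C_0(D)$-valued form $\langle f,g\rangle_{\QQ_\morp}(x)=\int_{\Phi(x)}\overline{f(x,y)}g(x,y)\,d\mu_x(y)$ takes values in $C_0(D)$ --- in fact in $C_c(D)$ --- by axiom (Q2), which holds for $\QQ_\morp$ by Lemma \ref{pseudo-quiver}; it is positive semidefinite because $\langle f,f\rangle_{\QQ_\morp}(x)=\int_{\Phi(x)}|f(x,y)|^2\,d\mu_x(y)\ge 0$; and the prescribed bimodule operations make $C_c(\overline{R_\Phi})$ a pre-$C^*$-correspondence, the left action of $a$ being adjointable with adjoint $\overline a\cdot(\,\cdot\,)$ and of norm at most $\|a\|$. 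Its Hausdorff completion is then a $C^*$-correspondence over $C_0(D)$ (this also follows a posteriori once the isomorphism $X_\morp\cong X_{\QQ_\morp}$ is established).

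Next I would analyse $W$. For $a,b\in C_c(D)$ the function $(x,y)\mapsto a(x)b(y)$ is continuous with support contained in the compact set $(\supp a\times\supp b)\cap\overline{R_\Phi}$, so, being bilinear in $(a,b)$, this assignment induces a well-defined linear map $W\colon C_c(D)\odot C_c(D)\to C_c(\overline{R_\Phi})$. Unwinding the definitions yields $W\big((ca)\odot b\big)=c\cdot W(a\odot b)$ and $W\big(a\odot(bc)\big)=W(a\odot b)\cdot c$, so $W$ intertwines the two $C_0(D)$-actions. The decisive point is the inner-product identity
$$
\langle W\xi,W\eta\rangle_{\QQ_\morp}=\langle\xi,\eta\rangle_\morp,\qquad \xi,\eta\in C_c(D)\odot C_c(D);
$$
by bilinearity it reduces to simple tensors, where --- once the two tensor legs of $a\otimes b$ are matched with the source and range coordinates of $\overline{R_\Phi}$ --- it is the tautology $\morp(h)(x)=\int_D h\,d\mu_x=\int_{\Phi(x)}h\,d\mu_x$, valid because $\mu_x$ is supported on $\Phi(x)=\supp\mu_x$.

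From this identity $W$ preserves the two seminorms; in particular it annihilates the null vectors of $\langle\,\cdot\,,\cdot\,\rangle_\morp$ and descends to an isometric $C_0(D)$-bimodule map on the corresponding quotient. Since $C_c(D)$ is dense in $C_0(D)$ and $\|a\odot b\|_{X_\morp}\le\|\morp\|^{1/2}\|a\|\,\|b\|$, the space $C_c(D)\odot C_c(D)$ is dense in $X_\morp$, so $W$ extends uniquely to an isometric morphism of $C^*$-correspondences $\overline W\colon X_\morp\to X_{\QQ_\morp}$. To see that $\overline W$ is onto, observe that its range contains $W\big(C_c(D)\odot C_c(D)\big)=\spane\{(x,y)\mapsto a(x)b(y):a,b\in C_c(D)\}$, a self-adjoint subalgebra of $C_0(\overline{R_\Phi})$ that separates the points of $\overline{R_\Phi}$ (two distinct points of $\overline{R_\Phi}$ differ in at least one coordinate) and vanishes nowhere on $\overline{R_\Phi}$ (at $(x_0,y_0)$ pick $a,b\in C_c(D)$ with $a(x_0)b(y_0)\ne 0$). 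As $\overline{R_\Phi}$, being closed in $D\times D$, is locally compact Hausdorff, the Stone--Weierstrass theorem yields that this subalgebra is uniformly dense in $C_0(\overline{R_\Phi})$, and hence --- using $\|\,\cdot\,\|_{X_{\QQ_\morp}}\le\|\morp\|^{1/2}\|\,\cdot\,\|_\infty$ on $C_0(\overline{R_\Phi})$ and the density of $C_c(\overline{R_\Phi})$ in $X_{\QQ_\morp}$ --- dense in $X_{\QQ_\morp}$. An isometry with dense range between Banach spaces is surjective, so $\overline W$ is the asserted isomorphism $X_\morp\cong X_{\QQ_\morp}$ of $C^*$-correspondences.

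I expect the real obstacle to be this final density step: one must invoke Stone--Weierstrass in its form for $C_0$ of an arbitrary locally compact Hausdorff space (the set $\overline{R_\Phi}$ need be neither $\sigma$-compact nor second countable), and the hypotheses ``separates points'' and ``vanishes nowhere'' must be checked on $\overline{R_\Phi}$ itself rather than on the ambient $D\times D$. The only other delicate, though routine, point is the bookkeeping that matches the first and second tensor legs of $a\otimes b$ with the source and range coordinates so that $W$ intertwines both $C_0(D)$-actions and the inner products at the same time.
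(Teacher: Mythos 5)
Your proposal follows the same route as the paper's proof: verify the semi-inner-product structure on $C_c(\overline{R_\Phi})$, check that $W$ intertwines the bimodule actions and the inner products, extend by density to an isometric morphism, and obtain surjectivity from Stone--Weierstrass on the locally compact Hausdorff space $\overline{R_\Phi}$ (the paper is terser on the $C_0$-version of Stone--Weierstrass and on the estimate $\|\cdot\|_{X_{\QQ_\morp}}\le\|\morp\|^{1/2}\|\cdot\|_\infty$ that converts uniform density into density in $X_{\QQ_\morp}$, both of which you rightly make explicit). One caveat about the point you defer as ``routine bookkeeping'': with the formulas taken literally ($W(a\odot b)(x,y)=a(x)b(y)$, right action by $b\circ r$, inner product a function of $x=s(x,y)$) the two identities you assert cannot hold simultaneously, since a direct computation gives $\langle W(a\odot b),W(c\odot d)\rangle_{\QQ_\morp}=a^*c\,\morp(b^*d)$ rather than $b^*\morp(a^*c)d$ (and indeed the displayed right action is not even compatible with the displayed inner product, as $\langle f,g\cdot b\rangle=\langle f,g\rangle b$ fails); the resolution is that the right action of the quiver correspondence must go through the same coordinate ($s$) that indexes the inner product, the left action through $r$, and correspondingly $W(a\odot b)(x,y)=a(y)b(x)$ --- after this relabelling, which is a defect of the statement rather than of your argument, everything you wrote goes through.
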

\begin{proof} It is a routine exercise to check that $C_c(\overline{R_\Phi})$ is a semi-inner product (right) $A$-module, equipped with a left $A$-module action by adjointable operators, \cite[p. 3]{Lan}. Thus we get the $C^*$-correspondence $X_{\QQ_\morp}$. Furthermore, one readily checks that \eqref{mapping W} determines  a well-defined map $W:C_c(D)\odot C_c(D) \to C_c( \overline{R_\Phi})$ satisfying
$$
a W( f\odot g) b =W(af \odot gb), \qquad \langle W(f\odot g) , W(f\odot g) \rangle_{\QQ}= \langle f\odot g , f\odot g \rangle_{\morp},
$$
for all $a,b \in C_0(D), f,g\in C_c(D)$.  Since, the image of $C_c(D)\odot C_c(D)$ is dense in $X_\morp$ it follows that $W$
 factors through and extends to an isometric homomorphism of $C^*$-correspondences $\mathcal{W}:X_\morp \to X_{\QQ_\morp}$. To see  it is surjective, note that by the Stone-Weierstrass theorem for any   $f\in C_c( \overline{R_\Phi})$ we can find  a sequence $\{f_n\}\subseteq C_c(D)\odot C_c(D)$ such that $W(f_n)$ converges uniformly  to $f$, and thus all the more in the semi-norm induced by  $\langle \cdot , \cdot \rangle_{\QQ}$.   Hence $\mathcal{W} $ is the desired isomorphism.
\end{proof}
By \cite[Definition 3.17]{mt}, the \emph{$C^*$-algebra associated to a topological quiver} $\QQ$  is  the Cuntz-Pimsner algebra $\OO_{X_\QQ}$ of a certain $C^*$-correspondence $X_\QQ$.  If the quintuple ${\QQ_\morp}=(D,\overline{R_\Phi},r,s,\lambda)$ defined in Lemma \ref{pseudo-quiver} is a topological quiver, then the $C^*$-correspondence  $X_{\QQ_\morp}$  constructed in Proposition \ref{Correspondence of a pseudo-quiver} coincides with  the $C^*$-correspondence associated to ${\QQ_\morp}$ in  \cite{mt}.  Thus Proposition \ref{Correspondence of a pseudo-quiver} and Theorem \ref{identification theorem} give the following proposition.

\begin{prop}\label{corollary for okulary1} Suppose that  the quintuple ${\QQ_\morp}=(D,\overline{R_\Phi},r,s,\lambda)$ defined in Lemma \ref{pseudo-quiver} is a topological quiver (it is automatic when $R_\Phi$ is closed). Then the crossed product $C^*(A,\morp)$ is naturally isomorphic to the $C^*$-algebra  $C^*({\QQ_\morp})$ associated to ${\QQ_\morp}$.
\end{prop}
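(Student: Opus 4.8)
The plan is to obtain the isomorphism by chaining together the two descriptions of $C^*(A,\morp)$ as a Cuntz-Pimsner algebra that are already at our disposal. Recall that, by \cite[Definition 3.17]{mt}, the $C^*$-algebra $C^*(\QQ_\morp)$ associated to a topological quiver is \emph{by definition} the Cuntz-Pimsner algebra $\OO_{X_{\QQ_\morp}}$ of the $C^*$-correspondence $X_{\QQ_\morp}$ that Muhly and Tomforde attach to the quiver. So the assertion amounts to identifying $C^*(A,\morp)$ with $\OO_{X_{\QQ_\morp}}$ in a canonical way.

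First I would verify that, when the quintuple $\QQ_\morp$ genuinely satisfies all the axioms of a topological quiver, the $C^*$-correspondence that \cite{mt} builds from it coincides with the one denoted $X_{\QQ_\morp}$ in Proposition \ref{Correspondence of a pseudo-quiver}. This is a matter of unwinding both definitions: each is the Hausdorff completion of $C_c(\overline{R_\Phi})$ carrying the same $C_0(D)$-bimodule structure $(a\cdot f\cdot b)(x,y)=a(x)f(x,y)b(y)$ and the same semi-inner product $\langle f,g\rangle(x)=\int_{\Phi(x)}\overline{f(x,y)}\,g(x,y)\,d\mu_x(y)$, so the two correspondences are literally the same object. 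This is the remark already made in the paragraph preceding the statement, and it is the one point that really uses the topological-quiver hypothesis: axiom (Q2) (together with (Q1) and openness of the source map) is exactly what guarantees the integrated inner product of \cite{mt} is defined on the nose on $\overline{R_\Phi}$.

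The rest is formal. Proposition \ref{Correspondence of a pseudo-quiver} provides an isomorphism of $C^*$-correspondences $X_\morp\cong X_{\QQ_\morp}$ over $A=C_0(D)$, implemented by the mapping $W$ of \eqref{mapping W}. An isomorphism of $C^*$-correspondences carries the left action, hence its kernel and the ideal $J(X)$, hence Katsura's ideal $J_X=(\ker\phi)^\bot\cap J(X)$, onto its counterpart, and intertwines the corresponding universal covariant representations; consequently it induces an isomorphism $\OO_{X_\morp}\cong\OO_{X_{\QQ_\morp}}$. Combining this with the identification $C^*(A,\morp)\cong\OO_{X_\morp}$ from Theorem \ref{identification theorem} and with $\OO_{X_{\QQ_\morp}}=C^*(\QQ_\morp)$, we obtain the desired natural isomorphism $C^*(A,\morp)\cong C^*(\QQ_\morp)$. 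Finally, the parenthetical claim is just the observation (cf. the remark after Lemma \ref{pseudo-quiver}) that closedness of $R_\Phi$ gives $\overline{R_\Phi}=R_\Phi$, so that (Q1) and openness of $s\colon\overline{R_\Phi}\to D$ hold without restriction and $\QQ_\morp$ is automatically a topological quiver. There is essentially no obstacle beyond this bookkeeping; all the substance already sits in Proposition \ref{Correspondence of a pseudo-quiver} and Theorem \ref{identification theorem}.
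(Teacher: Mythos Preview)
Your argument is correct and follows exactly the paper's approach: identify the Muhly--Tomforde correspondence with the $X_{\QQ_\morp}$ of Proposition \ref{Correspondence of a pseudo-quiver}, invoke the isomorphism $X_\morp\cong X_{\QQ_\morp}$ established there, and combine with Theorem \ref{identification theorem} to get $C^*(A,\morp)\cong\OO_{X_\morp}\cong\OO_{X_{\QQ_\morp}}=C^*(\QQ_\morp)$. You have simply spelled out in more detail what the paper compresses into the sentence preceding the proposition.
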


If the set $R_\Phi$ is closed, that is, if $\mu$ is a topological relation, then the $C^*$-correspondence  $X_{\QQ_\morp}$  constructed in Proposition \ref{Correspondence of a pseudo-quiver} coincides with the one associated to $\mu$ in \cite{brenken}. In particular, Brenken defines a \emph{$C^*$-algebra $\mathcal{C}(\mu)$ associated to the topological relation} $\mu$ as Pimsner's (augmented) $C^*$-algebra $\OO(J(X_{\QQ_\morp}),X_{\QQ_\morp})$.
Hence by Proposition \ref{Correspondence of a pseudo-quiver} and Theorem \ref{identification theorem} we have the following proposition.
\begin{prop}\label{corollary for okulary} If  $\morp:C_0(D)\to C_0(D)$ is such that the map $\mu$ given by \eqref{Brenken's topological relation} is a topological relation then the $C^*$-algebra $\mathcal{C}(\mu)$ associated to $\mu$ is naturally isomorphic to the relative crossed product $C^*(A,\morp; A)$.
\end{prop}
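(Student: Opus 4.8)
The plan is to derive the statement directly from the identification of $C^*$-correspondences in Proposition \ref{Correspondence of a pseudo-quiver}, Theorem \ref{identification theorem}, and Brenken's definition of $\mathcal{C}(\mu)$. First I would note that $\mu$ being a topological relation means precisely that $R_\Phi$ is closed in $D\times D$, so $\overline{R_\Phi}=R_\Phi$; then by Lemma \ref{pseudo-quiver} the quintuple $\QQ_\morp=(D,R_\Phi,r,s,\lambda)$ is a genuine topological quiver and, as recalled in the paragraph preceding the statement, the $C^*$-correspondence $X_{\QQ_\morp}$ constructed in Proposition \ref{Correspondence of a pseudo-quiver} is exactly the $C^*$-correspondence that \cite{brenken} attaches to $\mu$. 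Hence, by Brenken's definition, $\mathcal{C}(\mu)=\OO(J(X_{\QQ_\morp}),X_{\QQ_\morp})$.

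Next I would invoke Proposition \ref{Correspondence of a pseudo-quiver} to get an isomorphism of $C^*$-correspondences $\mathcal{W}\colon X_\morp\to X_{\QQ_\morp}$ over $A=C_0(D)$, i.e.\ a unitary $A$-bimodule map intertwining the left actions (and fixing the distinguished copy of $A$). Such an isomorphism conjugates $\K(X_\morp)$ onto $\K(X_{\QQ_\morp})$ compatibly with the left actions $\phi$, and therefore carries $J(X_\morp)=\phi^{-1}(\K(X_\morp))$ onto $J(X_{\QQ_\morp})$; since $\mathcal{W}$ fixes $A$, these two ideals of $A$ are in fact equal, say $J_0:=J(X_\morp)=J(X_{\QQ_\morp})$. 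Moreover $\mathcal{W}$ sends $J$-covariant representations of $X_\morp$ to $J$-covariant representations of $X_{\QQ_\morp}$ for every ideal $J\subseteq J_0$, hence induces an isomorphism $\OO(J,X_\morp)\cong\OO(J,X_{\QQ_\morp})$ intertwining the universal homomorphisms $j_A$. This functoriality of relative Cuntz--Pimsner algebras is routine and I would only sketch it.

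Finally I would assemble the chain. Theorem \ref{identification theorem}, applied with $J=A$, gives $C^*(A,\morp;A)=C^*(A,\morp;A\cap J(X_\morp))\cong\OO(A\cap J(X_\morp),X_\morp)=\OO(J_0,X_\morp)$, the canonical copies of $A$ corresponding throughout. Composing with the isomorphism $\OO(J_0,X_\morp)\cong\OO(J_0,X_{\QQ_\morp})$ from the previous step and with the equality $\OO(J_0,X_{\QQ_\morp})=\OO(J(X_{\QQ_\morp}),X_{\QQ_\morp})=\mathcal{C}(\mu)$ yields $C^*(A,\morp;A)\cong\mathcal{C}(\mu)$; naturality is automatic because every map in the chain restricts to the identity (or the canonical identification) on the distinguished copy of $A$. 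There is essentially no genuine obstacle here: the substantive content has already been extracted in Proposition \ref{Correspondence of a pseudo-quiver} (in particular the surjectivity of $\mathcal{W}$, which rests on a Stone--Weierstrass approximation of elements of $C_c(R_\Phi)$ by finite sums $\sum_i a_i\otimes b_i$), and the only care required is the bookkeeping that the correspondence isomorphism matches the ideals $J_0$ and descends to the relative Cuntz--Pimsner quotients compatibly with the embeddings of $A$.
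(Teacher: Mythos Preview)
Your proposal is correct and follows essentially the same approach as the paper: the paper simply remarks that the result follows from Proposition~\ref{Correspondence of a pseudo-quiver} (the isomorphism $X_\morp\cong X_{\QQ_\morp}$) together with Theorem~\ref{identification theorem} (the identification $C^*(A,\morp;A)\cong \OO(J(X_\morp),X_\morp)$) and Brenken's definition $\mathcal{C}(\mu)=\OO(J(X_{\QQ_\morp}),X_{\QQ_\morp})$. You have merely spelled out the routine functoriality step in more detail than the paper does.
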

Suppose now that $D$ is compact and $\morp$ is unital (equivalently, every measure $\mu_x$, $x\in D$, is a probability distribution).  Then   $\morp$ is called a \emph{Markov operator}  in  \cite[Definition 1]{imv}. The closure $\overline{R_\Phi}$ of the set  \eqref{support of morp} coincides with the support of $\morp$  defined in  \cite[Definition 4]{imv}. It seems that the authors of  \cite{imv} tacitly assumed that the corresponding set $R_\Phi$ is closed, since they model their algebras via $C^*$-algebras associated to topological quivers. Namely, they define \cite[Definition 7]{imv} the $C^*$-algebra $C^*(\morp)$ of the Markov operator $\morp$ to be the $C^*$-algebra of the quintuple $(D, \overline{R_\Phi}, r,s, \lambda)$ described in Lemma \ref{pseudo-quiver}. But in general  neither  $(D, R_\Phi, r,s, \lambda)$ nor $(D, \overline{R_\Phi}, r,s, \lambda)$, satisfies all of the axioms of a topological quiver.
\begin{ex} Consider the following  Markov operators $\morp_1$, $\morp_2$ on $C([0,1])$:
\begin{align*}
&\morp_1(a)(x)=
\begin{cases}
\int_0^1 a(t) f_x(t)\, dt & x \neq 0 \\
a(1), & x=0
\end{cases},  \qquad  f_x(t)=\frac{t^{1/x}}{\int_0^1 s^{1/x}\, ds};
\\
&\morp_2(a)(x)=
f(x)a(0)+ f(1-x)a(1), \,\,\, f(x)=\chi_{[0,1/3)}(x) + (2-3x)\chi_{[1/3, 2/3)}(x).
\end{align*}
Then the corresponding relations on $[0,1]$ are
$$
R_{1}=\Big((0,1]\times [0,1]\Big)\cup\{(0,1)\}, \qquad   R_{2}=\Big([0,2/3)\times \{0\}  \Big) \cup   \Big((1/3,1]\times \{1\} \Big).
$$
Plainly, $R_{1}$ is not locally compact in  $(0,1)$, while  the source map  on  $\overline{R_{2}}$  is not open.
\end{ex}
The above example shows that  the assertion in \cite[Proposition 2.2]{imv} is false.  Proposition \ref{Correspondence of a pseudo-quiver} could be considered a  correct version of this statement. It  suggests that in general the $C^*$-algebra $C^*(\morp)$ should be defined as the Cuntz-Pimsner algebra   $\OO_{X_{\QQ_\morp}}$ of the $C^*$-correspondence  $X_{\QQ_\morp}$ described in Proposition \ref{Correspondence of a pseudo-quiver}. Then  Theorem \ref{identification theorem} gives us the following proposition.

\begin{prop} The $C^*$-algebra $C^*(\morp)$ of a Markov operator $\morp:C(D)\to C(D)$ is naturally isomorphic to the crossed product $
C^*(A,\morp)$.
\end{prop}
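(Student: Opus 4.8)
The plan is to obtain the statement as a formal corollary by chaining together isomorphisms already at our disposal. First note that, although $D$ is compact and $\morp$ is unital in this situation, these hypotheses play no role: since $A=C(D)$ is commutative, $\morp$ is automatically completely positive by \cite[Corollary 3.5, Proposition 3.9]{Tak}, so it is a legitimate $C^*$-dynamical system in the sense of Section \ref{cp section}, and the crossed product $C^*(A,\morp)$ together with its $C^*$-correspondence $X_\morp$ are defined.

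Next I would invoke Proposition \ref{Correspondence of a pseudo-quiver}, which supplies a canonical isomorphism of $C^*$-correspondences $X_\morp\cong X_{\QQ_\morp}$, induced by the densely defined map $a\odot b\mapsto W(a\odot b)$; in particular this isomorphism intertwines both the left and the right $A$-module structures, so it is the identity on the two canonical copies of $A=C(D)$. Because the Cuntz--Pimsner construction $X\mapsto\OO_X$ is functorial with respect to isomorphisms of $C^*$-correspondences (Katsura's $\OO_X$ is determined up to canonical isomorphism by $X$, and an isomorphism of correspondences carries $J_X$ onto the corresponding ideal and covariant representations onto covariant representations), this yields a natural isomorphism $\OO_{X_\morp}\cong\OO_{X_{\QQ_\morp}}$ carrying the image of $A$ in the former onto that in the latter. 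By the (re)definition adopted in the discussion just above the statement, $C^*(\morp):=\OO_{X_{\QQ_\morp}}$, whence $C^*(\morp)\cong\OO_{X_\morp}$.

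Finally, Theorem \ref{identification theorem} gives $C^*(A,\morp)\cong\OO_{X_\morp}$, with the universal homomorphism $j_A$ identifying $A$ with its canonical copy. Composing, we get $C^*(\morp)\cong C^*(A,\morp)$, and by the remarks above this isomorphism restricts to the identity on $A=C(D)$, which is the precise sense in which it is natural. There is essentially no obstacle: the only point deserving a word of care is checking that the isomorphism of Proposition \ref{Correspondence of a pseudo-quiver} is indeed the identity on the copies of $A$, so that the induced isomorphism of Cuntz--Pimsner algebras is genuinely canonical rather than merely abstract — and this is immediate from the explicit formula $(a\cdot f\cdot b)(x,y)=a(x)f(x,y)b(y)$ defining the actions on $X_{\QQ_\morp}$.
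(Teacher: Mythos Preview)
Your proposal is correct and follows essentially the same route as the paper: the paper simply invokes the definition $C^*(\morp):=\OO_{X_{\QQ_\morp}}$, the isomorphism $X_\morp\cong X_{\QQ_\morp}$ from Proposition \ref{Correspondence of a pseudo-quiver}, and Theorem \ref{identification theorem} to conclude. Your additional remarks on automatic complete positivity and on naturality (the isomorphism restricting to the identity on $A$) are accurate elaborations that the paper leaves implicit.
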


\section{A new look at Exel systems and their crossed products}\label{general exel systems section}

Throughout this section,  $(A,\alpha,\LL)$ denotes an Exel system. We show that Exel-Royer's crossed product $\OO(A,\alpha, \LL)$ is the  crossed product $C^*(A,\LL)$  and   Exel's crossed product $A\rtimes_{\alpha,\LL} \N$  is the relative crossed product $C^*(A,\LL; \overline{A\al(A)A})$. We study in detail the structure of  Exel systems with the property that $\alpha\circ \LL$ is a conditional expectation, and  discuss  cases when  we have  $A\rtimes_{\alpha,\LL} \N=C^*(A,\LL)$.

\subsection{Exel's crossed products as crossed products by transfer operators}
Let us start with the following simple but fundamental observation.
\begin{lem} Any transfer operator is a completely positive map.
\end{lem}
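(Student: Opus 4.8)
The plan is to recall the characterization of complete positivity stated earlier in the excerpt — a linear map $\psi:A\to B$ is completely positive if and only if $\sum_{i,j=1}^n b_i^*\psi(a_i^*a_j)b_j\geq 0$ for all finite families $a_1,\dots,a_n\in A$, $b_1,\dots,b_n\in B$ — and verify it directly for a transfer operator $\LL:A\to A$. So $B=A$ here, and I need to show $\sum_{i,j}b_i^*\LL(a_i^*a_j)b_j\geq 0$ for all $a_i,b_i\in A$.

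The key idea is to use the defining identity $\LL(a\alpha(b))=\LL(a)b$ together with positivity of $\LL$. First I would approximate: pick an approximate unit $\{\mu_\lambda\}$ in $A$ and note $b_j=\lim_\lambda \mu_\lambda b_j$, so it suffices to handle the case where each $b_j$ is replaced by $\mu_\lambda b_j$; actually the cleanest route is to observe that $b_i^*\LL(a_i^*a_j)b_j = \lim_\lambda b_i^*\LL(a_i^*a_j\alpha(\mu_\lambda))b_j\,\mu_\lambda^{?}$ — but this is getting complicated, so instead I would argue as follows. Using the transfer identity in the form $\LL(a)b=\LL(a\alpha(b))$, write, for a single "column" vector, $\sum_{i,j}b_i^*\LL(a_i^*a_j)b_j$. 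Using $\LL(a_i^*a_j)b_j = \LL(a_i^*a_j\alpha(b_j))$ and then taking adjoints and using that $\LL$ is $*$-preserving (which holds for any positive map, as recalled in the excerpt), one can also write $b_i^*\LL(a_i^*a_j) = \LL(\alpha(b_i)^* a_i^* a_j)$. Combining both, $b_i^*\LL(a_i^*a_j)b_j = \LL(\alpha(b_i)^*a_i^*a_j\alpha(b_j))$ — here I should be careful that applying the identity on both sides simultaneously is legitimate, which follows by applying it once on the right to get $\LL(a_i^*a_j\alpha(b_j))$ and then once on the left (after rewriting $a_i^*a_j\alpha(b_j)$ appropriately, or by taking adjoints: $\big(b_i^*\LL(X)b_j\big)^* = b_j^*\LL(X^*)b_i = \LL(X^*\alpha(b_i))^*\cdot$... ). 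Let me instead set $c_j := a_j\alpha(b_j)$; then $\sum_{i,j} b_i^*\LL(a_i^*a_j)b_j = \sum_{i,j}\LL\big(\alpha(b_i)^* a_i^* a_j \alpha(b_j)\big) = \LL\Big(\big(\sum_i a_i\alpha(b_i)\big)^*\big(\sum_j a_j\alpha(b_j)\big)\Big) = \LL(c^*c)$ where $c=\sum_i a_i\alpha(b_i)$. Since $\LL$ is positive, $\LL(c^*c)\geq 0$, which is exactly what is needed.

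The one genuine point to nail down — the main (minor) obstacle — is justifying the identity $b_i^*\LL(a_i^*a_j)b_j = \LL(\alpha(b_i)^* a_i^* a_j \alpha(b_j))$ from the single-sided relation \eqref{transfer operator relation}. The right-hand insertion $\LL(a_i^*a_j)b_j = \LL(a_i^*a_j\alpha(b_j))$ is immediate from \eqref{transfer operator relation}. For the left-hand insertion I would take adjoints: since $\LL$ is $*$-preserving, $b_i^*\LL(X) = \big(\LL(X^*)b_i\big)^* = \big(\LL(X^*\alpha(b_i))\big)^* = \LL(\alpha(b_i)^* X)$ for any $X\in A$; applying this with $X = a_i^*a_j\alpha(b_j)$ gives the claim. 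With the identity in hand the computation above is a one-line rearrangement, and complete positivity follows. (Note this argument does not even need an approximate unit, since \eqref{transfer operator relation} is assumed for all $a,b\in A$.)
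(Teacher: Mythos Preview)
Your proof is correct and follows essentially the same approach as the paper: both use the characterization of complete positivity via $\sum_{i,j} b_i^*\LL(a_i^*a_j)b_j\ge 0$ and rewrite this sum as $\LL\big((\sum_i a_i\alpha(b_i))^*(\sum_j a_j\alpha(b_j))\big)$ using the transfer relation on both sides. The only difference is that the paper simply states the symmetrized identity $\LL(\alpha(b)a)=b\LL(a)$ without comment, whereas you explicitly derive it from $*$-preservation of $\LL$.
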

\begin{proof}
Using \eqref{transfer operator relation} and its symmetrized version:  $
\LL(\al(b)a)=b\LL(a)$, $a,b\in A$,   for any $a_i$, $b_i \in A$, $i=1,...,n$, we get
\begin{equation}\label{completely positivity of L}
\sum_{i,j=1}^n b_i^*\LL(a_i^*a_j)b_j= \LL\Big(\big(\sum_{i=1}^n a_i\al(b_i)\big)^* \big(\sum_{j=1}^n a_j\al(b_j)\big)\Big)\geq 0.
\end{equation}
Hence $\LL$ is a completely positive map.
\end{proof}

Authors of  \cite{brv} and \cite{Larsen} considered Exel systems  $(A,\al,\LL)$  under the additional assumption that both $\alpha$ and $\LL$ are extendible. It turns out that  extendibility of $\LL$ is automatic.

\begin{prop}\label{extendibility of transfer operators}
Any transfer operator $\LL$ for $\al$ is extendible. Its strictly continuous extension $\overline{\LL}:M(A)\to M(A)$ is determined by the formula
\begin{equation}\label{extension of transfer operator}
\overline{\LL}(m)a=\LL(m\al(a)), \qquad a \in A, \,\, m\in M(A).
\end{equation}
In particular,  $
\overline{\LL}(1)
$ is a positive central element in $M(A)$, and if $\alpha$ is extendible  then the triple $(M(A),\overline{\al},\overline{\LL})$ is an Exel system.
\end{prop}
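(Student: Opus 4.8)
The plan is to construct the extension $\overline{\LL}$ by hand, using the double-centralizer picture of $M(A)$. For $m\in M(A)$ I would define two maps $L_m,R_m\colon A\to A$ by $L_m(a):=\LL(m\al(a))$ and $R_m(a):=\LL(\al(a)m)$; these make sense because $\al(a)\in A$ and $A$ is an ideal in $M(A)$, and both are bounded by $\|\LL\|\,\|m\|$ since $\al$ is contractive and $\LL$ is bounded. The transfer identity \eqref{transfer operator relation} together with its $*$-adjoint form $\LL(\al(b)a)=b\LL(a)$ ($a,b\in A$) then gives $L_m(ab)=L_m(a)b$, $R_m(ab)=aR_m(b)$ and the centralizer compatibility $a\,L_m(b)=R_m(a)\,b$ — indeed both sides of the last equation equal $\LL(\al(a)\,m\,\al(b))$. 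Hence $(L_m,R_m)$ is a double centralizer and determines an element $\overline{\LL}(m)\in M(A)$ with $\overline{\LL}(m)a=\LL(m\al(a))$ and $a\,\overline{\LL}(m)=\LL(\al(a)m)$. Linearity of $m\mapsto\overline{\LL}(m)$ is immediate, and for $m\in A$ the transfer identity gives $\overline{\LL}(m)a=\LL(m)a$ and $a\,\overline{\LL}(m)=a\LL(m)$, so $\overline{\LL}$ restricts to $\LL$ on $A$ and \eqref{extension of transfer operator} holds by construction.

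Next I would check strict continuity: if $m_i\to m$ strictly, then $m_i\al(a)\to m\al(a)$ in norm for each $a\in A$, so by norm-continuity of $\LL$ we get $\overline{\LL}(m_i)a=\LL(m_i\al(a))\to\LL(m\al(a))=\overline{\LL}(m)a$, and symmetrically $a\,\overline{\LL}(m_i)\to a\,\overline{\LL}(m)$; thus $\overline{\LL}(m_i)\to\overline{\LL}(m)$ strictly. Since $A$ is strictly dense in $M(A)$, $\overline{\LL}$ is the unique strictly continuous extension of $\LL$, which proves $\LL$ extendible. For $\overline{\LL}(1)$, the defining formula gives $\overline{\LL}(1)a=\LL(\al(a))=a\,\overline{\LL}(1)$ for all $a\in A$, so $\overline{\LL}(1)$ commutes with $A$ and hence lies in the centre of $M(A)$; and for any approximate unit $\{\mu_\lambda\}$ in $A$ one has $\overline{\LL}(1)a=\LL(\al(a))=\lim_\lambda\LL(\mu_\lambda\al(a))=\lim_\lambda\LL(\mu_\lambda)a$ by \eqref{transfer operator relation}, so $a^*\overline{\LL}(1)a=\lim_\lambda a^*\LL(\mu_\lambda)a\geq 0$ for every $a\in A$, which together with self-adjointness of $\overline{\LL}(1)$ gives $\overline{\LL}(1)\geq 0$ in $M(A)$.

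Finally, assume $\al$ is extendible with extension $\overline{\al}\colon M(A)\to M(A)$. Then $\overline{\al}$ is an endomorphism of $M(A)$ and $\overline{\LL}$ is a positive linear self-map of $M(A)$, so it remains only to verify the transfer relation $\overline{\LL}(m\,\overline{\al}(n))=\overline{\LL}(m)\,n$ for $m,n\in M(A)$. I would test the two multipliers against an arbitrary $a\in A$ from the right: since $\overline{\al}$ is multiplicative and agrees with $\al$ on $A$, we have $\overline{\al}(n)\al(a)=\overline{\al}(na)=\al(na)$, whence $\overline{\LL}(m\,\overline{\al}(n))a=\LL\big(m\,\overline{\al}(n)\al(a)\big)=\LL\big(m\,\al(na)\big)=\overline{\LL}(m)(na)=(\overline{\LL}(m)n)a$; as $a\in A$ is arbitrary, the two multipliers coincide, so $(M(A),\overline{\al},\overline{\LL})$ is an Exel system.

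I expect the step requiring the most care is the verification that $(L_m,R_m)$ is a genuine double centralizer — the one place where both \eqref{transfer operator relation} and its $*$-adjoint are used simultaneously — together with the analogous bookkeeping in the last paragraph, which rests on $\overline{\al}$ being a $*$-homomorphism that extends $\al$. Everything else is routine.
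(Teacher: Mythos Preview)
Your proof is correct and follows essentially the same strategy as the paper's. The only cosmetic difference is that the paper identifies $M(A)$ with the adjointable operators on the standard Hilbert $A$-module $A$ and verifies that $a\mapsto \LL(m\al(a))$ is adjointable with adjoint $a\mapsto \LL(m^*\al(a))$, whereas you use the equivalent double-centralizer picture; your treatment of strict continuity, centrality of $\overline{\LL}(1)$, and the extended transfer identity is at least as explicit as the paper's (the paper simply remarks that \eqref{transfer operator relation} is preserved under strict limits).
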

\begin{proof}
Fix $m\in M(A)$. We claim that   \eqref{extension of transfer operator} defines a  multiplier, that is an adjointable mapping $\overline{\LL}(m):A\to A$  where we view $A$ as  the standard Hilbert $A$-module. Indeed, for any $a,b \in A$ we have
$$
(\overline{\LL}(m)a)^*b=\LL(m\al(a))^*b=\LL(\al(a^*)m^*)b=\LL(\al(a^*)m\al(b))=a^*(\overline{\LL}(m^*)b).
$$
Hence $\overline{\LL}(m)\in M(A)$ and $\overline{\LL}(m)^*=\overline{\LL}(m^*)$. Accordingly, \eqref{extension of transfer operator} defines a $*$-preserving  mapping $\overline{\LL}:M(A)\to M(A)$. It follows directly from \eqref{extension of transfer operator} that $\overline{\LL}$ is  a  strictly continuous extension of $\LL:A\to A$.   Moreover, for $a\in A$, we have
$
\overline{\LL}(1)a=\LL(1\al(a))=\LL(\al(a)1)=a\overline{\LL}(1)
$. Thus $\overline{\LL}(1)$ belongs to the commutant of $A$ in $M(A)$. This  commutant coincides with the center of $M(A)$. If additionally $\alpha$ is extendible then $\overline{\LL}$ is a transfer operator for $\overline{\alpha}$ because  \eqref{transfer operator relation} is preserved when   passing to strict limits.
\end{proof}
Another somehow unexpected fact is that the first relation in \eqref{Exel relations} is superfluous.
\begin{prop}\label{destroying proposition}
Suppose that $(A,\al, \LL)$ is an  Exel system. For any representation $(\pi,S)$ of $(A,\LL)$ we automatically have
$$
S\pi(a) =\pi(\al(a))S, \quad a\in A.
$$
Thus the classes of representations of $(A,\LL)$ and $(A,\al,\LL)$ coincide and
$$\TT(A,\LL) = \TT(A,\al,\LL).$$
Moreover, the notions of  redundancy  for  $(\pi,S)$ as a representation of $(A,\LL)$ and as a representation of $(A,\al,\LL)$ coincide.
\end{prop}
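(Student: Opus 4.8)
The plan is to isolate the commutation relation $S\pi(a)=\pi(\al(a))S$, prove it by a short Hilbert space computation, and then read off the remaining assertions essentially formally. So let $(\pi,S)$ be a representation of $(A,\LL)$ in the sense of Definition~\ref{representation of a cp map}, i.e. a pair with $\pi$ non-degenerate and $S^*\pi(a)S=\pi(\LL(a))$ for all $a\in A$; fix $b\in A$ and put $T:=\pi(\al(b))S-S\pi(b)\in\B(H)$. First I would show $S^*T=0$. Replacing $a$ by $a\al(b)$ in \eqref{representation of morp} and using the transfer operator identity \eqref{transfer operator relation}, for every $a\in A$ we get
$$
S^*\pi(a)\pi(\al(b))S=\pi\big(\LL(a\al(b))\big)=\pi(\LL(a)b)=\pi(\LL(a))\pi(b)=S^*\pi(a)S\pi(b),
$$
that is $S^*\pi(a)T=0$ for all $a\in A$. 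Taking $a=\mu_\lambda$ along an approximate unit and using non-degeneracy of $\pi$ (so $\pi(\mu_\lambda)\to1$, hence $S^*\pi(\mu_\lambda)\to S^*$, strongly) yields $S^*T=0$, and therefore also $T^*S=0$.

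Next I would compute $T^*T$. Using $T^*S=0$ we have $T^*T=T^*\pi(\al(b))S$; writing $T^*=S^*\pi(\al(b^*))-\pi(b^*)S^*$ and using multiplicativity of $\al$ gives
$$
T^*T=S^*\pi(\al(b^*b))S-\pi(b^*)S^*\pi(\al(b))S .
$$
By \eqref{representation of morp}, $S^*\pi(\al(b^*b))S=\pi\big(\LL(\al(b^*)\al(b))\big)$, and since $\LL$ is $*$-preserving the identity \eqref{transfer operator relation} has the symmetrized form $\LL(\al(c)x)=c\LL(x)$, so $\LL(\al(b^*)\al(b))=b^*\LL(\al(b))$; also $\pi(b^*)S^*\pi(\al(b))S=\pi(b^*)\pi(\LL(\al(b)))$ by \eqref{representation of morp}. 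Hence $T^*T=\pi(b^*)\pi(\LL(\al(b)))-\pi(b^*)\pi(\LL(\al(b)))=0$, so $T=0$. As $b$ was arbitrary, $S\pi(a)=\pi(\al(a))S$ for all $a\in A$.

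It follows that the classes of representations of $(A,\LL)$ and of the Exel system $(A,\al,\LL)$ coincide: a representation of $(A,\al,\LL)$ satisfies both relations in \eqref{Exel relations}, in particular $S^*\pi(a)S=\pi(\LL(a))$, so it is a representation of $(A,\LL)$; conversely, by the previous paragraph a representation of $(A,\LL)$ automatically satisfies the first relation in \eqref{Exel relations}. Since both Toeplitz algebras are, by definition, generated by $i_A(A)\cup i_A(A)t$ for the universal representation of the corresponding (now common) class of representations of $A\cup\{s\}$, whose existence is granted by Lemma~\ref{existence resistance}, we get $\TT(A,\LL)=\TT(A,\al,\LL)$.

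Finally, for the two notions of redundancy: in both cases a redundancy of $(\pi,S)$ is a pair $(\pi(a),k)$ with $\pi(a)\pi(b)S=k\pi(b)S$ for all $b$, the only difference being whether $k$ lies in $\overline{\pi(A)S\pi(A)S^*\pi(A)}$ (Definition~\ref{redundancy definition}) or in $\overline{\pi(A)SS^*\pi(A)}$ (Definition~\ref{exel crossed product definition}), so it suffices to see these two closed spans agree. The inclusion $\overline{\pi(A)S\pi(A)S^*\pi(A)}\subseteq\overline{\pi(A)SS^*\pi(A)}$ is immediate from $\pi(a)S\pi(b)S^*\pi(c)=\pi(a\al(b))SS^*\pi(c)$, using the commutation relation. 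For the reverse inclusion, by Lemma~\ref{bread remark2} and \eqref{associated correspondence representation} (Proposition~\ref{characterization of representations of cp map}) the net $\pi(a)S\pi(\mu_\lambda)=\pi_{X_\LL}(a\otimes\mu_\lambda)$ converges in norm to $\pi_{X_\LL}(a\otimes1)=\pi(a)S$, so $\pi(a)S\in\overline{\pi(A)S\pi(A)}$, whence $\pi(a)SS^*\pi(c)\in\overline{\pi(A)S\pi(A)}\,S^*\pi(A)\subseteq\overline{\pi(A)S\pi(A)S^*\pi(A)}$; taking closed spans completes the argument. The main obstacle is the commutation relation itself: the key point is that \eqref{transfer operator relation} forces $S^*T=0$, after which $T^*T$ collapses to $\pi$ of $\LL(\al(b^*)\al(b))-b^*\LL(\al(b))$, which vanishes identically by the symmetrized transfer relation; everything else is bookkeeping with universal representations and closed linear spans.
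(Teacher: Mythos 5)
Your proof is correct and follows essentially the same route as the paper: both establish $S\pi(a)=\pi(\al(a))S$ by showing $\|T\|^2=\|T^*T\|=0$ for $T=\pi(\al(a))S-S\pi(a)$ using the transfer identity and its symmetrized form, and both reduce the redundancy claim to the equality $\overline{\pi(A)S\pi(A)S^*\pi(A)}=\overline{\pi(A)SS^*\pi(A)}$, with the forward inclusion from the commutation relation and the reverse from the norm convergence $\pi(a)S\pi(\mu_\lambda)\to\pi(a)S$. The only real divergence is a minor one: the paper handles the cross term $\pi(a^*)S^*S\pi(a)$ by invoking the strict extension $\overline{\LL}$ and the identity $S^*S=\overline{\pi}(\overline{\LL}(1))$ from Proposition \ref{extendibility of transfer operators}, whereas you bypass the multiplier algebra by first proving $S^*T=0$ with an approximate unit, which kills that term at the outset — a slightly more self-contained rendering of the same computation.
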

\begin{proof} Let $(\pi,S)$ be a representation of $(A,\LL)$, $\overline{\pi}:M(A)\to\B(H)$ the  extension of $\pi$, and $\overline{\LL}$ the strictly continuous extension of $\LL$, which exists by Proposition \ref{extendibility of transfer operators}. It readily follows  that $(\overline{\pi},S)$ is a representation of $(M(A),\overline{\LL})$. In particular,  $S^*S=\overline{\pi}(\overline{\LL}(1))$. Using this and  \eqref{extension of transfer operator},    one sees that   each of the  expressions
$$
S^*\pi(\al(a^*))S\pi(a), \quad S^*\pi(\al(a^*))\pi(\al(a))S, \quad \pi(a^*)S^*S\pi(a), \quad \pi(a^*)S^*\pi(\al(a))S
$$
is equal to $\pi\big(\LL(\alpha(a^*a))\big)$, for any $a\in A$. Hence we get
\begin{align*}
\|S\pi(a) -\pi(\al(a))S\|^2&=\|\big(S^*\pi(\al(a^*)) -\pi(a^*)S^*\big)\big(S\pi(a) -\pi(\al(a))S\big)\|=0.
\end{align*}
This finishes the proof of the first part of the assertion. For the second part it suffices to show that $\overline{\pi(A)S\pi(A)S^*\pi(A)}=\overline{\pi(A)SS^*\pi(A)}$. In view of what we  have just shown we have
$$
\overline{\pi(A)S\pi(A)S^*\pi(A)}=\overline{\pi(A)\pi(\al(A))SS^*\pi(A)}\subseteq\overline{\pi(A)SS^*\pi(A)}.
$$
Moreover, the last inclusion is the equality because  $\pi(a)S=\lim_{\lambda \in \Lambda}\pi(a\al(\mu_\lambda))S$ for any approximate unit $\{\mu_\lambda\}_{\lambda \in \Lambda}$ in $A$. Indeed,
\begin{align*}
\|\pi(a)S-\pi(a\al(\mu_\lambda))S\|^2&=
\|\pi\big(\LL(a^*a)- \LL(a^*a)\mu_\lambda- \mu_\lambda\LL(a^*a) + \mu_\lambda\LL(a^*a )\mu_\lambda\big)\|
\end{align*}
clearly tends to $0$.
\end{proof}
The above coincidence can be explained at the level of $C^*$-correspondences.
\begin{lem}\label{Exel vs GNS}
The $C^*$-correspondence $M_{\LL}$ associated to $(A,\al,\LL)$ and the $C^*$-correspon\-dence $X_\LL$ associated to $(A,\LL)$ are isomorphic, via the mapping
determined by $a\otimes b \longmapsto q(a\al(b))$, $a, b\in A$.
\end{lem}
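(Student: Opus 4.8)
The plan is to exhibit the map $a\otimes b\mapsto q(a\alpha(b))$ as an isomorphism of $C^*$-correspondences by checking, in the usual order, that it is well defined, $A$-bilinear, inner-product preserving, and has dense range. Recall that $M_\LL$ is the Hausdorff completion of $A_\LL$, where $A_\LL$ carries $m\cdot a=m\alpha(a)$ and $\langle m,n\rangle_\LL=\LL(m^*n)$, with $q:A_\LL\to M_\LL$ the quotient map; while $X_\LL$ is the Hausdorff completion of $A\odot A$ with $\langle a\odot b,c\odot d\rangle_\LL=b^*\LL(a^*c)d$ and actions $e\cdot(a\otimes b)=(ea)\otimes b$, $(a\otimes b)\cdot e=a\otimes(be)$. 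So the natural thing is to define $W_0:A\odot A\to A_\LL$ by $W_0(a\odot b)=a\alpha(b)$, compose with $q$, and show $q\circ W_0$ factors through $X_\LL$ and extends to the desired isomorphism.

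First I would verify that $qW_0$ is inner-product preserving at the algebraic level: for $a,b,c,d\in A$,
\[
\langle q(a\alpha(b)),\, q(c\alpha(d))\rangle_\LL
=\LL\big(\alpha(b^*)a^*c\,\alpha(d)\big)
=b^*\LL(a^*c)d
=\langle a\odot b,\, c\odot d\rangle_\LL,
\]
using the transfer-operator identity \eqref{transfer operator relation} together with its symmetrized version $\LL(\alpha(b)a)=b\LL(a)$ (both were recorded in the proof that any transfer operator is completely positive). This computation simultaneously shows that $qW_0$ is bounded for the seminorms defining $X_\LL$ and $M_\LL$ and that it kills the null space of $\langle\cdot,\cdot\rangle_\LL$ on $A\odot A$; hence it descends to an isometric linear map $\mathcal W:X_\LL\to M_\LL$. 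Next I would check bimodularity on elementary tensors: $\mathcal W(e\cdot(a\otimes b))=q(ea\,\alpha(b))=e\cdot q(a\alpha(b))=e\cdot\mathcal W(a\otimes b)$, and $\mathcal W((a\otimes b)\cdot e)=q(a\alpha(be))=q(a\alpha(b)\alpha(e))=\mathcal W(a\otimes b)\cdot e$, so $\mathcal W$ intertwines both actions; by continuity these identities pass to all of $X_\LL$.

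Finally, surjectivity: the image of $\mathcal W$ contains $q(a\alpha(b))$ for all $a,b\in A$, hence $q(A\alpha(A))$, and $\overline{A\alpha(A)}=A$ because for an approximate unit $\{\mu_\lambda\}$ in $A$ one has $a\alpha(\mu_\lambda)\to a\overline\alpha(1)$ when $\alpha$ is extendible, but more simply $\|q(a)-q(a\alpha(\mu_\lambda))\|^2=\|\LL\big((a-a\alpha(\mu_\lambda))^*(a-a\alpha(\mu_\lambda))\big)\|\to 0$ by the computation at the end of the proof of Proposition \ref{destroying proposition}; so $q(A)$ lies in the closure of the image, and since $q(A)$ is dense in $M_\LL$ by construction, $\mathcal W$ is onto. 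Thus $\mathcal W$ is the claimed isomorphism of $C^*$-correspondences. The only mildly delicate point is making sure density of $q(A\alpha(A))$ in $M_\LL$ is handled without invoking extendibility of $\alpha$ (which is not assumed), and this is exactly where the seminorm estimate above, rather than a multiplier-algebra argument, does the job.
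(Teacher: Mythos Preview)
Your proof is correct and follows essentially the same approach as the paper: both establish that $a\otimes b\mapsto q(a\alpha(b))$ preserves inner products (the paper invokes the identity \eqref{completely positivity of L} directly for sums, while you check simple tensors and extend by sesquilinearity), both note bimodularity, and both prove surjectivity via the same approximate-unit estimate $\|q(a)-q(a\alpha(\mu_\lambda))\|^2\to 0$. Your write-up is simply a more explicit unpacking of the paper's terse argument.
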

\begin{proof}
That $a\otimes b \longmapsto q(a\al(b))$ yields a well defined isometry follows  from the equality in \eqref{completely positivity of L}. Clearly, it is a $C^*$-correspondence map. It is onto because $q(a\alpha(\mu_\lambda))$ converges in $M_{\LL}$ to $q(a)$ for any $a\in A$ and any approximate unit $\{\mu_\lambda\}$ in $A$, cf. \cite[Lemma 3.6]{kakariadis}.
\end{proof}
\begin{cor}\label{Toeplitz corollary}
For every Exel system $(A,\al,\LL)$ we have
$$
\TT( M_{\LL})\cong \TT(X_\LL)\cong \TT(A,\LL) = \TT(A,\al,\LL).
$$
\end{cor}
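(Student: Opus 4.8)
The plan is to read off the three displayed relations from results already in hand, taking them in order from right to left.

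First I would note that the equality $\TT(A,\LL)=\TT(A,\al,\LL)$ is precisely the content of Proposition~\ref{destroying proposition}: every representation $(\pi,S)$ of the $C^*$-dynamical system $(A,\LL)$ automatically satisfies $S\pi(a)=\pi(\al(a))S$, so the class of representations of $(A,\LL)$ coincides with the class of representations of the Exel system $(A,\al,\LL)$, and moreover the two notions of redundancy agree. Consequently the universal representations coincide and the two Toeplitz algebras are literally the same $C^*$-algebra.

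Next, the isomorphism $\TT(X_\LL)\cong\TT(A,\LL)$ is obtained from Proposition~\ref{characterization of representations of cp map}, which gives a one-to-one correspondence between representations $(\pi,S)$ of $(A,\LL)$ and representations $(\pi,\pi_{X_\LL})$ of the $C^*$-correspondence $X_\LL$, under which $C^*(\pi,S)=C^*(\pi(A)\cup\pi_{X_\LL}(X_\LL))$ and the preorders $\precsim$ on the two classes match up. Hence the universal objects correspond, which is exactly the natural identification $\TT(A,\LL)\cong\TT(X_\LL)$ already used in the proof of Theorem~\ref{identification theorem}. Finally, for $\TT(M_\LL)\cong\TT(X_\LL)$ I would invoke Lemma~\ref{Exel vs GNS}, which supplies a $C^*$-correspondence isomorphism $\Psi\colon M_\LL\to X_\LL$; the assignment $(\pi,\pi_X)\mapsto(\pi,\pi_X\circ\Psi)$ is a bijection between representations of $X_\LL$ and of $M_\LL$ compatible with $\precsim$, and hence carries the universal representation of $X_\LL$ to that of $M_\LL$. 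Chaining the three isomorphisms yields the claim.

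There is no genuine obstacle here; the only point that deserves a sentence rather than silence is the passage "isomorphic $C^*$-correspondences have isomorphic Toeplitz algebras," which is immediate once one observes that representations transport bijectively along a correspondence isomorphism and that this transport respects the preorder defining the universal representation. Everything else is bookkeeping among Proposition~\ref{characterization of representations of cp map}, Lemma~\ref{Exel vs GNS}, and Proposition~\ref{destroying proposition}.
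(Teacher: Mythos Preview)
Your proposal is correct and follows essentially the same approach as the paper's own proof, which simply cites Lemma~\ref{Exel vs GNS}, Proposition~\ref{characterization of representations of cp map}, and Proposition~\ref{destroying proposition} in the opposite order. The only difference is that you spell out why an isomorphism of $C^*$-correspondences induces an isomorphism of Toeplitz algebras, which the paper leaves implicit.
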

\begin{proof} We have $\TT( M_{\LL})\cong \TT(X_\LL)$ by  Lemma \ref{Exel vs GNS}. Proposition \ref{characterization of representations of cp map} implies that $\TT(X_\LL)\cong \TT(A,\LL)$,      and we have  $\TT(A,\LL) = \TT(A,\al,\LL)$ by Proposition \ref{destroying proposition}.
\end{proof}
\begin{rem}\label{slight generalization of brv remark}
The isomorphism $\TT( M_{\LL})\cong \TT(A,\al,\LL)$  was proved in  \cite[Corollary  3.3]{br}, cf. \cite[Theorem 3.7]{kakariadis}, for $A$ unital, and  independently in \cite[Proposition 3.1]{brv} and \cite[Proposition 4.1]{Larsen}, for extendible Exel systems.
\end{rem}
Now we are in a position to show the main result of this subsection.
\begin{thm}\label{Exel crossed products as relative Pimsner}
For any  Exel system $(A,\al, \LL)$ we have
$$
C^*(A,\LL)=\OO(A,\al, \LL)\cong \OO_{M_\LL}, \qquad A\times_{\al, \LL} \N = C^*(A,\LL;J),
$$
where $J:=\overline{A\al(A)A}$. In particular,  we have
$$
A\times_{\al, \LL} \N \cong\OO(X_\LL, J\cap J(X_\LL))\cong\OO( M_{\LL},J\cap J(M_{\LL})),
$$
and the universal homomorphism $j_A:A \to A\times_{\al, \LL} \N $ is injective if and only if $\LL$ is almost faithful on $\overline{A\al(A)A}\cap J(M_\LL)$.
\end{thm}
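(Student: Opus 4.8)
The plan is to assemble everything from the structural identities already in place. First I would observe that, by Corollary \ref{Toeplitz corollary}, the Toeplitz algebra $\TT(A,\al,\LL)$ coincides with $\TT(A,\LL)$, and under this identification a representation $(\pi,S)$ of $(A,\al,\LL)$ is literally a representation of $(A,\LL)$, with the \emph{same} notion of redundancy (Proposition \ref{destroying proposition}). Consequently the defining ideal of Exel-Royer's crossed product $\OO(A,\al,\LL)$ — generated by the differences $i_A(a)-k$ for $a\in J_{M_\LL}$ and $(i_A(a),k)$ a redundancy of $(i_A,t)$ — is, via the isomorphism $M_\LL\cong X_\LL$ of Lemma \ref{Exel vs GNS} (which identifies $J(M_\LL)$ with $J(X_\LL)$, hence $J_{M_\LL}$ with $N_\LL^\bot\cap J(X_\LL)$, which is Katsura's ideal for $X_\LL$), exactly the ideal defining $C^*(A,\LL)$ in Definition \ref{definition main} combined with Theorem \ref{identification theorem}. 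This gives $C^*(A,\LL)=\OO(A,\al,\LL)$, and $\OO(A,\al,\LL)\cong\OO_{M_\LL}$ is Remark \ref{Remark for the referee who loves Kastura's work} (or follows from Theorem \ref{identification theorem}).

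For the second identity I would run the same bookkeeping with the ideal $J:=\overline{A\al(A)A}$ in place of $N_\LL^\bot$. The defining relations of $A\times_{\al,\LL}\N$ impose $i_A(a)=k$ precisely for $a\in\overline{A\al(A)A}$ with $(i_A(a),k)$ a redundancy of $(i_A,t)$; since redundancies for $(A,\al,\LL)$ and $(A,\LL)$ agree, this is exactly the ideal defining the relative crossed product $C^*(A,\LL;J)$. Hence $A\times_{\al,\LL}\N=C^*(A,\LL;\overline{A\al(A)A})$. Then Theorem \ref{identification theorem} applied with this $J$ yields
$$
A\times_{\al,\LL}\N=C^*(A,\LL;J\cap J(X_\LL))\cong\OO(J\cap J(X_\LL),X_\LL),
$$
and transporting along $M_\LL\cong X_\LL$ gives $\OO(M_\LL,J\cap J(M_\LL))$ as well. (I would just be careful to state the relative Cuntz–Pimsner algebra with the ideal written in the order $\OO(J\cap J(X_\LL),X_\LL)$ to match the convention of the excerpt.)

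Finally, the injectivity criterion for $j_A\colon A\to A\times_{\al,\LL}\N$ is the second half of Theorem \ref{identification theorem}: $j_A$ is injective iff $\LL$ is almost faithful on $J\cap J(X_\LL)$, equivalently $J\cap J(X_\LL)\subseteq N_\LL^\bot$. Since $J(X_\LL)$ corresponds to $J(M_\LL)$ and $J=\overline{A\al(A)A}$, this reads: $j_A$ is injective iff $\LL$ is almost faithful on $\overline{A\al(A)A}\cap J(M_\LL)$. The only mild subtlety — and the one step I would spell out rather than wave at — is checking that the isomorphism $M_\LL\cong X_\LL$ of Lemma \ref{Exel vs GNS} intertwines the left actions so that it genuinely carries $J(M_\LL)$ onto $J(X_\LL)$ and the GNS-kernels onto one another; this is immediate from the fact that both left actions have kernel $N_\LL$ (noted after the definition of $M_\LL$ and in the remark after the definition of $X_\morp$) and that a correspondence isomorphism automatically matches the ideals of the form $\phi^{-1}(\K(\cdot))$. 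Everything else is formal.
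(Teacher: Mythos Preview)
Your proposal is correct and follows essentially the same route as the paper: invoke Proposition \ref{destroying proposition} (equivalently Corollary \ref{Toeplitz corollary}) to match the Toeplitz algebras and redundancies, then use Lemma \ref{Exel vs GNS} and Theorem \ref{identification theorem} to pass to the relative Cuntz--Pimsner picture, with Remark \ref{Remark for the referee who loves Kastura's work} giving $\OO(A,\al,\LL)\cong\OO_{M_\LL}$. Your extra remark about the correspondence isomorphism intertwining the left actions (hence matching $J(M_\LL)$ with $J(X_\LL)$ and the GNS-kernels) is a reasonable point to make explicit, though the paper treats it as immediate.
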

\begin{proof}
Equality  $A\times_{\al, \LL} \N = C^*(A,\LL;J)$ follows from Proposition  \ref{destroying proposition}. To get $\OO(A,\al, \LL)=C^*(A,\LL)$ combine Proposition  \ref{destroying proposition}, Lemma \ref{Exel vs GNS}   and the first part of Theorem \ref{identification theorem}. The isomorphism  $C^*(A,\LL)\cong \OO_{M_\LL}$ follows now either from Remark \ref{Remark for the referee who loves Kastura's work} or from Lemma \ref{Exel vs GNS}   and  Theorem \ref{identification theorem}. The second part of the assertion follows now from  Lemma \ref{Exel vs GNS}   and Theorem \ref{identification theorem}.
\end{proof}
\begin{rem}
The second part of the assertion in the above theorem  generalizes \cite[Proposition 3.10 and Theorem 4.2]{br} proved in the unital case, and \cite[Theorems 4.1 and 4.3]{brv} where authors assumed extendibility of the Exel system.
\end{rem}
Brownlowe, Raeburn and Vitadello  proved in \cite[Corollary 4.2]{brv} that  Exel's crossed products for Exel systems $(C_0(T), \alpha,\LL)$ induced by classical dynamical systems $(T, \tau)$ are naturally isomorphic to $\OO_{M_\LL}$. For these systems $\LL$  is faithful and $\alpha$ is extendible. It turns our that the latter properties suffice to get the corresponding isomorphism.
\begin{prop}\label{corollary on faithful transfers} Let  $(A,\alpha, \LL)$ be an Exel system and suppose that one of the following conditions hold:
\begin{itemize}
\item[i)] $\LL$ is almost faithful on $A$ and $\alpha$ is a non-degenerate homomorphism.
\item[ii)] $\LL$ is   faithful and $\alpha$ is extendible.
\end{itemize}
Then  $
A\times_{\al, \LL} \N = C^*(A,\LL)\cong \OO_{M_\LL}.
$
\end{prop}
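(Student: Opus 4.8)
The plan is to reduce condition (ii) to condition (i), and then to read off the conclusion under (i) almost immediately from Theorem~\ref{Exel crossed products as relative Pimsner}. First I would record that a faithful transfer operator is automatically almost faithful on $A$: if $a\in A$ satisfies $\LL((ab)^*ab)=0$ for all $b\in A$, then specializing $b$ to the elements $\mu_\lambda$ of an approximate unit and using boundedness (hence continuity) of $\LL$ gives $\LL(a^*a)=\lim_\lambda\LL((a\mu_\lambda)^*(a\mu_\lambda))=0$, so $a=0$. Thus under (ii) we have $N_\LL^\bot=A$, exactly as under (i).

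The substantive part of the reduction is to show that, when $\LL$ is faithful and $\alpha$ is extendible, $\alpha$ must be a non-degenerate homomorphism. Put $p:=\overline{\alpha}(1)\in M(A)$, so that $\alpha(\mu_\lambda)\to p$ strictly since $\alpha$ is extendible. Using the symmetrized transfer identity $\LL(\alpha(\mu_\lambda)c)=\mu_\lambda\LL(c)$ and passing to the limit (the left-hand side tends to $\LL(pc)$ by strict continuity together with continuity of $\LL$, the right-hand side to $\LL(c)$) yields $\LL((1-p)c)=0$ for every $c\in A$. The key step is then to feed into this the test element $c=bb^*(1-p)\in A$, $b\in A$, which gives
\[
\LL\big(((1-p)b)((1-p)b)^*\big)=\LL\big((1-p)bb^*(1-p)\big)=0 ,
\]
so that faithfulness of $\LL$ forces $(1-p)b=0$. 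Hence $pb=b$ for all $b\in A$, i.e.\ $p=1$ in $M(A)$, so $\overline{\alpha}(1)=1$; consequently $a=\lim_\lambda\alpha(\mu_\lambda)a\in\overline{\alpha(A)A}$ for every $a\in A$, and $\alpha$ is non-degenerate. This establishes (ii)$\Rightarrow$(i).

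It then remains to prove the statement assuming (i). Since $\LL$ is almost faithful on $A$ we have $N_\LL^\bot=A$, so $C^*(A,\LL)=C^*(A,\LL;A)$ by Definition~\ref{definition main}; and since $\alpha$ is a non-degenerate homomorphism, $\alpha(A)A=A$, whence $\overline{A\alpha(A)A}=A$. Plugging both equalities into Theorem~\ref{Exel crossed products as relative Pimsner} gives
\[
A\times_{\alpha,\LL}\N=C^*(A,\LL;\overline{A\alpha(A)A})=C^*(A,\LL;A)=C^*(A,\LL)\cong\OO_{M_\LL} .
\]
I expect the only non-routine point to be the non-degeneracy argument above — specifically, hitting upon the element $bb^*(1-p)$ so that the identity $\LL((1-p)c)=0$ can be turned into something to which faithfulness of $\LL$ applies; everything else is bookkeeping with the definitions and with the results already established in this section.
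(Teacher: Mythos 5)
Your proposal is correct and follows essentially the same route as the paper: reduce (ii) to (i) by showing faithfulness of $\LL$ forces $\overline{\alpha}(1)$ to act as the identity (the paper does this by computing $\LL\big((a-a\overline{\alpha}(1))^*(a-a\overline{\alpha}(1))\big)=0$ directly, which is the same computation as your $\LL((1-p)c)=0$ specialized at $c=bb^*(1-p)$), and then read off the conclusion from Theorem \ref{Exel crossed products as relative Pimsner} using $N_\LL^\bot=A=\overline{A\alpha(A)A}$.
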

\begin{proof}
i). The assumptions imply that $N_\LL^\bot=A$ and $\overline{A\alpha(A)A}=A$. By Theorem \ref{Exel crossed products as relative Pimsner}, we get $A\times_{\al, \LL} \N  =C^*(A,\LL; A)=C^*(A,\LL;N_\LL^\bot)=C^*(A,\LL)\cong \OO_{M_\LL}$.

ii). By item i) it suffices to show that $\alpha$ is non-degenerate. For any $a\in A$ the element $
\LL((a-a\overline{\alpha}(1))^*(a-a\overline{\alpha}(1)))$ is equal to
$$
\LL(a^*a) - \LL(\overline{\alpha}(1)a^*a) - \LL(a^*a \overline{\alpha}(1))+ \LL(\overline{\alpha}(1)a^*a\overline{\alpha}(1))=0.
$$
Thus by faithfulness of $\LL$ we have  $a=\overline{\alpha}(1)a$. This implies that $A=\overline{\alpha}(1)A=\alpha(A)A$.
\end{proof}

\begin{rem}
Kakariadis and Peters \cite[Remark 3.19]{kakariadis} raised a question  whether Exel's crossed product $A\times_{\al, \LL} \N$ is always isomorphic to the Cuntz-Pimsner algebra $\OO_{M_\LL}$ (by Theorem \ref{Exel crossed products as relative Pimsner}, the latter is always isomorphic to  $C^*(A,\LL)$). The answer to this question, stated as it is,  is  no. The  reason is that  $A$ always embeds into $\OO_{M_\LL}\cong C^*(A,\LL)$ while for Exel's crossed product in general this fails, see for instance   \cite[Example 4.7]{br}.
Thus, taking into account Theorem \ref{Exel crossed products as relative Pimsner}, we propose the following  modified version of this question:
\begin{quote}
\emph{Let $(A,\alpha,\LL)$ be an Exel system such that $\overline{A\alpha(A)A}\cap J(M_\LL)\subseteq N_\LL^\bot$. Do the crossed products $C^*(A,\LL)$ and $A\times_{\al, \LL} \N$ coincide?}
\end{quote}
Since $C^*(A,\LL;J)=C^*(A,\LL; J\cap  J(M_\LL))$, the answer to the above question, for systems under consideration, is positive if and only if
\begin{equation}\label{inclusion relation}
 N_{\LL}^\bot \cap  J(M_\LL) \subseteq \overline{A\alpha(A)A}.
\end{equation}
The most problematic part in establishing \eqref{inclusion relation} is determining $J(M_\LL)$. For instance, when $\alpha$ is extendible and $\LL$ is faithful then  $\overline{A\alpha(A)A}=A=N_\LL^\bot$ and hence \eqref{inclusion relation} holds independently of $J(M_\LL)$. Interestingly enough,  if additionally  $E=\LL\circ \alpha:A\to \alpha(A)\subseteq A$ is a conditional expectation of finite-type then $J(M_\LL)=A$,  see \cite{exel_vershik}.  However, in general we have $\overline{A\alpha(A)A}\neq N_\LL^\bot$ and $J(M_\LL)\cap N_{\LL}^\bot \neq  N_{\LL}^\bot$. This may happen already when $\LL$ is faithful but $\alpha$ is not extendible,  cf. Lemmas \ref{description of the relevant ideals} and \ref{existence of endomorphism}. Surprisingly, when $\alpha(A)$ is a corner in $A$, see  Theorem \ref{complete systems crossed products} below, and  also for all Exel systems considered in Section \ref{graphs section} we  have $J(M_\LL)\cap N_{\LL}^\bot=\overline{A\alpha(A)A}$.
\end{rem}
\subsection{Regular transfer operators}\label{Regular section}

 Most of natural  Exel systems appearing in applications, see \cite{exel2}, \cite{exel_vershik}, \cite{br}, \cite{brv},  \cite{kwa-trans},  have the property that  $\alpha\circ \LL$ is a conditional expectation onto $\alpha(A)$. In \cite{exel2} Exel called   transfer operators with that property  \emph{non-degenerate}. However,  we have reasons to change this name. Firstly, the  term  `non-degenerate' when referred to a positive operator   is sometimes used to mean
a  faithful map \cite[page 60]{exel-inter} or a strict map \cite[subsection 3.3]{Schweizer}. Secondly, there are historical reasons. Namely,  transfer operators on  unital commutative $C^*$-algebras, under the name averaging operators,  were studied   at least from 1950's see \cite{Pelczynski}, cf. \cite{kwa-trans}. The averaging operators are called \emph{regular} exactly when the corresponding composition $\alpha\circ \LL$ is a conditional expectation, cf. \cite[Proposition 2.1.i)]{kwa-trans}. Therefore we adopt the following definition.
\begin{dfn}  Let $(A,\al,\LL)$ be an  Exel system. We say that both the transfer operator $\LL$  and the Exel system $(A,\al,\LL)$ are  \emph{regular}, if  $E:=\alpha\circ \LL$ is a conditional expectation onto $\alpha(A)$.
\end{dfn}
Let us start with a simple fact.
\begin{lem}\label{structure of transfer operators}
Let $(A,\al,\LL)$ be an  Exel system. The range $\LL(A)$ of the transfer operator $\LL$ is a self-adjoint two-sided (not necessarily closed) ideal in $A$ such that $\ker\al\subseteq \LL(A)^\bot$.
\end{lem}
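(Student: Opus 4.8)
The plan is to derive everything from the transfer operator identity \eqref{transfer operator relation} and its adjoint form, using only that a positive map (and an endomorphism) is automatically $*$-preserving. First I would record the symmetrized identity: taking adjoints in $\LL(a\al(b))=\LL(a)b$, and using that $\al$ is a $*$-homomorphism and $\LL$ is $*$-preserving, gives $\LL(\al(b)a)=b\LL(a)$ for all $a,b\in A$.

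Self-adjointness of $\LL(A)$ is immediate: $\LL(a)^*=\LL(a^*)\in\LL(A)$. For the ideal property, combine the two identities: for any $a,c\in A$,
$$
c\,\LL(a)=\LL(\al(c)a)\in\LL(A),\qquad \LL(a)\,c=\LL(a\al(c))\in\LL(A),
$$
so $\LL(A)$ is a two-sided (algebraic, not necessarily closed) ideal. Finally, if $b\in\ker\al$ then $\al(b)=0$, whence by the symmetrized identity $b\,\LL(a)=\LL(\al(b)a)=0$ for every $a\in A$; thus $b$ annihilates $\LL(A)$, and since $\LL(A)$ is self-adjoint this means $b\in\LL(A)^\bot$. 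Hence $\ker\al\subseteq\LL(A)^\bot$.

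There is no genuine obstacle here — the statement is a short formal consequence of \eqref{transfer operator relation}. The only point requiring a little care is to justify passing to adjoints in \eqref{transfer operator relation} (i.e. to note that $\al$ and $\LL$ are $*$-preserving) before using the symmetrized relation, and to read $\LL(A)^\bot$ as $\{b\in A: b\,\LL(A)=0\}=\overline{\LL(A)}^{\,\bot}$ so that the annihilator makes sense for the non-closed set $\LL(A)$.
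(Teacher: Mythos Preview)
Your argument is correct and essentially identical to the paper's proof: both use the transfer operator relation and its symmetrized form $\LL(\al(b)a)=b\LL(a)$ (obtained by taking adjoints) to get the two-sided ideal property, and then observe that $b\in\ker\al$ forces $b\LL(a)=\LL(\al(b)a)=0$. The only difference is that you spell out the symmetrized identity and the left/right multiplications explicitly, whereas the paper refers to \eqref{transfer operator relation} more tersely.
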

\begin{proof}
Since $\LL$ is linear and $*$-preserving, $\LL(A)$  is a self-adjoint   linear space. The space $\LL(A)$  is a two-sided ideal in $A$ by \eqref{transfer operator relation}.
For $a\in \ker \al$ we have
$
a\LL(A)= \LL(\al(a)A)=\LL(0)=0.
$
Hence $\ker\al\subseteq \LL(A)^\bot$.
\end{proof}
Suppose that the central positive element $\overline{\LL}(1)\in M(A)$, described in Proposition \ref{extendibility of transfer operators},   is a projection.  Then by  the above lemma the multiplier $\overline{\LL}(1)$ projects $A$ onto an ideal contained in $(\ker\al)^\bot$. It turns out that $\LL$ is regular exactly  when  $\overline{\LL}(1)$  projects $A$ onto $(\ker\al)^\bot$.
\begin{prop}
\label{Ex2.3}
Let $(A,\al,\LL)$ be an Exel system and let $\{\mu_\lambda\}$  be an approximate unit  in $A$. The following conditions are equivalent:
\begin{itemize}
\item[i)] $\LL$ is regular, that is $E=\al\circ \LL:A\to \al(A)$ is a conditional expectation,
\item[ii)] $\{\alpha(\LL(\mu_\lambda))\}$ is an approximate unit in $\al(A)$,
\item[iii)] $\alpha\circ\LL\circ\alpha =\alpha$,
\item[iv)] $\{\LL(\mu_\lambda)\}$ converges strictly to a  projection $\overline{\LL}(1)\in M(A)$ onto $(\ker\al)^\bot$,
\item[v)] $(\alpha,\LL)$ is an interaction in the sense of  \cite[Definition 3.1]{exel-inter}, see Remark \ref{remark on interactions}.
\end{itemize}
In particular, if the above equivalent conditions hold, then   $\ker\al$ is a complemented ideal in $A$,
 $
 \overline{\LL}(1)A=\LL(A)=(\ker\al)^{\bot}$, $(1-\overline{\LL}(1))A=\ker\al$,
 and
\begin{equation}\label{non-degenerate transfers formula}
 \LL(a)=\al^{-1}(E(a)),  \qquad \al(a)=\LL^{-1}(\overline{\LL}(1)a), \qquad a\in A,
 \end{equation}
 where    $\al^{-1}$ is the inverse to the isomorphism $\al:(\ker\al)^\bot \to \al(A)$, and $\LL^{-1}$ is the inverse to the isomorphism $\LL:\al(A)\to \LL(A)$.
\end{prop}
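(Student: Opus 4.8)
The plan is to record two tools, then prove the equivalence as the cycle (i)$\Rightarrow$(ii)$\Rightarrow$(iii)$\Rightarrow$(iv)$\Rightarrow$(v)$\Rightarrow$(i), reading off the concluding formulas from the work done in (iii)$\Rightarrow$(iv) and (iv)$\Rightarrow$(v). Write $p:=\overline{\LL}(1)$; by Proposition~\ref{extendibility of transfer operators} this is a positive central multiplier with $pa=\LL(\al(a))$ for every $a\in A$, and $\LL(\mu_\lambda)\to p$ strictly for every approximate unit $\{\mu_\lambda\}$. Tool one is Lemma~\ref{structure of transfer operators}: $\ker\al\subseteq\LL(A)^\bot$, so passing to annihilators gives $\overline{\LL(A)}\subseteq(\ker\al)^\bot$. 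Tool two is that $\al$ restricts to an isometric $*$-isomorphism of $(\ker\al)^\bot$ onto $\al(A)$: it is injective on $(\ker\al)^\bot$ since $(\ker\al)^\bot\cap\ker\al=\{0\}$, hence isometric with closed range, and it is onto because $(\ker\al)^\bot+\ker\al$ is dense in $A$ (its annihilator is $(\ker\al)^{\bot\bot}\cap(\ker\al)^\bot=\{0\}$); write $\al^{-1}$ for the inverse. One further observes, directly from \eqref{transfer operator relation}, that $E:=\al\circ\LL$ is always completely positive, maps $A$ into $\al(A)$, and satisfies $\al(A)\subseteq MD(E)$ (for $x=\al(a)$: $E(bx)=\al(\LL(b)a)=E(b)x$ and $E(xb)=xE(b)$).

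For (i)$\Rightarrow$(ii), I would use that a conditional expectation $E$ onto $\al(A)$ is contractive and fixes $\al(A)$, so $\{E(\mu_\lambda)\}$ is an increasing net of positive contractions in $\al(A)$ with $E(\mu_\lambda)x=E(\mu_\lambda x)\to x$ and $xE(\mu_\lambda)\to x$ for $x\in\al(A)$. For (ii)$\Rightarrow$(iii), $\al(a)=\lim_\lambda\al(\LL(\mu_\lambda))\al(a)=\lim_\lambda\al(\LL(\mu_\lambda)a)=\al(pa)$, i.e.\ $\al\circ\LL\circ\al=\al$. For (iii)$\Rightarrow$(iv), note (iii) reads $\al(pa)=\al(a)$, i.e.\ $(1-p)A\subseteq\ker\al$, while $\ker\al\subseteq(1-p)A$ always holds ($pa=\LL(\al(a))=0$ for $a\in\ker\al$); hence $\ker\al=(1-p)A$, so $(p-p^2)A=p(1-p)A=\{0\}$ forces $p=p^2$, $\ker\al$ is complemented by $pA$, and $pA=(\ker\al)^\bot$ (complementary ideals are mutual annihilators); with $\LL(\mu_\lambda)\to p$ strictly this is (iv). For (iv)$\Rightarrow$(v), tool one gives $\LL(A)\subseteq pA$, so $p\LL(b)=\LL(b)$; then $\LL\circ\al\circ\LL=\LL$ and $\al(A)\subseteq MD(\LL)$ follow by short computations with $p$ central, $\LL(A)\subseteq MD(\al)=A$ is trivial, and $\al\circ\LL\circ\al=\al$ holds because for $a\in(1-p)A$, writing $\al(a)=\al(pc)$ via surjectivity of $\al$ onto $\al(A)=\al(pA)$ forces $pc\in pA\cap(1-p)A=\{0\}$, so $\al(a)=0$; hence $(\al,\LL)$ is an interaction in the sense of Remark~\ref{remark on interactions}. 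For (v)$\Rightarrow$(i), (v) contains $\al\circ\LL\circ\al=\al$, so by the argument for (iii)$\Rightarrow$(iv), $p$ is a projection with $pA=(\ker\al)^\bot$ and $\LL(A)\subseteq pA$; then $E=\al\circ\LL$ is completely positive into $\al(A)$, fixes $\al(A)$, is idempotent ($E^2(b)=\al(p\LL(b))=\al(\LL(b))=E(b)$), and has $\al(A)\subseteq MD(E)$, so by the characterisation of conditional expectations recalled in Section~\ref{preliminary section}, $E$ is a conditional expectation onto $\al(A)$.

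For the concluding statements I would assume the equivalent conditions and harvest the facts already produced: $p=\overline{\LL}(1)$ is a projection, $\ker\al=(1-p)A$ is complemented, $(\ker\al)^\bot=pA$, and $\LL(A)\subseteq pA$; conversely each $a\in pA=(\ker\al)^\bot$ equals $pa=\LL(\al(a))$, so $\LL(A)=pA=(\ker\al)^\bot=\overline{\LL}(1)A$ (in particular closed). The same identity shows $\LL$ carries $\al(A)$ onto $\LL(A)$ and injectively (if $\LL(\al(a))=\LL(\al(a'))$ with $a,a'\in(\ker\al)^\bot$ then $pa=pa'$, i.e.\ $a=a'$), and as $\al(A)\subseteq MD(\LL)$ the map $\LL\colon\al(A)\to\LL(A)$ is a $*$-isomorphism; call its inverse $\LL^{-1}$. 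Then $\LL(a)\in\LL(A)=(\ker\al)^\bot$ gives $\al^{-1}(E(a))=\LL(a)$, and $\LL(\al(a))=\overline{\LL}(1)a$ with $\al(a)\in\al(A)$ gives $\LL^{-1}(\overline{\LL}(1)a)=\al(a)$, which is \eqref{non-degenerate transfers formula}.

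The one genuinely delicate step will be obtaining $\al\circ\LL\circ\al=\al$ inside (iv)$\Rightarrow$(v) (equivalently (iv)$\Rightarrow$(iii)): one cannot deduce $(1-p)A\subseteq\ker\al$ by taking annihilators, because $(1-p)A=(\ker\al)^{\bot\bot}$ may be strictly larger than $\ker\al$. The fix is to use that $\al$ maps $(\ker\al)^\bot=pA$ \emph{onto} $\al(A)$ to subtract off a $pA$-component of an arbitrary $a\in(1-p)A$ and then invoke $pA\cap(1-p)A=\{0\}$. Establishing that surjectivity — equivalently, that $\al|_{(\ker\al)^\bot}$ is an isomorphism onto $\al(A)$ — together with the bookkeeping of the central projection $p$ is where the real content lies; everything else is routine manipulation of the transfer relation $\LL(a\al(b))=\LL(a)b$ and the identity $\LL(\al(a))=\overline{\LL}(1)a$.
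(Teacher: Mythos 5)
Your cycle and most of the individual implications match the paper's proof, but your ``tool two'' --- that $\al$ always restricts to a $*$-isomorphism of $(\ker\al)^\bot$ \emph{onto} $\al(A)$ --- is false, and it is the load-bearing step of your (iv)$\Rightarrow$(v). The justification fails first: a closed ideal with trivial annihilator need not be dense (e.g.\ $C_0((0,1))$ in $C([0,1])$ is closed, proper, and has zero annihilator), so you cannot conclude that $(\ker\al)^\bot+\ker\al$ is dense in $A$. The claim itself also fails: take $A=C(X)$ with $X=\{0\}\cup\{1/n:n\geq 1\}$ and $\al(f)=f(0)\cdot 1$; then $\ker\al=\{f:f(0)=0\}$ has $(\ker\al)^\bot=\{0\}$, so $\al\bigl((\ker\al)^\bot\bigr)=\{0\}$ while $\al(A)=\C 1$. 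Surjectivity of $\al|_{(\ker\al)^\bot}$ onto $\al(A)$ is \emph{equivalent} to $A=(\ker\al)^\bot+\ker\al$, i.e.\ to $\ker\al$ being complemented --- which is part of what you are trying to establish at that point of the cycle.

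The same example shows that the delicacy you flagged cannot be argued around under the literal reading of (iv): with $\LL=0$ (a legitimate transfer operator for this $\al$) one has $\LL(\mu_\lambda)\to 0$ strictly, $0$ is a projection, and $0\cdot A=\{0\}=(\ker\al)^\bot$, yet $\al\circ\LL\circ\al=0\neq\al$. So (iv) must be read --- as the paper's own proof and the listed conclusions (in particular $(1-\overline{\LL}(1))A=\ker\al$) indicate --- as saying that $\overline{\LL}(1)$ projects $A$ onto $(\ker\al)^\bot$ \emph{along} $\ker\al$, i.e.\ that $(1-\overline{\LL}(1))A=\ker\al$. Under that reading $\al\bigl((1-\overline{\LL}(1))a\bigr)=0$ is immediate and (iv)$\Rightarrow$(v) needs no surjectivity argument at all. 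Your remaining implications ((i)$\Rightarrow$(ii)$\Rightarrow$(iii)$\Rightarrow$(iv), the direct verification of (v)$\Rightarrow$(i) via the characterisation of conditional expectations in place of the paper's citation of Exel, and the harvesting of \eqref{non-degenerate transfers formula}) are correct and essentially coincide with the paper's argument.
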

\begin{proof}  i)$\Rightarrow$ii). For  any $a\in \alpha(A)$ we have
$
\lim_{\lambda \in \Lambda}\al(\LL(\mu_\lambda)) a=\lim_{\lambda \in \Lambda} E(\mu_\lambda a)=E(a)=a.
$

ii)$\Rightarrow$iii).  For any $a\in A$ we have
$$
\al(a)=\lim_{\lambda \in \Lambda}\al(\LL(\mu_\lambda))\al(a)=\lim_{\lambda \in \Lambda}\al(\LL(\mu_\lambda)a)=\lim_{\lambda \in \Lambda}\al(\LL(\mu_\lambda \al(a)))=\al(\LL(\al(a))).
$$

iii)$\Rightarrow$iv). By Proposition \ref{extendibility of transfer operators}, $\{\LL(\mu_\lambda)\}$ converges strictly to a  central element $\overline{\LL}(1)$ in $M(A)$. In particular, using  \eqref{extension of transfer operator},
for  $a\in A$, we get
$$
\overline{\LL}(1)^2a=\overline{\LL}(1) \LL(\al(a))= \LL(\al(\LL(\al(a))))=\LL(\al(a))=\overline{\LL}(1)a.
$$
Hence $\overline{\LL}(1)$ is a projection. On one hand $(1-\overline{\LL}(1))A\subseteq \ker\al$ because
$$
\al((1-\overline{\LL}(1))a)=\al(a)-\al(\LL(\al(a)))=\al(a)-\al(a)=0,
$$
for any $a\in A$. On the other hand, $\ker\al \subseteq (1-\overline{\LL}(1))A$ because if $a\in \ker\al$ then
$$
(1-\overline{\LL}(1))a=a-\LL(\al(a))=a.
$$
Accordingly, $\ker\al =(1-\overline{\LL}(1))A$ and $(\ker\al)^\bot =\overline{\LL}(1)A$.

iv)$\Rightarrow$v).
If   $\overline{\LL}(1)$ is a projection onto $(\ker\al)^\bot$ then in view of \eqref{extension of transfer operator} for $a\in A$ we have
$$
\al(a)=\al(\overline{\LL}(1)a)=\al(\LL(\al(a)),
$$
that is $\al=\al\circ \LL\circ \al$. By Lemma \ref{structure of transfer operators},  $\ker\al\subseteq \LL(A)^\bot$.  This implies that   $\LL(A)\subseteq (\LL(A)^\bot)^\bot\subseteq (\ker\al)^\bot=\overline{\LL}(1)A$. Consequently,
$$
\LL(a)=\overline{\LL}(1) \LL(a)=\LL(\al(\LL(a))),
$$
that is $\LL=\LL\circ \al\circ\LL$. We note that as $
\overline{\LL}(1)A=\LL(\al(A))\subseteq \LL(A)
$ we actually get  $\LL(A)=\overline{\LL}(1)A=(\ker\al)^\bot$. Clearly, $\LL(A)\subseteq MD(\alpha)=A$. We have $\alpha(A)\subseteq MD(\LL)$ because
$$
\LL(\alpha(a)b)=a \LL(b)= \overline{\LL}(1) a\LL(b)=\LL(\alpha(a))\LL(b),
$$
and similarly $\LL(b\alpha(a))= \LL(b)\LL(\alpha(a))$, $a,b\in A$.

v)$\Rightarrow$i). $E=\al\circ \LL$ is a conditional expectation by \cite[Corollary 3.3]{exel-inter}.
\end{proof}
\begin{rem}
If $A$ is unital  the equivalence of  conditions i), ii), iii) above (with units in place of approximate units) was proved by Exel \cite[Proposition 2.3]{exel2}, and  in \cite[Proposition 1.5]{kwa-trans} it was noticed that they imply that $\LL(1)$ is a central projection with $\LL(1)A=\LL(A)=(\ker\al)^{\bot}$.
 \end{rem}

The following classification of regular transfer operators generalizes  \cite[Theorem 1.6]{kwa-trans} to the non-unital case.

\begin{prop}\label{proposition for rocky}
Fix an endomorphism $\al:A\to A$. If $\alpha$   admits a regular transfer operator then its  kernel is a complemented ideal in $A$ and the formulas
$$
 \LL=\al^{-1} \circ E , \qquad E=\alpha\circ \LL
$$
where    $\al^{-1}$ is the inverse to  $\al:(\ker\al)^\bot \to \al(A)$, establish a one-to-one correspondence between conditional expectations $E$ from $A$ onto $\al(A)$ and regular transfer operators $\LL$ for $\al$.

In particular, if the range of $\al$ is a hereditary subalgebra of $A$, then $\al$ admits at most one regular transfer operator.
\end{prop}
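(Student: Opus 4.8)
The plan is to extract all the structural information from Proposition~\ref{Ex2.3} and then to verify the converse assignment directly. So suppose $\al$ admits a regular transfer operator $\LL_0$. By the implication i)$\Rightarrow$iv) of Proposition~\ref{Ex2.3}, $\ker\al$ is a complemented ideal in $A$, and $\al$ restricts to a $*$-isomorphism $\al\colon(\ker\al)^\bot\to\al(A)$ whose inverse $\al^{-1}$ is precisely the one occurring in the statement; moreover $\LL_0(A)=(\ker\al)^\bot$ by the same proposition. This settles the first assertion and makes the displayed formulas meaningful.

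Next I would set up the two assignments. If $\LL$ is a regular transfer operator, then $\al\circ\LL$ is by definition a conditional expectation onto $\al(A)$. Conversely, given a conditional expectation $E\colon A\to\al(A)$, put $\LL_E:=\al^{-1}\circ E$; this is linear and positive, being the composition of the positive map $E$ with the $*$-isomorphism $\al^{-1}$. To check that $\LL_E$ satisfies the transfer identity, fix $a,b\in A$ and use multiplicativity of $\al$, the identity $\al\circ\al^{-1}=\mathrm{id}$ on $\al(A)$, and the $\al(A)$-bimodule property of $E$ to compute $\al(\LL_E(a)\,b)=\al(\LL_E(a))\,\al(b)=E(a)\,\al(b)=E(a\al(b))=\al(\LL_E(a\al(b)))$. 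Since both $\LL_E(a)\,b$ and $\LL_E(a\al(b))$ lie in the ideal $(\ker\al)^\bot$ — the former because $(\ker\al)^\bot$ is an ideal containing $\LL_E(a)$, the latter because it lies in the range of $\al^{-1}$ — and $\al$ is injective on $(\ker\al)^\bot$, we conclude $\LL_E(a\al(b))=\LL_E(a)\,b$, which is \eqref{transfer operator relation}. Finally $\al\circ\LL_E=\al\circ\al^{-1}\circ E=E$, so $\LL_E$ is regular.

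It then remains to note that the assignments $\LL\mapsto\al\circ\LL$ and $E\mapsto\LL_E=\al^{-1}\circ E$ are mutually inverse: $\al\circ\LL_E=E$ by the previous paragraph, while $\al^{-1}\circ(\al\circ\LL)=\LL$ because $\LL(A)\subseteq(\ker\al)^\bot$ (Proposition~\ref{Ex2.3}, or formula \eqref{non-degenerate transfers formula}), so that $\al^{-1}\circ\al$ acts as the identity on the range of $\LL$. This yields the asserted one-to-one correspondence.

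For the final assertion it suffices to prove that a hereditary $C^*$-subalgebra admits at most one conditional expectation, since then the correspondence just established forces $\al$ to have at most one regular transfer operator (and if $\al$ has no regular transfer operator there is nothing to prove). So let $E_1,E_2\colon A\to\al(A)$ be conditional expectations and $\{e_\lambda\}$ an approximate unit of $\al(A)$. For every $a\in A$, heredity gives $e_\lambda a e_\lambda\in\al(A)$ (a hereditary subalgebra $B$ satisfies $BAB\subseteq B$), so $E_i(e_\lambda a e_\lambda)=e_\lambda a e_\lambda$; on the other hand the bimodule property gives $E_i(e_\lambda a e_\lambda)=e_\lambda E_i(a) e_\lambda$, which tends to $E_i(a)$. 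Comparing, $E_1(a)=\lim_\lambda e_\lambda a e_\lambda=E_2(a)$. I expect the two places requiring care to be the transfer-identity computation for $\LL_E$ — one must land both elements in the ideal $(\ker\al)^\bot$ before invoking injectivity of $\al$ there — and this last uniqueness argument, which is the only genuinely new ingredient and relies on $\al(A)$ being hereditary rather than merely a closed subalgebra.
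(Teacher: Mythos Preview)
Your proof is correct and follows essentially the same approach as the paper. The paper's own argument is extremely terse---it simply cites Proposition~\ref{Ex2.3} for the correspondence and remarks that any conditional expectation $E\colon A\to B$ is determined by its restriction to $BAB$ (whence uniqueness when $B=\al(A)$ is hereditary, as then $BAB=B$)---whereas you have carefully written out both the converse assignment $E\mapsto\LL_E$ with the transfer-identity verification and the explicit approximate-unit argument for uniqueness; these are exactly the details the paper leaves implicit.
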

\begin{proof} The first part  follows immediately from  Proposition \ref{Ex2.3}. For the second part notice that every conditional expectation $E:A\to B\subseteq A$ is determined by its restriction to the hereditary $C^*$-subalgebra $BAB$ of $A$ generated by $B$.
\end{proof}

Now, we reverse the situation and parametrize all regular Exel systems for a fixed transfer operator.
To this end, we recall, cf. \cite[subsection 1.3]{Schweizer}, that  a completely positive contraction $\morp:A\to B$ is called  a \emph{retraction} if there exists a homomorphism $\theta:B\to A$ such that $\morp\circ \theta= id_B$; then $\theta$ is called a \emph{section} of $\morp$.

\begin{prop}\label{proposition implying interactions}
A completely positive mapping $\LL:A\to A$ is a regular transfer operator for a certain endomorphism if and only if $\LL(A)$ is a complemented ideal in $A$ and $\LL:A\to \LL(A)$ is a retraction.

If the above conditions hold, we have bijective correspondences between the following objects:
\begin{itemize}
\item[i)] endomorphisms $\alpha:A\to A$ making $(A,\alpha,\LL)$ into a regular Exel system,
\item[ii)] sections $\theta:\LL(A)\to A$ of $\LL:A\to \LL(A)$,
\item[iii)] $C^*$-subalgebras  $B\subseteq MD(\LL)$ such that $\LL: B\to \LL(A)$ is a bijection.
\end{itemize}
These correspondences are given by the relations
\begin{equation}\label{alpha_b definition}
\alpha(a)=\theta(\overline{\LL}(1)a), \qquad a\in A, \qquad B= \alpha(A)=\theta(\LL(A)),
\end{equation}
where   $\overline{\LL}(1)$ is the projection  onto $\LL(A)$ and $\theta:\LL(A)\to B$ is the inverse to $\LL:B\to \LL(A)$.
\end{prop}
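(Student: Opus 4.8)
The plan is to organise everything around the central projection $p\in M(A)$ determined by the complemented ideal $\LL(A)=pA$, and around the assignment $\theta\mapsto\alpha_\theta$ with $\alpha_\theta(a):=\theta(pa)$; the two conditions and the three correspondences then follow by bookkeeping, with one genuinely analytic step. For necessity, if $\LL$ is a regular transfer operator for an endomorphism $\alpha$, then by Proposition~\ref{Ex2.3}(iv) and Lemma~\ref{structure of transfer operators} the element $\overline{\LL}(1)=:p$ is a central projection in $M(A)$ with $pA=\LL(A)=(\ker\alpha)^{\bot}$, so $\LL(A)$ is a complemented ideal; also $\|\LL\|=\|\overline{\LL}(1)\|\le 1$ by Proposition~\ref{extendibility of transfer operators} and Lemma~\ref{boundness equality for a cp maps}, and by \eqref{non-degenerate transfers formula} the map $\LL$ restricts to a $*$-isomorphism $\alpha(A)\to\LL(A)$, whose inverse is a homomorphic section of $\LL$; hence $\LL:A\to\LL(A)$ is a retraction.

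For sufficiency, assume $\LL(A)=pA$ is a complemented ideal (with $p$ the corresponding central projection of $M(A)$) and let $\theta:\LL(A)\to A$ be a section; by the very definition of a retraction, $\LL$ is then a \emph{contractive} completely positive map. Set $\alpha:=\alpha_\theta$. It is an endomorphism, being the composite of the homomorphisms $a\mapsto pa$ ($p$ central) and $\theta$, and $\LL(\alpha(a))=\LL(\theta(pa))=pa$ since $pa\in pA=\LL(A)$. Using that $\alpha$ is $*$-preserving and $p$ central one computes
\[
\LL(\alpha(b)^{*}\alpha(b))=\LL(\alpha(b^{*}b))=pb^{*}b=\LL(\alpha(b))^{*}\LL(\alpha(b)),
\]
and likewise with $\alpha(b)\alpha(b)^{*}$, so Proposition~\ref{multiplicative domain for ccp maps} gives $\alpha(A)\subseteq MD(\LL)$; consequently
\[
\LL(a\alpha(b))=\LL(a)\LL(\alpha(b))=\LL(a)\,pb=\LL(a)b,\qquad a,b\in A,
\]
the last equality because $\LL(a)\in pA$. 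Thus $\LL$ is a transfer operator for $\alpha$. Finally $E:=\alpha\circ\LL$ is completely positive with $E^{2}=E$ (since $\LL\circ\alpha$ is the map $a\mapsto pa$ while $\LL(A)\subseteq pA$), $E|_{\alpha(A)}=\operatorname{id}$, $E(A)=\alpha(A)$, and $\alpha(A)\subseteq MD(E)$ (from the transfer relation just proved, e.g.\ $E(\alpha(b)c)=\alpha(b\LL(c))=\alpha(b)E(c)$ and symmetrically), so by the characterisation of conditional expectations recalled in Section~\ref{preliminary section}, $E$ is a conditional expectation onto $\alpha(A)$ and $(A,\alpha,\LL)$ is a regular Exel system.

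For the bijections I would take the maps $\theta\mapsto\alpha_\theta$ from (ii) to (i) (well defined by the previous paragraph); $\alpha\mapsto\theta_\alpha:=(\LL|_{\alpha(A)})^{-1}$ from (i) to (ii) (well defined by \eqref{non-degenerate transfers formula}); $\alpha\mapsto\alpha(A)$ from (i) to (iii) (well defined since Proposition~\ref{Ex2.3} gives $\alpha(A)\subseteq MD(\LL)$ and $\LL|_{\alpha(A)}$ is a bijection onto $\LL(A)$); and $B\mapsto(\LL|_B)^{-1}$ from (iii) to (ii) (well defined because $B\subseteq MD(\LL)$ makes $\LL|_B$ a $*$-isomorphism onto $\LL(A)$). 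Mutual inverseness is then a short chain of computations: $\theta_{\alpha_\theta}=\theta$ because $\alpha_\theta(A)=\theta(\LL(A))$ and $\LL\circ\theta=\operatorname{id}$; $\alpha_{\theta_\alpha}=\alpha$ because $p=\overline{\LL}(1)$ and \eqref{non-degenerate transfers formula} reads $\alpha(a)=\theta_\alpha(pa)$; and $\alpha_{\theta_B}(A)=\theta_B(\LL(A))=B$ for $\theta_B:=(\LL|_B)^{-1}$. All of this is precisely the content of the displayed formulas \eqref{alpha_b definition}.

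I expect the single real obstacle to be the passage from the bare section identity $\LL\circ\theta=\operatorname{id}_{\LL(A)}$ to the full transfer-operator relation, i.e.\ the inclusion $\alpha(A)\subseteq MD(\LL)$ in the sufficiency part; this is where one must use that a retraction is \emph{by definition} a contractive completely positive map, so that the description of the multiplicative domain in Proposition~\ref{multiplicative domain for ccp maps} applies. (Dually, in the necessity part one uses that a regular transfer operator is automatically a contraction.) Everything else is routine manipulation of $p$ and $\theta$.
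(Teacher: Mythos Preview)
Your argument is correct and follows essentially the same route as the paper: build $\alpha$ from a section via $\alpha(a)=\theta(pa)$ with $p$ the central projection onto $\LL(A)$, verify the Exel-system axioms, and read off the bijections. The paper is terser---it cites Proposition~\ref{Ex2.3}(v) to get $\alpha(A)\subseteq MD(\LL)$ rather than deriving it from Proposition~\ref{multiplicative domain for ccp maps} as you do---and you could likewise shorten your regularity check by noting $\alpha\circ\LL\circ\alpha=\alpha$ (since $\LL\circ\alpha(a)=pa$ and $\alpha(pa)=\alpha(a)$) and invoking Proposition~\ref{Ex2.3}(iii) directly, rather than verifying the conditional-expectation axioms by hand.
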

\begin{proof} By virtue of Proposition \ref{Ex2.3}, if $\LL$ is a regular transfer operator for $\alpha$ then $\LL(A)$ is a complemented ideal in $A$ and $\alpha$ is given by the first formula in \eqref{alpha_b definition}. Conversely, if $\LL(A)$ is a complemented ideal in $A$, then the projection $\overline{\LL}(1) \in M(A)$ onto $\LL(A)$ commutes with elements of $A$. Hence for any section $\theta:\LL(A)\to A$,  the first formula in \eqref{alpha_b definition} defines a homomorphism such that $(A,\alpha,\LL)$ is a regular Exel system.
Thus we have a bijection between objects in items i) and ii). If $\theta$ is a section of $\LL:A\to \LL(A)$ and $\alpha$ is the corresponding endomorphism then $B:= \alpha(A)=\theta(\LL(A))$ is a $C^*$-subalgebra of $MD(\LL)$ by Proposition \ref{Ex2.3}v). By the same proposition  $\LL: B\to \LL(A)$ is a bijection. Conversely, if $B$ is a $C^*$-algebra such that  $B\subseteq MD(\LL)$ and $\LL: B\to \LL(A)$ is a bijection, then the inverse to $\LL:B\to \LL(A)$ is a section of $\LL:A\to \LL(A)$. This shows the  correspondence between objects in ii) and iii).
\end{proof}

\begin{ex}[cf. Example 4.7 in \cite{br}]
Let $\LL:C([0,2])\to C([0,2])$ be given by $\LL(a)(x)=a(x/2)$. Regular Exel systems $(C([0,2]),\alpha,\LL)$ are parametrized by continuous extensions of the mapping $[0,1]\ni x \to 2x \in [0,2]$; for any such system we have
$$
\alpha(a)(x)=
\begin{cases}
a(2x), & \textrm{if }x\in [0,1]\\
a(\gamma(x)), &  \textrm{if }x\in [1,2]
\end{cases}, \qquad\quad a\in C([0,2]),
$$
where $\gamma:[1,2]\to [0,2]$ is continuous and $\gamma(1)=2$. In other words, the  algebras $B$ in Proposition \ref{proposition implying interactions} correspond to continuous mappings on $[0,2]$ whose restriction to $[0,1]$ is $2x$.
\end{ex}

\subsection{Exel's crossed products for regular Exel systems}\label{crossed products for regular Exel systems subsection}
By Theorem \ref{Exel crossed products as relative Pimsner}, $A$ embeds into $A\times_{\al, \LL} \N$ if and only if $\overline{A\alpha(A)A}\cap J(M_\LL)\subseteq N_\LL^\bot$.
In this subsection, we consider regular Exel systems satisfying   stronger, but easier to  check in practice, condition:
 $\overline{A\alpha(A)A}\subseteq N_\LL^\bot$. The latter inclusion holds,  for instance, for  systems with faithful  transfer operators or with corner endomorphisms. These  are the cases when Exel's crossed product boasts its greatest successes, see \cite{exel2}, \cite{exel_vershik}, \cite{br}, \cite{brv}. We show that for such systems Exel crossed product $A\times_{\al, \LL} \N$ can be defined without a use of  $\alpha$.

We recall that any positive map $\LL:A\to A$ restricts to the homomorphism  $\LL:MD(\LL)\to \LL(A)$. The kernel $(\ker \LL|_{MD(\LL)})$ of this homomorphism is an ideal in $ MD(\LL)$ and we may consider its annihilator $(\ker \LL|_{MD(\LL)})^{\bot}$  in $ MD(\LL)$. Thus  $(\ker \LL|_{MD(\LL)})^{\bot}$  is a $C^*$-subalgebra of $A$.
\begin{prop}\label{Exel without endomorphism}
Suppose that $(A,\alpha,\LL)$ is a regular Exel system such that  $\LL$ is  faithful on $\overline{A\alpha(A)A}$.  Then $\alpha(A)=(\ker \LL|_{MD(\LL)})^{\bot}$. Hence $\alpha$ is uniquely determined by $\LL$ and
$$
A\times_{\al, \LL} \N=C^*(A,\LL; \overline{A(\ker \LL|_{MD(\LL)})^{\bot}A}).
$$
In particular, if $\overline{A(\ker \LL|_{MD(\LL)})^{\bot}A}=N_\LL^{\bot}$,  then $
A\times_{\al, \LL} \N=C^*(A,\LL).
$
\end{prop}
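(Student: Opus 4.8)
The plan is to reduce everything to the identity $\al(A)=(\ker\LL|_{MD(\LL)})^{\bot}$, since once this is known the two displayed formulas follow simply by substituting it into Theorem \ref{Exel crossed products as relative Pimsner} and into Definition \ref{definition main}. Throughout I would freely use the structural description of regular Exel systems from Proposition \ref{Ex2.3}: since $(A,\al,\LL)$ is regular we have $\al(A)\subseteq MD(\LL)$, the identity $\LL\circ\al\circ\LL=\LL$, and $E:=\al\circ\LL$ is a (necessarily $*$-preserving, idempotent) conditional expectation of $A$ onto $\al(A)$. I would also note that $\LL|_{MD(\LL)}$ is a $*$-homomorphism, so $\ker\LL|_{MD(\LL)}$ is a closed two-sided ideal of the $C^{*}$-algebra $MD(\LL)$; consequently its annihilator $(\ker\LL|_{MD(\LL)})^{\bot}$ in $MD(\LL)$ is again a two-sided, hence $*$-closed, ideal, and the product of an element of $(\ker\LL|_{MD(\LL)})^{\bot}$ with an element of $\ker\LL|_{MD(\LL)}$ is zero.

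For the inclusion $\al(A)\subseteq(\ker\LL|_{MD(\LL)})^{\bot}$, take $a\in\al(A)$ and $b\in\ker\LL|_{MD(\LL)}$. Both $a$ and $b$ lie in $MD(\LL)$, hence so does $b^{*}a^{*}ab$, and multiplicativity of $\LL$ on $MD(\LL)$ gives $\LL\big((ab)^{*}(ab)\big)=\LL(b)^{*}\LL(a)^{*}\LL(a)\LL(b)=0$ because $\LL(b)=0$. Since $\overline{A\al(A)A}$ is the closed two-sided ideal generated by $\al(A)$, it contains $\al(A)$ (apply an approximate unit of $A$) and therefore contains $ab$; faithfulness of $\LL$ on $\overline{A\al(A)A}$ then forces $ab=0$, and replacing $a,b$ by $a^{*},b^{*}$ gives $ba=0$ as well. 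Hence $a$ annihilates $\ker\LL|_{MD(\LL)}$, that is $a\in(\ker\LL|_{MD(\LL)})^{\bot}$.

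The hard part will be the reverse inclusion. Given $a\in(\ker\LL|_{MD(\LL)})^{\bot}\subseteq MD(\LL)$, put $c:=a-E(a)=a-\al(\LL(a))$. Then $c\in MD(\LL)$, because $a\in MD(\LL)$ and $\al(\LL(a))\in\al(A)\subseteq MD(\LL)$, and $\LL(c)=\LL(a)-\LL(\al(\LL(a)))=\LL(a)-\LL(a)=0$ by $\LL\circ\al\circ\LL=\LL$; thus $c\in\ker\LL|_{MD(\LL)}$. On the other hand $c^{*}=a^{*}-\al(\LL(a))^{*}$ is a sum of an element of $(\ker\LL|_{MD(\LL)})^{\bot}$ (namely $a^{*}$) and an element of $\al(A)\subseteq(\ker\LL|_{MD(\LL)})^{\bot}$ (namely $\al(\LL(a))^{*}$, by the inclusion just proved), so $c^{*}\in(\ker\LL|_{MD(\LL)})^{\bot}$. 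Therefore $c^{*}c=0$, whence $c=0$ and $a=\al(\LL(a))\in\al(A)$. This establishes $\al(A)=(\ker\LL|_{MD(\LL)})^{\bot}$.

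Since $MD(\LL)$ and $\ker\LL|_{MD(\LL)}$, and hence the annihilator above, depend only on $\LL$, the range $\al(A)$ is determined by $\LL$; the bijective correspondence between regular Exel endomorphisms and their ranges (Proposition \ref{proposition implying interactions}) then shows that $\al$ itself is uniquely determined by $\LL$. Finally, Theorem \ref{Exel crossed products as relative Pimsner} gives $A\times_{\al,\LL}\N=C^{*}(A,\LL;\overline{A\al(A)A})$, and substituting $\al(A)=(\ker\LL|_{MD(\LL)})^{\bot}$ yields $A\times_{\al,\LL}\N=C^{*}(A,\LL;\overline{A(\ker\LL|_{MD(\LL)})^{\bot}A})$; if moreover $\overline{A(\ker\LL|_{MD(\LL)})^{\bot}A}=N_{\LL}^{\bot}$, the right-hand side equals $C^{*}(A,\LL;N_{\LL}^{\bot})=C^{*}(A,\LL)$ by Definition \ref{definition main}. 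The only genuinely delicate step is the reverse inclusion in the third paragraph; everything else is routine bookkeeping with the properties of regular transfer operators.
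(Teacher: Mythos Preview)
Your proof is correct, but it takes a different route from the paper's. The paper first establishes the intermediate identity $\al(A)=\overline{A\al(A)A}\cap MD(\LL)$: since $\LL$ is faithful on $\overline{A\al(A)A}$, the restriction $\LL:\overline{A\al(A)A}\cap MD(\LL)\to\LL(A)$ is an injective homomorphism, and since it already restricts to an isomorphism on the subalgebra $\al(A)$ with the same range $\LL(A)$, one must have equality. This intermediate set is then an ideal of $MD(\LL)$ on which $\LL$ is injective and onto $\LL(A)$, so it is the complement of $\ker\LL|_{MD(\LL)}$, i.e.\ $(\ker\LL|_{MD(\LL)})^{\bot}$. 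Your argument bypasses this intermediate step and proves the two inclusions directly, the key device being the splitting $a=E(a)+(a-E(a))$ via the conditional expectation $E=\al\circ\LL$. The paper's route yields the extra piece of information $\al(A)=\overline{A\al(A)A}\cap MD(\LL)$; yours is more elementary and self-contained. One minor simplification in your third paragraph: since $(\ker\LL|_{MD(\LL)})^{\bot}$ is a $*$-closed ideal containing both $a$ and $E(a)\in\al(A)$, you get $c\in(\ker\LL|_{MD(\LL)})^{\bot}\cap\ker\LL|_{MD(\LL)}=\{0\}$ immediately, without passing through $c^{*}$.
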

\begin{proof}
On one hand, by Proposition \ref{proposition implying interactions},   $\alpha(A)\subseteq \overline{A\alpha(A)A}\cap MD(\LL)$ and $\LL:\alpha(A)\to \LL(A)$ is an isomorphism. On the other hand, since $\LL$ is  faithful on $\overline{A\alpha(A)A}$, the map $\LL:\overline{A\alpha(A)A}\cap MD(\LL)\to \LL(A)$ is an injective homomorphism. This implies that $\alpha(A)=\overline{A\alpha(A)A}\cap MD(\LL)$. Since   $\alpha(A)=\overline{A\alpha(A)A}\cap MD(\LL)$ is an ideal in  $MD(\LL)$ and $\LL:\alpha(A)\to \LL(A)$ is an isomorphism we actually get  $\alpha(A)=(\ker \LL|_{MD(\LL)})^{\bot}$.  Thus, by Proposition \ref{proposition implying interactions}, $\alpha$ is uniquely determined  by $\LL$.  By Theorem \ref{Exel crossed products as relative Pimsner} we get $A\times_{\al, \LL} \N = C^*(A,\LL; \overline{A(\ker \LL|_{MD(\LL)})^{\bot}A})$, and if $\overline{A(\ker \LL|_{MD(\LL)})^{\bot}A}=N_\LL^{\bot}$, then we actually have $A\times_{\al, \LL} \N  =C^*(A,\LL; N_\LL^\bot)=C^*(A,\LL; N_\LL^\bot\cap J(X_\LL))=C^*(A,\LL)$.
\end{proof}

Now we consider Exel systems $(A,\alpha,\LL)$ where $\alpha$ and $\LL$ have somehow equal rights.  Algebras arising from such systems were studied for instance in \cite{Paschke}, \cite{exel2}, \cite{Ant-Bakht-Leb}, \cite{kwa-trans}, \cite{kwa-interact}, \cite{kwa-rever}.
\begin{dfn}
We say that a regular Exel system $(A,\alpha,\LL)$ is a  \emph{corner system}  if  $\alpha(A)$ is a hereditary subalgebra of $A$.
\end{dfn}
The above terminology is justified by Lemma \ref{characterization of corner systems} and Remark \ref{Remarks on reversible systems} below. We note that corner systems $(A,\alpha,\LL)$ satisfy condition $\overline{A\alpha(A)A}\subseteq N_\LL^\bot$. Indeed, since $\alpha(A)$ is hereditary and $\LL$ is faithful on $\alpha(A)$, Lemma \ref{faithful vs almost faithful} implies that $\LL$ is almost faithful on $\overline{A\alpha(A)A}$.

\begin{lem}\label{characterization of corner systems} Let $(A,\alpha,\LL)$ be an Exel system. The following statements are equivalent:
\begin{itemize}
\item[i)] $(A,\alpha, \LL)$ is a corner system.
\item[ii)] $\alpha$ is extendible and
\begin{equation}\label{complete equation}
\alpha(\LL(a))=\overline{\alpha}(1) a \overline{\alpha}(1) \qquad \textrm{ for all }a \in A.
\end{equation}
\item[iii)] $\alpha$ has a  complemented kernel and a corner range;  $\LL$ is a unique regular transfer operator for $\alpha$ and it is given by the formula
\begin{equation}\label{complete transfer operator formula}
\LL(a)=\alpha^{-1}(p a p), \qquad a\in A,
\end{equation}
where $p\in M(A)$ is a projection such that $\alpha(A)=pAp$, and  $\alpha^{-1}$ is the inverse to the isomorphism $\alpha:(\ker\alpha)^{\bot}\to pAp$.
\item[iv)] $\LL(A)$ is  a complemented ideal in $A$ and  $(\ker \LL|_{MD(\LL)})^{\bot}$  is a hereditary subalgebra of $A$ mapped by $\LL$ onto $\LL(A)$;  $\alpha$   is given by the formulas:
\begin{equation}\label{corner endomorphism formula}
\alpha|_{\LL(A)^{\bot}}\equiv 0   \quad \textrm{ and  }\quad \alpha(a)=\LL^{-1}(a), \,\, \textrm{ for } a\in \LL(A),
\end{equation}
where $\LL^{-1}$ is the inverse to the isomorphism $\LL:(\ker \LL|_{MD(\LL)})^{\bot}\to \LL(A)$.
\end{itemize}
If the above equivalent statements hold then  $\alpha(A)=\overline{\alpha}(1) A \overline{\alpha}(1)=(\ker \LL|_{MD(\LL)})^{\bot}$.
\end{lem}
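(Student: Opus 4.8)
The plan is to establish the cycle of implications $\mathrm{i)}\Rightarrow\mathrm{ii)}\Rightarrow\mathrm{iii)}\Rightarrow\mathrm{iv)}\Rightarrow\mathrm{i)}$; the closing identity $\alpha(A)=\overline{\alpha}(1)A\overline{\alpha}(1)=(\ker\LL|_{MD(\LL)})^{\bot}$ will then be read off, since it is produced along the way (in $\mathrm{ii)}\Rightarrow\mathrm{iii)}$ one gets $\alpha(A)=\overline{\alpha}(1)A\overline{\alpha}(1)$, and in $\mathrm{iii)}\Rightarrow\mathrm{iv)}$ one gets $\alpha(A)=(\ker\LL|_{MD(\LL)})^{\bot}$). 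The main tools are Proposition~\ref{Ex2.3} — which for a regular system supplies the central projection $q:=\overline{\LL}(1)\in M(A)$ with $\LL(A)=(\ker\alpha)^{\bot}=qA$, the conditional expectation $E:=\alpha\circ\LL$ onto $\alpha(A)$, the identification of $\LL|_{\alpha(A)}$ with the inverse of the isomorphism $\alpha|_{(\ker\alpha)^{\bot}}$, and the fact that $\{\alpha(\LL(\mu_\lambda))\}$, hence also $\{\alpha(\mu_\lambda)\}$, is an approximate unit of $\alpha(A)$ for any approximate unit $\{\mu_\lambda\}$ of $A$ — together with Propositions~\ref{proposition for rocky} and~\ref{proposition implying interactions}, Proposition~\ref{multiplicative domain for ccp maps} (so that, $\LL$ being a contraction when regular, $MD(\LL)$ is the largest $C^{*}$-subalgebra on which $\LL$ is multiplicative), and the elementary consequence of~\eqref{transfer operator relation} that $\LL(\alpha(a)\,c\,\alpha(b))=a\,\LL(c)\,b$, whence $E\big(\alpha(a)\,c\,\alpha(b)\big)=\alpha(a)\,E(c)\,\alpha(b)$ for all $a,b,c\in A$.

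The step $\mathrm{i)}\Rightarrow\mathrm{ii)}$ is the heart of the matter. Assume $(A,\alpha,\LL)$ is a corner system. As noted before the lemma, hereditariness of $\alpha(A)$ and faithfulness of $\LL$ on $\alpha(A)$ give, via Lemma~\ref{faithful vs almost faithful}, that $\LL$ is almost faithful on $\overline{A\alpha(A)A}$, i.e.\ $\overline{A\alpha(A)A}\subseteq N_\LL^{\bot}$. The first task is to promote the bimodule identity $E\big(\alpha(a)c\alpha(b)\big)=\alpha(a)E(c)\alpha(b)$ to the statement that $\alpha(A)$ is in fact a corner of $A$; equivalently, that $\alpha$ is extendible. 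For this one analyses the net $\{\alpha(\mu_\lambda)\}\subseteq\alpha(A)$: using that it is an approximate unit of the \emph{hereditary} subalgebra $\alpha(A)$, together with the transfer relations and almost faithfulness of $\LL$ on $\overline{A\alpha(A)A}$, one shows that $\{\alpha(\mu_\lambda)\}$ converges strictly in $M(A)$. Then $\alpha$ is extendible and, $\overline{\alpha}$ being a homomorphism, $p:=\overline{\alpha}(1)$ is a projection in $M(A)$; moreover $\alpha(a)=\overline{\alpha}(1)\alpha(a)=\alpha(a)\overline{\alpha}(1)$, so $\alpha(A)\subseteq pAp$, and passing to strict limits in the bimodule identity (with $E(c)$ fixed by $E$, since $E(c)\in\alpha(A)$) yields $E(c)=pcp$, i.e.\ $\alpha(\LL(c))=\overline{\alpha}(1)\,c\,\overline{\alpha}(1)$ for all $c\in A$, which is~\eqref{complete equation}, and $\alpha(A)=E(A)=pAp$. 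I expect this step — proving strict convergence of $\{\alpha(\mu_\lambda)\}$, that is, that a hereditary range of a regular transfer operator's endomorphism is forced to be a corner — to be the main obstacle, and the place where hereditariness, the Frobenius relation, and almost faithfulness have to be combined.

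The remaining implications are essentially bookkeeping with the structural propositions. For $\mathrm{ii)}\Rightarrow\mathrm{iii)}$: by~\eqref{complete equation}, $p:=\overline{\alpha}(1)$ is a projection, $E=\alpha\circ\LL$ is compression by $p$, and $\alpha(A)=E(A)=pAp$ is a corner; applying $\alpha\circ\LL$ to $\alpha(a)$ gives $\alpha\circ\LL\circ\alpha=\alpha$, so by Proposition~\ref{Ex2.3} $\LL$ is regular and $\ker\alpha$ is complemented; uniqueness of the regular transfer operator for $\alpha$ follows from Proposition~\ref{proposition for rocky} (as $\alpha(A)$ is hereditary), and~\eqref{complete transfer operator formula} is~\eqref{non-degenerate transfers formula} rewritten using $E(a)=pap$. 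For $\mathrm{iii)}\Rightarrow\mathrm{iv)}$: from~\eqref{complete transfer operator formula}, $\LL(A)=\alpha^{-1}(pAp)=(\ker\alpha)^{\bot}$ is a complemented ideal; since $\LL$ is regular, $\alpha(A)\subseteq MD(\LL)$ and $\LL|_{\alpha(A)}$ is a $C^{*}$-isomorphism onto $\LL(A)$, so $\alpha(A)\cap\ker(\LL|_{MD(\LL)})=\{0\}$, while $\LL\circ\alpha\circ\LL=\LL$ forces $MD(\LL)=\alpha(A)+\ker(\LL|_{MD(\LL)})$; one then checks $\alpha(A)\cdot\ker(\LL|_{MD(\LL)})=\ker(\LL|_{MD(\LL)})\cdot\alpha(A)=\{0\}$ — using that, for $b_0\in\ker(\LL|_{MD(\LL)})$, the identity $\LL(b_0^{*}b_0)=0$ together with the transfer relations and almost faithfulness forces $b_0\,p=0$ — so $\alpha(A)$ is an ideal of $MD(\LL)$ complementary to $\ker(\LL|_{MD(\LL)})$, i.e.\ $\alpha(A)=(\ker\LL|_{MD(\LL)})^{\bot}$, which is hereditary, is carried by $\LL$ onto $\LL(A)$, and makes~\eqref{corner endomorphism formula} hold by construction. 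Finally, for $\mathrm{iv)}\Rightarrow\mathrm{i)}$: $\LL(A)$ is complemented and $\LL$ restricts to an isomorphism of $B:=(\ker\LL|_{MD(\LL)})^{\bot}$ onto $\LL(A)$, so $\LL\colon A\to\LL(A)$ is a retraction with section $\theta:=(\LL|_{B})^{-1}$; by Proposition~\ref{proposition implying interactions} the endomorphism $a\mapsto\theta(\overline{\LL}(1)a)$ makes $(A,\alpha,\LL)$ into a regular Exel system with range $\theta(\LL(A))=B$, which is hereditary by hypothesis, and a direct check identifies this endomorphism with the one prescribed by~\eqref{corner endomorphism formula}; hence $(A,\alpha,\LL)$ is a corner system. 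This closes the cycle, and the asserted equalities $\alpha(A)=\overline{\alpha}(1)A\overline{\alpha}(1)=(\ker\LL|_{MD(\LL)})^{\bot}$ are precisely what was obtained in $\mathrm{ii)}\Rightarrow\mathrm{iii)}$ and $\mathrm{iii)}\Rightarrow\mathrm{iv)}$.
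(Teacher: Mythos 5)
Your cycle of implications is the same as the paper's, and your treatments of ii)$\Rightarrow$iii), iii)$\Rightarrow$iv) and iv)$\Rightarrow$i) are sound: in particular your route to $(\ker \LL|_{MD(\LL)})^{\bot}=\alpha(A)$ via the decomposition $MD(\LL)=\alpha(A)+\ker(\LL|_{MD(\LL)})$ is a legitimate variant of the paper's argument, which instead computes $MD(\LL)=pAp\oplus(1-p)A(1-p)$ directly from the four-corner decomposition of $A$ using \eqref{multiplicative domain 1}. The problem is i)$\Rightarrow$ii). You correctly identify strict convergence of $\{\alpha(\mu_\lambda)\}$ as the heart of the matter, but you never prove it: you list ingredients (``hereditariness, the Frobenius relation, and almost faithfulness'') and assert that ``one shows'' the net is strictly Cauchy. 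That assertion is the entire nontrivial content of the implication, so the proposal has a genuine gap exactly where you predicted the difficulty would lie. Moreover, the ingredient you single out --- almost faithfulness of $\LL$ on $\overline{A\alpha(A)A}$ --- is not the right lever: almost faithfulness gives no norm control, whereas the estimate requires the stronger fact (already in your toolbox, via Proposition~\ref{proposition implying interactions}) that $\LL$ restricted to $\alpha(A)$ is an injective $*$-homomorphism, hence isometric.

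Concretely, the missing step is the following. Since $\alpha(A)$ is hereditary, $\alpha(A)A\alpha(A)=\alpha(A)$, so for $a\in A$ the element $(\alpha(\mu_\lambda)-\alpha(\mu_{\lambda'}))aa^*(\alpha(\mu_\lambda)-\alpha(\mu_{\lambda'}))$ lies in $\alpha(A)$, where $\LL$ is isometric; combining this with the transfer relation \eqref{transfer operator relation} and its symmetrized version gives
\begin{align*}
\|(\alpha(\mu_\lambda)-\alpha(\mu_{\lambda'}))a\|^2
&=\|(\alpha(\mu_\lambda)-\alpha(\mu_{\lambda'}))aa^*(\alpha(\mu_\lambda)-\alpha(\mu_{\lambda'}))\|\\
&=\|\LL\big((\alpha(\mu_\lambda)-\alpha(\mu_{\lambda'}))aa^*(\alpha(\mu_\lambda)-\alpha(\mu_{\lambda'}))\big)\|\\
&=\|(\mu_\lambda-\mu_{\lambda'})\,\LL(aa^*)\,(\mu_\lambda-\mu_{\lambda'})\|\;\le\;2\,\|\LL(aa^*)(\mu_\lambda-\mu_{\lambda'})\|,
\end{align*}
which tends to zero. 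Hence $\{\alpha(\mu_\lambda)\}$ is strictly Cauchy, $\alpha$ is extendible, and $\alpha(A)=\overline{\alpha}(1)A\overline{\alpha}(1)$; then \eqref{complete equation} follows because $a\mapsto\overline{\alpha}(1)a\overline{\alpha}(1)$ is the unique conditional expectation onto the corner $\alpha(A)$, so it must coincide with $\alpha\circ\LL$ by Proposition~\ref{proposition for rocky} (your alternative of passing to strict limits in the bimodule identity also works once extendibility is in hand). Supplying this estimate closes the gap.
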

\begin{proof}

i)$\Rightarrow$ii). Let $\{\mu_\lambda\}_{\lambda\in \Lambda}$ be an approximate unit in
$A$.  Since  $\LL$ is isometric on  $\alpha(A)=\alpha(A)A\alpha(A)$,  for any $a\in A$, we have
\begin{align*}
\|\big(\alpha(\mu_\lambda)-\alpha(\mu_{\lambda'})\big)a\|^2&=\|\big(\alpha(\mu_\lambda)-\alpha(\mu_{\lambda'})\big)aa^*\big(\alpha(\mu_\lambda)-\alpha(\mu_{\lambda'})\big)\|\\
&= \|\LL\Big(\big(\alpha(\mu_\lambda)-\alpha(\mu_{\lambda'})\big)aa^*\big(\alpha(\mu_\lambda)-\alpha(\mu_{\lambda'})\big)\Big)\|
\\
&\leq 2 \|\LL(aa^*) (\mu_\lambda - \mu_{\lambda'})\|.
\end{align*}
The last term is arbitrarily small for sufficiently large $\lambda$ and $\lambda'$. Accordingly,  $\{\alpha(\mu_\lambda)\}_{\lambda\in \Lambda}$ is strictly Cauchy and thereby  strictly convergent. Hence $\alpha$ is extendible and we  have  $\alpha(A)=\overline{\alpha}(1) A\overline{\alpha}(1)$. Since  $E(a)=\overline{\alpha}(1) a\overline{\alpha}(1)$ is the unique  conditional expectation onto $\alpha(A)$ we conclude, using Proposition \ref{proposition for rocky},  that  \eqref{complete equation} holds.

ii)$\Rightarrow$iii). Note that \eqref{complete equation} implies that $\alpha(A)=\overline{\alpha}(1) A\overline{\alpha}(1)$ is a corner in $A$.  In particular, $(A,\alpha,\LL)$ is regular because $E(a)=(\alpha\circ\LL)(a)=\overline{\alpha}(1) a\overline{\alpha}(1)$ is a conditional expectation onto $\alpha(A)$. Thus by Proposition \ref{proposition for rocky},  $\ker\alpha$ is complemented and $\LL(a)=\alpha^{-1}(\overline{\alpha}(1) a \overline{\alpha}(1))$ $a\in A$.

iii)$\Rightarrow$iv). Decomposing $A$ into parts $pAp$, $(1-p)Ap$, $pA(1-p)$, $(1-p)A(1-p)$, the map $\LL$ assumes the form
$$
\LL\left(  \begin{array}{c c}
a_{11} & a_{12}\\
 a_{21} & a_{22}
\end{array}\right)
= \alpha^{-1}(a_{11}).
$$
By Proposition \ref{multiplicative domain for ccp maps}, it is immediate that $pAp \oplus (1-p)A(1-p)\subseteq  MD(\LL)$. Moreover,
if $a=a_{12}+ a_{21}\in MD(\LL)$, where $a_{12}\in (1-p)Ap$ and $a_{21}\in pA(1-p)$, then using \eqref{multiplicative domain 1} we get
$$
\alpha^{-1}(a_{12}^*a_{12})=0,\qquad  \alpha^{-1}(a_{21}a_{21}^*)=0.
$$
Since $a_{12}^*a_{12}$ and $a_{21}a_{21}^*$ belong to $pAp$, it follows that $a=a_{12}+ a_{21}=0.$ Hence $MD(\LL)=pAp \oplus (1-p)A(1-p)$. Consequently,   $(\ker \LL|_{MD(\LL)})^{\bot}=pAp$.  Now  the formula \eqref{corner endomorphism formula} is immediate.

iv)$\Rightarrow$i). It follows  from Proposition \ref{proposition implying interactions}.
\end{proof}
\begin{rem}
Transfer operators satisfying \eqref{complete equation} are called   complete transfer operators in  \cite{Ant-Bakht-Leb}, \cite{kwa-trans}, \cite{kwa-interact}, \cite{kwa-rever}. The pair $(A,\alpha)$ where $\alpha$ is an endomorphism satisfying condition iii) of Lemma \ref{characterization of corner systems}, is called a reversible $C^*$-dynamical system in  \cite{kwa-rever}.
\end{rem}

  In the case when $A$ is unital, Exel systems  satisfying \eqref{complete equation} were considered in  \cite{Ant-Bakht-Leb} and \cite{kakariadis}. In particular, it was  shown in \cite[Theorem 4.16]{Ant-Bakht-Leb} that $A\times_{\al, \LL} \N$ is isomorphic to $C^*(A,\alpha)$, and in  \cite[Theorem 3.14]{kakariadis} that $A\times_{\al, \LL} \N$ is isomorphic to $\OO_{M_{\LL}}$. By   Theorem \ref{Exel crossed products as relative Pimsner}, we know that $\OO_{M_{\LL}}\cong \OO(A,\alpha, \LL)= C^*(A,\LL)$. Hence combining these results we get the following isomorphisms $A\times_{\al, \LL} \N \cong C^*(A,\LL)\cong C^*(A,\alpha)$.
We now generalize this fact to the non-unital case.
\begin{thm}\label{complete systems crossed products}
Suppose   $(A,\alpha, \LL)$  is a corner Exel system. Then $\alpha$ and $\LL$ determine each other uniquely and we have
$$
A\times_{\al, \LL} \N =C^*(A,\LL) \cong C^*(A,\alpha).
$$
In particular, $A\times_{\al, \LL} \N$ can be viewed as a  $C^*$-algebra generated by $j_A(A)\cup j_A(A)s$ where $j_A:A\to A\times_{\al, \LL} \N$ is a non-degenerate homomorphism,  $s\in M( A\times_{\al, \LL} \N)$,
\begin{equation}\label{complete relations}
sj_A(a)s^*=j_A(\alpha(a)), \qquad s^*j_A(a)s =j_A(\LL(a)),\qquad a\in A,
\end{equation}
and the pair  $(j_A,s)$ is universal for the pairs with the above properties.
\end{thm}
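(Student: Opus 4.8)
The plan is to derive the theorem from Lemma~\ref{characterization of corner systems}, Theorem~\ref{Exel crossed products as relative Pimsner} and Theorem~\ref{identification theorem}, the one genuinely new input being the ideal identity $N_\LL^\bot\cap J(M_\LL)=\overline{A\al(A)A}$. That $\al$ and $\LL$ determine each other is already contained in Lemma~\ref{characterization of corner systems}: its part iii) recovers $\LL$ from the corner endomorphism $\al$ via \eqref{complete transfer operator formula}, and its part iv) recovers $\al$ from $\LL$, so within the class of corner systems the endomorphism and the transfer operator are mutually determined.

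To prove the ideal identity I would show that the $C^*$-correspondence $M_\LL$ is in fact a Hilbert bimodule. Write $p:=\overline{\al}(1)\in M(A)$ for the projection with $\al(A)=pAp$ (Lemma~\ref{characterization of corner systems}), so that \eqref{complete equation} reads $\al(\LL(a))=pap$. Since $\LL((m-m')^*(m-m'))=0$ forces $(m-m')p=0$, the assignment ${}_A\langle q(m),q(n)\rangle:=mpn^*$ is well defined on $M_\LL$; it is a left $A$-valued inner product (${}_A\langle q(m),q(m)\rangle=(mp)(mp)^*\ge 0$, with $\|{}_A\langle q(m),q(m)\rangle\|=\|mp\|^2=\|q(m)\|^2$), and using the identity $q(j)=q(jp)$ for $j\in A$ together with $\al(\LL(n^*k))=pn^*kp$ one checks the bimodule relation ${}_A\langle q(m),q(n)\rangle\cdot q(k)=q(m)\cdot\langle q(n),q(k)\rangle_{\LL}$. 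Hence $\overline{{}_A\langle M_\LL,M_\LL\rangle}=\overline{ApA}=\overline{A\al(A)A}$, and since $\K(M_\LL)=\phi\bigl(\overline{{}_A\langle M_\LL,M_\LL\rangle}\bigr)$ for a Hilbert bimodule, any $a\in N_\LL^\bot\cap J(M_\LL)$ satisfies $\phi(a)=\phi(b)$ for some $b\in\overline{A\al(A)A}$; as $N_\LL=\ker\phi$ while $\overline{A\al(A)A}\subseteq N_\LL^\bot$ (Lemma~\ref{faithful vs almost faithful}, $\al(A)$ being hereditary with $\LL$ isometric on it), it follows that $a-b\in N_\LL\cap N_\LL^\bot=\{0\}$, so $a=b\in\overline{A\al(A)A}$; the reverse inclusion is immediate from $\overline{A\al(A)A}=\overline{{}_A\langle M_\LL,M_\LL\rangle}\subseteq J(M_\LL)$. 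Granting the identity, Theorem~\ref{Exel crossed products as relative Pimsner} and Theorem~\ref{identification theorem} (transported along the isomorphism $X_\LL\cong M_\LL$ of Lemma~\ref{Exel vs GNS}) give $A\times_{\al, \LL} \N=C^*(A,\LL;\overline{A\al(A)A})=C^*(A,\LL;N_\LL^\bot\cap J(X_\LL))=C^*(A,\LL)$.

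For the isomorphism with $C^*(A,\al)$ I would combine the identification $\OO_X\cong\OO_{\overline X}$ for Hilbert bimodules (\cite[Proposition~3.7]{katsura1}, cf.\ Remark~\ref{remark on interactions}) with Theorem~\ref{identification theorem}: since $X_\LL\cong M_\LL$ (Lemma~\ref{Exel vs GNS}) and $X_\al\cong E_\al$ (Lemma~\ref{lemma on the correspondence of endomorphism}) are mutually opposite Hilbert bimodules --- the conjugate of $M_\LL$ is carried onto $E_\al$ by $\overline{q(m)}\mapsto pm^*$ --- one gets $C^*(A,\LL)\cong\OO_{M_\LL}\cong\OO_{E_\al}\cong C^*(A,\al)$. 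It remains to record the generators--relations description. As $\LL$ is a transfer operator it is extendible (Proposition~\ref{extendibility of transfer operators}), so $s\in M(C^*(A,\LL))$ by Proposition~\ref{characterization of representations of cp map}; the relation $s^*j_A(a)s=j_A(\LL(a))$ is definitional, $sj_A(a)=j_A(\al(a))s$ holds by Proposition~\ref{destroying proposition}, and the Hilbert bimodule structure forces $ss^*=\overline{j_A}(p)$, whence $sj_A(a)s^*=j_A(\al(a))ss^*=j_A(\al(a)p)=j_A(\al(a))$, so \eqref{complete relations} hold. For universality, let $(\pi,S)$ be a non-degenerate pair satisfying \eqref{complete relations}: the second relation makes $(\pi,S)$ a representation of $(A,\LL)$, while the first, via $\al$ being extendible, forces $SS^*=\textrm{s-}\lim\pi(\al(\mu_\lambda))=\overline{\pi}(p)$, so by Proposition~\ref{destroying proposition} and the description of redundancies the ideal of covariance of $(\pi,S)$ contains $\al(A)$, hence the whole ideal $\overline{A\al(A)A}=N_\LL^\bot\cap J(X_\LL)$; thus $(\pi,S)$ is covariant and Proposition~\ref{universal description of crossed products} yields the epimorphism $C^*(A,\LL)\to C^*(\pi,S)$.

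I expect the ideal identity $N_\LL^\bot\cap J(M_\LL)=\overline{A\al(A)A}$ to be the main obstacle: the idea is not deep, but establishing the Hilbert bimodule structure of $M_\LL$ and the compact-operator description rigorously in the non-unital setting, where $p=\overline{\al}(1)$ lives only in the multiplier algebra, demands careful handling of passages such as $q(j)=q(jp)$ and $\overline{ApA}=\overline{A\al(A)A}$; everything else is a matter of assembling results already established in the paper.
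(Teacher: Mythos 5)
Your proof is correct, and its central computation is in substance the one the paper uses: the paper establishes the key ideal identity $\overline{A\al(A)A}=N_\LL^\bot\cap J(M_\LL)$ by checking directly that $\Theta_{q(x),q(y)}=\phi\bigl(x\overline{\al}(1)y^*\bigr)$, so that $\phi$ carries $\overline{A\al(A)A}=\overline{A\overline{\al}(1)A}$ isometrically onto $\K(M_\LL)$, and this is exactly the content of your Hilbert-bimodule formulation with ${}_A\langle q(m),q(n)\rangle=mpn^*$ — your packaging is a bit more structural, but the underlying calculation and the final ``$a-b\in N_\LL\cap N_\LL^\bot=\{0\}$'' step are identical. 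Where you genuinely diverge is the isomorphism $C^*(A,\LL)\cong C^*(A,\al)$: the paper obtains it by showing that $C^*(A,\LL)$ is universal for the relations \eqref{complete relations} (verifying that $(i_A(\al(a)),t\,i_A(a)t^*)$ is a redundancy and that the covariance ideal is generated by $\al(A)$) and then matching this against the universal property of $C^*(A,\al)$ from \cite[Proposition 4.6]{kwa-rever}, whereas you pass through opposite Hilbert bimodules, $\OO_{M_\LL}\cong\OO_{\overline{M_\LL}}\cong\OO_{E_\al}$, via \cite[Proposition 3.7]{katsura1}. Your route is precisely the ``interaction-theoretic'' alternative the paper flags in Remark \ref{Remarks on reversible systems}; it buys a symmetric picture of the two crossed products at the cost of having to exhibit $E_\al$ as the conjugate bimodule of $M_\LL$, while the paper's route yields the generators-and-relations description of $A\times_{\al, \LL} \N$ as a byproduct — a description you must then (and correctly do) supply separately in your final paragraph, by the same redundancy/covariance-ideal argument the paper uses.
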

\begin{proof}
Lemma \ref{characterization of corner systems} implies that  $\alpha$ and $\LL$ determine each other uniquely. By Theorem \ref{Exel crossed products as relative Pimsner},   to prove the equality $A\times_{\al, \LL} \N =C^*(A,\LL)$ it suffices to  show that $A\alpha(A)A=N_\LL^\bot\cap J(X_\LL)$. To this end, we use the  isomorphism  $X_\LL\cong M_\LL$ from Lemma \ref{Exel vs GNS}. For $x\in A$ we have $q(x)=q(x\overline{\alpha}(1))\in M_\LL$. For any $x$, $y$, $z\in A$ we get
\begin{align*}
\Theta_{q(x),q(y)}q(z)&= q(x) \LL(y^*z)= q(x \alpha(\LL(y^*z)))=q(x \overline{\alpha}(1) y^*z\overline{\alpha}(1))= (x\overline{\alpha}(1)y) q(z).
\end{align*}
Thus $\phi(x\overline{\alpha}(1)y)= \Theta_{q(x),q(y)}$. It follows that $\phi$ sends $\overline{A\alpha(A)A}=\overline{A\overline{\alpha}(1)A}\subseteq N_\LL^\bot=(\ker\phi)^\bot$ isometrically onto $\K(X_\LL)\cong \K(M_\LL)$. Hence $\overline{A\alpha(A)A}=J(X_\LL)\cap N_{\LL}^\bot$ and we have $A\times_{\al, \LL} \N =C^*(A,\LL)$.

 In order to show that the first relation in \eqref{complete relations} holds in $A\times_{\al, \LL} \N =C^*(A,\LL)$ (the second holds trivially) it suffices to check that $(i_A(\alpha(a)), ti_A(a)t^*)$, for  $a\in A$, is a redundancy of the Toeplitz representation $(i_A,t)$ (note that  $\alpha(A)\subseteq N_\LL^\bot$).  Invoking  Proposition \ref{destroying proposition}  we have $ti_A(a)=i_A(\alpha(a))t$. Thus, using \eqref{complete equation}, for any $b,c \in A$ we get
\begin{align*}
(ti_A(a) t^*)\, i_A(b)t\,i_A(c)&= t i_A(a \LL(b)c)=i_A(\alpha(a \LL(b)c))t=i_A(\alpha(a)b\alpha(c)) t
\\
&=i_A(\alpha(a)) \, i_A(b)t\,i_A(c).
\end{align*}
Since $i_A$ is non-degenerate this shows that $(i_A(\alpha(a)), ti_A(a)t^*)$ is a redundancy and thus \eqref{complete relations} holds. Moreover, since the ideal $J(X_\LL)\cap N_{\LL}^\bot=\overline{A\alpha(A)A}$ is generated by $\alpha(A)$ we see that the kernel of the quotient map $\TT(A,\LL)\to C^*(A,\LL)$ is the ideal generated by differences  $i_A(\alpha(a)) - ti_A(a)t^*$, $a\in A$. Hence $(C^*(A,\LL),j_A,s)$ is universal with respect to relations \eqref{complete relations}, cf. Proposition \ref{universal description of crossed products}.
By \cite[Proposition 4.6]{kwa-rever}, cf. Proposition \ref{Proposition for endomorphisms}, $C^*(A,\alpha)$ is universal with respect to the same relations and thus   $C^*(A,\alpha)\cong C^*(A,\LL)$.
\end{proof}
\begin{rem}\label{Remarks on reversible systems}  If $A$ is unital then an Exel system $(A,\alpha,\LL)$ is a corner system if and only if $(\alpha,\LL)$ is a corner interaction studied in \cite{kwa-interact}, see  Proposition \ref{Ex2.3}v). In particular,  the isomorphism $C^*(A,\LL)\cong C^*(A,\alpha)$ is an instance of  the isomorphism \eqref{interacion isomorphisms}. An examination of the argument leading to \eqref{interacion isomorphisms} shows that it  holds also in the non-unital case if one defines a corner interaction as an interaction $(\VV,\HH)$ over $A$ where both $\VV$ and $\HH$ are extendible and have corner ranges. Thus corner interactions give a symmetrized framework for corner Exel systems, and one could think of them as partial automorphism of $A$ whose domain and range are corners in $A$.
\end{rem}

\section{Graph $C^*$-algebras as crossed products by completely positive maps}\label{graphs section}
In this section, we test  Exel's construction  and the results of the present paper against the original idea standing behind \cite{exel2} that Cuntz-Krieger
 algebras (or more generally graph $C^*$-algebras) could be viewed as  crossed products  associated to topological Markov shifts. We recall Brownlowe's \cite{Brownlowe} realization of  graph $C^*$-algebras $C^*(E)$ as Exel-Royer's crossed product for   partially defined  Exel system $(\D_E,\alpha,\LL)$ and discuss when the maps $\alpha$ and $\LL$ can be extended to the whole of diagonal algebra $\D_E$. This  leads us to a complete description of Perron-Frobenious operators on $\D_E$ associated to quivers on $E$. We prove that the  crossed product of $\D_E$ by any such operator is  isomorphic to  $C^*(E)$.

\subsection{Graph $C^*$-algebras as Exel-Royer's crossed products}
For graphs and their $C^*$-algebras we use the notation and conventions of \cite{Raeburn}, \cite{brv}, \cite{hr}.
Throughout this section, we fix an arbitrary countable directed graph $E = (E^0,E^1, r, s)$. Hence   $E^0$ and $E^1$ are countable sets  and  $r,s:E^{1}\to E^0$ are arbitrary  maps.  We denote by  $E^n$, $n>0$,  the set of finite paths $\mu=\mu_1...\mu_n$ satisfying $s(\mu_i)=r(\mu_{i+1})$,
 for all $i=1,...,n$. Then    $|\mu|=n$ stands for the length of $\mu$ and    $E^*=\bigcup_{n=0}^\infty E^n$ is the set of all finite paths (vertices are treated as paths of length zero).  We put  $E^\infty$ to be the set of infinite paths. The  maps $r,s$ extend naturally to $E^*$ and $r$ extends also to $E^\infty$.

The \emph{graph $C^*$-algebra} $C^*(E)$ is generated by a universal Cuntz-Krieger $E$-family consisting of partial isometries $\{s_e: e\in E^1\}$ and mutually  orthogonal projections $\{p_v: v\in E^1\}$ such that  $s_e^*s_e=p_{s(e)}$, $s_e s_e^*\leq p_{r(e)}$   and $p_v=\sum_{r(e)=v} s_e s_e^*$ whenever the sum is finite (i.e. $v$ is a  finite receiver). It follows that $C^*(E)=\clsp\{s_\mu s_\nu^*: \mu, \nu \in E^*\}$ where $s_\mu:=s_{\mu_1} s_{\mu_2}....s_{\mu_n}$ for $\mu=\mu_1...\mu_n\in E^n$, $n>0$, and $s_\mu=p_{\mu}$ for $\mu\in E^0$. We denote by  $\D_E:=\clsp\{s_\mu s_\mu^*: \mu \in E^*\}$ the \emph{diagonal  $C^*$-subalgebra} of $C^*(E)$.

It is attributed to folklore, see \cite{fmy} or \cite{Webster}  for an extended discussion, that the Gelfand spectrum of $\D_E$ can be identified with the boundary space of $E$. To be more specific, we define $E^*_{inf}:=\{\mu\in E^*: |r^{-1}(s(\mu))|=\infty\}$ and $E^*_s:=\{\mu\in E^*: r^{-1}(s(\mu))=\emptyset\}$, so $E^*_{inf}$ is the set of paths that start in infinite receivers, and $E^*_s$ is the set of paths that start in sources. For any $\eta\in E^*\setminus E^0$ let ${\eta}E^{\leq \infty}:=\{\mu=\mu_1... \in E^*\cup E^\infty: \mu_1...\mu_{|\eta|}=\eta\}$ and for $v\in E^0$ put ${v}E^{\leq \infty}:=\{\mu \in E^*\cup E^\infty: r(\mu)=v\}$. The \emph{boundary space} of $E$, cf. \cite[Section 2]{Webster} or \cite[Subsection 4.1]{Brownlowe}, is the set
$$
\partial E:= E^\infty\cup E^*_{inf} \cup E^*_{s}
$$
equipped with the topology generated by the `cylinders' $D_\eta:=\partial E\cap {\eta}E^{\leq \infty}$, $\eta \in E^*$, and their complements. In fact, the sets   $D_\eta \setminus \bigcap_{\mu \in F} D_\mu$, where $\eta\in E^*$ and $F\subseteq {\eta}E^{\leq \infty}\cap E^*$ is finite, form a basis of compact and open sets for Hausdorff topology on $\partial E$,   \cite[Section 2]{Webster} or  \cite[Section 2]{Brownlowe}.
Passing to a dual description of the assertion in \cite[Theorem 3.7]{Webster} we get the following proposition.
\begin{prop}\label{isomorphism proposition}
We have an isomorphism $\D_E\cong C_0(\partial E)$ determined by the formula
\begin{equation}\label{isomorphism proposition fromula}
s_\mu s_\mu^*  \longmapsto  \chi_{D_\mu } ,  \qquad \mu \in E^*.
\end{equation}
\end{prop}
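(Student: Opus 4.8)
The plan is to produce a concrete $*$-isomorphism by checking that the prescribed map on the canonical generators respects all the defining relations, and then invoking a known identification of the spectrum of $\D_E$ with $\partial E$. First I would recall that $\D_E=\clsp\{s_\mu s_\mu^*:\mu\in E^*\}$ is a commutative $C^*$-algebra, so it is of the form $C_0(X)$ where $X$ is its Gelfand spectrum, and that the projections $q_\mu:=s_\mu s_\mu^*$ satisfy the multiplication rules dictated by the Cuntz-Krieger relations: $q_\mu q_\nu = q_\mu$ if $\nu$ is an initial subpath of $\mu$ (or $\mu$ of $\nu$, symmetrically), $q_\mu q_\nu = 0$ if $\mu$ and $\nu$ are not comparable, and $q_v=\sum_{r(e)=v} q_{e}$ whenever $v$ is a finite receiver. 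These are exactly the relations satisfied by the indicator functions $\chi_{D_\mu}$ of the cylinder sets in $\partial E$: $\chi_{D_\mu}\chi_{D_\nu}=\chi_{D_\mu\cap D_\nu}$, the intersection $D_\mu\cap D_\nu$ is $D_\mu$ or $D_\nu$ if $\mu,\nu$ are comparable and empty otherwise, and $D_v=\bigsqcup_{r(e)=v} D_e$ when $v$ is a finite receiver (since then every boundary path through $v$ extends by a unique edge). So the assignment $q_\mu\mapsto\chi_{D_\mu}$ extends to a well-defined $*$-homomorphism at the purely algebraic level on the dense $*$-subalgebra $\spane\{q_\mu\}$.

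The second step is to pass from this algebraic compatibility to an honest isometric isomorphism onto $C_0(\partial E)$. Here I would lean directly on the cited structural fact: by \cite[Theorem 3.7]{Webster} (and the surrounding discussion in \cite{fmy}, \cite{Webster}, \cite{Brownlowe}), the Gelfand spectrum of $\D_E$ is homeomorphic to $\partial E$, under a homeomorphism that sends the basic clopen set $\{$characters not vanishing on $q_\mu\}$ to the cylinder $D_\mu$. Dualizing this homeomorphism gives precisely an isomorphism $\D_E\cong C_0(\partial E)$ under which the spectral projection $q_\mu\in\D_E$ corresponds to the indicator $\chi_{D_\mu}\in C_0(\partial E)$ — because $q_\mu$, being a projection, corresponds to the indicator of the clopen set on which the corresponding character is nonzero. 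This is exactly formula \eqref{isomorphism proposition fromula}, so the proposition follows. Since the statement explicitly says "passing to a dual description of the assertion in \cite[Theorem 3.7]{Webster}", the intended proof is essentially this one-line Gelfand-duality translation, and I would present it as such rather than re-deriving the spectrum from scratch.

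The main obstacle, and the only place requiring genuine care, is making sure the correspondence $q_\mu\leftrightarrow\chi_{D_\mu}$ is the right one — i.e. that under Webster's homeomorphism the specific generator $s_\mu s_\mu^*$ lands on the specific cylinder $D_\mu=\partial E\cap\eta E^{\leq\infty}$ and not on some other clopen set. This amounts to unwinding how a boundary path $\mu\in\partial E$ is turned into a character of $\D_E$: the path $\mu$ gives the character $\chi_\mu$ with $\chi_\mu(s_\nu s_\nu^*)=1$ iff $\nu$ is an initial subpath of $\mu$, and then $\chi_\mu(q_\eta)=1$ iff $\mu\in D_\eta$, which is the required bookkeeping. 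If one wanted a self-contained argument avoiding a citation, one would (i) show the characters of $\D_E$ are exactly the ones of this form, parametrized by $\partial E$ — checking that a multiplicative functional, being determined by which projections $q_\mu$ it sends to $1$, picks out a consistent "branch" that is precisely a boundary path (the boundary condition $E^*_{inf}\cup E^*_s$ accounting for finite paths that cannot be extended further) — and (ii) verify the cylinders $D_\mu$ generate the topology, matching the weak-$*$ topology on the spectrum. But given that \cite[Theorem 3.7]{Webster} is available, the clean route is simply to cite it and dualize, and I would do that.
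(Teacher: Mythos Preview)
Your proposal is correct and follows exactly the approach the paper takes: the paper offers no proof beyond the single sentence ``Passing to a dual description of the assertion in \cite[Theorem 3.7]{Webster} we get the following proposition,'' and your argument is precisely that dualization, fleshed out with the bookkeeping that the generators $s_\mu s_\mu^*$ and $\chi_{D_\mu}$ satisfy the same relations and correspond under Webster's homeomorphism. You have simply supplied the details the paper leaves implicit.
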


  The one-sided \emph{topological Markov shift}  associated to $E$ is the map $\sigma:\partial E\setminus E^0 \to \partial E$  defined, for $\mu=\mu_1\mu_2...\in \partial E\setminus E^0$,  by the formulas
$$
\sigma(\mu):=\mu_2\mu_3...\,\, \textrm{ if }\,\,\mu \notin E^1, \quad \textrm{ and} \quad \sigma(\mu):=s(\mu_1)\,\, \textrm{ if }\,\,\mu=\mu_1 \in E^1.
$$
By \cite[Proposition 2.1]{Brownlowe} the shift $\sigma$ is a local homeomorphism. Furthermore,  results of \cite[Propositions 2.1 and 4.4]{Brownlowe}  imply the following  proposition (we adopt the convention that a sum over the empty set is zero).
\begin{prop}[Brownlowe] \label{Brownlowe proposition}
The formulas
\begin{equation}\label{composition with shift}
\alpha(a)(\mu)=a(\sigma(\mu)), \qquad \LL(a)(\mu)=\sum\limits_{\nu \in \sigma^{-1}(\mu)} a(\nu) 
\end{equation}
define respectively a homomorphism $\alpha:C_0(\partial E) \to  M(C_0(\partial E\setminus E^0))$ and a linear map
 $\LL:C_c(\partial E\setminus E^0)\to C_c(\partial E)$. Moreover, the triple $(C_0(\partial E), \alpha, \LL)$ forms a $C^*$-dynamical system in the sense of \cite[Definition 1.2]{br}, and   we have an isomorphism
$$
\OO(C_0(\partial E), \alpha, \LL)\cong C^*(E).
$$
 \end{prop}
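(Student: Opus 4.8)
The plan is to reduce everything to the structural results already established for Exel–Royer's crossed product and Brownlowe's partial Exel system $(C_0(\partial E),\alpha,\LL)$, and then verify the identification on generators. The statement is essentially a packaging of \cite[Propositions 2.1, 4.4]{Brownlowe}, so the task is to extract the two assertions — well-definedness of $\alpha$ and $\LL$ with the dynamical-system axioms, and the isomorphism $\OO(C_0(\partial E),\alpha,\LL)\cong C^*(E)$ — in the form stated here.

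First I would check that the formulas in \eqref{composition with shift} are well defined. For $\alpha$: given $a\in C_0(\partial E)$, the function $\mu\mapsto a(\sigma(\mu))$ on $\partial E\setminus E^0$ is continuous because $\sigma$ is continuous (indeed a local homeomorphism by \cite[Proposition 2.1]{Brownlowe}), and it is bounded, hence defines a multiplier of $C_0(\partial E\setminus E^0)$; multiplicativity is immediate, so $\alpha$ is a homomorphism into $M(C_0(\partial E\setminus E^0))$. For $\LL$: since $\sigma$ is a local homeomorphism, $\sigma^{-1}(\mu)$ is discrete, and for $a\in C_c(\partial E\setminus E^0)$ the support meets only finitely many of the compact–open cylinders on which $\sigma$ is injective, so the sum $\sum_{\nu\in\sigma^{-1}(\mu)}a(\nu)$ is finite, continuous in $\mu$, and compactly supported; linearity is clear. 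The transfer-operator identity $\LL(a\cdot(\alpha(b)|_{\partial E\setminus E^0}))=\LL(a)\,b$ follows by the change of variables $\nu\in\sigma^{-1}(\mu)$, using $\alpha(b)(\nu)=b(\sigma(\nu))=b(\mu)$, exactly as in \cite[Proposition 4.4]{Brownlowe}; this is what makes $(C_0(\partial E),\alpha,\LL)$ a $C^*$-dynamical system in the sense of \cite[Definition 1.2]{br}.

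For the isomorphism, I would invoke \cite[Propositions 2.1 and 4.4]{Brownlowe} directly: Brownlowe constructs from $(C_0(\partial E),\alpha,\LL)$ the $C^*$-correspondence $M_\LL$, identifies $\OO(C_0(\partial E),\alpha,\LL)$ with $\OO_{M_\LL}$ (cf. Remark \ref{Remark for the referee who loves Kastura's work} and \cite[Proposition 4.5]{Brownlowe}), and produces a representation of the Exel system on $C^*(E)$ by sending $a\in C_0(\partial E)\cong\D_E$ to its copy in $\D_E\subseteq C^*(E)$ and the abstract generator $t$ to $S:=\sum_{e\in E^1}s_e$ (interpreted as a strictly convergent sum in $M(C^*(E))$, equivalently as a multiplier). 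One checks $S^*aS$ computes $\LL(a)$ on cylinder functions via $s_e^*\,\chi_{D_\mu}\,s_e$ and $aS=S\,\alpha(a)$, so this is a covariant representation in the relevant (Exel–Royer / Katsura) sense; gauge-invariant uniqueness for the graph algebra together with the gauge action on $C^*(E)$ (scaling the $s_e$) then promotes the induced surjection $\OO(C_0(\partial E),\alpha,\LL)\to C^*(E)$ to an isomorphism. Since all of this is carried out in \cite{Brownlowe}, in the paper I would simply cite it; the proposition as stated is a restatement adapted to the present notation.

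The main obstacle — more bookkeeping than genuine difficulty — is the passage between the ``partial'' picture (where $\alpha$ lands in $M(C_0(\partial E\setminus E^0))$ and $\LL$ is only defined on $C_c(\partial E\setminus E^0)$) and the ``global'' framework of Section \ref{general exel systems section}, which assumes $\alpha,\LL$ are honest self-maps of $A$. For a general graph one genuinely cannot avoid the partial setting here — that is precisely the point that Theorem \ref{thm for Raeburn, Brownlowe and Vitadello} will address by replacing $\LL$ with a Perron–Frobenius operator $\LL_\lambda$ — so at this stage the cleanest route is to keep the statement at the level of \cite[Definition 1.6]{er} / Brownlowe's $\OO(A,\alpha,\LL)$ and defer the reconciliation with $C^*(\D_E,\LL)$ to the next subsection.
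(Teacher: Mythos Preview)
Your proposal matches the paper's approach: the paper gives no proof of its own and simply attributes the result to \cite[Propositions 2.1 and 4.4]{Brownlowe}, which is exactly what you do, with some added explanation of why those citations suffice.

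One caveat on your elaboration: the sketch of the isomorphism via $S:=\sum_{e\in E^1}s_e$ as a strictly convergent multiplier is not correct for general graphs. That unweighted sum need not converge strictly in $M(C^*(E))$ --- indeed, by Proposition \ref{proposition for lambda transfer operators} with $\lambda_e\equiv 1$, strict convergence would force $\{|s^{-1}(v)|\}_{v}\in\ell_\infty$, i.e.\ a uniform bound on out-degrees. Brownlowe's actual argument in \cite[Proposition 4.4]{Brownlowe} works at the level of the $C^*$-correspondence $M_\LL$ (identifying it with the graph correspondence) rather than through a single implementing operator. This is precisely the obstruction that motivates the weighted operators $u_\lambda$ in the next subsection, as you correctly anticipate in your final paragraph.
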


The above mappings \eqref{composition with shift} have the following important algebraic description.
The isomorphism $\D_E\cong C_0(\partial E)$ from Proposition \ref{isomorphism proposition} gives rise to $*$-isomorphisms  $M(\clsp\{s_\mu s_\mu^*: \mu \in E^*\setminus \{0\}\})\cong M(C_0(\partial E\setminus E_0))$ and $\textrm{span}\{s_\mu s_\mu^*: \mu \in E^*\setminus E^0\}\cong C_c(\partial E)$. Using these isomorphisms the mappings in \eqref{composition with shift} are intertwined respectively with a homomorphism $\Phi:\D_E \to  M(\clsp\{s_\mu s_\mu^*: \mu \in E^*\setminus E^0\})$ and a linear map $\Phi_*:\textrm{span}\{s_\mu s_\mu^*: \mu \in E^*\setminus E^0\}\to \textrm{span}\{s_\mu s_\mu^*: \mu \in E^*\}$
which are given by the formulas
\begin{equation}\label{composition with shift algebraic}
\Phi(a) =\sum_{e\in E^1} s_e a s_e^* , \qquad \Phi_*(a)=\sum_{e\in E^1} s_e^* a s_e,
\end{equation}
where the first sum is strictly convergent and the second is finite.
(It suffices to check it on the spanning elements $s_\mu s_\mu^*$  and $\chi_{D_\mu }$, $\mu \in E^*$, which we leave to the reader.)
 When $E$ has no infinite emitters (see Proposition \ref{non-commutative Markov shifts equivalences} below) the formula  $\Phi(a) =\sum_{e\in E^1} s_e a s_e^*$ defines a  self-map on the whole of the graph $C^*$-algebra $C^*(E)$. In the literature, this mapping, usually considered when   $E$ is locally finite (i.e. $r$ and $s$ are finite-to-one), is  called
a \emph{non-commutative  Markov shift} and its ergodic properties are well studied, cf., for instance, \cite{jp}.

\subsection{Non-commutative Perron-Frobenius operators arising from quivers}\label{Non-commutative Perron-Frobenius subsection}
The mappings $\alpha$ and $\LL$ considered in Proposition \ref{Brownlowe proposition} are  viewed as partial mappings on $C_0(\partial E)$, cf.  \cite{er}, \cite{Brownlowe}. Now we  discuss the problem of when the formulas \eqref{composition with shift}, or their analogues, define honest mappings on  $C_0(\partial E)$.

\begin{prop}\label{non-commutative Markov shifts equivalences}
The following conditions are equivalent:
\begin{itemize}
\item[i)] the first of formula in \eqref{composition with shift} defines an endomorphism $\alpha:C_0(\partial E)\to C_0(\partial E)$,
\item[ii)] $\sigma:\partial E\setminus E^0 \to \partial E$ is a proper map (preimage of a compact set is compact),
\item[iii)] $\sigma$ is a finite-to-one mapping,
\item[iv)] there are no infinite emitters in $E$,
\item[v)] the sum $\sum_{e\in E^1} s_e a s_e^*$ converges in norm for every $a\in C^*(E)$,
\item[vi)]  the range of the homomorphism $\Phi$ is contained in $\clsp\{s_\mu s_\mu^*: \mu \in E^*\setminus E^0\}\subseteq \D_E$, and hence $\Phi:\D_E\to \D_E$ is an endomorphism.
\end{itemize}
In particular, if the above equivalent conditions hold, then the first formula in \eqref{composition with shift algebraic} defines a completely positive map $\Phi: C^*(E) \to  C^*(E)$ which restricts to an endomorphism $\Phi:\D_E\to \D_E$.
\end{prop}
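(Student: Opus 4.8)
The plan is to prove the six conditions equivalent by establishing i) $\Leftrightarrow$ vi) $\Leftrightarrow$ iv), then iv) $\Leftrightarrow$ iii) $\Leftrightarrow$ ii) and iv) $\Leftrightarrow$ v), and finally to read off the concluding statement. The recurring tool is the fibre description of the shift: for a boundary path $\nu\in\partial E\setminus E^0$ one has $\sigma^{-1}(\nu)=\{e\nu:e\in E^1,\ s(e)=r(\nu)\}$, and for a vertex $v\in\partial E$ one has $\sigma^{-1}(v)=\{e\in E^1:s(e)=v\}$, so in either case $|\sigma^{-1}(\nu)|=|s^{-1}(r(\nu))|$. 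Since every finite path extends to a boundary path, and every vertex emitting an edge occurs as $r(\nu)$ for some boundary path $\nu$ (take $\nu=v$ if $v$ is a source; otherwise prolong forward any edge with range $v$, the prolongation terminating at a source, at an infinite receiver, or continuing to an infinite path), it follows that $\sigma$ is finite-to-one precisely when $|s^{-1}(v)|<\infty$ for every vertex $v$, that is, precisely when $E$ has no infinite emitters; this is iii) $\Leftrightarrow$ iv). For ii) $\Rightarrow$ iii), a proper local homeomorphism has fibres that are at once discrete and compact, hence finite. For iv) $\Rightarrow$ ii), use that $\partial E$ has a basis of compact-open cylinders $D_\eta$; under iv) the set $\sigma^{-1}(D_\eta)=\bigcup_{s(e)=r(\eta)}D_{e\eta}$ is a finite union of cylinders, hence compact, and since any compact $K\subseteq\partial E$ is covered by finitely many cylinders, $\sigma^{-1}(K)$ is a closed subset of a compact set, hence compact. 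This settles ii) $\Leftrightarrow$ iii) $\Leftrightarrow$ iv).

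For iv) $\Rightarrow$ vi), compute on the diagonal generators: $\Phi(s_\mu s_\mu^*)=\sum_{e\in E^1}s_{e\mu}s_{e\mu}^*=\sum_{e\in s^{-1}(r(\mu))}s_{e\mu}s_{e\mu}^*$, which under iv) is a finite sum of diagonal generators indexed by paths of positive length; by linearity and continuity $\Phi(\D_E)\subseteq\clsp\{s_\nu s_\nu^*:\nu\in E^*\setminus E^0\}$, and since $\Phi$ is already a homomorphism this is exactly vi). The equivalence i) $\Leftrightarrow$ vi) is the intertwining recorded just before the statement: the isomorphism $\D_E\cong C_0(\partial E)$ of Proposition \ref{isomorphism proposition} carries $\Phi$ to $\alpha$ and carries $\clsp\{s_\nu s_\nu^*:\nu\in E^*\setminus E^0\}$ to $C_0(\partial E\setminus E^0)$, so $\Phi(\D_E)\subseteq\clsp\{s_\nu s_\nu^*:\nu\in E^*\setminus E^0\}$ iff $\alpha(C_0(\partial E))\subseteq C_0(\partial E\setminus E^0)$, which is the content of vi) and of i) respectively. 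For i) $\Rightarrow$ iv) I argue by contraposition: given an infinite emitter $v$, choose a boundary path $\nu$ with $r(\nu)=v$; then $\chi_{D_\nu}\in C_0(\partial E)$ as $D_\nu$ is compact-open, while $\alpha(\chi_{D_\nu})$ takes the value $1$ on $\bigsqcup_{e\in s^{-1}(v)}D_{e\nu}$, a disjoint union of infinitely many nonempty compact-open sets, and such a function cannot lie in $C_0(\partial E\setminus E^0)$, contradicting i). Hence i) $\Leftrightarrow$ iv) $\Leftrightarrow$ vi).

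It remains to treat v) and the final claim. Since the range projections $s_es_e^*$ of distinct edges are mutually orthogonal, and likewise the source projections, one gets $s_e^*s_{e'}=s_es_{e'}^*=0$ for $e\neq e'$, so the elements $s_eas_e^*$ are mutually orthogonal and $\|\sum_{e\in F}s_eas_e^*\|=\max_{e\in F}\|s_eas_e^*\|$ for every finite $F\subseteq E^1$. If $v$ is an infinite emitter then $\|s_ep_vs_e^*\|=\|s_es_e^*\|=1$ for each of the infinitely many $e\in s^{-1}(v)$, so $\sum_es_ep_vs_e^*$ does not converge in norm and v) fails; conversely, under iv) the sum $\sum_es_e(s_\mu s_\nu^*)s_e^*=\sum_{e\in s^{-1}(r(\mu))}s_{e\mu}s_{e\nu}^*$ is finite for each monomial (and vanishes unless $r(\mu)=r(\nu)$), and approximating a general $a\in C^*(E)$ in norm by finite linear combinations of monomials, using $\|s_e(a-b)s_e^*\|\leq\|a-b\|$, shows that the net of finite partial sums is norm-Cauchy, so v) holds. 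When these conditions hold, $\Phi$ on $C^*(E)$ is the norm-pointwise limit of the finite sums $a\mapsto\sum_{e\in F}s_eas_e^*$, each of which is completely positive (a finite sum of the completely positive maps $a\mapsto s_eas_e^*$); since the cone of positive elements in each matrix amplification is closed, $\Phi:C^*(E)\to C^*(E)$ is completely positive, and by vi) its restriction to $\D_E$ is an endomorphism.

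The most delicate part is not any single $C^*$-algebraic computation but the combinatorial bookkeeping on the boundary path space $\partial E$: correctly identifying the fibres of $\sigma$, matching cylinders with compact-open subsets, and checking that ``$v$ is an infinite emitter'' is simultaneously the obstruction to $\sigma$ being finite-to-one and the reason why $p_v$ spoils norm convergence of $\sum_es_e(\cdot)s_e^*$. One must also keep track of the fact that $\alpha$ actually takes values in the ideal $C_0(\partial E\setminus E^0)\cong\clsp\{s_\nu s_\nu^*:\nu\in E^*\setminus E^0\}$ of $\D_E$, which is the sense in which i) asserts that $\alpha$ is an endomorphism of $C_0(\partial E)$. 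The remaining ingredients, such as the orthogonality of the maps $a\mapsto s_eas_e^*$ and the complete positivity of a norm limit of completely positive maps, are routine.
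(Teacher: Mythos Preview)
Your proof is correct and follows essentially the same approach as the paper, with a slightly different arrangement of the chain of implications. The paper runs the cycle i)$\Leftrightarrow$ii)$\Rightarrow$iii)$\Rightarrow$iv)$\Rightarrow$v)$\Rightarrow$vi)$\Rightarrow$i), invoking the general fact that a composition operator maps $C_0(Y)$ into $C_0(X)$ if and only if the underlying map is proper to obtain i)$\Leftrightarrow$ii); you instead argue iv)$\Rightarrow$ii) directly via the cylinder basis and handle i)$\Rightarrow$iv) by an explicit contrapositive construction, which is a pleasant and self-contained alternative. The substantive steps iv)$\Rightarrow$v) and iv)$\Rightarrow$vi) are identical to the paper's.

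One small slip in your write-up: the source projections $s_e^*s_e=p_{s(e)}$ are \emph{not} mutually orthogonal for distinct $e$ (two edges can share a source), and consequently $s_e s_{e'}^*$ need not vanish for $e\neq e'$. Fortunately you never use this; the mutual orthogonality of the elements $s_e a s_e^*$ and the norm identity $\|\sum_{e\in F}s_e a s_e^*\|=\max_{e\in F}\|s_e a s_e^*\|$ follow already from $s_e^*s_{e'}=0$, which is a consequence of the orthogonality of the \emph{range} projections alone. Simply delete the phrase ``and likewise the source projections'' and the claim $s_e s_{e'}^*=0$.
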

\begin{proof}

i)$\Leftrightarrow$ii). It is a well known general fact  that  a continuous  mapping $\tau:X\to Y$ between locally compact Hausdorff spaces $X,Y$, gives rise to the composition operator from $C_0(Y)$   to $C_0(X)$ (rather than to $C_b(X)=M(C_0(X))$) if and only if $\tau$ is proper.

ii)$\Rightarrow$iii).
If $\sigma$ is a proper local homeomorphism  then $\sigma^{-1}(\mu)$,  $\mu \in \partial E$, is compact and cannot have a cluster point.
Hence $\sigma$  is finite-to-one.

iii)$\Rightarrow$iv).  It    follows  readily from the definition of $\sigma$.

iv)$\Rightarrow$v).  Consider a net,  indexed by finite sets $F\subseteq E^1$ ordered by inclusion, consisting of mappings $\alpha_F:C^*(E)\to C^*(E)$ given by $\alpha_F(a):=\sum_{e\in F} s_e as_e^*$.
Since the projections $s_es_e^*$, $e\in F$, are mutually orthogonal we get $\|\alpha_F(a)\|=\max_{e\in F} \|s_e as_e^*\|\leq \|a\|$, and thus $\alpha_F$ is a contraction. Let $a\in C^*(E)$. For any $\varepsilon>0$ there is a finite linear combination $b=\sum_{\mu,\nu \in K} c_{\mu,\nu} s_\mu s_\nu^*$  such that $\|a-b\|\leq \varepsilon$  ($K\subseteq E^*$ is finite set). Since $E$ has no infinite emitters  the set
$$
F=\{e\in E^1: s(e)=r(\mu)\textrm{ for some } \mu \in K\}=\bigcup_{v\in r(K)} s^{-1}(v)
$$
is finite. Clearly, for any finite set $F'\subseteq E^*$ containing $F$ we have $\alpha_{F'}(b)=\alpha_{F}(b)$. Thus
$$
\|\alpha_{F'}(a)-\alpha_F(a)\|\leq \| \alpha_{F'}(a)-\alpha_{F'}(b)\|+\|\alpha_{F}(b)-\alpha_{F}(a)\|\leq 2\varepsilon.
$$
Hence the net $\{\alpha_F(a)\}_F$ is  Cauchy and the sum $\sum_{e\in E^1} s_e a s_e^*$ converges in norm.

v)$\Rightarrow$vi). It is straightforward.

vi)$\Rightarrow$i). Note that the isomorphism $\D_E\cong C_0(\partial E)$ given by \eqref{isomorphism proposition fromula} intertwines $\Phi$ and $\alpha$.
\end{proof}

One can check that  the second formula in \eqref{composition with shift} defines a mapping $\LL:C_c(\partial E)\to C_c(\partial E)$ if and only if $E$ has no infinite receivers. But even if the graph $E$ is locally finite, this  mapping  might be unbounded.
On the other hand, if $E$ is locally finite,  we can adjust the formula for $\LL$ by adding averaging as  in \eqref{composition with shift preliminary},  and then $\LL$ has norm one, so in particular it extends to a self-map of $C_0(\partial E)$.
This motivates us to consider slightly more general averagings, which will allow us to get a bounded positive operator on $C_0(\partial E)$ for arbitrary graphs. Accordingly, we wish to consider strictly positive numbers $\lambda=\{\lambda_e\}_{e\in E^1}$ such that the formula
\begin{equation}\label{general formula for L of a graph}
 \LL_\lambda(a)(\mu)=\sum\limits_{e \in E^1,\, e\mu \in \partial E} \lambda_{e}\, a(e\mu)
\end{equation}
defines a mapping on $C_0(\partial E)$.  We note that fixing the family $\{\lambda_e\}_{e\in E^1}$ is equivalent to fixing  a  system   of  measures $\{\lambda_v\}_{v\in E^0}$ on $E^1$ making the graph $E$ into a (topological) quiver. Indeed,  the relation  $\lambda_e=\lambda_{s(e)}(\{e\})$ establishes a one-to-one correspondence between families  $\{\lambda_e\}_{e\in E^1}$ of strictly positive numbers and $s$-systems of measures $\{\lambda_v\}_{v\in E^0}$ on $E$, cf. Definition  \ref{topological quiver defn}.
In particular, if $E$ has no infinite emitters one can put $\lambda_e:=|s^{-1}(s(e))|^{-1}$, $e\in E^1$, which corresponds to the situation where all the measures $\{\lambda_v\}_{v\in E^0}$ are uniform probability distributions. In this case one recovers from \eqref{general formula for L of a graph} the second formula in \eqref{composition with shift preliminary}.

\begin{prop}\label{proposition for lambda transfer operators}
Let $\lambda=\{\lambda_e\}_{e\in E^1}$ be a family of strictly positive numbers. The following conditions are equivalent:
\begin{itemize}
\item[i)] the formula \eqref{general formula for L of a graph} defines a bounded operator $\LL_\lambda: C_0(\partial E)\to C_0(\partial E)$,
\item[ii)] the following conditions are satisfied:
\begin{equation}\label{conditions for convergence}
\left\{\sum_{e\in s^{-1}(v)} \lambda_e\right\}_{v\in s(E^1)}  \in \ell_\infty\left(s(E^1)\right),
\end{equation}
\begin{equation}\label{conditions for convergence2}
\left\{\sum_{e\in r^{-1}(v)\cap s^{-1}(w)} \lambda_e\right\}_{w\in s(r^{-1}(v))} \in c_0\left(s(r^{-1}(v))\right) \quad \textrm{for all } v\in r(E^1),
\end{equation}
\item[iii)] the sum
\begin{equation}\label{definition of u}
u_\lambda:=\sum\limits_{e\in E^1} \sqrt{\lambda_e} \, s_{e}
\end{equation}
converges  strictly  in $M(C^*(E))$,
 \item[iv)]  the sum $\sum_{e,f\in E^1} \sqrt{\lambda_e\lambda_f }s_e^* a s_f$ converges in norm for every $a\in C^*(E)$ and
\begin{equation}\label{general formula for L of a graph2}
 \Phi_{*,\lambda}(a):=\sum_{e,f\in E^1} \sqrt{\lambda_e\lambda_f }s_e^* a s_f, \qquad a\in C^*(E),
\end{equation}
defines a completely positive map $\Phi_{*,\lambda}: C^*(E)\to C^*(E)$.
\end{itemize}
If the above equivalent conditions hold then  $
\Phi_{*,\lambda}(a)=u_\lambda^* a u_\lambda$, $a\in C^*(E)$,
and the isomorphism $\D_E\cong C_0(\partial E)$ from Proposition \ref{isomorphism proposition} intertwines $\Phi_{*,\lambda}|_{\D_E}$ and $\LL_\lambda$.
\end{prop}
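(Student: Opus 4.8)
The plan is to establish the cyclic implications \textrm{i)}$\Leftrightarrow$\textrm{ii)}, \textrm{ii)}$\Rightarrow$\textrm{iii)}, \textrm{iii)}$\Rightarrow$\textrm{iv)}, \textrm{iv)}$\Rightarrow$\textrm{i)}, the two concluding assertions emerging on the way. Throughout one works with the finite partial sums $u_F:=\sum_{e\in F}\sqrt{\lambda_e}\,s_e$, $F\subseteq E^1$ finite, and the elementary Cuntz--Krieger identities $s_e^*s_f=\delta_{e,f}p_{s(e)}$, $s_e s_e^*\cdot s_fs_f^*=0$ for $e\ne f$, $s_\mu^*s_\mu=p_{s(\mu)}$, and $s_e^*s_\mu=\delta_{e,\mu_1}s_{\sigma(\mu)}$, $s_\mu^*s_f=\delta_{\mu_1,f}s_{\sigma(\mu)}^*$ for $|\mu|\geq 1$. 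From these one records the routine computations $u_F^*u_F=\sum_{v}\big(\sum_{e\in F\cap s^{-1}(v)}\lambda_e\big)p_v$, whence $\|u_F\|^2=\max_{v}\sum_{e\in F\cap s^{-1}(v)}\lambda_e$; $\|(u_F-u_{F'})s_\mu\|^2=\sum_{e\in(F\triangle F')\cap s^{-1}(r(\mu))}\lambda_e$; $p_v u_F=\sum_{e\in F\cap r^{-1}(v)}\sqrt{\lambda_e}\,s_e$ with $\|p_v(u_F-u_{F'})\|^2=\max_w\sum_{e\in(F\triangle F')\cap r^{-1}(v)\cap s^{-1}(w)}\lambda_e$; and $s_\mu s_\nu^* u_F$ is eventually constant when $|\nu|\geq 1$. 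I will not reproduce these calculations.

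\emph{\textrm{i)}$\Leftrightarrow$\textrm{ii)}.} The functions $\chi_{D_\eta}$, $\eta\in E^*$, span a dense $*$-subalgebra of $C_0(\partial E)$ (by Stone--Weierstrass they separate the points of $\partial E$ and vanish nowhere, and $\chi_{D_\eta}\chi_{D_{\eta'}}$ is again some $\chi_{D_{\eta''}}$ or $0$), so $\LL_\lambda$ is a well-defined bounded operator on $C_0(\partial E)$ if and only if it maps each $\chi_{D_\eta}$ into $C_0(\partial E)$ and $\sup\{\|\LL_\lambda(a)\|: a\in C_0(\partial E),\,a\geq 0,\,\|a\|\leq 1\}<\infty$. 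A direct evaluation of \eqref{general formula for L of a graph} gives $\LL_\lambda(\chi_{D_\eta})=\lambda_{\eta_1}\chi_{D_{\sigma(\eta)}}\in C_c(\partial E)$ when $|\eta|\geq 1$, and $\LL_\lambda(\chi_{D_v})(\mu)=g_v(r(\mu))$ with $g_v(w):=\sum_{e\in r^{-1}(v)\cap s^{-1}(w)}\lambda_e$. Since every cylinder $D_w$ is a non-empty compact open set, every vertex occurs as the range of a boundary path, and the $D_w$ ($w\in E^0$) are pairwise disjoint, the set $\{\mu:\LL_\lambda(\chi_{D_v})(\mu)\geq\varepsilon\}=\bigsqcup_{w:\,g_v(w)\geq\varepsilon}D_w$ is compact exactly when $\{w:g_v(w)\geq\varepsilon\}$ is finite; hence $\LL_\lambda(\chi_{D_v})\in C_0(\partial E)$ for all $v\in r(E^1)$ if and only if \eqref{conditions for convergence2} holds. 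On the other hand, from $|\LL_\lambda(a)(\mu)|\leq\|a\|\sum_{e\in s^{-1}(r(\mu))}\lambda_e$ together with, conversely, approximating the constant function $1$ on finite subsets of the discrete set $\sigma^{-1}(\mu)$ by elements of $C_c(\partial E)_+$, one gets $\|\LL_\lambda\|=\sup_{v\in E^0}\sum_{e\in s^{-1}(v)}\lambda_e$ (cf. Lemma \ref{boundness equality for a cp maps}), so finiteness of $\|\LL_\lambda\|$ is precisely \eqref{conditions for convergence}. This proves \textrm{i)}$\Leftrightarrow$\textrm{ii)}, and in passing shows that the isomorphism $\D_E\cong C_0(\partial E)$ of Proposition \ref{isomorphism proposition} intertwines $\LL_\lambda$ with the map on $\D_E$ determined by $s_\mu s_\mu^*\mapsto\lambda_{\mu_1}s_{\sigma(\mu)}s_{\sigma(\mu)}^*$ ($|\mu|\geq 1$) and $p_v\mapsto\sum_{e\in r^{-1}(v)}\lambda_e p_{s(e)}$.

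\emph{\textrm{ii)}$\Rightarrow$\textrm{iii)}$\Rightarrow$\textrm{iv)}$\Rightarrow$\textrm{i)}.} Assume \textrm{ii)}. Then \eqref{conditions for convergence} gives $\sup_F\|u_F\|<\infty$; the estimate $\|(u_F-u_{F'})s_\mu\|^2=\sum_{e\in(F\triangle F')\cap s^{-1}(r(\mu))}\lambda_e\to0$ shows $\{u_Fa\}$ is norm-Cauchy for $a$ in the dense span of the $s_\mu s_\nu^*$, hence for every $a\in C^*(E)$; likewise, using $\|s_\mu p_v(u_F-u_{F'})\|\leq\|p_v(u_F-u_{F'})\|$ and \eqref{conditions for convergence2}, $\{au_F\}$ is norm-Cauchy for every $a$; so $\{u_F\}$ is strictly Cauchy and $u_\lambda:=\textrm{s-}\lim_F u_F$ exists in $M(C^*(E))$, i.e. \textrm{iii)} holds. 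Conversely \textrm{iii)} forces, via Banach--Steinhaus applied to the pointwise norm-bounded left multiplications $a\mapsto u_Fa$, that $\sup_F\|u_F\|<\infty$; then for $a\in C^*(E)$ the net $(u_G^*au_G)_G$ of partial sums converges in norm to $u_\lambda^*au_\lambda$, because $u_G^*a(u_G-u_\lambda)\to0$ (strictness plus the uniform bound) and $(u_G^*-u_\lambda^*)(au_\lambda)\to0$ (strict convergence $u_F^*\to u_\lambda^*$ evaluated at $au_\lambda\in C^*(E)$). Since $u_G^*au_G=\sum_{e,f\in G}\sqrt{\lambda_e\lambda_f}\,s_e^*as_f$, the sum in \eqref{general formula for L of a graph2} converges in norm with $\Phi_{*,\lambda}(a)=u_\lambda^*au_\lambda$; this is a completely positive map $C^*(E)\to C^*(E)$ because $M(C^*(E))\,C^*(E)\,M(C^*(E))\subseteq C^*(E)$, so \textrm{iv)} holds, and at the same time the identity $\Phi_{*,\lambda}(a)=u_\lambda^*au_\lambda$ of the final assertion is proved. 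Finally, evaluating \eqref{general formula for L of a graph2} on the spanning elements of $\D_E$ yields $\Phi_{*,\lambda}(s_\mu s_\mu^*)=\lambda_{\mu_1}s_{\sigma(\mu)}s_{\sigma(\mu)}^*$ ($|\mu|\geq1$) and $\Phi_{*,\lambda}(p_v)=\sum_{e\in r^{-1}(v)}\lambda_e p_{s(e)}$, so $\Phi_{*,\lambda}(\D_E)\subseteq\D_E$; comparing with the previous paragraph, the isomorphism $\D_E\cong C_0(\partial E)$ carries $\Phi_{*,\lambda}|_{\D_E}$ onto $\LL_\lambda$, which is therefore a positive (hence, over a commutative algebra, completely positive) bounded self-map of $C_0(\partial E)$, i.e. \textrm{iv)}$\Rightarrow$\textrm{i)}, and the intertwining statement of the proposition is established.

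The step I expect to be most delicate is \textrm{i)}$\Leftrightarrow$\textrm{ii)}: it rests on the point-set structure of the boundary path space $\partial E$ --- compactness of each cylinder $D_w$, their pairwise disjointness as $w$ ranges over vertices, and the fact that every vertex arises as the range of a boundary path --- which is exactly what turns ``$\LL_\lambda(\chi_{D_v})$ vanishes at infinity'' into the summability condition \eqref{conditions for convergence2}. The other genuine subtlety is multiplier-theoretic: since a strictly convergent net need not be norm-bounded \emph{a priori}, one must invoke Banach--Steinhaus to deduce $\sup_F\|u_F\|<\infty$ from \textrm{iii)} before one can legitimately identify $\Phi_{*,\lambda}$ with the double sum \eqref{general formula for L of a graph2}. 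Everything else is bookkeeping with the Cuntz--Krieger relations.
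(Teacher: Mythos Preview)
Your argument is essentially correct and follows the same route as the paper: the cycle \textrm{i)}$\Rightarrow$\textrm{ii)}$\Rightarrow$\textrm{iii)}$\Rightarrow$\textrm{iv)}$\Rightarrow$\textrm{i)} via the explicit formulas for $\LL_\lambda(\chi_{D_\eta})$, the norm identity $\|u_F\|^2=\max_v\sum_{e\in F\cap s^{-1}(v)}\lambda_e$, and the Cauchy estimates for $u_Fa$ and $au_F$ on spanning elements. Your direct treatment of \textrm{i)}$\Leftrightarrow$\textrm{ii)} (rather than only \textrm{i)}$\Rightarrow$\textrm{ii)} as in the paper) is a harmless improvement.

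There is, however, one genuine slip. In \textrm{iii)}$\Rightarrow$\textrm{iv)} you invoke Banach--Steinhaus to get $\sup_F\|u_F\|<\infty$ from strict convergence. This does not work as stated: strict convergence of a \emph{net} $\{u_F\}$ means $u_Fa\to u_\lambda a$ in norm for each $a$, but for a net indexed by the (uncountable, when $E^1$ is infinite) directed set of finite subsets of $E^1$ this does \emph{not} yield $\sup_F\|u_Fa\|<\infty$, so the hypothesis of the uniform boundedness principle is not available. The fix is direct and uses the specific structure: from strict convergence, $u_Fp_v\to u_\lambda p_v$ in norm, and since $\|u_Fp_v\|^2=\sum_{e\in F\cap s^{-1}(v)}\lambda_e$ is non-decreasing in $F$, one gets $\sum_{e\in s^{-1}(v)}\lambda_e=\|u_\lambda p_v\|^2\le\|u_\lambda\|^2$ for every $v$; hence $\|u_F\|^2=\max_v\sum_{e\in F\cap s^{-1}(v)}\lambda_e\le\|u_\lambda\|^2$. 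With this bound in hand your decomposition $u_G^*au_G-u_\lambda^*au_\lambda=u_G^*a(u_G-u_\lambda)+(u_G^*-u_\lambda^*)(au_\lambda)$ goes through. (The paper's \textrm{iii)}$\Rightarrow$\textrm{iv)} is itself terse on this point, but since it has just established \textrm{ii)}, the bound is already available there.)
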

\begin{proof}
i)$\Rightarrow$ii). One readily sees that
\begin{equation}\label{relation determining L_lambda 1}
\LL_\lambda (\chi_{D_\eta})=\lambda_{\eta_1} \chi_{D_{\sigma(\eta)}}, \quad \textrm{ for any }\eta=\eta_1... \in \partial E \setminus E^{0}.
\end{equation}
Hence for any   $v\in s(E^1)$ and  any finite set  $F \subseteq s^{-1}(v)$ we get
$$
\sum_{e\in F} \lambda_e =\|\left(\sum_{e\in F}\lambda_e\right)  \chi_{D_v}\| =\|\LL_\lambda \left(\sum_{e\in F}\chi_{D_e}\right) \| \leq \|\LL_\lambda\|,
$$
which implies condition \eqref{conditions for convergence}. Now let $v\in r(E^1)$ and note that, for any $\mu \in \partial E$,
\begin{align*}
\LL_\lambda (\chi_{D_v})(\mu)&
=\sum\limits_{e \in r^{-1}(v),\, e\in s^{-1}(r(\mu))} \lambda_{e}\, \chi_{D_v}(e\mu)
=\sum\limits_{e \in r^{-1}(v)} \lambda_{e}\, \chi_{D_{s(e)}}(\mu)
\\
&=\sum_{w\in s(r^{-1}(v))}   \sum_{e\in r^{-1}(v)\cap s^{-1}(w)} \lambda_{e}\, \chi_{D_{w}}(\mu).
\end{align*}
Since the sets $D_{w}$ are disjoint and  open, $\LL_\lambda (\chi_{D_v})\in C_0(\partial E)$ implies condition \eqref{conditions for convergence2}.
For future reference, note that in view of the above calculation we have (we treat empty sums as zero)
\begin{equation}\label{relation determining L_lambda 2}
\LL_\lambda (\chi_{D_v}) = \sum\limits_{e \in r^{-1}(v)} \lambda_{e}\, \chi_{D_{s(e)}},\qquad  v\in  E^0.
\end{equation}

ii)$\Rightarrow$iii).
Let $v\in  s(E^1)$. For a finite set $F\subseteq s^{-1}(v)$  we have
$
\|\sum_{e \in F} \sqrt{\lambda_e} s_e\|^2=\|\sum_{e \in F} \lambda_e p_v \|=\sum_{e \in F} \lambda_e$.
Since $\sum_{e\in s^{-1}(v)} \lambda_e <\infty$, by \eqref{conditions for convergence}, it follows that the sum $u_v:=\sum_{e \in s^{-1}(v)} \sqrt{\lambda_e} s_e$ converges  in  norm. Thus for any finite set $F\subseteq s(E^1)$ we have
\begin{equation}\label{definition of u_F}
u_F:=\sum\limits_{v\in F} u_v=\sum\limits_{v\in F}\sum\limits_{e\in s^{-1}(v)} \sqrt{\lambda_e} \, s_{e}=\sum\limits_{e\in s^{-1}(F)} \sqrt{\lambda_e} \, s_{e}\in C^*(E).
\end{equation}
By  \eqref{conditions for convergence},  $M:=\sup_{v\in s( E^1)}\, \sum_{e\in s^{-1}(v)} \lambda_e$  is finite.  The set of elements $u_F$ is bounded:
\begin{equation}\label{norm estimation}
\|u_F\|^2=\|u_F^*u_F\|=\|\sum\limits_{v\in F} \sum\limits_{e\in s^{-1}(v)} \lambda_e  p_v\|=\max_{v\in F}  \sum\limits_{e\in s^{-1}(v)} \lambda_e \leq M.
\end{equation}
Condition \eqref{conditions for convergence2}  implies that for any $v\in E^0$ the sum $\sum_{e \in r^{-1}(v)} \sqrt{\lambda_e} s_e$ converges in norm. Indeed, for any  finite set  $F\subseteq r^{-1}(v)$ we have
$$
\|\sum_{e \in F} \sqrt{\lambda_e} s_e\|^2=\|\sum\limits_{e\in F} \lambda_e  p_{s(e)}\|=\max_{w\in s(r^{-1}(v))} \sum_{e\in r^{-1}(v)\cap s^{-1}(w)\cap F} \lambda_e,
$$
which by \eqref{conditions for convergence2} can be made arbitrarily small by choosing $F$ lying outside a sufficiently large finite subset of $r^{-1}(v)$.

Now fix a (nonzero) finite linear combination $a=\sum_{\mu,\nu \in K} \lambda_{\mu,\nu}s_{\mu} s_{\nu}^*$,  where  $K\subseteq E^*$ is finite.
Since we know, by \eqref{norm estimation}, that $\|\sum\limits_{e\in F} \sqrt{\lambda_e} \, s_{e}\|\leq \sqrt{M}$ for every finite $F\subseteq E^1$, to prove the strict convergence of the sum in \eqref{definition of u} it suffices to check the convergence in norm of the two series
$
 \sum\limits_{e\in E^1} \sqrt{\lambda_e} \, s_{e}a$ and $\sum\limits_{e\in E^1} \sqrt{\lambda_e} \, s_{e}^*a$.

Firstly,  note that for   $v\in E^0$ we have  $
u_v a = 0$ unless $v\in r(K)$. Hence for any finite set $F\subseteq  s(E^1)$ containing $r(K)$ we get $u_{F}a=u_{r(K)}a$. Recall, see  \eqref{definition of u_F}, that  $u_{r(K)}=\sum_{e\in   s^{-1}(r(K))} \sqrt{\lambda_e}s_e$ converges in norm. Therefore, for any  $\varepsilon >0$ there is a finite set $F_0\subseteq s^{-1}(r(K))\cap E^1$ such that for any finite $F\subseteq E^1$ disjoint with $F_0$  we have
$$
\| \sum_{e\in F } \sqrt{\lambda_e}s_e a\|= \| \sum_{e\in F\cap  s^{-1}(r(K))} \sqrt{\lambda_e}s_e a\|\leq \varepsilon.
$$
This means that  the sum $\sum\limits_{e\in E^1} \sqrt{\lambda_e} \, s_{e}a$ converges in $C^*(E)$.
\\
Secondly,  note that for   $e\in E^1$ we have $
s_e^* a = 0$ unless $e\mu \in K$ for some $\mu\in E^*$, or $r(e)\in K \cap E^0$.
 Recall that the sum $\sum_{e\in r^{-1}(v) } \sqrt{\lambda_e}s_e^* $ is norm convergent for all $v\in E^0$. Thus for a fixed  $\varepsilon>0$  we can find a finite set $F_1\subseteq E^1$ such that for any $F$ disjoint with $F_1$ we have
$$
\|\sum_{e\in r^{-1}(v)\cap F } \sqrt{\lambda_e}s_e^*\|\leq \frac{\varepsilon}{|K\cap E^0|\cdot \|a\| } \quad  \textrm{ for all }v\in K\cap E^0.
$$
Then for any finite $F\subseteq E^1$ lying outside  the finite set $F_0:=\{e\in E^1:   e\mu\in K, \mu\in E^* \}\cup F_1$  we get
\begin{align*}
\|\sum_{e\in F } \sqrt{\lambda_e}s_e^*a\| &=\| \sum_{v\in K\cap E^0}\sum_{e\in r^{-1}(v)\cap F } \sqrt{\lambda_e}s_e^*  a\|
\\
&\leq \sum_{v\in K\cap E^0} \|\sum_{e\in r^{-1}(v)\cap F }  \sqrt{\lambda_e}s_e^*  \| \cdot \|a\|\leq \varepsilon.
\end{align*}
Thus  $\sum\limits_{e\in E^1} \sqrt{\lambda_e} \, s_{e}^*a$ converges in $C^*(E)$.
This shows that  $u_\lambda=\sum\limits_{e\in E^1} \sqrt{\lambda_e} \, s_{e}$ converges in strict topology in $M(C^*(E))$.

iii)$\Rightarrow$iv). Plainly, as the  sum $u_\lambda=\sum\limits_{e\in E^1} \sqrt{\lambda_e} \, s_{e}$ is strictly convergent   the sum $u_\lambda^* a u_\lambda =\sum_{e,f\in E^1} \sqrt{\lambda_e\lambda_f }s_e^* a s_f$ converges in norm for every $a\in C^*(E)$.

iv)$\Rightarrow$i). Using relations \eqref{relation determining L_lambda 1}, \eqref{relation determining L_lambda 2} one readily verifies that the isomorphism given by \eqref{isomorphism proposition fromula} intertwines the restriction  $\Phi_{*,\lambda}|_{\D_E}$ of  $\Phi_{*,\lambda}$ to $\D_E$ with a mapping $\LL_\lambda:C_0(\partial E)\to C_0(\partial E)$ given by   \eqref{general formula for L of a graph}.
  \end{proof}
\begin{rem}\label{remark on probability distribution relation}  For $u_\lambda$ given by \eqref{definition of u} we have $u_\lambda^*u_\lambda=\sum_{v\in E^{0}}(\sum_{e\in s^{-1}(v)}\lambda_e)p_v$. Hence  $u_\lambda$ is a partial isometry if and only if the measures $\{\lambda_v\}_{v\in E^0}$ arising from $\lambda=\{\lambda_e\}_{e\in E^1}$ are normalized, that is if and only if
\begin{equation}\label{probability distribution}
\sum_{e\in s^{-1}(v)} \lambda_e =1,\,\,\,\,\, \textrm{ for all }\,\,\,\, v \in E^0.
\end{equation}
Clearly,  \eqref{probability distribution} implies \eqref{conditions for convergence} and if no vertex in $E$ receives  edges from infinitely many vertices then \eqref{conditions for convergence2} is trivial.  So in this case  $u_\lambda$ can be chosen to be a partial isometry. Nevertheless, in general  there might be no  systems satisfying \eqref{probability distribution} for which the sum   \eqref{definition of u} is strictly convergent (e.g. consider the infinite countable graph with a vertex receiving  one edge from each of the remaining ones). If $E$ is locally finite, one can let   $\lambda_v$, $v\in E^0$, to be uniform probability distributions by putting $\lambda_e:=|s^{-1}(s(e))|^{-1}$, $e\in E^1$. In the latter case and under the assumption that $E$ has no sinks or sources it was noted  implicitly in \cite[Theorem 5.1]{brv} and  explicitly in \cite[Section 5]{hr} that the  formula \eqref{definition of u} defines an isometry in  $M(C^*(E))$.
A detailed  discussion of history and analysis of operators \eqref{definition of u}, \eqref{general formula for L of a graph2} associated to systems of uniform probability measures for arbitrary finite graphs can be found in \cite{kwa-interact}.
\end{rem}

Let us note that $\Phi_{*,\lambda}$, given by  \eqref{general formula for L of a graph2},   restricted to $\D_E$ assumes the form
\begin{equation}\label{formula for Phi_lambda on core of a graph}
\Phi_{*,\lambda}(a)=\sum_{e\in E^1} \lambda_e s_e^* a s_e, \qquad a\in \D_E.
\end{equation}
In particular, in view of the last part of Proposition \ref{proposition for lambda transfer operators}, it is natural to call  $\Phi_{*,\lambda}: C^*(E)\to C^*(E)$ the \emph{non-commutative Perron-Frobenius operator} associated to the quiver  $(E^1,E^0,r,s, \lambda)$.

\subsection{Graph $C^*$-algebras  as crossed products $C^*(\D_E,\LL)$}
Now we are ready to state and  prove the  main result of this section. In previous subsections we have shown that for  positive numbers $\lambda=\{\lambda_e\}_{e\in E^1}$ satisfying \eqref{conditions for convergence}, \eqref{conditions for convergence2}  we have two mappings $\LL_\lambda:C_0(\partial E)\to C_0(\partial E)$ and $\Phi_{*,\lambda}:\D_E\to \D_E$, given respectively by \eqref{general formula for L of a graph} and \eqref{formula for Phi_lambda on core of a graph}. These mappings are intertwined by the isomorphism $C_0(\partial E)\cong \D_E$ determined by \eqref{isomorphism proposition fromula}. Thus one could express the following statement equally well in terms of  $(C_0(\partial E), \LL_\lambda)$ or $(\D_E,\Phi_{*,\lambda})$. We choose the second system, as it is more convenient for our proofs. In order to shorten the  notation we denote $\Phi_{*,\lambda}$ simply by $\LL$.

\begin{thm}\label{thm for Raeburn, Brownlowe and Vitadello}
 Suppose  $E=(E^0,E^1,s,r)$ is an arbitrary  directed graph and  choose the numbers $\lambda_e>0$, $e\in E^1$, such that the conditions \eqref{conditions for convergence}, \eqref{conditions for convergence2} hold. Then
  the sum
\begin{equation}\label{first approximation of a transfer operator}
\LL(a):=\sum_{e\in E^1} \lambda _e  s_e^* a s_e, \qquad a\in \D_E,
\end{equation}
is convergent in norm and defines  a (completely) positive map $\LL:\D_E\to \D_E$   such that
  $$
C^*(E)\cong C^*(\D_E, \LL),
$$
with the isomorphism determined by $a \mapsto j_{\D_E}(a)$,   $ a u_\lambda \mapsto   j_{\D_E}(a) s$,   $a\in \D_E$, where $u_\lambda$ is given by the strictly convergent sum \eqref{definition of u}. Further under these assumptions:
\begin{itemize}
\item[i)] If  $E$ has no infinite emitters, then  the following sum is convergent in norm:
\begin{equation}\label{Markov shift for local finite}
\alpha(a):=\sum_{e\in E^1}  s_e a s_e^*, \qquad a\in \D_E.
\end{equation}
It defines an endomorphism such that $(\D_E,\alpha,\LL)$ is an Exel system and
$$
 C^*(\D_E, \LL)= \D_E\rtimes_{\alpha,\LL} \N.
$$
Moreover,  $(\D_E,\alpha,\LL)$  is a regular Exel system if and only if \eqref{probability distribution} holds.
\item[ii)]   If  $E$ has no infinite receivers then $\LL$ is a transfer operator for a certain endomorphism $\alpha$ if and only if $E$ is locally finite. In this event $\alpha$ given by \eqref{Markov shift for local finite} is a unique endomorphism such that  $(\D_E,\alpha,\LL)$ is an Exel system and $\LL$ is faithful on $\alpha(\D_E)\D_E$.
\item[iii)] If  $E$ is locally finite and without sources then  $\LL$ is faithful and $\alpha$  given by \eqref{Markov shift for local finite}  is a unique endomorphism such that $(\D_E,\alpha,\LL)$ is an Exel system.
\end{itemize}
\end{thm}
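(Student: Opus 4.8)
The plan is to realise $C^*(E)$ directly as the crossed product $C^*(\D_E,\LL)$ by exhibiting an explicit covariant representation of $(\D_E,\LL)$ inside $C^*(E)$ and applying the gauge‑invariant uniqueness theorem, Proposition~\ref{Gauge invariant uniqueness theorem}. First note that $\LL=\Phi_{*,\lambda}|_{\D_E}$: for $a\in\D_E$ the cross terms of $\Phi_{*,\lambda}(a)=\sum_{e,f}\sqrt{\lambda_e\lambda_f}\,s_e^{*}as_f$ vanish because $s_e^{*}as_f=0$ whenever $e\neq f$, so by Proposition~\ref{proposition for lambda transfer operators} the sum \eqref{first approximation of a transfer operator} converges in norm to $u_\lambda^{*}au_\lambda\in\D_E$, and $\LL$ is positive, hence completely positive as $\D_E$ is commutative. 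Let $\pi\colon\D_E\hookrightarrow C^*(E)$ be the inclusion and put $S:=u_\lambda\in M(C^*(E))$; the finite sums $\sum_{v\in F}p_v$ form an approximate unit of $C^*(E)$, so $\pi$ is non‑degenerate, and $S^{*}\pi(a)S=u_\lambda^{*}au_\lambda=\LL(a)$, whence $(\pi,S)$ is a faithful representation of $(\D_E,\LL)$. From $s_e^{*}s_f=\delta_{e,f}p_{s(e)}$ one gets $\pi(s_es_e^{*})\,S=\sqrt{\lambda_e}\,s_e$, so every $s_e$ and every $p_v$ lies in $C^*(\pi,S)$; since the reverse inclusion is obvious, $C^*(\pi,S)=C^*(E)$.

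The heart of the argument is to check that $(\pi,S)$ is covariant, i.e.\ $N_\LL^{\bot}\cap J(X_\LL)\subseteq J_{(\pi,S)}$. Transporting everything through $\D_E\cong C_0(\partial E)$, a short computation with the Cuntz--Krieger relations gives $\chi_{D_\mu}u_\lambda=\sqrt{\lambda_{\mu_1}}\,s_\mu s_{\sigma(\mu)}^{*}$ for every $\mu\in E^{*}$ with $|\mu|\geq 1$ (here $\sigma(\mu)=\mu_2\cdots\mu_{|\mu|}$), and hence
\[
\chi_{D_\mu}=\tfrac{1}{\lambda_{\mu_1}}\,\chi_{D_\mu}\,u_\lambda\,p_{s(\mu_1)}\,u_\lambda^{*}\,\chi_{D_\mu}\in\overline{\pi(\D_E)S\pi(\D_E)S^{*}\pi(\D_E)},
\]
so $\bigl(\pi(\chi_{D_\mu}),\pi(\chi_{D_\mu})\bigr)$ is a redundancy and $\chi_{D_\mu}\in J_{(\pi,S)}$; as $J_{(\pi,S)}$ is an ideal, $C_0\bigl(\{\nu\in\partial E:|\nu|\geq 1\}\bigr)\subseteq J_{(\pi,S)}$. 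Conversely, faithfulness of $(\pi,S)$ and Lemma~\ref{lemma of raeburn} give $J_{(\pi,S)}\subseteq N_\LL^{\bot}\cap J(X_\LL)$, so it remains to prove the reverse inclusion $N_\LL^{\bot}\cap J(X_\LL)\subseteq C_0(\{\nu:|\nu|\geq 1\})$. Inspecting the GNS‑kernel one finds $N_\LL=C_0(\{\text{source vertices of }E\})$, hence $N_\LL^{\bot}=C_0\bigl(\overline{\{\nu\in\partial E:|\nu|\geq1\}}\bigr)$, and the closure adds precisely the infinite receivers of $E$ regarded as length‑zero boundary paths. Thus what must be shown is: $\phi(a)$ is not a generalized compact operator on $X_\LL$ whenever $a$ is nonzero at an infinite receiver $w$. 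I would establish this using the topological‑quiver model of $X_\LL$ from Proposition~\ref{Correspondence of a pseudo-quiver} together with Brownlowe's description of $\sigma$ (Proposition~\ref{Brownlowe proposition}), by producing an infinite orthogonal family in $X_\LL$ indexed by $r^{-1}(w)$ on which $\phi(a)$ acts essentially as multiplication by $a(w)$, contradicting compactness. This concrete identification of $N_\LL^{\bot}$ and $J(X_\LL)$ in graph terms is the step I expect to be the main obstacle; granting it one gets $N_\LL^{\bot}\cap J(X_\LL)=C_0(\{\nu:|\nu|\geq1\})=J_{(\pi,S)}$, so $(\pi,S)$ is covariant.

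Now $(\pi,S)$ is faithful and covariant, and the canonical gauge action of $C^*(E)$ -- which fixes $\D_E$ pointwise and sends $u_\lambda=\sum_e\sqrt{\lambda_e}s_e$ to $zu_\lambda$ for $z\in\T$ -- supplies the circle action required in Proposition~\ref{Gauge invariant uniqueness theorem}. Therefore $C^*(E)=C^*(\pi,S)\cong C^*(\D_E,\LL)$ with $j_{\D_E}(a)\mapsto a$ and $j_{\D_E}(a)s\mapsto au_\lambda$, while the strict convergence of $u_\lambda$ in $M(C^*(E))$ is Proposition~\ref{proposition for lambda transfer operators} iii).

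For the remaining items: if $E$ has no infinite emitters, Proposition~\ref{non-commutative Markov shifts equivalences} shows \eqref{Markov shift for local finite} defines an endomorphism $\alpha$ of $\D_E$, and $\LL(a\alpha(b))=\LL(a)b$ follows from the Cuntz--Krieger relations, so $(\D_E,\alpha,\LL)$ is an Exel system; by Theorem~\ref{Exel crossed products as relative Pimsner} $\D_E\rtimes_{\alpha,\LL}\N=C^*(\D_E,\LL;\overline{\D_E\alpha(\D_E)\D_E})$, and since $\overline{\D_E\alpha(\D_E)\D_E}=C_0(\{\nu:|\nu|\geq1\})$ agrees, after intersecting with $J(X_\LL)$, with the ideal identified above, this equals $C^*(\D_E,\LL)$; regularity of $(\D_E,\alpha,\LL)$ is, by Proposition~\ref{Ex2.3} iv), equivalent to $\overline{\LL}(1)=\sum_v\bigl(\sum_{e\in s^{-1}(v)}\lambda_e\bigr)p_v$ being a projection, i.e.\ to \eqref{probability distribution}. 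For ii), the transfer identity forces any partner $\alpha'$ to satisfy $\alpha'(b)=b\circ\sigma$ on the positive‑length paths, so $\alpha'$ is an endomorphism of $\D_E$ only when $\sigma$ is proper, i.e.\ when $E$ is locally finite (Proposition~\ref{non-commutative Markov shifts equivalences}), in which case part i) applies; the extra requirement that $\LL$ be faithful on $\overline{\D_E\alpha'(\D_E)\D_E}$ (equivalently, that this ideal lie in $N_\LL^{\bot}$) then pins $\alpha'$ down on the length‑zero paths as well, forcing $\alpha'=\alpha$ of \eqref{Markov shift for local finite}. Finally, if $E$ is locally finite and without sources then $\partial E=E^{\infty}$, so $N_\LL=0$ and $\LL$ is faithful on all of $\D_E$, and iii) follows from the uniqueness argument in ii).
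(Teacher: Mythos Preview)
Your strategy is essentially the paper's: exhibit $(\pi,S)=(\mathrm{id},u_\lambda)$ as a faithful covariant representation of $(\D_E,\LL)$ inside $C^*(E)$, check that the gauge action of $C^*(E)$ provides the circle action, and invoke Proposition~\ref{Gauge invariant uniqueness theorem}. Your redundancy computation $\chi_{D_\mu}=\lambda_{\mu_1}^{-1}\chi_{D_\mu}u_\lambda p_{s(\mu_1)}u_\lambda^{*}\chi_{D_\mu}$ is exactly the paper's, and your treatments of items i)--iii) follow the same lines (for ii) the paper makes explicit the step you compress: since $E$ has no infinite receivers the projection $p=\sum_e s_es_e^{*}$ exists strictly, $\alpha'(a)p=\sum_e s_eas_e^{*}$ is forced into $\D_E$, and this fails at any infinite emitter).

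The one substantive difference is how you propose to compute $J(X_\LL)$. You sketch an argument via the topological--quiver model of Proposition~\ref{Correspondence of a pseudo-quiver}; the paper instead isolates this as Lemma~\ref{description of the relevant ideals} and works directly in the GNS correspondence $X_\LL$. The key ingredient is the identity $\phi(s_\mu s_\mu^{*})=\lambda_{\mu_1}^{-1}\Theta_{s_\mu s_\mu^{*}\otimes 1,\,s_\mu s_\mu^{*}\otimes 1}$ for $|\mu|\geq 1$, which immediately gives $\clsp\{s_\mu s_\mu^{*}:\mu\in E^{*}\setminus E^0_{inf}\}\subseteq J(X_\LL)$; for the reverse inclusion the paper takes an infinite receiver $v_0$ and, for any finite--rank $K$, picks $g\in r^{-1}(v_0)$ outside the ``support'' of $K$ to show $\|\phi(p_{v_0})-K\|\geq \lambda_g^{-1}\|s_gs_g^{*}\otimes 1\|=1$. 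This is precisely your ``infinite orthogonal family'' idea, carried out algebraically with the vectors $s_gs_g^{*}\otimes 1$, $g\in r^{-1}(v_0)$, so your instinct is right---but the direct route avoids the quiver machinery entirely.

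One minor imprecision: in item i) you invoke Proposition~\ref{Ex2.3}~iv) as ``$\overline{\LL}(1)$ is a projection'', but that condition also requires the projection to be onto $(\ker\alpha)^{\bot}$. This is true here (once $\overline{\LL}(1)=\sum_{v\in s(E^1)}p_v$, one checks $(\ker\alpha)^{\bot}=\bigoplus_{v\in s(E^1)}p_v\D_E$ directly), but it needs to be said; the paper sidesteps this by using condition iii), computing $\alpha(\LL(\alpha(a)))=\sum_e\bigl(\sum_{f\in s^{-1}(s(e))}\lambda_f\bigr)s_eas_e^{*}$ and comparing with $\alpha(a)$.
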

\begin{rem} We  comment on the corresponding items in the above theorem:

i). Recall that \eqref{probability distribution} holds if and only if the operator $u_\lambda$  is a partial isometry. In particular, the general question for which graphs $E$ the numbers $\lambda_e>0$, $e\in E^1$, can be chosen so that  the  Exel system $(\D_E,\alpha,\LL)$  is  regular, seems to be a complex problem.

ii).  One could conjecture that in general $\LL$ is a transfer operator for a certain endomorphism if and only if $E$ has no infinite emitters, and then this endomorphism is the (non-commutative) Markov shift given by \eqref{Markov shift for local finite}.

iii). If  $E$ is locally finite and without sources then $\partial E=E^\infty$ and we can put $\lambda_e:=|s^{-1}(s(e))|^{-1}$, $e\in E^1$. In this case, identifying $\D_E$ with $C_0(E^\infty)$,  the mappings \eqref{Markov shift for local finite} and \eqref{first approximation of a transfer operator} coincide with those given by \eqref{composition with shift preliminary}. In particular, Theorem \ref{thm for Raeburn, Brownlowe and Vitadello} yields an isomorphism
$$
C^*(E)\cong C_0(E^\infty)\rtimes_{\alpha,\LL} \N
$$
proved  by Brownlowe in \cite[Proposition 4.6]{Brownlowe}, and when $E$ has  no sinks by Brownlowe, Raeburn and Vitadello in \cite[Theorem 5.1]{brv}.
\end{rem}

 The proof of Theorem \ref{thm for Raeburn, Brownlowe and Vitadello} will rely on the  following two lemmas.  We fix the notation from the assertion of Theorem \ref{thm for Raeburn, Brownlowe and Vitadello} and note that the map $\LL:\D_E\to \D_E$ is well defined by   Proposition \ref{proposition for lambda transfer operators}.
We denote by $E^0_{s}:=\{v\in E^0: r^{-1}(v)=\emptyset\}$ and $E_{inf}:=\{v\in E^0: |r^{-1}(v)|=\infty\}$   the set of sources and the set of infinite receivers, respectively.
\begin{lem}\label{description of the relevant ideals}
Let $X_\LL$ be the $C^*$-correspondence of $(\D_E, \LL)$. We have
$
N_\LL^\bot= \clsp\{ s_\mu s_\mu^*: \mu \in E^*\setminus E^0_{s}\}$ and $
J(X_\LL)=\clsp\{ s_\mu s_\mu^*:  \mu\in E^*\setminus  E^0_{inf}\}$. Hence
$$
N_\LL^\bot \cap J(X_\LL)=\clsp\{ s_\mu s_\mu^*:  \mu\in E^*\setminus  E^0\}.
$$
\end{lem}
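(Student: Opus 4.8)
The plan is to compute the two ideals $N_\LL^\bot$ and $J(X_\LL)$ separately and then intersect them. Recall that $\D_E\cong C_0(\partial E)$ with $s_\mu s_\mu^*$ corresponding to $\chi_{D_\mu}$, so every (closed, two-sided) ideal of $\D_E$ is of the form $\clsp\{s_\mu s_\mu^*:\mu\in S\}$ for a suitable ``hereditary'' collection of cylinders $S$; it therefore suffices to decide, for each $\mu\in E^*$, whether $s_\mu s_\mu^*$ lies in the ideal in question. Throughout I would use the explicit formulas \eqref{relation determining L_lambda 1} and \eqref{relation determining L_lambda 2} for $\LL$ on the spanning projections, transported to $\D_E$ via Proposition \ref{isomorphism proposition}, namely $\LL(s_\mu s_\mu^*)=\lambda_{\mu_1}s_{\sigma(\mu)}s_{\sigma(\mu)}^*$ for $|\mu|\geq 1$ and $\LL(p_v)=\sum_{e\in r^{-1}(v)}\lambda_e\, s_{s(e)}s_{s(e)}^*$.

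First I would identify $N_\LL$, the GNS-kernel of $\LL$ (Definition \ref{GNS-kernel}). Since $\D_E$ is commutative, $N_\LL$ is simply the largest ideal contained in $\ker\LL$, and $\LL(s_\mu s_\mu^*)=0$ happens exactly when the relevant sum is empty. For $|\mu|\geq1$ this never happens (the coefficient $\lambda_{\mu_1}$ is strictly positive), while $\LL(p_v)=0$ precisely when $r^{-1}(v)=\emptyset$, i.e. $v\in E^0_s$. Being careful with the ideal structure, one checks that the largest ideal killed by $\LL$ is $N_\LL=\clsp\{p_v:v\in E^0_s\}$, whence its annihilator is $N_\LL^\bot=\clsp\{s_\mu s_\mu^*:\mu\in E^*\setminus E^0_s\}$; here I would use that $p_v s_\mu s_\mu^*=s_\mu s_\mu^*$ iff $r(\mu)=v$, so a cylinder $D_\mu$ annihilates $p_v$ iff $r(\mu)\neq v$, and $\mu\in E^*$ annihilates all of $N_\LL$ iff $r(\mu)\notin E^0_s$.

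Next I would compute $J(X_\LL)=\phi^{-1}(\K(X_\LL))$, the ideal of elements the left action sends into the compacts. Using the identification $X_\LL\cong M_\LL$ (Lemma \ref{Exel vs GNS}) is one option, but it is cleaner to work directly: $\phi(p_v)$ is the rank-type operator built from the $q(s_e)$, $e\in r^{-1}(v)$, and the computation $\Theta_{q(s_e),q(s_f)}q(z)=q(s_e)\LL(s_f^* z)$ shows $\phi(p_v)=\sum_{e\in r^{-1}(v)}\Theta_{q(s_e),q(s_e)}$ converges in $\K(X_\LL)$ exactly when $r^{-1}(v)$ is finite, i.e. $v\notin E^0_{inf}$ (for $v\in E^0_{inf}$ the partial sums form an increasing, non-Cauchy net of projections in $\LL(X_\LL)$, so $\phi(p_v)\notin\K(X_\LL)$). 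For $|\mu|\geq1$ the element $s_\mu s_\mu^*=s_{\mu_1}(s_{\sigma(\mu)}s_{\sigma(\mu)}^*)s_{\mu_1}^*$ always acts compactly since it factors through the single edge $\mu_1$. Again threading through the ideal structure, this gives $J(X_\LL)=\clsp\{s_\mu s_\mu^*:\mu\in E^*\setminus E^0_{inf}\}$.

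Finally I would intersect: a basis cylinder $s_\mu s_\mu^*$ lies in $N_\LL^\bot\cap J(X_\LL)$ iff $\mu\notin E^0_s$ and $\mu\notin E^0_{inf}$. For $|\mu|\geq1$ both conditions are automatic, and for $\mu=v\in E^0$ neither can hold simultaneously only fails when $v\in E^0_s\cup E^0_{inf}$; but in fact since $E^0=E^0_s\sqcup(E^0\setminus E^0_s)$ and every $v\in E^0\setminus E^0_s$ with $r^{-1}(v)$ finite still must be excluded — wait, such $v$ lies in $J(X_\LL)$ and in $N_\LL^\bot$, so it would be included. Here I need to double-check the claimed answer $N_\LL^\bot\cap J(X_\LL)=\clsp\{s_\mu s_\mu^*:\mu\in E^*\setminus E^0\}$: this forces every vertex projection $p_v$ to be excluded, which means I must have the roles of sources and infinite receivers arranged so that $E^0=E^0_s\cup E^0_{inf}$ fails in general — so the correct reading is that $p_v\in N_\LL^\bot$ requires $v\notin E^0_s$ while $p_v\in J(X_\LL)$ requires $v\notin E^0_{inf}$, and a vertex that is neither a source nor an infinite receiver is a \emph{finite non-source}, for which $p_v=\sum_{r(e)=v}s_es_e^*\in\clsp\{s_\mu s_\mu^*:|\mu|\geq1\}$ already; so $p_v$ is \emph{not} an independent generator and is subsumed. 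This last observation — that the Cuntz--Krieger relation at finite receivers collapses $p_v$ into the span of length-$\geq1$ cylinders — is the subtle point of the argument, and it is what I would be most careful to get right; everything else is a routine application of the formulas for $\LL$ together with Lemma \ref{lemma of raeburn} / Theorem \ref{identification theorem}.
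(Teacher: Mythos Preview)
Your handling of $N_\LL^\bot$ and of the intersection (noting that for a finite receiver $v\notin E^0_s$ the Cuntz--Krieger relation already places $p_v=\sum_{r(e)=v}s_es_e^*$ in $\clsp\{s_\mu s_\mu^*:|\mu|\geq 1\}$) is correct; the paper in fact just writes ``Hence'' for the intersection, so you were more explicit there.

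The gap is in showing $\phi(p_{v_0})\notin\K(X_\LL)$ for $v_0\in E^0_{inf}$. First, you write $\phi(p_v)=\sum_e\Theta_{q(s_e),q(s_e)}$ in $M_\LL$-notation, but $M_\LL$ requires an endomorphism $\alpha$, which is not assumed here; you should work in $X_\LL$ with $x_e:=s_es_e^*\otimes 1$, and the correct rank-one projections are $P_e:=\lambda_e^{-1}\Theta_{x_e,x_e}=\phi(s_es_e^*)$ (one checks $\langle x_e,x_f\rangle_\LL=\delta_{e,f}\lambda_e\,p_{s(e)}$). More seriously, ``increasing non-Cauchy net of projections'' alone does not force non-compactness: in a Hilbert module a compact projection can dominate infinitely many orthogonal subprojections (take $A=C(\N\cup\{\infty\})$ and $1\in\K(A)=A$ over the $\delta_n$). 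What makes your idea work is the additional fact that the partial sums $Q_F=\sum_{e\in F}P_e$ converge \emph{strictly} to $\phi(p_{v_0})$ in $\LL(X_\LL)$; granted this, if $\phi(p_{v_0})$ were compact one would get $Q_F=Q_F\phi(p_{v_0})\to\phi(p_{v_0})$ in norm, contradicting $\|Q_F-Q_{F'}\|=1$. But that strict convergence amounts to verifying $\bigl\|\sum_{e\in r^{-1}(v_0)\setminus F}\lambda_e\,p_{s(e)}\bigr\|\to 0$, and this is exactly where conditions \eqref{conditions for convergence} and \eqref{conditions for convergence2} on $\lambda$ must be invoked --- a point you do not address.

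The paper proceeds differently for this step: rather than a projection argument, it bounds $\|\phi(p_{v_0})-K\|\geq 1$ for every finite-rank $K$ directly, by exhibiting an edge $g\in r^{-1}(v_0)$ with $K(s_gs_g^*\otimes 1)=0$ while $\phi(p_{v_0})(s_gs_g^*\otimes 1)=s_gs_g^*\otimes 1$. Your approach, once the strict-convergence step is supplied, is a legitimate alternative.
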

\begin{proof}
 Note that $\D_E$ is a direct sum of two complemented ideals $\clsp\{ p_v : v\in E^{0}_{s}\}$ and $\clsp\{ s_\mu s_\mu^*: \mu \in E^*\setminus E^0_{s}\}$. One readily sees  that  $\LL$  vanishes on the first one  and is  faithful on the second one. Hence  $N_\LL=\clsp\{ p_v : v\in E^{0}_{s}\}$ and  $N_\LL^\bot=  \clsp\{ s_\mu s_\mu^*: \mu \in E^*\setminus E^0_{s}\}$.

Let    $\mu\in E^*\setminus E^0$ and put
$
K:=   \lambda_{\mu_1}^{-1} \Theta_{(s_{\mu } s_{\mu}^*\otimes 1),(s_{\mu } s_{\mu }^*\otimes 1)}
$
where $\mu_1\in E^1$ is such that $\mu_1\overline{\mu}=\mu$ for  $\overline{\mu}\in E^*$ (we recall that  $a\otimes 1\in X_{\LL}$, for  $a\in \D_E$, is given by \eqref{tensor one mappings2}). We claim that $\phi(s_\mu s_\mu^*)=K$. Indeed, for any $a,b \in \D_E$  we have
\begin{align}\label{K(a o b)}
K(a\otimes b)= K(a\otimes 1) b= \lambda_{\mu_1}^{-1}   \left(s_{\mu}s_{\mu }^*\otimes \LL (s_{\mu}s_{\mu }^* a )\right)b=\left(s_{\mu}s_{\mu }^*\otimes s_{\overline{\mu}}s_{\mu }^* a s_{\mu_1}\right)b.
\end{align}
Moreover, for  any $x,y \in \D_E$ we have
\begin{align*}
\langle s_{\mu }s_{\mu }^*\otimes s_{\overline{\mu}}s_{\mu }^* a s_{\mu_1}, x\otimes y\rangle_{\LL} & =  s_{\overline{\mu}}s_{\mu }^* a s_{\mu_1} \LL(s_{\mu }s_{\mu }^* x) y
\\
&=\lambda_{\mu_1}  s_{\overline{\mu}}s_{\mu }^* a s_{\mu }s_{\mu }^*  x s_{\mu_1} y
\\
&=\LL(s_{\mu }s_{\mu }^* a  x ) y
\\
&=\langle s_{\mu }s_{\mu }^* (a \otimes 1), x\otimes y\rangle_{\LL}.
\end{align*}
Hence $s_{\mu }s_{\mu }^*\otimes s_{\overline{\mu}}s_{\mu }^* a s_{\mu_1}= s_{\mu }s_{\mu }^* (a \otimes 1)$. Thus in view of \eqref{K(a o b)} we get
$
K(a\otimes b)= (s_{\mu }s_{\mu }^* a \otimes 1)b= \phi(s_{\mu }s_{\mu}^*) (a \otimes b),
$
which proves our claim.
If $v\in E^0\setminus  E^0_{inf}$,  then using what we have just shown we get
$$
\phi(p_v)=\phi(\sum_{f\in r^{-1}(v)} s_f s_f^*) =\sum_{f\in r^{-1}(v)}   \lambda_{f}^{-1} \Theta_{(s_{f} s_{f}^*\otimes 1),(s_{f} s_{f}^*\otimes 1)} \in \K(X_\LL).
$$
This shows that $\clsp\{ s_\mu s_\mu^*:  \mu\in E^*\setminus  E^0_{inf}\}
\subseteq J(X_\LL)$. Suppose, on the contrary, that this inclusion is proper. Then there exists an element in $J(X_\LL)$ of the form $a=\sum_{\mu \in E^*\setminus  E^0_{inf}} c_\mu s_\mu s_\mu^* + \sum_{v \in E^0_{inf}} c_v p_v$ where $c_\mu$, $c_v$ are complex numbers and there is $v_0 \in E^0_{inf}$ such that $c_{v_0}\neq 0$. Then $\sum_{v \in E^0_{inf}} c_v p_v=a-\sum_{\mu \in E^*\setminus  E^0_{inf}} c_\mu s_\mu s_\mu^* $ is in $J(X_\LL)$. Hence $p_{v_0}=c_{v_0}^{-1}p_{v_0} \sum_{v \in E^0_{inf}} c_v p_v$ is in $J(X_\LL)$.  We show  that the latter is impossible. Indeed, any operator in $\K(X_\LL)$ can be approximated by   $K\in \K(X_\LL)$  given by a finite linear combination of the form
$$
K=\sum_{\mu,\nu,\eta, \tau \in F}  \lambda_{\mu,\nu,\eta, \tau} \Theta_{(s_\mu s_\mu^*\otimes s_\nu s_\nu^*), (s_\eta s_\eta^*\otimes s_\tau s_\tau^*)}
$$
where $F\subseteq  E^*$ is a finite set. For any such combination we can find an edge $g\in r^{-1}(v_0)$ such that the projection $p_{s(g)}$ is orthogonal to every projection $s_\mu s_\mu^*$, $\mu \in F$. Then for $\tau \in F$, and any $\eta \in E^*$, we have
$$
\langle s_\eta s_\eta^*\otimes s_\tau s_\tau^*, s_g s_g^* \otimes 1\rangle_{\LL} = s_\tau s_\tau^*\LL(s_\eta s_\eta^* s_gs_g^*)= s_\tau s_\tau^* p_{s(g)} (\lambda_g s_g^* s_\eta s_\eta^* s_g) =0.
$$
This implies that $K (s_g s_g^* \otimes 1)=0$. Thus, as $\lambda_g^{-1}\|s_g s_g^* \otimes 1\|=1$, we get
\begin{align*}
\|\phi(p_{v_0})- K\|\geq \lambda_{g}^{-1}\|\phi(p_v) (s_g s_g^* \otimes 1) - K(s_g s_g^* \otimes 1)\|=\lambda_{g}^{-1}\|s_g s_g^*\otimes 1\|=1.
\end{align*}
Accordingly,  $\phi(p_{v_0})\notin \K(X_\LL)$ which is a contradiction. Thus  $\clsp\{ s_\mu s_\mu^*:  \mu\in E^*\setminus  E^0_{inf}\}=J(X_\LL)$.
\end{proof}
By Proposition \ref{non-commutative Markov shifts equivalences}, if $E$ has no infinite emitters then \eqref{Markov shift for local finite} defines an endomorphism $\alpha:\D_E\to \D_E$.
\begin{lem}\label{existence of endomorphism}
Suppose $E$ has no infinite emitters and $\alpha$ is given by \eqref{Markov shift for local finite}. Then
$$
\alpha(\D_E)\D_E =N_\LL^\bot \cap  J(X_\LL).
$$
\end{lem}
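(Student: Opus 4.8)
The strategy is to compute both sides of the claimed equality explicitly as closed spans of diagonal projections and then compare. By Lemma \ref{description of the relevant ideals} we already know that $N_\LL^\bot\cap J(X_\LL)=\clsp\{s_\mu s_\mu^*:\mu\in E^*\setminus E^0\}$, so the whole task reduces to identifying the ideal $\alpha(\D_E)\D_E$ (equivalently $\overline{\D_E\alpha(\D_E)\D_E}$, since $\D_E$ is commutative and an ideal generated by a subalgebra of a commutative algebra is just the closed span of products) with the same span. First I would recall from \eqref{composition with shift algebraic}/\eqref{Markov shift for local finite} that $\alpha(s_\mu s_\mu^*)=\sum_{e\in E^1}s_e s_\mu s_\mu^* s_e^*=\sum_{e\in E^1,\,s(e)=r(\mu)}s_{e\mu}s_{e\mu}^*$, a strictly convergent sum of diagonal projections indexed by the (finitely many, since there are no infinite emitters) edges $e$ with $s(e)=r(\mu)$; in particular $\alpha(p_v)=\sum_{s(e)=v}s_e s_e^*$. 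So $\alpha(\D_E)$ is contained in $\clsp\{s_\eta s_\eta^*:\eta\in E^*,\ |\eta|\ge 1\}$, and hence $\alpha(\D_E)\D_E\subseteq\clsp\{s_\mu s_\mu^*:\mu\in E^*\setminus E^0\}$.

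For the reverse inclusion I need to produce, for each path $\mu=\mu_1\bar\mu$ with $|\mu|\ge 1$, the projection $s_\mu s_\mu^*$ inside $\alpha(\D_E)\D_E$. The key computation is $\alpha(s_{\bar\mu}s_{\bar\mu}^*)\,s_\mu s_\mu^* = \big(\sum_{s(e)=r(\bar\mu)}s_{e\bar\mu}s_{e\bar\mu}^*\big)s_\mu s_\mu^*$. Since $\mu=\mu_1\bar\mu$ with $s(\mu_1)=r(\bar\mu)$, exactly one term in that sum, namely $e=\mu_1$, has $s_{e\bar\mu}s_{e\bar\mu}^*=s_\mu s_\mu^*$, and the projections $s_{e\bar\mu}s_{e\bar\mu}^*$ for distinct $e$ are mutually orthogonal; therefore the product collapses to $s_\mu s_\mu^*$. (When $|\mu|=1$, i.e. $\mu=\mu_1\in E^1$ and $\bar\mu=s(\mu_1)\in E^0$, the same computation reads $\alpha(p_{s(\mu_1)})s_{\mu_1}s_{\mu_1}^*=s_{\mu_1}s_{\mu_1}^*$.) This shows $s_\mu s_\mu^*\in\alpha(\D_E)\D_E$ for every $\mu\in E^*\setminus E^0$, hence $\clsp\{s_\mu s_\mu^*:\mu\in E^*\setminus E^0\}\subseteq\alpha(\D_E)\D_E$, and combining with the first inclusion and Lemma \ref{description of the relevant ideals} finishes the proof.

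I do not expect a serious obstacle here: the argument is a direct bookkeeping of diagonal projections, and the only thing to be careful about is the strict (rather than norm) convergence of $\alpha(s_{\bar\mu}s_{\bar\mu}^*)$ — but no infinite emitters guarantees the relevant sums $\sum_{s(e)=r(\bar\mu)}(\cdots)$ are finite, so all manipulations are with honest finite sums and the orthogonality of the projections $\{s_{e\bar\mu}s_{e\bar\mu}^*\}_e$ makes the product computation exact. The mild subtlety is the separate treatment of length-one paths (where the prefix $\bar\mu$ is a vertex), which I would handle in a parenthetical remark as above rather than as a separate case. One could also phrase the whole thing more conceptually: $\alpha$ maps $\D_E$ onto a hereditary-in-spirit piece of the diagonal and $\alpha(p_v)$ is the projection onto $\bigcup_{s(e)=v}D_e$ under the identification $\D_E\cong C_0(\partial E)$, so $\overline{\D_E\alpha(\D_E)\D_E}$ corresponds to $C_0$ of the open set $\partial E\setminus E^0$, which is exactly the Gelfand picture of $\clsp\{s_\mu s_\mu^*:\mu\in E^*\setminus E^0\}$; but the projection computation is the cleanest route and I would present that.
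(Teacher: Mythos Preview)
Your proof is correct and follows essentially the same approach as the paper. The only cosmetic differences are that the paper writes $s_\mu s_\mu^*=s_{\mu_1}s_{\mu_1}^*\,\alpha(s_{\bar\mu}s_{\bar\mu}^*)$ (rather than $\alpha(s_{\bar\mu}s_{\bar\mu}^*)\,s_\mu s_\mu^*$) for one inclusion, and for the other uses the approximate unit $\mu_F=\sum_{e\in F}s_es_e^*$ of $N_\LL^\bot\cap J(X_\LL)$ to show $\alpha(a)=\lim_F\mu_F\alpha(a)$ lies in that ideal, rather than computing $\alpha$ on generators directly.
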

\begin{proof}
Let $=\mu_1\overline{\mu}\in E^*\setminus E^0$ where $\mu_1\in E^1$. Since
$$
s_\mu s_\mu^*=s_{\mu_1\overline{\mu}}s_{\mu_1\overline{\mu}}^*=s_{\mu_1} s_{\mu_1}^* \alpha(s_{\overline{\mu}}s_{\overline{\mu}}^*)\in  \alpha(\D_E)\D_E,
$$
it follows from Lemma \ref{description of the relevant ideals} that   $N_\LL^\bot \cap  J(X_\LL)\subseteq  \alpha(\D_E)\D_E$. For the reverse inclusion it suffices to show that for any $a\in \D_E$  we have $\alpha(a) \in N_\LL^\bot \cap  J(X_\LL)$. To this end, consider a net $\mu_F:=\sum_{e \in F} s_e s_e^*\in N_\LL^\bot \cap  J(X_\LL)=\clsp\{ s_\mu s_\mu^*:  \mu\in E^*\setminus  E^0\}$  indexed  by finite sets $F\subseteq E^1$ ordered  by inclusion. Clearly, $\mu_F \alpha(a)$ converges to $\alpha(a)$. Hence $\alpha(a)\in N_\LL^\bot \cap  J(X_\LL)$.
\end{proof}

\begin{Proof of}{Theorem \ref{thm for Raeburn, Brownlowe and Vitadello}:}
 By    Proposition \ref{proposition for lambda transfer operators} the sum \eqref{first approximation of a transfer operator} converges in norm and the operator  $u_\lambda=\sum\limits_{e\in E^1} \sqrt{\lambda_e}s_e$ converges  strictly  in  $M(C^*(E))$. Plainly, $\LL(a)=u_\lambda au_\lambda^*$ for $a \in \D_E$.  Let us treat $M(C^*(E))$ as a non-degenerate subalgebra of $\B(H)$. Then the pair $(id, u_\lambda)$ is  a faithful representation of $(\D_E,\LL)$ in $B(H)$.
We claim that it is covariant, in the sense of Definition \ref{definition of covariance}, i.e.  $
N_\LL^\bot\cap J(X_\LL) \subseteq     \overline{\D_E u_\lambda \D_E u^*_\lambda}.
$ Indeed,  taking $s_\mu s_\mu^*$ where $\mu\in E^*\setminus  E^0$, and writing $\mu=\mu_1\overline{\mu}$ where $\mu_1\in E^1$ and  $\overline{\mu}\in E^*$ we get
$$
s_\mu s_\mu^*=s_{\mu_1\overline{\mu}}s_{\mu_1\overline{\mu}}^*=\lambda_{\mu_1} ^{-1} s_{\mu_1} s_{\mu_1}^*  \, u_\lambda (s_{\overline{\mu}}s_{\overline{\mu}}^*) u_\lambda^* \, s_{\mu_1} s_{\mu_1}^* \in \overline{\D_E u_\lambda \D_E u^*_\lambda}.
$$
By virtue of Lemma  \ref{description of the relevant ideals} this proves our claim. Hence by Proposition \ref{universal description of crossed products}
the mapping $j_A(a) \mapsto a$,  $j_A(a)s \mapsto a u_\lambda$, $a\in \D_E$, gives rise to a homomorphism  from $C^*(\D_E,\LL)$ into $C^*(E)$.
 Let us denote it by $id\rtimes u_\lambda$ and note that  it is actually an epimorphism because  we have
$$
s_e= (\sqrt{\lambda_{e}})^{-1} (s_es_e^*) u_\lambda  p_{s(e)},\qquad \textrm{for all }e\in E^1.
$$
 Moreover, for the canonical gauge circle action $\gamma$ on $C^*(E)$ we have
$$
\gamma_z(a)=a, \qquad \gamma_z(au_\lambda )=zau_\lambda , \qquad \textrm{ for all }a\in \D_E, \, z\in \T.
$$
Thus applying Proposition \ref{Gauge invariant uniqueness theorem} we see that $id\rtimes u_\lambda$ is  an isomorphism.  This proves the main part of the assertion.

i). Suppose now that $E$ has no infinite emitters. Then \eqref{Markov shift for local finite} converges in norm by Proposition \ref{non-commutative Markov shifts equivalences}. Since
$$
\LL(\alpha(a)b)= \sum_{e,f\in E^1} \lambda_f s_f^* s_e as_e^* b s_f= \sum_{e\in E^1} \lambda_e p_{s(e)} a s_e^* b s_e =a \sum_{e\in E^1} \lambda_e  s_e^* b s_e=a\LL(b),
$$
for all $a$, $b\in \D_E$, the triple $(\D_E,\alpha,\LL)$ is an Exel system.
Similar calculations show that
 $$
 \alpha(\LL(\alpha(a)))= \sum_{e\in E^1}  \left(\sum_{f\in s^{-1}(s(e))}  \lambda_f\right)   s_e  a s_e^*.
 $$
Hence, in view of Proposition \ref{Ex2.3}, $\LL$ is a regular transfer operator for $\alpha$ if and only if  \eqref{probability distribution} holds.
 The crossed products $C^*(\D_E, \LL)$ and $\D_E\rtimes_{\alpha,\LL} \N$ coincide by Lemma \ref{existence of endomorphism}  and Theorem \ref{Exel crossed products as relative Pimsner}.

ii). Suppose $\alpha$ is an endomorphism such that $(\D_E,\alpha, \LL)$ is an Exel system. Putting $b=s_e s_e^*$, $e\in E^1$, in the equation $\LL(\alpha(a)b)=a\LL(b)$ we get $s_e^*\alpha(a)s_e=a s_e^* s_e$. This in turn implies that
$$
\alpha(a)s_es_e^*=s_ea s_e^*, \qquad e \in E^1.
$$
Lack of infinite receivers in $E$ implies that the projections $s_es_e^*$ sum up strictly to a projection in $M(C^*(E))$. Let us denote it  by $p$. It follows  that $\alpha(a)p=\sum_{e\in E^1} s_e a s_e^*$ is in $\D_E$ for any $a\in \D_E$. If there would be  an infinite emitter  $v\in E^0$, then $\alpha(p_v)p=\sum_{e\in s^{-1}(v)}s_e s_e^*$ would not be an element of $\D_E$ (otherwise it would correspond via the isomorphism $\D_E\cong C_0(\partial E)$ to a characteristic function of a non-compact set). Thus $E$ must be locally finite.  Furthermore, in view of Lemma  \ref{description of the relevant ideals}, we have $p\D_E=N_\LL^\bot$. Therefore if $\alpha(\D_E)\D_E\subseteq N_\LL^\bot$ then $\alpha$ has to be given by  \eqref{Markov shift for local finite}.

Item iii) follows  from item ii) because for a locally finite graph without sources we have $N_\LL^\bot=\D_E$ by Lemma \ref{description of the relevant ideals}.
\end{Proof of}

\end{document}